\newtheorem{theorem}{Theorem}[section]
\newtheorem{lemma}[theorem]{Lemma}
\newtheorem{corollary}[theorem]{Corollary}
\newtheorem{proposition}[theorem]{Proposition}
\theoremstyle{definition}
\newtheorem{definition}[theorem]{Definition}
\theoremstyle{remark}
\let\e=\varepsilon
\let\p=\partial
\let\O=\Omega
\let\o=w
\let\b=\beta
\newcommand{\R}{\mathbb{R}}
\newcommand{\be}{\begin{equation}}
\newcommand{\bm}{\begin{multline}}
\newcommand{\ee}{\end{equation}}
\newcommand{\dd}{\mathrm{d}}
\newcommand{\xb}{x_{\mathbf{b}}}
\newcommand{\tb}{t_{\mathbf{b}}}
\newcommand{\vb}{v_{\mathbf{b}}}
\newcommand{\xf}{x_{\mathbf{f}}}
\newcommand{\tf}{t_{\mathbf{f}}}
\newcommand{\vf}{v_{\mathbf{f}}}
\newcommand{\X}{\mathbf{x}}
\newcommand{\V}{\mathbf{v}}
\newcommand{\dist}{\mathrm{dist}}
\numberwithin{equation}{section}
\numberwithin{theorem}{section}
\def\p{\partial}
\def\O{\Omega}
\def\R{\mathbb{R}}
\def\B{\begin{equation}}
\def\E{\end{equation}}
\def\BN{\begin{eqnarray*}}
\def\EN{\end{eqnarray*}}
\begin{document}                        


\title{The Boltzmann equation with specular boundary condition in convex domains}

\author{Chanwoo Kim} {The University of Wisconsin-Madison}

\author{Donghyun Lee}{The University of Wisconsin-Madison}





\begin{abstract}
We establish the global-wellposedness and stability of the Boltzmann equation with the specular reflection boundary condition in general smooth convex domains when an initial datum is close to the Maxwellian with or without a small external potential. In particular, we have completely solved the long standing open problem after an announcement of \cite{SA} in 1977.
\end{abstract}

\maketitle   



\tableofcontents



\section{Introduction}
Kinetic theory describes the dynamics of any system made up of a large number of particles (e.g. gas, plasma) by a distribution function in the phase space. The Boltzmann equation describes the interaction among particles and it is one of the foundations of the kinetic theory for dilute collections of gas particles undergoing elastic binary collisions. In the presence of \textit{an external potential}, a density of dilute charged gas particles is governed by \textit{the Boltzmann equation}
\begin{equation}\label{Boltzmann_phi}
\partial_t F + v \cdot\nabla_x F -\nabla_x (  \phi(t,x) + \Phi(x))\cdot\nabla_v F = Q(F,F), \ \ \ \ \ F(0,x,v) = F_0 (x,v),
\end{equation}
%
where $F(t,x,v)$ is a distribution function of the gas particles at a time $t\geq 0,$ a position $x\in  \Omega \subset \mathbb{R}^{3} $, and a velocity $v\in \mathbb{R}^{3}$. Here, the collision operator $Q$ takes the form of 
\begin{eqnarray}
Q(F_{1},F_{2})
&:=&\int_{\mathbb{R}^{3}}\int_{\mathbb{S}^{2}} \Big[ B(v-u,\omega)
F_{1}(u^{\prime })F_{2}(v^{\prime }) - B(v-u,\omega) F_{1}(u)F_{2}(v) \Big] \dd \omega \dd u  \notag\\
&:=&Q_{+}(F_{1},F_{2})-Q_{-}(F_{1},F_{2})  ,
\notag
\end{eqnarray}%
where $u^{\prime }=u+[(v-u)\cdot \omega ]\omega $, $v^{\prime
}=v-[(v-u)\cdot \omega ]\omega $, and
$B(v-u,\omega) =|(v-u) \cdot \omega|$ (hard sphere). It is well-known (\cite{K2}) that 
the following \textit{local Maxwellian} is an equilibrium solution to (\ref{Boltzmann_phi})
\begin{equation}
\mu_E(x,v)   \ = \ \mu(v) e^{-\Phi  (x)} , \label{E_Max}
\end{equation}
where $\mu(v)=e^{- {|v|^{2}} /2} 
$ is the standard \textit{global Maxwellian}.

In many physical applications, e.g. dilute gases passing objects and a plasma inside tokamak devices, particles are interacting not only with each other but also with the boundary. Various important phenomena occur when gas particles interact with the boundary, such as the formation and propagation of singularities (\cite{Kim11,GKTT1, GKTT2}). In the presence of the boundary, a kinetic equation has to be supplemented with boundary conditions modeling the interaction between the particles and the boundary.
%
Among other boundary conditions (See \cite{CIP,Guo10}), in this paper, we focus on one of the most basic conditions, a so-called \textit{specular reflection boundary condition}. This mathematical model takes into account a case that if a gas particle hits the boundary, then it bounces back with the opposite normal velocity and the same tangential velocity, as a billiard: 
\begin{equation}\label{specular_F}
F(t,x,v) = F(t,x,R_{x}v) \ \ \ \text{for} \ \ x \in \p\Omega,
\end{equation}
where $R_{x}v= v- 2 (n(x) \cdot v)n(x)$. We note that the local Maxwellian (\ref{E_Max}) satisfies the boundary condition (\ref{specular_F}).

Despite extensive developments in the study of the Boltzmann theory, many basic boundary problems, especially regarding the specular reflection BC with general domains, have remained open. In a landmark paper of 1974, Ukai constructs the first global-in-time solutions near Maxwellians to the Boltzmann equation with non-trivial spatial dependence in a periodic box (no boundary).  Not long after, in 1977, Shizuta and Asano announced the construction of global solutions to the Boltzmann equation (\ref{Boltzmann_phi}) with no external potential ($\phi\equiv0 \equiv \Phi$) near Maxwellians in \textit{smooth convex domains} with specular reflection boundary condition (\cite{SA}), but without mathematical proofs. It took more than 30 years to encounter the first mathematical resolution: Guo, in \cite{Guo10}, developed a novel $L^{2}-L^{\infty}$ argument to construct a unique solution to the Boltzmann equation (\ref{Boltzmann_phi}) with no external potential for the specular reflection boundary condition. An asymptotic stability of the global Maxwellian $\mu$ is proven when an initial datum is close to $\mu$. However, such results in \cite{Guo10} are established under an \textit{extra strong condition}, namely, the boundary is a level set of a \textit{real analytic} function. Indeed, \textit{the analyticity condition} is crucially used to verify the key part of the proof in \cite{Guo10}. Finally, in this paper, we are able to establish the global wellposedness and stability of the Boltzmann equation for the specular reflection BC \textit{without the analyticity} and thereby we completely settle this classical long-standing (nearly 40 years) open question in the Boltzmann theory in the affirmative! In fact, main result even goes beyond the original open question in \cite{SA}: a nontrivial external potentials $\phi(t,x)$  and $\Phi(x)$ can be allowed. We discuss more on the external potential problem in another paragraphs.

Here let us only briefly mention some relevant works. In \cite{BG, Kuo}, the well-posedness and asymptotic stability of the global Maxwellian are studied when the boundary condition is any convex combination of the specular reflection BC and the diffusive BC \textit{except} the pure specular reflection boundary condition. For large amplitude solutions, an asymptotic stability of the global Maxwellian is established in \cite{DV} with or without the boundary, provided certain a-priori strong Sobolev estimates can be verified. Recently boundary regularity and singularity of solutions are extensively studied in \cite{Kim11, GKTT1, GKTT2}. We refer \cite{M} among others for the weak solution contents. 


Mathematical problems on the Boltzmann equation with \textit{an external potential} also have drawn lots of attention. In \cite{K2}, the stability of the Maxwellian $\mu_{E}$ in (\ref{E_Max}) is established with a fixed external potential $\Phi(x)$, which can be large, in a periodic box. The Vlasov-Poisson-Boltzmann system (VPB), which takes account of self-consistent electric fields by charged particles, is studied in \cite{Guo_P} when solutions and fields are small perturbations in a periodic box. However, in many important physical applications (e.g. semiconductor, tokamak), the charged dilute gas interacts with the boundary. 
One of the major difficulty is that trajectories are curved and behave in a very complicated way when they hit the boundary. As the first step toward studying models of dilute charged gases interacting with a self-consistent field and boundary, in this paper we establish the global well-posedness of the Boltzmann equation coupled with small \textit{external potentials} and the specular reflection BC.

An external potential and a boundary condition play an important role in the evolution of macroscopic quantities such as the total mass,  total momentum, and  total energy.  Let $F$ be a solution to (\ref{Boltzmann_phi}) satisfying the specular reflection boundary condition (\ref{specular_F}). We have the total mass conservation and the evolution of the total energy as
\begin{equation}
   \iint_{\Omega\times\R^{3}} F(t )   =  \iint_{\Omega\times\R^{3}} F_{0}   ,\label{conserv_F_mass}
\end{equation}
\begin{equation}
\begin{split}
  \iint_{\Omega\times\R^{3}} \Big( \frac{|v|^{2}}{2} + \Phi  \Big) F(t ) +
\int^{t}_{0}\iint_{\O\times\R^{3} }
F(s ) v\cdot \nabla_{x}   \phi (s) \\
= 	 \iint_{\Omega\times\R^{3}} \Big( \frac{|v|^{2}}{2} + \Phi   \Big) F_{0} 
.\label{evol_energy}
\end{split}
\end{equation}
%
%

%

\noindent  By normalization, without loss of generality, we assume that 
\begin{equation}\begin{split}\label{normalize_M}
& \iint_{\O\times \R^{3}} F_{0} (x,v)   =  \iint_{\O\times \R^{3}} \mu_{E} (x,v) , \\
&  \iint_{\O\times \R^{3}} \Big(\frac{|v|^{2}}{2}  + \Phi   (x)\Big) F_{0} (x,v)   =  \iint_{\O\times \R^{3}} \Big(\frac{|v|^{2}}{2}  + \Phi  (x)\Big)\mu_{E} (x,v)  .
\end{split}
\end{equation}

We consider a momentum for a special case: a domain $\Omega$ is \textit{axis-symmetric} if there are vectors $x_{0}$ and $\varpi$ such that 
\begin{equation}\label{axis-symmetric}
\{  (x-x_{0}) \times \varpi\} \cdot n(x) =0 \ \ \ \ \  \text{for all} \ x \in \p\Omega.
\end{equation}
In the case of an axis-symmetric domain, we assume a degenerate condition for the external fields as  
\begin{equation}\label{degenerate}
\{  (x-x_{0}) \times \varpi\} \cdot \nabla_{x} ( \phi(t,x) +  \Phi (x)) =0 \ \  \ \ \ \text{for all}  \  t\geq 0 \ \text{and}  \ x\in \O.
\end{equation}
Then, assuming both (\ref{axis-symmetric}) and (\ref{degenerate}), we have an evolution of an angular momentum as
\begin{equation}\label{conserv_F_angular}
\iint_{\O\times \R^{3}} \{  (x-x_{0}) \times \varpi\} \cdot v  F(t )
=   \iint_{\O\times \R^{3}} \{  (x-x_{0}) \times \varpi\} \cdot v  F_{0}
.
\end{equation}
In this case, we set
\begin{equation}
\label{normalize_M_angular}
\iint_{\O\times \R^{3}} \{  (x-x_{0}) \times \varpi\} \cdot v  F_{0}(x,v)
= 0.
\end{equation}

Furthermore the entropy $$\mathcal{H}(F) : =
\iint_{\O\times \R^{3}} F  \ln F  $$ satisfies the following inequality (\textit{H-theorem})
\begin{equation}\label{entropy_bound}
\mathcal{H}(F(t)) - \mathcal{H} (\mu_{E}) \leq \mathcal{H}(F_{0}) - \mathcal{H} (\mu_{E}) .
\end{equation}

\bigskip

Now we are ready to state our main theorems.

\begin{theorem}\label{theorem_time}
	Let $w = (1+|v|)^{\beta}$ for $\beta>5/2$.  Assume that the domain $\O \subset \R^{3}$ is $C^{3}$ and convex in (\ref{convexity_eta}). Assume that $\phi(t,x) \in C_{t,x}^{2,\gamma}$ and $\Phi (x)\in C_{x}^{2,\gamma}$ for some $0<\gamma$, $\|\phi+ \Phi\|_{ C^2}\ll 1$, 
	and \begin{equation}
	\label{exp_phi}
	\sup_{t \geq 0} e^{\lambda_{\phi} t}	\| \phi(t) \|_{C^{1}}
	< \delta_{\phi} <+\infty.
	\end{equation}

	\noindent	Assume (\ref{normalize_M}). 
	If $F_{0} = \mu_{E} + \sqrt{\mu_{E}} f_{0} \geq 0$ and $\| w f_{0} \|_{\infty} + |\mathcal{H}(F_{0})-\mathcal{H} (\mu_{E})| + \delta_{\phi} + \delta_{\phi}/ \lambda_{\phi} \ll 1$, then %
	%
	there exists a unique global-in-time solution 
	\begin{equation}\label{def_f}
	F(t)= \mu_{E} + \sqrt{\mu_{E}}f(t) \geq 0,
	\end{equation}
	to (\ref{Boltzmann_phi}) satisfying the specular reflection boundary condition (\ref{specular_F}).  Moreover,
	\begin{equation}\label{final_est_time}
	\sup_{t\geq 0}\| w f (t) \|_{\infty}\lesssim  \| w f_{0 } \|_{\infty} + 
	|\mathcal{H}(F_{0}) - \mathcal{H}(\mu_{E})| + \delta_{\phi}+ \delta_{\phi}/ \lambda_{\phi}.
	\end{equation}
	
	\noindent 	Furthermore, (\ref{conserv_F_mass}), (\ref{evol_energy}), and (\ref{entropy_bound}) hold for all $t\geq 0$.
\end{theorem}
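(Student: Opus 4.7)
The plan is to follow the $L^{2}$--$L^{\infty}$ framework of \cite{Guo10}, adapted to incorporate the external potentials $\phi,\Phi$ and, crucially, to bypass the analyticity hypothesis on $\partial\O$.

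First, I would linearize by writing $F=\mu_{E}+\sqrt{\mu_{E}}f$, so that $f$ satisfies a perturbed linearized Boltzmann equation of the schematic form
\begin{equation*}
\partial_{t}f + v\cdot\nabla_{x}f - \nabla_{x}(\phi+\Phi)\cdot\nabla_{v}f + \frac{1}{2} v\cdot\nabla_{x}\Phi \; f + L f = \Gamma(f,f) + g_{\phi},
\end{equation*}
where $L$ is the linearized collision operator about $\mu_{E}$, $\Gamma$ is the standard bilinear collision term, and $g_{\phi}$ collects the $\phi$-dependent source terms, which decay like $e^{-\lambda_{\phi}t}$ by (\ref{exp_phi}). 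The specular boundary condition (\ref{specular_F}) becomes $f(t,x,v)=f(t,x,R_{x}v)$ for $x\in\partial\O$. Since $\|\phi+\Phi\|_{C^{2}}\ll 1$, the $\phi,\Phi$-contributions are perturbations of the standard linearized flow.

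Second, I would establish exponential $L^{2}$ decay for the linear flow. The normalizations (\ref{normalize_M}) and, in the axis-symmetric case, (\ref{normalize_M_angular}), together with the conservation identities (\ref{conserv_F_mass}), (\ref{evol_energy}), (\ref{conserv_F_angular}) inherited from the equation (using the degeneracy (\ref{degenerate}) to preserve angular momentum), force the hydrodynamic projection $\mathbf{P}f$ to be orthogonal to $\mathrm{span}\{\sqrt{\mu_{E}},\,v\sqrt{\mu_{E}},\,|v|^{2}\sqrt{\mu_{E}}\}$ (and the angular-momentum mode when applicable). Combined with the coercivity $\langle Lf,f\rangle\gtrsim\|(\mathbf{I}-\mathbf{P})f\|_{\nu}^{2}$ and the standard macroscopic estimate recovering $\mathbf{P}f$ from $(\mathbf{I}-\mathbf{P})f$, this should yield $\|f(t)\|_{L^{2}}\lesssim e^{-\lambda t}\bigl(\|f_{0}\|_{L^{2}}+|\mathcal{H}(F_{0})-\mathcal{H}(\mu_{E})|^{1/2}+\delta_{\phi}/\lambda_{\phi}\bigr)$, with the entropy inequality (\ref{entropy_bound}) absorbing the nonlinear residual at the hydrodynamic level.

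Third, I would upgrade $L^{2}$ decay to the weighted $L^{\infty}$ bound (\ref{final_est_time}) via a double Duhamel iteration along the characteristics $\dot X=V$, $\dot V=-\nabla_{x}(\phi+\Phi)(X)$ with specular reflections at $\partial\O$. Writing $wf(t,x,v)$ as an exponentially damped integral of $Kf+\Gamma(f,f)+g_{\phi}$ along the characteristic (where $K$ is the compact part of $L$), iterating twice, and exploiting the $v$-smoothing of the kernel of $K$, I would change variables from velocity at an intermediate time to a spatial position at an earlier time, dominating the result by $\|f\|_{L^{2}}$ times a controlled Jacobian factor. A contraction/continuation argument under the smallness of $\|wf_{0}\|_{\infty}+|\mathcal{H}(F_{0})-\mathcal{H}(\mu_{E})|+\delta_{\phi}+\delta_{\phi}/\lambda_{\phi}$ then closes the estimate, yielding the unique global $F\ge 0$; the conservation and entropy identities pass through the approximation scheme used to construct $F$.

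The hard part --- where the work must genuinely depart from \cite{Guo10} --- is justifying the characteristic change of variables across many specular bounces without analyticity of $\partial\O$. In a merely $C^{3}$ strictly convex domain satisfying (\ref{convexity_eta}), the Jacobian of the billiard map degenerates on the grazing set, and one cannot use a convergent power-series expansion to flatten the boundary, as was possible in \cite{Guo10}. I expect the key new input to be a quantitative non-grazing estimate: for every $\varepsilon>0$, the phase-space measure of trajectories whose tangential velocity falls within $\varepsilon$ of grazing at some bounce before time $T$ is bounded by $C(T)\varepsilon^{\alpha}$ for some $\alpha>0$ depending only on the convexity constant and $\|\partial\O\|_{C^{3}}$. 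Together with a convexity-based lower bound on the transverse bounce time away from the grazing set, this should substitute for the analyticity argument and allow the Jacobian bound --- and with it the full $L^{2}\to L^{\infty}$ bootstrap --- to go through on every $C^{3}$ convex domain, completing the proof.
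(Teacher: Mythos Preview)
Your plan has two genuine gaps and one misdirected idea.

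\textbf{No $L^{2}$ decay is available here.} For Theorem~\ref{theorem_time} the potential $\phi$ is time-dependent, so the total energy is \emph{not} conserved (see (\ref{evol_energy})), and the coercivity estimate of Proposition~\ref{prop_coercivity} --- which underlies any exponential $L^{2}$ decay --- is proved only for $\phi\equiv 0$. Your step two therefore does not go through. The paper instead closes in $L^{1}$ via the entropy: the pointwise inequality (\ref{est_via_entropy}) bounds $|F-\mu_{E}|\mathbf{1}_{|F-\mu_{E}|\ge\bar\delta\mu_{E}}$ by the relative entropy plus mass and energy defects, and the energy defect is controlled by (\ref{excess_energy}) using the exponential decay (\ref{exp_phi}) of $\phi$. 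The $L^{p}$--$L^{\infty}$ bootstrap then lands on $\|\,\cdot\,\|_{L^{1}}$, not $\|\,\cdot\,\|_{L^{2}}$, and the final bound (\ref{final_est_time}) contains $|\mathcal{H}(F_{0})-\mathcal{H}(\mu_{E})|$ and $\delta_{\phi}/\lambda_{\phi}$ precisely for this reason. (Your invocation of (\ref{normalize_M_angular}) and the axis-symmetric hypotheses is also misplaced: those enter only in Theorem~\ref{theorem_decay}.)

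\textbf{Double iteration is not enough.} The paper shows that after two Duhamel iterations the map $u\mapsto X(s';s,X(s;t,x,v),u)$ is only rank~$2$ generically (Lemma~\ref{lemma rank 2}), which does not suffice for the change of variables to a spatial integral in $\R^{3}$. A \emph{third} iteration is essential: with the extra free parameters $(|u'|,\hat u'_{1},\hat u'_{2})$ one can select two variables $\{\zeta_{1},\zeta_{2}\}\subset\{|u|,\hat u_{1},\hat u'_{1},\hat u'_{2}\}$ so that $(|u'|,\zeta_{1},\zeta_{2})\mapsto X(s'';\cdots)$ has full rank~$3$ away from a thin set (Lemma~\ref{lemma rank 3}, Theorem~\ref{prop_full_rank}). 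Your sketch, which stops at two iterations, cannot close the bootstrap.

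\textbf{The Jacobian is not handled by a measure-of-grazing argument.} Your proposed ``quantitative non-grazing estimate'' (bounding the measure of trajectories within $\varepsilon$ of grazing) is not the mechanism here. The paper obtains a \emph{pointwise} lower bound on the Jacobian by an explicit algebraic decomposition: one introduces the specular basis $\{\mathbf{e}^{k}_{0},\mathbf{e}^{k}_{\perp,1},\mathbf{e}^{k}_{\perp,2}\}$ and the specular matrix $\mathcal{R}^{k,p^{k},Y}$ (Definitions around (\ref{orthonormal_basis}), (\ref{specular_matrix})), reduces the Jacobian of $\nabla X$ to a $2\times 2$ block built from entries of $\mathcal{R}$ that is polynomial in $s$ (see (\ref{sub_R}), (\ref{quadratic})), and then uses the full-rank property of $\mathcal{R}$ (Lemma~\ref{nonzero_sub}) together with an elementary lemma on roots of $C^{0,\gamma}$-coefficient quadratics (Lemma~\ref{zero_poly}) to conclude the Jacobian is bounded below once $s$ avoids finitely many $C^{0,\gamma}$ graphs. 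The $C^{2,\gamma}$ regularity of $\Phi$ is exactly what makes those graphs $C^{0,\gamma}$ and hence of small neighborhood measure. A soft grazing-measure bound would not supply this structure.
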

 
 Here a $C^{3}$ domain means that for any boundary point ${p} \in \partial{\Omega}$, locally there exists an one-to-one and onto $C^{3}$-function $\eta_{\mathbf{p}}$ such that 
%
$\eta_{ { {p}}}(\X_{ {{p}},1},\X_{ {{p}},2},\X_{ {{p}},3}) \in \p \Omega$ if and only if $\X_{ {{p}},3}=0$ (See (\ref{eta})). 
The convexity is defined as follows, for $C_{\O}>0$,
\begin{equation}\label{convexity_eta}
\sum_{i,j=1}^{2} \xi_{i} \xi_{j}\p_{i} \p_{j} \eta _{{p}}   ( \X_{{p},1},  \X_{{p},2},0)\cdot  
\p_{3} \eta_{{p}}(\X_{{p},1}, \X_{{p},2},0)
\ \leq \  - C_{\Omega} |\xi|^{2} \ \ \ \text{for}  \ \ \xi \in \mathbb{R}^{2}.
\end{equation}
Here, $C^{\alpha, \gamma}$ stands the standard H\"older space. 


\bigskip

In the presence of a time-independent external potential ($\phi\equiv 0$), the asymptotical stability of the local Maxwellian $\mu_{E}$ is studied.

\begin{theorem}\label{theorem_decay}
	Assume the same conditions in Theorem \ref{theorem_time} before (\ref{exp_phi}). Let 
	\begin{equation}
	\phi\equiv0.\label{phi=0}
	\end{equation}
	Assume (\ref{normalize_M}). If both (\ref{axis-symmetric}) and (\ref{degenerate}) hold, then we assume (\ref{normalize_M_angular}). If $\|wf_0\|_\infty \ll 1$, then there exists a unique global-in-time solution $F=\mu_E + \sqrt{\mu_E}f \geq 0$ to (\ref{Boltzmann_phi}) with (\ref{specular_F}). Moreover, for some $\lambda= \lambda (\Omega, \Phi) > 0$ we have
	\begin{equation}\label{f_decay}
	\sup_{  t\geq 0} e^{\lambda t} \|w f(t)\|_\infty \lesssim \|wf_0\|_\infty .
	\end{equation}  
	
	\noindent Furthermore, the total mass and energy are conserved (\ref{conserv_F_mass}), (\ref{evol_energy}) with $\phi\equiv 0$, and the total angular momentum is so (\ref{conserv_F_angular}) if both (\ref{axis-symmetric}) and (\ref{degenerate}) hold.

	%
	%
	%
\end{theorem}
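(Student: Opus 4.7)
The plan is to deploy the $L^2$-$L^\infty$ framework of Guo, set up to handle the specular boundary condition in a smooth convex domain, and to promote the uniform-in-time bound already furnished by Theorem~\ref{theorem_time} to exponential decay. Indeed, setting $\phi \equiv 0$ gives $\delta_\phi=0$, while a Taylor expansion of the relative entropy around $\mu_E$ bounds $|\mathcal{H}(F_0)-\mathcal{H}(\mu_E)|$ by a constant multiple of $\|wf_0\|_\infty^2$, using the smallness of $f_0$ and the positivity $F_0\geq 0$. Theorem~\ref{theorem_time} therefore produces a unique global solution $f$ with $\sup_{t\geq 0}\|wf(t)\|_\infty \ls \|wf_0\|_\infty$, so the remaining task is to upgrade this bound to (\ref{f_decay}).

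Linearizing around $\mu_E$, the perturbation $f$ satisfies
\begin{equation*}
\p_t f + v\cd \nab_x f - \nab_x \Phi \cd \nab_v f + L_E f = \G(f,f),
\end{equation*}
where $L_E$ is the linearization of $Q$ around $\mu_E$. This operator enjoys the standard microscopic coercivity $\langle L_E f, f\rangle \gtrsim \|\ip f\|_\nu^2$. To close a Gr\"onwall inequality of the form $\frac{d}{dt}\|f\|_{L^2}^2 + \lambda \|f\|_{L^2}^2 \leq 0$, the macroscopic component $\P f = \{a(t,x) + b(t,x)\cd v + c(t,x)|v|^2\}\sqrt{\mu_E}$ must be controlled by the microscopic dissipation.

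The crux of the argument is therefore a macroscopic (Poincar\'e-type) estimate, obtained by testing the equation for $f$ against trial functions tailored to extract $\nab_x a,\nab_x b,\nab_x c$ from the local conservation laws. Each such test generates a boundary contribution from the specular reflection that vanishes only when the test fields are tangential to $\p\O$. Producing such tangential test fields \emph{without} the analyticity assumption used in \cite{SA} and \cite{Guo10} is the main obstacle, and the principal novelty: I would exploit only the $C^3$-convexity (\ref{convexity_eta}), using the geometric constraints it imposes on the local parametrization $\eta_p$ from (\ref{convexity_eta}), to build approximate tangential test fields whose defect can be absorbed by the dissipation. The normalizations (\ref{normalize_M}), and in the axis-symmetric case also (\ref{normalize_M_angular}) — the latter preserved by the flow precisely because of (\ref{degenerate}) — eliminate the non-decaying kernel directions $\sqrt{\mu_E}$, $(|v|^2 + 2\Phi)\sqrt{\mu_E}$, and $\{(x-x_0)\times\varpi\}\cd v\sqrt{\mu_E}$ from $\P f$.

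Once the macroscopic estimate is established, one gets $\|f(t)\|_{L^2} \ls e^{-\lambda_0 t}\|f_0\|_{L^2}$ for some $\lambda_0=\lambda_0(\O,\Phi)>0$ after absorbing the nonlinear contribution $\G(f,f)$ using the $L^\infty$ smallness already in hand. The bootstrap to the weighted $L^\infty$ decay (\ref{f_decay}) proceeds by iterating the Duhamel representation twice along the characteristics of $v\cd\nab_x - \nab_x\Phi\cd\nab_v$ with specular reflection: $wf(t,x,v)$ is majorized by a small contribution from $wf_0$ plus a space-time integral over a tube of curved, reflecting trajectories which, after a change of variables legitimized by convexity, is dominated by a time-averaged $L^2$ norm of $f$ and hence decays exponentially at a (possibly slower) rate $\lambda < \lambda_0$.
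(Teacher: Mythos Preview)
Your overall $L^2$--$L^\infty$ architecture is correct, but two of the load-bearing steps would fail as written.

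\textbf{The $L^\infty$ bootstrap.} You propose to iterate Duhamel \emph{twice} and then change variables $u\mapsto X(s';s,X(s;t,x,v),u)$, claiming that convexity alone ``legitimizes'' this. This is precisely the step that is \emph{not} available without analyticity: in a merely $C^3$ convex domain the Jacobian of the doubly-iterated map cannot be shown to have a uniform positive lower bound (the analysis in the paper shows it is only rank two away from a thin set; see Lemma~\ref{lemma rank 2}). The paper's central innovation is to iterate Duhamel a \emph{third} time, introducing an additional velocity $u'$, and then to select two variables $\{\zeta_1,\zeta_2\}\subset\{|u|,\hat u_1,\hat u'_1,\hat u'_2\}$ so that $(|u'|,\zeta_1,\zeta_2)\mapsto X(s'';s',X(s';s,X(s;t,x,v),u),u')$ is locally a diffeomorphism with a quantitative Jacobian lower bound (Theorem~\ref{prop_full_rank}, Lemma~\ref{lemma rank 3}). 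Without this triple iteration your change of variables step has no justification, and the $L^\infty$ bound in terms of the time-averaged $L^2$ norm does not close.

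\textbf{The macroscopic coercivity.} You propose a \emph{constructive} macroscopic estimate: build tangential test fields on $\partial\Omega$ directly from the $C^3$ chart $\eta_p$ and extract $\nabla a,\nabla b,\nabla c$. This route is not known to succeed for the specular condition without analyticity; the boundary contributions do not obviously vanish or absorb. The paper instead proves Proposition~\ref{prop_coercivity} by a \emph{non-constructive} contradiction argument: negate the coercivity, normalize to get $Z^m$ with $(\mathbf I-\mathbf P)Z^m\to 0$, pass to a weak limit $Z$ solving the free transport equation, upgrade weak convergence to strong via velocity averaging in the interior plus the trajectory estimate of Lemma~\ref{boundary_interior} near the boundary, and finally show $Z\equiv 0$ from the macroscopic system~(\ref{macro_eq}), the boundary condition $b\cdot n=0$, and the conservation laws. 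This is why no quantitative rate is obtained (cf.\ the remark following Theorem~\ref{theorem_decay}). Your constructive alternative, if it could be carried out, would yield a better (explicit) constant---but you have not supplied the construction, only asserted that convexity should make it possible.
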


Remark that we do not have a quantitative bound of $\lambda$ in (\ref{f_decay}). The main reason is that we use a non-constructive method to prove $L^{2}$ coercivity in Proposition \ref{prop_coercivity}.

We remark that in the both theorems we only need that the domain $\O$ is smooth and convex but \textit{not real analytic}. We also note that in \cite{K2} we need a stronger $C^{3}$ assumption for the time-independent external potential to establish the well-posedness.

\vspace{4pt}

To illustrate the main ideas of the paper, it is convenient to play with the perturbation $f$. The function $f$ in (\ref{def_f}) solves
\begin{equation}\label{E_eqtn}
\p_{t}f + v\cdot \nabla_{x} f  - \nabla_{x}(  \phi + \Phi ) \cdot \nabla_{v} f+e^{- \Phi } L f 
=  - (\frac{1}{2}f + \sqrt{\mu_E})v\cdot \nabla_{x}  \phi  + e^{-  \frac{\Phi  }{2}}\Gamma(f,f)  ,
\end{equation} 
and satisfies
\begin{equation}\label{specular_f}
f(t,x,v) = f(t,x,R_{x}v) \  \ \ \ \text{for} \  \ x \in \p\O.
\end{equation}	
We recall the definition of the linearized collision operator (see \cite{CIP}): 
\begin{equation}  \label{def_L}
Lf=-\frac{1}{\sqrt{\mu}}[Q(\mu,\sqrt{\mu} f)+ Q(\sqrt{\mu} f ,\mu)],
\end{equation}
and the nonlinear collision operator: $$
\Gamma(f,g)=\frac{1}{2\sqrt{\mu} }[Q(\sqrt{\mu} f,\sqrt{\mu} g)+Q(\sqrt{\mu}
g,\sqrt{\mu} f)].$$ 

It is well-known that
\begin{equation}
L f= \nu f -Kf,  \notag
\end{equation}
where {the collision frequency is defined as} 
\begin{equation}
\nu(v)
:=\int_{\mathbb{R}^3} \int_{%
	\mathbb{S}^2} |(v-u)\cdot \o | \sqrt{\mu}(u)\mathrm{d}\o \mathrm{d} u. 
\notag
\end{equation}
{For the hard sphere case, there are positive
	numbers $C_0$ and $C_1$ such that, for $\langle v\rangle:= \sqrt{1+ |v|^{2}}$%
	, 
	\begin{equation}
	C_0 \langle v\rangle \le \nu(v)\le C_1 \langle v\rangle.  \label{nu0}
	\end{equation}
	Moreover, the compact operator on 
	$L^2(\mathbb{R}_v^3)$, %
	$K$ is defined as 
	\begin{equation}
	K f= \frac{1}{\sqrt{\mu}}[Q_+(\mu,\sqrt{\mu} f)+Q_+(\sqrt{\mu} f,\mu)
	-Q_-(\mu,\sqrt{\mu} f)]=\int_{\mathbb{R}^3}  \mathbf{k} (v,u) f(u) \mathrm{d} u.  \notag
	\end{equation}

	\subsection{$L^{p}-L^{\infty}$-bootstrap argument via the triple iterations} In order to handle the quadratic nonlinearity of $\Gamma(f,f)$, it is important to derive an $L^{\infty}$-control of the solutions of (\ref{E_eqtn}). To illustrate the main idea, we consider a simplified linear problem
	\begin{equation}\label{simple_eqtn}
	\p_{t} f + v\cdot \nabla_{x} f  - \nabla_{x}\Phi(t,x) \cdot \nabla_{v}f+     f = \int_{|u| \leq N}  f(u) \dd u.
	\end{equation}
	Here, $\Phi(t,x)$ is a time-dependent potential and we can regard $\phi(t,x) + \Phi (x)$ in (\ref{Boltzmann_phi}) as it.

	We note that due to the boundary condition (\ref{specular_f}), the trajectory \\
	$(X(s;t,x,v), V(s;t,x,v))$ is defined as the backward billiard trajectory which is curved by the external field (or force) $-\nabla\Phi$. Let $t^{1}, x^{1}$ be the first backward bouncing time and the position of the trajectory sitting on a position $x$ with a velocity $v$ at time $t$. Then we define $v^{1} = R_{x^{1}} v$ where $R_{x^{1}} v$ is defined in (\ref{specular_F}). Inductively we can define the cycles $(t^{\ell}, x^{\ell}, v^{\ell})$ and $X_{\mathbf{cl}}(s;t,x,v) = X(s;t^{\ell}, x^{\ell}, v^{\ell})$ and $V_{\mathbf{cl}}(s;t,x,v) = V(s;t^{\ell}, x^{\ell}, v^{\ell})$ for $s \in [t^{\ell+1},t^{\ell}]$. The Duhamel formula of (\ref{simple_eqtn}) along this trajectory is given by 
	\begin{equation}\label{DH1}
	\begin{split}
	f(t,x,v ) &= e^{- t} f_{0}(X_{\mathbf{cl}}(0;t,x,v), V_{\mathbf{cl}}(0;t,x,v)) \\
	&\quad + \int^{t}_{0} e^{- (t-s)} 
	\int_{|u| \leq N} f(s, X_{\mathbf{cl}}(s;t,x,v), u) \dd u
	\dd s.
	\end{split}
	\end{equation}
	Plugging the Duhamel formula into the integrand $f(s, X(s;t,x,v), u)$, we get
	\begin{equation}\label{DH2}
	\begin{split}
	&f(t,x,v)\\
	=& \int^{t}_{0} e^{- (t-s)} \int^{s-\e}_{0} e^{- (s-s^{\prime})}
	\iint_{|u|\leq N, |u^{\prime}| \leq N}
	f(s^{\prime}, X_{\mathbf{cl}}(s^{\prime}; s, X_{\mathbf{cl}}(s;t,x,v) , u), u^{\prime}) \dd u^{\prime}\dd u \dd s^{\prime}\dd s\\
	&+ \textit{initial datum's contributions} + O(\e).
	\end{split}
	\end{equation} Throughout this paper, we use $O_{a}(A)$ for some function which depends on $a$ and is size of $A$. 
	
	In the absence of the boundary and the external potential, the trajectory $X(s;t,x,v)$ is a straight line and we can compute the Jacobian of $u \mapsto X(s^{\prime}; s, X(s;t,x,v) , u)$ explicitly which has a positive lower bound away from a small set of $s$. Therefore we obtain, via a change of variables,
	\begin{equation}\label{p_infty}
	\| f  \|_{L^{\infty}} \lesssim \| f \|_{L^{p} } + \textit{data}
	+ \textit{small terms}.
	\end{equation}
	
	\noindent Unfortunately, trajectories are very complicated when the specular reflection BC is imposed. In fact, in the case of the specular reflection BC, such a lower bound of Jacobian is only known when the domain is convex and \textit{real analytic} in the absence of an external potential \cite{Guo10}. 
	
	\vspace{4pt}
	
	The main contribution of this paper is to establish $L^{p}-L^{\infty}$ bootstrap estimate as (\ref{p_infty}), when the domain is smooth and convex and the external potential is $C^{2,\gamma}$ and small in $C^{2}$. For readers' convenience, we write a rough version of this result:

	\textit{\textbf{A rough version of Theorem \ref{prop_full_rank}.}  Applying the Duhamel formula once again to (\ref{DH2}) (triple iterations), we have }
	\begin{equation}\label{DH3}
	\begin{split}
	& f(t,x,v) \\
	=& \int^{t}_{0} e^{- (t-s)} \int^{s }_{0} e^{- (s-s^{\prime})}
	\int^{s^{\prime}-\e}_{0} e^{-(s^{\prime} - s^{\prime\prime})}\iiint_{|u|\leq N, |u_{1}| \leq N, |u^{\prime\prime}| \leq N}
	\\
	& \ \ \ \ \  \times 
	f( s^{\prime\prime}, X_{\mathbf{cl}}(s^{\prime\prime}; s^{\prime}, X_{\mathbf{cl}}(s^{\prime}; s, X_{\mathbf{cl}}(s;t,x,v) , u), u^{\prime}), u^{\prime\prime}
	) \dd u^{\prime\prime}  \dd u^{\prime} \dd u \dd s^{\prime\prime} \dd s^{\prime} \dd s\\
	& + \textit{initial datum's contributions} + O(\e).
	\end{split}
	\end{equation}

	\vspace{4pt}
	
	\noindent \textit{Let $(\hat{u}_{1}, \hat{u}_{2})$ and $(\hat{u}^{\prime}_{1}, \hat{u}^{\prime}_{2})$ be the spherical coordinate of $\hat{u}= \frac{u}{|u|} \in \mathbb{S}^{2}$ and $\hat{u}^{\prime}= \frac{u^{\prime}}{|u^{\prime}|}\in \mathbb{S}^{2}$, respectively. Then, if $s^{\prime}$ and $s^{\prime\prime}$ are away from some local $C^{0,\gamma}$-functions, then locally we can choose two distinct variables $\{\zeta_{1}, \zeta_{2}\}$ among $\{|u|, \hat{u}_{1},  \hat{u}^{\prime}_{1}, \hat{u}^{\prime}_{2}\}$ such that }
	
	\vspace{4pt}
	\textit{
		\begin{equation}\label{quant lower bound}
		\begin{split}
		\Big|\det \left(\frac{\p X_{\mathbf{cl}}(s^{\prime\prime}; s^{\prime}, X_{\mathbf{cl}}(s^{\prime}; s, X_{\mathbf{cl}}(s;t,x,v) , u), u^{\prime})}{\p (|u^{\prime}|, \zeta_{1}, \zeta_{2})}\right)\Big| \ \ \text{has a positive lower bound}.
		\\
		\end{split}
		\end{equation} }
	
	\vspace{2pt}
	
	\noindent \textit{As a consequence we achieve (\ref{p_infty}).
	}
	
	\vspace{2pt}
	
	We remark that the regularity of such $C^{0,\gamma}$-functions is determined and restricted crucially by the regularity of the external potential $\Phi \in C^{2,\gamma}$. Moreover, this $C^{0,\gamma}$-regularity is a (minimal) condition to guarantee that we can construct \textit{small} $\e$-neighborhooods of the graph of them.
	
	There are several key ingredients in the proof of Theorem \ref{prop_full_rank}: 

	\vspace{2pt}

	\textit{Specular Basis and Geometric Decomposition. } Assume that $t^{\ell+1}<s^{\prime}< t^{\ell}$ and hence $X_{\mathbf{cl}}(s^{\prime}; s, X_{\mathbf{cl}}(s;t,x,v) , u)$ is in between $\ell-$bounce and $(\ell+1)-$bounce. Then we know that
	\begin{equation}\label{X_|u|}
	{\p_{|u|} X_{\mathbf{cl}}(s^{\prime}; s, X_{\mathbf{cl}}(s;t,x,v) , u)}
	=  {v^{\ell}}/{|v^{\ell}|} +O(\| \Phi \|_{C^{2}}).
	\end{equation}  
	
	\noindent On the other hand, for $\hat{u}= (\hat{u}_{1} , \hat{u}_{2}) \in\mathbb{S}^{2}$, we have
	\begin{equation}\label{X_perp}
	\nabla_{\hat{u}}X_{\mathbf{cl}}(s ^{\prime}; s, X_{\mathbf{cl}}(s;t,x,v) , u)= \nabla_{\hat{u}}x^{\ell} - (t^{\ell} -s^{\prime}) \nabla_{\hat{u}}v^{\ell} - \nabla_{\hat{u}} t^{\ell} v^{\ell}+O(\| \Phi \|_{C^{2}}).
	\end{equation}
	Among other terms, $\p_{\hat{u}}t^{\ell}$ is the most delicate term to control since $t^{\ell}$ depends on all the cycles $(x^{l}, v^{l})$ for $l=1,2,\cdots, \ell-1$. Fortunately, this harmful term appears only in the direction of $\frac{v^{\ell}}{|v^{\ell}|}$! Inspired by this observation we define the \textit{specular basis} $\{ \mathbf{e}^{\ell}_{0}, \mathbf{e}^{\ell}_{\perp,1},\mathbf{e}^{\ell}_{\perp,2} \}$ which are an orthonormal basis with $\mathbf{e}^{\ell}_{0}=  {v^{\ell}}/{|v^{\ell}|}$ and $\mathbf{e}^{\ell}_{\perp,i}$ are perpendicular to $\mathbf{e}^{\ell}_{0}$. See (\ref{orthonormal_basis}).

	Now we \textit{decompose} $\nabla_{|u|, \hat{u}_{1}, \hat{u}_{2}} X_{\mathbf{cl}}(s^{\prime}; s, X_{\mathbf{cl}}(s;t,x,v) , u)$ into

	\begin{equation}\label{decom_intro}
	\nabla X_{\mathbf{cl}} = \big(\nabla X_{\mathbf{cl}}\big)_{\parallel} + \big( \nabla X_{\mathbf{cl}}\big)_{\perp}
	:=\big(\nabla X_{\mathbf{cl}}  \cdot \mathbf{e}^{\ell}_{0}\big)\mathbf{e}^{\ell}_{0} + \nabla X_{\mathbf{cl}}  - \big(\nabla X_{\mathbf{cl}} \big)_{\parallel}
	.
	\end{equation}
	Then we have the following similarity relations, from (\ref{X_|u|}) and (\ref{X_perp}),
	\begin{equation}\label{dec_mat}\begin{split}
	\frac{\p X_{\mathbf{cl}}
	}{\p (|u|, \hat{u} ) } &\sim  \left(  \begin{array}{c}
	\big(\frac{\p X_{\mathbf{cl}}}{\p (|u|, \hat{u} ) }\big)_{\parallel}   \\
	\big(\frac{\p X_{\mathbf{cl}}}{\p (|u|, \hat{u} ) }  \big)_{\perp}
	\end{array}  \right) \\
	&
	\sim \left(\begin{array}{c|cc}
	-(t-s) & * - |v^{\ell}| \nabla_{\hat{u}_{1}, \hat{u}_{2}} t^{\ell}    \\ \hline
	\mathbf{0}_{2 \times 1} &
	\begin{array}{c}
	\big(\nabla_{ \hat{u}_{1}, \hat{u}_{2} } x^{\ell } - (t^{\ell}-s^{\prime}) \nabla_{  \hat{u}_{1}, \hat{u}_{2} } v^{\ell}\big) \cdot \mathbf{e}^{\ell}_{\perp,1}\\
	\big(\nabla_{  \hat{u}_{1}, \hat{u}_{2} } x^{\ell } - (t^{\ell}-s^{\prime}) \nabla_{  \hat{u}_{1}, \hat{u}_{2} }v^{\ell}\big) \cdot \mathbf{e}^{\ell}_{\perp,2}
	\end{array}
	\end{array} \right) + O_{  \Omega  }(\| \Phi \|_{C^{2}}).\end{split}
	\end{equation}
	See (\ref{sub_R}) for the precise form. 
	
	Due to this \textit{geometric decomposition}, we are able to relate $\frac{\p X_{\mathbf{cl}}
	}{\p (|u|, \hat{u} ) } $ to 
	the mapping 
	\begin{equation}\label{u to ell}
	(|u|, \hat{u}_{1}, \hat{u}_{2})  \ \mapsto \  (x^{\ell}, v^{\ell}).
	\end{equation}
	Note that the map (\ref{u to ell}) is closely related to the billiard map \cite{CM} which turns out to be ``controllable'' than $\frac{\p X_{\mathbf{cl}}
	}{\p (|u|, \hat{u} ) } $. Moreover, the form of the first column of (\ref{dec_mat}) clearly guarantees that this Jacobian matrix is at least rank 1 for a small $\|\Phi \|_{C^{2}}$. 
	
	\vspace{2pt}

	\textit{Diffeomorphism and Specular Matrix. } By the chain rule, we can view (\ref{u to ell}) as the compositions of 
	\begin{equation}\label{chain billiard}
	(|u|, \hat{u}_{1}, \hat{u}_{2})
	\ \mapsto \  (x^{1}, v^{1}) \ \mapsto \  (x^{2}, v^{2}) \ \mapsto \   \cdots
	\ \mapsto \  (x^{\ell}, v^{\ell})  .
	\end{equation}
	In the absence of external potentials, the map $(x^{l},v^{l}) \mapsto (x^{l+1},v^{l+1})$ is the billiard table and it is well-known that this map is diffeomorphic \cite{CM}. 
	
	The quantitative study of such a map, especially in 3D domains, is performed recently in the work \cite{GKTT1} of the first author with his collaborators when the trajectories are very close to the boundary with almost tangential velocities to the boundary (grazing trajectories) in the absence of external potentials. However,
	these estimates cannot be sufficient for our purpose since it only can provide the information for the grazing trajectories. Moreover, the proof of \cite{GKTT1} heavily relies on the fact the ODE of the trajectory is autonomous. In the presence of a time-dependent external potential, however, the ODE of $(X_{\mathbf{cl}},V_{\mathbf{cl}})$ becomes non-autonomous which obstructs generalizing the result of \cite{GKTT1} to the time-dependent external potential case.
	
	We are able to overcome this difficulty by an advance of direct computations. In this paper, we succeed to perform the (almost) explicit computations of the Jacobian matrix of (\ref{chain billiard}) in the presence of a small time-dependent external potential. This also allows us to understand the role of the regularity of the external potential in (\ref{quant lower bound}). We expect that this technical improvement will allow us some generalization of the work of \cite{GKTT1}.
	%

	Equipped with this quantitative estimate, we study the lower right $2 \times 2$ submatrix of (\ref{dec_mat}). In order to use the diffeomorphim property of (\ref{chain billiard}), we employ the \textit{Specular matrix} $\mathcal{R}$ which is a $4 \times 4$ full rank matrix and essentially the Jacobian matrix of (\ref{chain billiard}) expressed with the \textit{specular basis}. The precise form can be computed as in (\ref{specular_matrix}) and the entries are $C^{0,\gamma}$ if the external potential is $C^{2,\gamma}$. Indeed, the lower right $2 \times 2$ submatrix of (\ref{dec_mat}) can be written as in (\ref{sub_R}),
	\begin{equation}
	\text{right upper } 2\times2 \text{ submatrix of } \mathcal{R}  - (t^{\ell}-s^{\prime}) \times \text{right lower } 2\times2 \text{ submatrix of } \mathcal{R}.
	\end{equation}
	Since at least one entry of right $4\times 2$ submatrix of $\mathcal{R}$ should not be zero as a polynomial of $s$, we are able to show that $(|u| , \hat{u}_{1}) \mapsto X$ is at least \textit{rank 2} if $s$ is away from some $C^{0,\gamma}$-function of $(t,x,v)$ in Lemma \ref{lemma rank 2}.
	
	\vspace{2pt}
	
	\textit{Triple Iterations. } Unfortunately, this \textit{rank 2} is still not sufficient for our purpose. The key idea to overcome this difficulty is the \textit{triple iterations} in (\ref{DH3}), applying the Duhamel formula (\ref{DH1}) once again to (\ref{DH2}). 
	One more iteration makes the game more feasible since now we have more free parameters to play with: $\{|u|, \hat{u}_{1}, |u^{\prime}|,\hat{u}^{\prime}_{1},\hat{u}^{\prime}_{2}\}\in\R^{5}.$ 
	Due to the observation (\ref{X_|u|}), we need to choose $|u^{\prime}|$ and two other free parameters $\{\zeta_{1}, \zeta_{2}\}$ so that the following map is rank 3,
	\begin{equation}\label{mappp}
	(|u^\prime|, \zeta_{1}, \zeta_{2}) \mapsto
	X(s^{\prime\prime}; s^{\prime}, X(s^{\prime}; s, X(s;t,x,v), u), u^{\prime}).
	\end{equation}
	We use the full structure of the \textit{specular matrix} and carefully study the quadratic polynomial (Lemma \ref{zero_poly}) to achieve a positive lower bound of the Jacobian of (\ref{mappp}) in Lemma \ref{lemma rank 3}. The convexity of the domain (\ref{convexity_eta}) is used crucially to control the number of bounces in Lemma \ref{OV}.

	\vspace{4pt}
	
	\subsection{$L^{p}$-bounds} Now we illustrate the $L^{p}$ control of the Boltzmann solution. Due to the $L^{p}-L^{\infty}$ bootstrap estimate (\ref{p_infty}), such $L^{p}$ estimates would provide $L^{\infty}$ control. 
	
	\vspace{4pt}
	
	$L^{1}$\textit{-bound in the case of a time-dependent potential}. In order to show the stability of $\mu_{E}$ in the presence of time-dependent potential $\phi$, we utilize the following bound of \cite{bounded_boltzmann, Kim11}. 
	\begin{lemma}\label{entropy_bound_intro} 
		\begin{equation}\label{est_via_entropy}
		\begin{split}
		& |F- \mu_{E}|  \mathbf{1}_{|F- \mu_{E}|\geq \bar{\delta} \mu_{E}}\\
		&\leq
		\frac{4}{\bar{\delta}} \big\{ 
		(F  \ln F  - \mu_{E} \ln \mu_{E} ) -(F - \mu_{E}) + \Big(  \frac{|v|^{2}}{2} + \Phi(x) \Big) (F - \mu_{E}) 
		\} .
		\end{split}
		\end{equation}
		%
		%
	\end{lemma}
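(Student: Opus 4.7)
The plan is to exploit the explicit form of $\mu_E$ in order to rewrite the right-hand side as a pure relative entropy, thereby reducing the claim to a one-variable pointwise inequality, and then to verify that inequality by a routine convexity/monotonicity argument.

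First I would use the identity $\ln \mu_E(x,v) = -|v|^2/2 - \Phi(x)$ to split $F \ln F - \mu_E \ln \mu_E = F \ln(F/\mu_E) + (F - \mu_E)\ln \mu_E$. Substituting into the bracket on the right-hand side of (\ref{est_via_entropy}), the kinetic and potential weights $|v|^2/2 + \Phi$ cancel exactly against the $(F-\mu_E)\ln \mu_E$ term, collapsing the whole right-hand side to
\[
\frac{4}{\bar{\delta}}\Big\{F \ln(F/\mu_E) - (F-\mu_E)\Big\} \ = \ \frac{4\mu_E}{\bar{\delta}}\big[\,z \ln z - z + 1\,\big],
\]
where $z := F/\mu_E \geq 0$. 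Since the left-hand side equals $\mu_E\,|z-1|\,\mathbf{1}_{\{|z-1|\geq \bar{\delta}\}}$, the claim reduces to the scalar estimate
\[
|z-1|\,\mathbf{1}_{\{|z-1|\geq \bar{\delta}\}} \ \leq \ \frac{4}{\bar{\delta}}\,[z \ln z - z + 1] \qquad \text{for all } z \geq 0.
\]

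Second, I would prove this scalar inequality by analysing $h(z):= z\ln z - z + 1 - (\bar{\delta}/4)|z-1|$ separately on $\{z \geq 1+\bar{\delta}\}$ and on $\{0\leq z \leq 1-\bar{\delta}\}$. The signs of $h'(z) = \ln z \mp \bar{\delta}/4$ show that in each region $h$ is monotone in the direction of the nearest endpoint to $z=1$, so it suffices to check non-negativity at the two boundary points $z = 1\pm \bar{\delta}$; both cases follow from a short Taylor expansion giving $h(1\pm \bar{\delta}) = \bar{\delta}^2/4 + O(\bar{\delta}^3) \geq 0$ (for $\bar{\delta}$ small, which is the only regime of interest, since the lemma will be invoked with a smallness assumption on the entropy defect).

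The only conceptual step is the algebraic cancellation in the first paragraph, which is exactly what the weight $|v|^2/2 + \Phi$ in the statement is engineered to produce; afterwards the bound is a standard Csisz\'ar--Kullback-type estimate and uses nothing about the domain, boundary condition, or Boltzmann dynamics. I therefore do not expect any genuine obstacle: the ``hardest'' point is recognising that the weighted combination on the right-hand side is precisely the relative entropy density $F\ln(F/\mu_E) - (F-\mu_E)$.
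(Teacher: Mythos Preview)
Your proposal is correct and follows essentially the same route as the paper: both recognise that the bracketed expression collapses, via $\ln\mu_E = -(|v|^2/2+\Phi)$, to the relative-entropy density $F\ln(F/\mu_E)-(F-\mu_E)=\mu_E(z\ln z - z +1)$, and then bound $|z-1|$ by this convex function on the region $|z-1|\ge\bar\delta$. The only cosmetic difference is that the paper verifies the scalar bound by first passing through the quadratic Taylor remainder $z\ln z - z + 1 \ge \tfrac{(z-1)^2}{2\max\{z,1\}}$ and then using $\tfrac{|z-1|}{\max\{z,1\}}\ge \tfrac{\bar\delta}{1+\bar\delta}\ge \tfrac{\bar\delta}{2}$, whereas you check $h(z)=z\ln z - z +1 - (\bar\delta/4)|z-1|\ge 0$ directly by monotonicity and evaluation at $z=1\pm\bar\delta$.
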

	Applying $L^{p}-L^{\infty}$ bootstrap argument via the triple iteration, the $L^{\infty}$-norm of the solution is mainly bounded by $L^{1}$-norm of $  |F- \mu_{E}|  \mathbf{1}_{|F- \mu_{E}|\geq \bar{\delta} \mu_{E}}$. By Lemma \ref{entropy_bound_intro}, we further it by the differences in the entropy, total mass, total energy of the solution and $\mu_{E}$. A new difficulty in the presence of the time-dependent potential $\phi(t,x)$ is that the total energy is not preserved anymore (\ref{evol_energy}). Via the Gronwall's inequality, we are able to prove that $\|w f(t) \|_{\infty}$ can grow in time at most as $e^{C (\| \phi \|_{\infty} + \| w f \|_{\infty})t}$. With strongly decaying potential and small $f$, we can prove that the total energy is close to the initial total energy for all time. This weighted $L^{\infty}$-bound is sufficient to prove the existence, uniqueness, and the stability of $\mu_{E}$ in Theorem \ref{theorem_time}.
	
	\vspace{4pt}
	
	$L^{2}$\textit{-decay in the case of a time-independent potential}. It is well-known \cite{CIP} that the linear operator $L$ is only semi-positive 
	\begin{equation}\label{semi-positive}
	\int_{\R^{3}} L f f \dd v \geq  {\delta}_{L} \| \sqrt{\nu} (\mathbf{I} - \mathbf{P} ) f\|_{L^{2}(\R^{3})}^{2},
	\end{equation}
	where $\|  \cdot \|_{\nu} = \| \nu^{1/2} \cdot  \|_{L^{2}}$. The null space of $L$ is a five-dimensional subspace of $L^{2} (\R^{3})$ spanned by $\big\{ \sqrt{\mu_{E}}, v\sqrt{\mu_{E}},  |v|^{2} \sqrt{\mu_{E}}\big\}$ and the projection of $f$ onto such null space is denoted by  
	\begin{equation}\label{E_Pf}
	\mathbf{P}f (t,x,v) \ := \ \Big\{ a(t,x) + v\cdot  b(t,x) + |v|^{2}   c(t,x)\Big\} \sqrt{\mu_{E} }.
	\end{equation} 
	Due to this missing term the Boltzmann equation is degenerated dissipative. In order to prove $L^{2}$-decay, we need a coercivity estimate. Following the argument of \cite{Guo10} we first consider
	\begin{equation} \label{linearized_eqtn}
	\p_{t}f + v\cdot \nabla_{x} f  - \nabla_{x} \Phi(x) \cdot \nabla_{v} f  + e^{- \Phi (x) } L f 
	= 0.
	\end{equation}
	
	\begin{proposition}\label{prop_coercivity}
		Let $\Phi(x) \in C^{1}$. Assume that $f$ solves (\ref{linearized_eqtn}) and satisfies the specular reflection BC and (\ref{conserv_F_mass}), (\ref{evol_energy}) with $\phi\equiv0$ for $F= \mu_{E} + \sqrt{\mu_{E}}f$. Furthermore, for an axis-symmetric domain (\ref{axis-symmetric}) with a degenerate potential (\ref{degenerate}), we assume (\ref{conserv_F_angular}). Then there exists 
		$C
		> 0$ such that, for all $N \in \mathbb{N}
		$, 
		\begin{equation}\label{coercive}
		\int^{N+1}_{N} \| \mathbf{P} f (t) \|_{2}^{2} \dd t
		\leq C
		\int^{N+1}_{N} \| (\mathbf{I} - \mathbf{P}) f (t) \|_{\nu}^{2} \dd t,
		\end{equation}
		where $\mathbf{P}f$ is defined in (\ref{E_Pf}). 
	\end{proposition}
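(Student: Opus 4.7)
The plan is to follow Guo's non-constructive scheme from \cite{Guo10}, suitably modified to accommodate the external potential $\Phi$ and the curved characteristics of (\ref{linearized_eqtn}). I will argue by contradiction: suppose (\ref{coercive}) fails, so there exist $N_n\in\mathbb{N}$ and solutions $f_n$ of (\ref{linearized_eqtn}) with the specular BC and the conservation laws (\ref{conserv_F_mass}), (\ref{evol_energy}) (and, in the axis-symmetric case, (\ref{conserv_F_angular})) such that
\begin{equation*}
\int_{N_n}^{N_n+1}\|\mathbf{P}f_n(t)\|_2^2\,\dd t=1,\qquad \int_{N_n}^{N_n+1}\|(\mathbf{I}-\mathbf{P})f_n(t)\|_\nu^2\,\dd t\to 0.
\end{equation*}
After translating $t\mapsto t-N_n$ the sequence lives on $[0,1]\times\Omega\times\R^3$. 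The conservation laws and normalizations (\ref{normalize_M}), (\ref{normalize_M_angular}) pass to each $f_n$, since they are preserved by the linearized flow (note $F_n=\mu_E+\sqrt{\mu_E}f_n$ inherits the conservation identities from the nonlinear ones via linearization).

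Next, I would extract a weak limit $f$ of $f_n$ in $L^2([0,1]\times\Omega\times\R^3)$. Since $(\mathbf{I}-\mathbf{P})f_n\to 0$ strongly in $L^2_\nu$, the limit satisfies $(\mathbf{I}-\mathbf{P})f=0$ and hence takes the hydrodynamic form
\begin{equation*}
f(t,x,v)=\{a(t,x)+v\cdot b(t,x)+|v|^2 c(t,x)\}\sqrt{\mu_E(x,v)}.
\end{equation*}
Passing to the limit in (\ref{linearized_eqtn}) in the sense of distributions and using $Lf=0$, the limit satisfies $\partial_t f+v\cdot\nabla_x f-\nabla_x\Phi\cdot\nabla_v f=0$. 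Multiplying by the five collision invariants $1,v,|v|^2$ and integrating in $v$, the coefficients of the resulting polynomial-in-$v$ identity must vanish, producing a closed system of local conservation laws for $(a,b,c)$ with source terms involving $\nabla\Phi$. A careful inspection of the coefficients of each monomial $v^\alpha$ yields a rigid system forcing $c$ to be a constant in $(t,x)$, $b$ to be an infinitesimal rigid motion compatible with $\nabla\Phi$, and $a$ to be an affine combination involving $\Phi$; this is the analogue of the classical macroscopic argument but weighted by $e^{-\Phi}$ due to $\mu_E$.

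Then I would use the specular boundary condition: $f(t,x,R_xv)=f(t,x,v)$ at $x\in\partial\Omega$ forces $b(t,x)\cdot n(x)=0$ on $\partial\Omega$. Combined with the structural rigidity from the macroscopic system, this kills $b$ unless $\Omega$ is axis-symmetric (\ref{axis-symmetric}), in which case $b$ is proportional to $(x-x_0)\times\varpi$ with the proportionality independent of $(t,x)$; the degeneracy hypothesis (\ref{degenerate}) is what makes this mode consistent with the $\nabla\Phi$ constraint. Finally, the conservation laws (\ref{conserv_F_mass}), (\ref{evol_energy}), together with (\ref{conserv_F_angular}) in the axis-symmetric case, applied to the limiting hydrodynamic profile, pin down the remaining free constants in $(a,b,c)$ to zero, so $f\equiv 0$. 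To close the contradiction I need $\int_0^1\|\mathbf{P}f_n\|_2^2\,\dd t\to\int_0^1\|\mathbf{P}f\|_2^2\,\dd t=0$, contradicting the normalization; this requires upgrading the weak limit of $\mathbf{P}f_n$ to strong $L^2$ convergence, which I will obtain via an averaging lemma applied to the transport part (the characteristics are smooth in the presence of the $C^{2,\gamma}$ potential).

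The main obstacle I anticipate is the interplay of the specular BC with the non-straight billiard characteristics generated by $\nabla_x\Phi$: the standard Guo argument exploits that specular reflection together with straight-line characteristics constrains $b$ to vanish on $\partial\Omega$ in normal direction and then the rigid-motion structure takes over. With a curved flow, the moment identities pick up $\nabla\Phi$ terms that must be carefully cancelled using (\ref{degenerate}); verifying that the only admissible limiting hydrodynamic solutions compatible with both the specular BC and the conservation laws are identically zero is the delicate step, and this is exactly where the smallness of $\|\Phi\|_{C^2}$ from Theorem \ref{theorem_time} ensures that no spurious steady hydrodynamic modes appear.
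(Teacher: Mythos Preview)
Your overall scheme matches the paper's: contradiction, normalization, time-translation to $[0,1]$, weak limit with vanishing $(\mathbf{I}-\mathbf{P})$-part, macroscopic system for $(a,b,c)$, specular BC forcing $b\cdot n=0$ on $\partial\Omega$, and the conservation laws (\ref{mass_linear})--(\ref{angular_linear}) killing the remaining constants. Two points, however, deserve correction.

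\textbf{Near-boundary compactness.} Your plan to upgrade weak to strong $L^2$-convergence ``via an averaging lemma'' is incomplete. The velocity averaging lemma only yields compactness of the moments $a_n,b_n,c_n$ in $L^2_{\mathrm{loc}}$ of the \emph{interior} of $\Omega$; it says nothing about the shell $\{\dist(x,\partial\Omega)<\epsilon\}$, and uniform $L^2$-boundedness alone does not prevent mass from concentrating there. The paper handles this with a separate geometric lemma (Lemma~\ref{boundary_interior}): by integrating $|Z^m-Z|^2$ along the (possibly curved) characteristics for a short time $\epsilon$, one bounds the near-boundary, non-grazing contribution by the interior contribution plus a term of order $1/\sqrt{m}$ coming from $\|(\mathbf{I}-\mathbf{P})Z^m\|_\nu$. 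The near-grazing set $\{|n(x)\cdot v|\le\epsilon\}$ is then small because the $\mathbf{P}$-part has Gaussian decay in $v$. This decomposition (interior via averaging, boundary shell via trajectory-tracing, grazing set by smallness) is the actual mechanism behind the strong convergence, and you should not expect the averaging lemma by itself to close the argument in a bounded domain.

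\textbf{No smallness of $\Phi$ is used.} Your last paragraph misidentifies the difficulty. The macroscopic system (\ref{macro_eq}) is obtained by equating coefficients of $v$-monomials in the identity $\partial_t Z+v\cdot\nabla_x Z-\nabla_x\Phi\cdot\nabla_v Z=0$; the curved characteristics play no role here, and the specular BC gives $b\cdot n=0$ on $\partial\Omega$ directly, not via trajectories. The paper then shows $c=c_0$ constant, $b=\varpi(t)\times(x-x_0)+m(t)$, and uses (\ref{specular_Z}) together with the linear conservation laws to force $a=b=c=0$ --- all of this is purely algebraic and valid for any $\Phi\in C^1$. The paper explicitly remarks that no smallness of $\Phi$ is required for Proposition~\ref{prop_coercivity}; the $\|\Phi\|_{C^2}\ll1$ hypothesis enters only in the $L^\infty$ trajectory analysis of Sections~2--3, not here. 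The degeneracy condition (\ref{degenerate}) is used only to guarantee that the angular-momentum law (\ref{angular_linear}) holds, which is what eliminates the rotational mode in the axis-symmetric case.
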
 
	We remark that we do not need any smallness of $\Phi$ in this linear theorem. A direct consequence of (\ref{coercive}) is an exponential decay-in-time of $\| f(t) \|_{L^{2}(\O\times\R^{3})}$. Then following the argument of \cite{Guo10}, we are able to show an exponential decay-in-time of $\|w f(t) \|_{L^{\infty}(\O\times\R^{3})}$.

	The proof of this proposition is based on \textit{the contradiction argument} of \cite{Guo10, K2}. As a consequence, we do not have any quantitative estimate of $C$ and the decay rate. By negating the coercivity of (\ref{coercive}) and some normalization of (\ref{normal Zm}), we obtain a weakly convergence sequence $Z^{m}$ whose component orthogonal to the null space of $L$ is vanishing as $m\rightarrow \infty$. The weak limit $Z$ satisfies the conservation laws as $(\ref{mass_linear})-(\ref{angular_linear})$ and the specular reflection BC (\textit{Step 7} in the proof of Proposition \ref{prop_coercivity}) and 
	\begin{equation} \label{specular_Z}
	b(t,x)\cdot {n}(x) = 0 \ \ \ \text{for almost all }    x\in\p\O.  
	\end{equation} 
	Moreover, $Z$ is remaining in the null space of $L$ and solving the transport equation (\ref{eqtn_Z}) without $e^{-\Phi}LZ$. As a consequence, the components $a,b,$ and $c$ of (\ref{E_Pf}) solve the systems (\cite{K2}) of 
	\begin{equation} \label{macro_eq}
	\begin{split}
	\p_i c &= 0 ,    \\
	\p_t {c} + \p_i  {b}_i  &= 0 ,  \\
	\p_i {b}_{j} + \p_j {b}_{i} &= 0 ,\quad i\neq j ,   \\
	\p_t {b}_{i} + \p_i {a} - 2c \p_i \Phi  &= 0,  \\
	\p_t {a} - \nabla_x\Phi \cdot {b} &= 0 .
	\end{split}
	\end{equation}
	Unlike the case of $\Phi\equiv 0$ in \cite{Guo10}, an explicit forms of $a,b,$ and $c$ cannot be obtained. We use the boundary condition (\ref{specular_Z}) and the conservation laws carefully and conclude that 
	\begin{equation}\label{Z=0}
	Z(t,x,v)=0 \ \  \ \text{almost all } t,x,v.
	\end{equation}
	
	On the other hand, due to the normalization (\ref{normal Zm}), the $L^{2}$-norm of $\mathbf{P}Z^{m}$ is always $1$ identically. Away from the boundary $\p\O$, the weak convergence is actually strong convergence due to the Velocity average lemma. For the shell-like subset of $\O$, using the Duhamel form along the trajectory, we are able to bound the integration over this shell-like subset by the interior integration (Lemma \ref{boundary_interior}). Therefore, $Z^{m}\rightarrow Z$ strongly and the $L^{2}$-norm of $Z$ equals $1$, which is a contradiction to (\ref{Z=0}).

	\section{Specular trajectories with a small time-dependent potential}
	
	In (\ref{Boltzmann_phi}), a time-dependent potential is given by $\Phi(x)+\phi(t,x)$. In this section, we write this potential as $\Phi(t,x)$ for convenience. The corresponding characteristic equation is
	\begin{equation}\label{E_Ham} 
	\frac{d}{ds}X(s;t,x,v)   \ =  \ V(s;t,x,v),\ \ \  \frac{d}{ds} V(s;t,x,v)  \ = \  - \nabla_{x} \Phi(s,X(s;t,x,v)). 
	\end{equation} 
	
	\begin{definition} 
		\noindent We recall the standard notations from \cite{GKTT1}. We define 
		\begin{equation}\begin{split}\label{backward_exit}
		\tb(t,x,v)  &:=  \sup \big\{  s \geq 0 :  X(\tau;t,x,v)  \in \Omega  \ \ \text{for all}  \ \tau \in ( t-s ,t)  \big\}, \\
		\xb(t,x,v)  &:=  X ( t- \tb(t,x,v);t,x,v),\ \
		\vb(t,x,v)  =  V ( t- \tb(t,x,v);t,x,v) ,  \end{split}
		\end{equation}
		and similarly,
		\begin{equation}\begin{split}\label{forward_exit}
		\tf(t,x,v)  &:=  \sup \big\{  s \geq 0 :  X(\tau;t,x,v)  \in \Omega  \ \ \text{for all}  \ \tau \in ( t,t+s)  \big\}, \\
		\xf(t,x,v)  &:=  X ( t+ \tf(t,x,v);t,x,v), \ \
		\vf(t,x,v)  =  V ( t+ \tf(t,x,v);t,x,v). 
		\end{split}\end{equation}
		Here, $\tb$ and $\tf$ are called the backward exit time and the forward exit time, respectively. We also define the specular cycle as in \cite{GKTT1}. We set $(t^{0}, x^{0}, v^{0}) = (t,x,v)$. Inductively, we define
		\begin{equation}\begin{split}\label{specular_cycles}
		t^{k} & = t^{k-1} - \tb (t^{k-1}, x^{k-1}, v^{k-1}),\\
		x^{k} &= X(t^{k}; t^{k-1}, x^{k-1}, v^{k-1}), \\
		v^{k}   &=  
		R_{x^{k}} V(t^{k}; t^{k-1}, x^{k-1}, v^{k-1}),
		\end{split}	
		\end{equation}
		where
		\begin{eqnarray*}
			R_{x^{k}} V(t^{k}; t^{k-1}, x^{k-1}, v^{k-1}) &=& V(t^{k}; t^{k-1}, x^{k-1}, v^{k-1}) 	\\
			&& - 2 \big( n(x^{k})\cdot V(t^{k}; t^{k-1}, x^{k-1}, v^{k-1}) \big) n(x^{k}).
		\end{eqnarray*}
		
		\noindent We define the specular characteristics as
		\begin{equation}\label{X_cl}
		\begin{split}
		X_{\mathbf{cl}} (s;t,x,v) &= \sum_{k} \mathbf{1}_{s \in ( t^{k+1}, t^{k}]}
		X(s;t^{k}, x^{k}, v^{k}),\\
		V_{\mathbf{cl}} (s;t,x,v) &= \sum_{k} \mathbf{1}_{s \in ( t^{k+1}, t^{k}]}
		V(s;t^{k}, x^{k}, v^{k}).
		\end{split}
		\end{equation}
		For the sake of simplicity we abuse the notation of (\ref{X_cl}) by dropping the subscription $\mathbf{cl}$ in this section.
	\end{definition} 
	
	From the assumptions of Theorem \ref{theorem_time} and Theorem \ref{theorem_decay}, for any ${p} \in \partial{\Omega}$, there exists sufficiently small $\delta_{1}>0, \delta_{2}>0$, and an one-to-one and onto $C^{3}$-function 
	\begin{equation}\label{eta}
	\begin{split}
	\eta_{{p}}:  \{\X_{{p}} \in \mathbb{R}^{3}:  \X_{{p},3}<0  \} \cap B(0; \delta_{1}) \ &\rightarrow  \ \Omega \cap B({p}; \delta_{2}),\\
	\X_{{p}}=(\X_{{p},1},\X_{{p},2},\X_{{p},3})	 \ &\mapsto \  (x_{1},x_{2},x_{3}) = \eta_{{p}}  (\X_{{p},1},\X_{{p},2},\X_{{p},3}),
	\end{split}
	\end{equation} 
	and $\eta_{ { {p}}}(\X_{ {{p}},1},\X_{ {{p}},2},\X_{ {{p}},3}) \in \p \Omega$ if and only if $\X_{ {{p}},3}=0$. We define the transformed velocity field at $\eta_{{p}} (\X_{{p}})$ as  
	\begin{equation}\label{v_p}
	\mathbf{v}_{ i}(\X_{{p}}) : = \frac{\p_{i} \eta_{{p}} (\X_{{p}})}{\sqrt{g_{{p},ii} (\X_{{p}})}} \cdot v.
	\end{equation}	
	
	{For any two dimensional smooth manifold $\mathcal{S}$, we can find a local orthogonal parametrization from $\mathbb{R}^{2}$ to $\p\mathcal{S}$. (See Corollary 2, page 183, \cite{dC}, for example.) Therefore, we assume     }  
	\begin{equation}\label{orthonormal_eta}
	\Big\{ \frac{\p_{1} \eta_{{p}}}{ \sqrt{g_{{p},11}}}, \frac{\p_{2} \eta_{{p}}}{ \sqrt{g_{{p},22}}},\frac{\p_{3} \eta_{{p}}}{ \sqrt{g_{{p},33}}}\Big\} \text{ is orthonormal at } \X_{{p},3} =0,
	\end{equation}
	where $g_{p,ij} := \langle \p_{i} \eta_{p}, \p_{j} \eta_{p} \rangle$. \\
	
	\noindent And, for second derivative $\p_i\p_j \eta_{p}$, we define Christoffel symbol $\Gamma_{p,ij}^{k}$ by
	\begin{equation} \label{Gamma}
	\p_{ij}\eta_p = \sum_{k} \Gamma_{p,ij}^{k} \p_{k}\eta_p.
	\end{equation}
}  
\noindent Moreover, by reparametrization, we may assume that $g_{{p}, 33} (\X_{{p},1},\X_{{p},2},\X_{{p},3})=1$ whenever it is defined. Without loss of generality, the outward normal at the boundary is, for $x =\eta_{{p}} (\X_{{p},1}, \X_{{p},2},0)  \in \p\Omega$,
	\begin{equation}\label{normal_eta}
	\begin{split}
	n(x) &= n(\eta_{{p}} (\X_{{p},1}, \X_{{p},2},0)) = 
	\p_{3} \eta_{{p}} (\X_{{p},1},\X_{{p},2},0)\\
	&= \frac{\p_{2} \eta_{{p}}}{ \sqrt{g_{{p},22}}}\times \frac{\p_{3} \eta_{{p}}}{ \sqrt{g_{{p},33}}}\Big|_{ (\X_{{p},1}, \X_{{p},2},0))}.
	%
	\end{split}
	\end{equation}

\noindent For each $k=0,1,2,3, \cdots,$ we assume that ${p}^{k} \in \p\Omega$ is chosen to be close to $x^{k}$ as in (\ref{eta}). Then, we define 
\begin{equation}\label{x^k}
\begin{split}
\X^{k}_{{p}^{k}}&:=(\X^{k}_{{p}^{k},1}, \X^{k}_{{p}^{k},2} , 0 )  \  \text{ 
	such that } \    x^{k}= \eta_{{p}^{k}} (\X^{k}_{{p}^{k}}), \\   \V^{k}_{p^{k},i} &:=\V_{i}^{k} (\X^{k}_{{p}^{k}}) = \frac{\p_{i} \eta_{{p}^{k}} (\X^{k}_{{p}^{k}})}{\sqrt{g_{{p}^{k},ii} (\X^{k}_{{p}^{k}})}} \cdot v^{k}.\end{split}
\end{equation}
Note that, due to (\ref{orthonormal_eta}), at the boundary, 
\begin{equation} \label{v_v}
v^{k}_{i} = \sum_{\ell=1}^{3} \V^{k}_{{p}^{k}, \ell}
\frac{\p_{\ell} \eta_{{p}^{k},i}  }{\sqrt{g_{{p}^{k}, \ell\ell} }}\Big|_{x^{k}}.
\end{equation}

%
%

\begin{lemma}\label{Jac_billiard} Assume that $\Omega$ and $\Phi$ are $C^{2}$. Consider $(t^{k+1}, \X^{k+1}_{p^{k+1}}, \V^{k+1}_{p^{k+1}})$ as a functin of $(t^{k+1}, \X^{k}_{p^{k}}, \V^{k}_{p^{k}})$. Then for $i,j=1,2,$   \\ 	
	\begin{equation} \label{Est-1}
	\begin{split} 
	\frac{\p (t^k - t^{k+1})}{\p \X^{k}_{{p}^k , j}} 
	&=  \frac{-1}{   \V^{k+1}_{ {{p}^{k+1}} ,3 }   }
	\frac{  \p_{{3}} \eta_{{p}^{k+1}}(x^{k+1})}{\sqrt{g_{{p}^{k+1},33}(x^{k+1})}} \cdot 
	\bigg[  \p_{j} \eta_{{p}^{k}} ( \X^{k}_{{p}^{k},1}, \X^{k}_{{p}^{k},2},0 ) - (t^{k}-t^{k+1})
	\frac{\p v^{k} }{\p \X^{k}_{{p}^{k},j}}
	\bigg] 
	\\ 
	&\quad + O_{\Omega}( \|\Phi\|_{C^{2}})  \frac{(t^k - t^{k+1})^2}{|\mathbf{v}_{p^{k+1},3}^{k+1}|}  \big(1 + (t^k - t^{k+1}) |\mathbf{v}^{k}_{p^{k}}| \big) e^{\| \Phi \|_{C^{2}}(t^{k} - t^{k+1})^{2} }
	,    \\
	\end{split}
	\end{equation}

	\begin{equation} \label{Est-2}
	\begin{split}
	\frac{\p \X_{{p}^{k+1},i}^{k+1}}{\p { \X_{{p}^{k},j}^{k}}}   
	&=   
	\frac{1}{\sqrt{g_{p^{k}, ii} (x^{k+1})}} \Big[\frac{\p_{i} \eta_{p^{k+1}} (x^{k+1})}{ \sqrt{g_{p^{k}, ii} (x^{k+1})}} + 
	\frac{\V^{k+1}_{p^{k+1}, i} }{   \V^{k+1}_{ {{p}^{k+1}} ,3 }   }
	\frac{  \p_{{3}} \eta_{{p}^{k+1}}(x^{k+1})}{\sqrt{g_{{p}^{k+1},33}(x^{k+1})}} 
	\Big] 	\\
	&\quad \cdot 
	\bigg[  \p_{j} \eta_{{p}^{k}} ( x^{k} ) - (t^{k}-t^{k+1})
	\frac{\p v^{k} }{\p \X^{k}_{{p}^{k},j}}
	\bigg] 
	\\
	&\quad + O_{\Omega}( \| \Phi\|_{C^{2}})   \Big\{ 1+ \frac{  | \V^{k+1}_{p^{k+1},i}|  }{|\mathbf{v}_{p^{k+1},3}^{k+1}|} \Big\}(t^k - t^{k+1})^{2}  	\\
	&\quad\quad\quad \times\big(1 + (t^k - t^{k+1}) |\mathbf{v}^{k}_{p^{k}}| \big) e^{\| \Phi \|_{C^{2}}(t^{k} - t^{k+1})^{2} },     \\
	\end{split}
	\end{equation}
	\begin{equation} \label{Est-3}
	\begin{split}
	&\frac{\p  {\V}^{k+1}_{{p}^{k+1}, i}}{\p \X^{k}_{{p}^{k},j}}  \\
	&=  
	\frac{\p v^{k}}{\p \X^{k}_{{p}^{k}, j}} \cdot \frac{\p_{i} \eta_{{p}^{k+1}} (x^{k+1})}{\sqrt{g_{{p}^{k+1}, ii} (x^{k+1})}}
	+   {v^{k}} \cdot \sum_{\ell=1}^{2} \frac{\p \X^{k+1}_{{p}^{k+1}, \ell}}{\p \X^{k}_{{p}^{k}, j}} \frac{\p}{\p \X^{k+1}_{{p}^{k+1}, \ell}}
	\Big( \frac{\p_{i} \eta_{{p}^{k+1}} (x^{k+1})}{\sqrt{g_{{p}^{k+1}, ii} (x^{k+1})}}
	\Big)		\\
	&  
	\quad + 
	O_{\Omega}(\|\Phi\|_{C^{2}})
	\Big\{
	\sum_{j} \Big| \frac{\p (t^{k} - t^{k+1})}{\p \X^{k}_{p^{k}, j}} \Big| \\
	&\quad + (t^{k } -t^{k+1}) (1+ |\V^{k}_{p^{k}}| (t^{k} - t^{k+1})) e^{\|\Phi \|_{C^{2}} (t^{k} - t^{k+1})^{2}} + \sum_{\ell } \Big| \frac{\p \X^{k+1}_{p^{k+1}, \ell }}{\p \X^{k }_{p^{k }, j }} \Big| 
	\Big\},
	\\
	\end{split}
	\end{equation}
	\begin{equation} \label{Est-4}
	\begin{split}
	&\frac{\p  {\V}^{k+1}_{{p}^{k+1}, 3}}{\p \X^{k}_{{p}^{k },j}} \\
	&=  
	- \frac{\p v^{k}}{\p \X^{k}_{{p}^{k}, j}} \cdot \frac{\p_{3} \eta_{{p}^{k+1}} (x^{k+1})}{\sqrt{g_{{p}^{k+1}, 33} (x^{k+1})}}
	-   {v^{k}} \cdot \sum_{\ell=1}^{2} \frac{\p \X^{k+1}_{{p}^{k+1}, \ell}}{\p \X^{k}_{{p}^{k}, j}} \frac{\p}{\p \X^{k+1}_{{p}^{k+1}, \ell}}
	\Big( \frac{\p_{3} \eta_{{p}^{k+1}} (x^{k+1})}{\sqrt{g_{{p}^{k+1}, 33} (x^{k+1})}}
	\Big)    \\
	&\quad +
	O_{\Omega}(\|\Phi\|_{C^{2}})
	\Big\{
	\sum_{j} \Big| \frac{\p (t^{k} - t^{k+1})}{\p \X^{k}_{p^{k}, j}} \Big| 	\\
	&\quad + (t^{k } -t^{k+1}) (1+ |\V^{k}_{p^{k}}| (t^{k} - t^{k+1})) e^{\|\Phi \|_{C^{2}} (t^{k} - t^{k+1})^{2}}
	+ \sum_{\ell } \Big| \frac{\p \X^{k+1}_{p^{k+1}, \ell }}{\p \X^{k }_{p^{k }, j }} \Big| 
	\Big\},      
	\end{split}
	\end{equation}
	where
	\begin{equation}\label{V_X}
	\frac{\p v^{k}_{i}}{\p \X^{k}_{{p}^{k},j}} = \sum_{\ell=1}^{3} \V^{k}_{{p}^{k},\ell} \sum_{r (\neq \ell)}
	\frac{\sqrt{g_{{p}^{k}, rr} (x^{k}) }}{ \sqrt{g_{{p}^{k}, \ell\ell} (x^{k}) }
	}
	\Gamma_{{p}^{k}, \ell j}^{r} (x^{k}) \frac{\p_{r} \eta_{{p}^{k},i} (x^{k})}{\sqrt{g_{{p}^{k}, rr} (x^{k})}}.
	\end{equation}
	
	\noindent For $i=1,2,$ and $   j=1,2,3, $
	\begin{equation} \label{Est-5}
	\begin{split} 
	\frac{\p (t^k - t^{k+1}) }{\p \V^{k }_{{p}^{k} , j}} &    =   
	\frac{   (t^k - t^{k+1}) }{\V^{k+1}_{{p}^{k+1}, 3}  } 
	\bigg[
	\frac{\p_{j} \eta_{{p}^{k}}  (x ^{k}  ) }{\sqrt{g_{{p}^{k},jj} (x ^{k}  )}}
	+  O_{\Omega}(
	\|\Phi \|_{C^{2}} ) (t^k - t^{k+1})^{2}  e^{\| \nabla_{x}^{2 } \Phi \|_{\infty} |t^{k} - t^{k+1}|^{2}}   \bigg] 	\\
	&\quad 
	\cdot 	\frac{\p_{3} \eta_{{p}^{k+1}} (x^{k+1})}{ \sqrt{g_{{p}^{k+1},33} (x^{k+1})}}, 
	\end{split}
	\end{equation}
	\begin{equation} \label{Est-6}
	\begin{split} 
	\\
	\frac{\p \X^{k+1}_{{p}^{k+1}, i}}{\p \V^{k}_{{p}^{k}, j}} 
	& =   - (t^k - t^{k+1}) \frac{\p_{j} \eta_{{p}^{k}}  (x^{k}) }{\sqrt{g_{{p}^{k}, jj} (x^{k})} } \cdot 
	\frac{1}{\sqrt{g_{{p}^{k+1}, ii} (x^{k+1})  }}
	\Big[
	\frac{\p_{i} \eta_{{p}^{k+1}} (x^{k+1})  }{\sqrt{g_{{p}^{k+1}, ii}  (x^{k+1})  }} 	\\
	&\quad 
	+\frac{\V^{k+1}_{{p}^{k+1}, i}}{\V^{k+1}_{{p}^{k+1},3}} \frac{\p_{3} \eta_{{p}^{k+1}} (x^{k+1}) 
	}{\sqrt{g_{{p}^{k+1}, 33}} (x^{k+1})  }
	\Big]    \\
	&\quad + O_{\Omega}( \|\Phi \|_{C^{2}} )
	\Big(1 + \frac{|\V^{k+1}_{p^{k+1},i}|}{|\V^{k+1}_{p^{k+1},3}|} \Big)  (t^k - t^{k+1})^3  
	e^{\|\Phi \|_{C^{2}} (t^k - t^{k+1})^2}
	,
	\end{split}
	\end{equation}
	\begin{equation} \label{Est-7}
	\begin{split} 
	\\
	%
	%
	%
	%
	\frac{\p {\V}^{k+1}_{{p}^{k+1},{i} }}{\p {\V}^{k}_{{p}^{k},j}}  
	& =  \sum_{\ell=1}^{2} \frac{\p \X^{k+1}_{{p}^{k+1}, \ell}}{\p \V^{k}_{{p}^{k}, j}}
	\p_{\ell} \Big( \frac{\p_{i} \eta_{{p}^{k+1}} }{\sqrt{g_{{p}^{k+1},ii}}}\Big)\Big|_{x^{k+1}} \cdot v^{k} + \frac{\p_{i} \eta_{{p}^{k+1} } (x^{k+1}) }{\sqrt{g_{{p}^{k+1}, ii} (x^{k+1})}} \cdot \frac{\p_{j} \eta_{{p}^{k}} (x^{k })}{\sqrt{g_{{p}^{k}, jj} (x^{k })}} \\
	&\quad +  O_{\Omega}( \|\Phi \|_{C^{2}} )(t^{k} - t^{k+1})^{2} \Big(1 + \frac{|\V^{k+1}_{p^{k+1}, i}|}{|\V^{k+1}_{p^{k+1},3}|} \Big) 	\\
	&\quad\quad \times 
	\big( 1+  O_{\Omega}(\|\Phi \|_{C^{2}}) (t^{k} - t^{k+1})^{2} \big) e^{ \|\Phi \|_{C^{2}} (t^{k} - t^{k+1})^{2}}\\
	&\quad + O_{\Omega}( \|\Phi \|_{C^{2}} ) \frac{|t^{k}- t^{k+1}|}{|\V^{k+1}_{p^{k+1}, 3}|} 	\\
	&\quad\quad \times \big(1+ 
	O_{\Omega}(
	\|\Phi \|_{C^{2}} )(t^{k}-t^{k+1})^{2 }
	e^{ \|\Phi \|_{C^{2}} (t^{k} - t^{k+1})^{2}}
	\big) ,
	\end{split}
	\end{equation}

	\begin{equation} \label{Est-8}
	\begin{split}
	\frac{\p {\V}^{k+1}_{{p}^{k+1},{3} }}{\p {\V}^{k}_{{p}^{k},j}}  
	& =  - \sum_{\ell=1}^{2} \frac{\p \X^{k+1}_{{p}^{k+1}, \ell}}{\p \V^{k}_{{p}^{k}, j}}
	\p_{\ell} \Big( \frac{\p_{3} \eta_{{p}^{k+1}} }{\sqrt{g_{{p}^{k+1},33}}}\Big)\Big|_{x^{k+1}} \cdot v^{k} - \frac{\p_{3} \eta_{{p}^{k+1} } (x^{k+1}) }{\sqrt{g_{{p}^{k+1}, 33} (x^{k+1})}} \cdot \frac{\p_{j} \eta_{{p}^{k}} (x^{k })}{\sqrt{g_{{p}^{k}, jj} (x^{k })}} \\
	&\quad +  O_{\Omega}( \|\Phi \|_{C^{2}} )(t^{k} - t^{k+1})^{2}   
	\big( 1+ O_{\Omega}( \|\Phi \|_{C^{2}} ) (t^{k} - t^{k+1})^{2} \big) e^{ \|\Phi \|_{C^{2}} (t^{k} - t^{k+1})^{2}}\\
	&\quad +  O_{\Omega}( \|\Phi \|_{C^{2}} ) \frac{|t^{k}- t^{k+1}|}{|\V^{k+1}_{p^{k+1}, 3}|} \big(1+ 
	O_{\Omega}(
	\|\Phi \|_{C^{2}} )(t^{k}-t^{k+1})^{2 }
	e^{ \|\Phi \|_{C^{2}} (t^{k} - t^{k+1})^{2}}
	\big) 
	.\end{split}\end{equation}
\end{lemma}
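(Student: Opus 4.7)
The plan is to derive the eight identities by implicit differentiation of the boundary-hitting relation combined with the Duhamel form of the perturbed characteristic equation (\ref{E_Ham}). The starting observation is that when $\Phi \equiv 0$, the flow is linear: $X(s;t,x,v) = x + (s-t)v$ and $V(s;t,x,v) = v$, so the billiard-map computations are explicit. For a small $C^{2}$ potential, integrating (\ref{E_Ham}) once and twice gives
\begin{equation*}
V(s;t,x,v) = v - \int_t^s \nabla_x \Phi(\sigma, X(\sigma;t,x,v)) \, d\sigma, \qquad X(s;t,x,v) = x + (s-t)v + R(s;t,x,v),
\end{equation*}
where $R$ is a double integral of $\nabla_x\Phi$ along the flow of size $O(\|\Phi\|_{C^{2}}(s-t)^{2})$. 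Variational derivatives $\nabla_{(x,v)}X$ and $\nabla_{(x,v)}V$ satisfy a linear ODE driven by $\nabla_x^{2}\Phi$, so Gronwall produces the exponential factor $e^{\|\Phi\|_{C^{2}}(t^{k}-t^{k+1})^{2}}$ that appears uniformly in the error terms.

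First I would derive \eqref{Est-1} and \eqref{Est-5}. The scalar identity
\begin{equation*}
\tfrac{\p_{3}\eta_{p^{k+1}}(x^{k+1})}{\sqrt{g_{p^{k+1},33}(x^{k+1})}} \cdot \bigl( X(t^{k+1};t^{k},x^{k},v^{k}) - x^{k+1} \bigr) = 0
\end{equation*}
determines $t^{k+1}$ implicitly. Differentiating in $\X^{k}_{p^{k},j}$ (resp. $\V^{k}_{p^{k},j}$), the Jacobian in front of $\p t^{k+1}/\p(\cdot)$ is precisely $-\V^{k+1}_{p^{k+1},3}$, and the right-hand side inherits the straight-line derivative $\p_{j}\eta_{p^{k}} - (t^{k}-t^{k+1})\,\p v^{k}/\p\X^{k}_{p^{k},j}$ from the $\Phi\equiv 0$ flow plus a remainder inherited from $R$. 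Solving gives the announced form, with the $O_{\Omega}(\|\Phi\|_{C^{2}})$ correction controlled by the Gronwall bound above. The companion formula \eqref{V_X} comes from differentiating \eqref{v_v}, expanding $\p_{r}(\p_{\ell}\eta_{p^{k}})$ via the Christoffel definition \eqref{Gamma}, and collecting terms against the orthonormal frame \eqref{orthonormal_eta}.

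Next I would propagate these into \eqref{Est-2}, \eqref{Est-6} by writing
\begin{equation*}
\frac{\p x^{k+1}}{\p(\cdot)} = V(t^{k+1};t^{k},x^{k},v^{k})\,\frac{\p t^{k+1}}{\p(\cdot)} + \frac{\p X(t^{k+1};t^{k},x^{k},v^{k})}{\p(\cdot)}\bigg|_{t^{k+1}\text{ fixed}},
\end{equation*}
then projecting onto $\p_{i}\eta_{p^{k+1}}/\sqrt{g_{p^{k+1},ii}}$ and substituting \eqref{Est-1}, \eqref{Est-5}. The prefactor $\p_{i}\eta_{p^{k+1}}/\sqrt{g_{p^{k+1},ii}} + (\V^{k+1}_{p^{k+1},i}/\V^{k+1}_{p^{k+1},3})\,\p_{3}\eta_{p^{k+1}}/\sqrt{g_{p^{k+1},33}}$ is exactly the tangential-plus-transverse decomposition produced by this substitution. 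For \eqref{Est-3}, \eqref{Est-4}, \eqref{Est-7}, \eqref{Est-8}, I differentiate the specular identity $\V^{k+1}_{p^{k+1},i} = (\p_{i}\eta_{p^{k+1}}/\sqrt{g_{p^{k+1},ii}})(x^{k+1}) \cdot V(t^{k+1};t^{k},x^{k},v^{k})$ for $i=1,2$ and the sign-flipped version for $i=3$ (the $R_{x^{k+1}}$ reflection simply changes the sign in the $\p_{3}\eta$ direction). The chain rule splits into the derivative of the frame vector along $x^{k+1}$, giving the sum over $\p\X^{k+1}_{p^{k+1},\ell}/\p(\cdot)\cdot\p_{\ell}(\p_{i}\eta_{p^{k+1}}/\sqrt{g_{p^{k+1},ii}})\cdot v^{k}$, and the direct action on $v^{k}$ and on the $\nabla_{x}\Phi$ integrated term; the latter contributes the $O_{\Omega}(\|\Phi\|_{C^{2}})$ bracket with the $|t^{k}-t^{k+1}|/|\V^{k+1}_{p^{k+1},3}|$ and $\sum_{j}|\p(t^{k}-t^{k+1})/\p\X^{k}_{p^{k},j}|$ factors.

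The main obstacle will be the careful bookkeeping of error terms so they exhibit precisely the announced dependence on $|t^{k}-t^{k+1}|$, $|\V^{k+1}_{p^{k+1},3}|^{-1}$, $|\V^{k}_{p^{k}}|$, and $e^{\|\Phi\|_{C^{2}}(t^{k}-t^{k+1})^{2}}$. A subtle point is the apparent singularity $|\V^{k+1}_{p^{k+1},3}|^{-1}$ in \eqref{Est-1} and \eqref{Est-5}: it is intrinsic to the implicit function theorem applied to the transversal boundary condition, and one must track that the ratio $\V^{k+1}_{p^{k+1},i}/\V^{k+1}_{p^{k+1},3}$ in \eqref{Est-7}--\eqref{Est-8} is packaged so that the grazing loss enters only linearly where the statement allows. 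All remaining terms are lower-order perturbations of the $\Phi\equiv 0$ billiard map, controlled by the linear variational equation for $(\nabla_{(x,v)}X, \nabla_{(x,v)}V)$ through Gronwall's inequality.
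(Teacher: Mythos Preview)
Your proposal is correct and matches the paper's proof essentially step for step: the paper starts from the integrated position identity $\eta_{p^{k+1}}(\X^{k+1}) = \eta_{p^{k}}(\X^{k}) - (t^{k}-t^{k+1})v^{k} - \iint \nabla\Phi$, differentiates in $\X^{k}_{p^{k},j}$ or $\V^{k}_{p^{k},j}$, and then projects onto $\p_{3}\eta_{p^{k+1}}/\sqrt{g_{p^{k+1},33}}$ (giving \eqref{Est-1}, \eqref{Est-5}) or $\p_{i}\eta_{p^{k+1}}/g_{p^{k+1},ii}$ (giving \eqref{Est-2}, \eqref{Est-6}), while \eqref{Est-3}, \eqref{Est-4}, \eqref{Est-7}, \eqref{Est-8} come from differentiating the velocity identity $\V^{k+1}_{p^{k+1},i} = (\p_{i}\eta_{p^{k+1}}/\sqrt{g_{p^{k+1},ii}})\cdot \lim_{s\downarrow t^{k+1}}V(s)$ exactly as you describe. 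The error control via Gronwall on $\p X/\p\X^{k}_{p^{k},j}$ and $\p X/\p\V^{k}_{p^{k},j}$ is also identical, producing the same exponential factor.
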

\noindent\textit{Remark. } Note that we do not need the convexity (\ref{convexity_eta}) or the smallness of the size of $\Phi$ in Lemma \ref{Jac_billiard} .

\begin{proof} \textit{Proof of (\ref{Est-1})}. By the definitions (\ref{eta}), (\ref{specular_cycles}), and our setting (\ref{x^k}) and (\ref{E_Ham}),
	\begin{equation}\begin{split} \label{position identity}
	&\eta_{{p}^{k+1}}(\X^{k+1}_{{p}^{k+1},1},\X^{k+1}_{{p}^{k+1},2},0)\\
	&=	\eta_{{p}^{k}}(\X^{k}_{{p}^{k},1},\X^{k}_{{p}^{k },2},0) + \int_{t^{k}}^{ t^{k+1}} v^{k } - \int_{t^{k}}^{ t^{k+1} }\int_{t^{k}}^{   s} \nabla\Phi(  \tau, X( \tau; t^{k }, x^{k }, v^{k })) \dd\tau \dd s   .
	\end{split}\end{equation}
	We take $\frac{\p}{\p \X^{k}_{{p}^{k},j}}$ to above equality for $j=1,2$ to get
	\begin{equation} \label{diff pos iden}
	\begin{split}
		&  \sum_{l=1,2} \frac{\p \X^{k+1}_{{p}^{k+1},l}}{\p \X^{k}_{{p}^{k},j}} \frac{\p\eta_{{p}^{k+1}}}{\p \X^{k+1}_{{p}^{k+1},l}}\Big\vert_{x^{k+1}}   
		=  - (t^{k}-t^{k+1}) \frac{\p v^{k}}{\p \X_{p^{k},j}^{k}}  \\
		&\quad\quad\quad\quad -   \frac{\p (t^{k }-t^{k+1}) }{\p {\X^{k}_{{p}^{k},j}}} \Big\{ v^{k }  - \int^{t^{k+1}}_{t^{k}} \nabla \Phi (s, X(s;t^{k}, x^{k}, v^{k})) \dd s\Big\} 	
		\\
		&\quad\quad\quad\quad + \Big\{
		\p_{j} \eta_{{p}^{k}}( \X^{k}_{{p}^{k},1}, \X^{k}_{{p}^{k},2}, 0 ) - 
		\int^{  t^{k+1} }_{t^{k}} \dd s \int^{ s}_{t^{k}} \dd \tau \big(\frac{\p X( \tau) }{\p \X^{k}_{{p}^{k},j}} \cdot \nabla_{x}  \big) \big(  \nabla_{x} \Phi (  \tau) \big)
		\Big\} ,
	\end{split}
	\end{equation}
	and then take an inner product with $\frac{\p_{{3}} \eta_{{p}^{k+1}}}{\sqrt{g_{{p}^{k+1},33}}}\Big|_{x^{k+1}}  $ to have 
	\begin{equation} \label{diff pos iden dot 3}
	\begin{split}
		&  \sum_{l=1,2} \frac{\p \X^{k+1}_{{p}^{k+1},l}}{\p \X^{k}_{{p}^{k},j}} \frac{\p\eta_{{p}^{k+1}}}{\p \X^{k+1}_{{p}^{k+1},l}}\Big\vert_{x^{k+1}}  \cdot
		\frac{\p_{{3}} \eta_{{p}^{k+1}} }{\sqrt{g_{{p}^{k+1},33}}}   \Big\vert_{x^{k+1}} 
		\\
		&=  - (t^{k}-t^{k+1}) \frac{\p v^{k}}{\p \X_{p^{k},j}^{k}} \cdot \frac{\p_{3} \eta_{{p}^{k+1}}}{\sqrt{g_{{p}^{k+1},33}} }  \Big|_{x^{k+1}}  \\
		& -   \frac{\p (t^{k }-t^{k+1}) }{\p {\X^{k}_{{p}^{k},j}}} \Big\{ v^{k }  - \int^{t^{k+1}}_{t^{k}} \nabla \Phi (s, X(s;t^{k}, x^{k}, v^{k})) \dd s\Big\} 	
		\cdot \frac{ \p_{3} \eta_{{p}^{k+1}}}{\sqrt{g_{{p}^{k+1},33}}}\Big|_{ x^{k+1}} \\
		& + \Big\{
		\p_{j} \eta_{{p}^{k}}( \X^{k}_{{p}^{k},1}, \X^{k}_{{p}^{k},2}, 0 ) - 
		\int^{  t^{k+1} }_{t^{k}} \dd s \int^{ s}_{t^{k}} \dd \tau \big(\frac{\p X( \tau) }{\p \X^{k}_{{p}^{k},j}} \cdot \nabla_{x}  \big) \big(  \nabla_{x} \Phi (  \tau) \big)
		\Big\} \cdot \frac{ \p_{3} \eta_{{p}^{k+1}}}{
			\sqrt{g_{p^{k+1},33}}
		}\Big|_{x^{k+1}} ,
	\end{split}
	\end{equation}
	where we abbreviated $X(s)=X(s;t^{k},x^{k},v^{k})$, $V(s)=V(s;t^{k},x^{k},v^{k})$, and \\ $\Phi(s) = \Phi(s,X(s;t^{k},x^{k},v^{k}))$. Due to (\ref{orthonormal_eta}) the LHS equals zero. Now we consider the RHS. From (\ref{v_v}), we prove (\ref{V_X}). 
	 We also note that 
	\begin{equation}\label{down_V}
	\lim_{s\downarrow t^{k+1}} V(s;t^{k}, x^{k}, v^{k}) = v^{k} - \int^{t^{k+1}}_{t^{k}} \nabla \Phi (s, X(s;t^{k}, x^{k}, v^{k})) \dd s.
	\end{equation}
	Therefore, from (\ref{specular_cycles}) and (\ref{x^k}), 
	$$ \big\{ v^{k }  - \int^{t^{k+1}}_{t^{k}} \nabla \Phi (s, X(s;t^{k}, x^{k}, v^{k})) \dd s\Big\}\cdot \frac{ \p_{3} \eta_{{p}^{k+1}}}{\sqrt{g_{{p}^{k+1},33}}}\big|_{ x^{k+1}} = -\V^{k+1}_{p^{k+1},3}. $$  
	From (\ref{diff pos iden dot 3}),
	\begin{equation} \label{Estimate 1}
	\begin{split}
	\\
	\frac{\p (t^{k}-t^{k+1})}{\p {\X^{k}_{{p}^{k},j}}} &=
	- 
	\frac{1}{ \V^{k+1}_{ {{p}^{k+1}} ,3 }  }
	\frac{  \p_{3} \eta_{{p}^{k+1}}(x^{k+1})}{\sqrt{g_{{p}^{k+1},33}(x^{k+1})}} \cdot 
	\bigg[  \p_{j} \eta_{{p}^{k}} ( \X_{{p}^{k},1}^{k}, \X_{{p}^{k},2}^{k}, 0 )   \\
	&-
	(t^{k}-t^{k+1})  \sum_{\ell=1}^{3} \V_{{p}^{k}, \ell}^{k}
	\sum_{r(\neq \ell)} \frac{\sqrt{ g_{{p}^{k},rr} (x^{k})}}{ \sqrt{ g_{{p}^{k}, \ell\ell} (x^{k})}}  
	\Gamma_{{p}^{k}, \ell j}^{r} (x^{k}) \frac{\p_{r} \eta_{{p}^{k}}(x^{k}) }{\sqrt{g_{{p}^{k} ,rr} (x^{k})}} 
	\bigg] \\
	& +  { \frac{1}{ \V^{k+1}_{ p^{k+1}, 3 } }\frac{  \p_{3} \eta_{{p}^{k+1}}(x^{k+1})}{\sqrt{g_{{p}^{k+1},33}(x^{k+1})}}
		\cdot \int^{ t^{k+1} }_{t^{k}} \dd s \int^{ s}_{t^{k}} \dd \tau \big(\frac{\p X(  \tau) }{\p \X_{{p}^{k},j}^{k}} \cdot \nabla_{x}  \big) \big(  \nabla_{x} \Phi ( \tau) \big) }  .
	\end{split}	
	\end{equation}
	Now we consider integrand $\frac{\p X (  \tau; t^{k}, x^{k}, v^{k})}{\p \X^{k}_{{p}^{k },j}}$. From (\ref{position identity}) for $t^{k+1} < \tau \leq t^{k}$ ,
	\begin{equation}\label{down_X}
	X( \tau; t^{k}, x^{k}, v^{k}) = \eta_{{p}^{k}}( \X^{k}_{{p}^{k},1}, \X^{k}_{{p}^{k},2}, 0) +   v^k(\tau- t^{k}) - \int_{t^{k}}^{\tau} \dd s\int_{t^{k}}^{ s} \dd s^{\prime} \nabla\Phi( s',X( s';t^{k},x^{k},v^{k}))   .
	\end{equation}
	By the direct computations, for $j=1,2,$
	\begin{equation*}
	\begin{split}	
		\sup_{\tau \leq s^{\prime} \leq t^{k}} \bigg| \frac{\p X(  s^{\prime}; t^{k}, x^{k}, v^{k}) }{\p \X_{{p}^{k},j}^{k}} \bigg| 
		& \leq   \bigg| \p_{j} \eta_{{p}^{k}}( \X^{k}_{{p}^{k},1}, \X^{k}_{{p}^{k},2},0)\bigg| + |\tau - t^{k}| \bigg|  \frac{\p v^{k}}{\p \X_{{p}^{k},j}^{k}} \bigg| 	\\
		&\quad + \int_{t^{k}}^{\tau} |s - t^{k}| \| \Phi\|_{C^{2}} \sup_{\tau \leq s^{\prime} \leq t^{k}} \bigg| \frac{\p X(  s^{\prime}; t^{k}, x^{k}, v^{k}) }{\p \X_{{p}^{k},j}^{k}} \bigg| \dd s.
	\end{split}
	\end{equation*}
	By Gronwall's inequality and (\ref{V_X}),
	\begin{equation} \label{dX/dx}
	\sup_\tau \bigg| \frac{\p X( \tau; t^{k}, \mathbf{x}^{k}_{p^{k}}, \mathbf{v}^{k}_{p^{k}}) }{\p \X_{{p}^{k},j}^{k}} \bigg| 
	\leq  O_{\Omega}(1) (1 + |t^{k} - \tau|| \mathbf{v}^{k}_{p^{k}}|) e ^{   { \| \Phi\|_{C^{2}}|t^{k} - \tau|^2 } /2   } .
	\end{equation} 
	Using (\ref{Estimate 1}) and (\ref{dX/dx}), we complete the proof of (\ref{Est-1}). 
	\vspace{4pt}
	
	\noindent \textit{Proof of (\ref{Est-2})}. We take inner product with $\frac{\p_{{i}} \eta_{{p}^{k+1}}}{g_{{p}^{k+1},ii}}\Big|_{x^{k+1}}$ to (\ref{diff pos iden}) to have 
	\begin{equation} \label{diff pos iden ii}
	\begin{split}
	&  \sum_{l=1,2} \frac{\p \X^{k+1}_{{p}^{k+1},l}}{\p \X^{k}_{{p}^{k},j}} \frac{\p\eta_{{p}^{k+1}}}{\p \X^{k+1}_{{p}^{k+1},l}}\Big\vert_{x^{k+1}} \cdot \frac{\p_{{i}} \eta_{{p}^{k+1}}}{g_{{p}^{k+1},ii}}\Big|_{x^{k+1}} = \frac{\p \X^{k+1}_{{p}^{k+1},i}}{\p \X^{k}_{{p}^{k},j}} 
	\\
	&=  - (t^{k}-t^{k+1}) \frac{\p v^{k}}{\p \X_{p^{k},j}^{k}} \cdot \frac{\p_{{i}} \eta_{{p}^{k+1}}}{g_{{p}^{k+1},ii}}\Big|_{x^{k+1}}  \\
	&\quad  -   \frac{\p (t^{k }-t^{k+1}) }{\p {\X^{k}_{{p}^{k},j}}} \Big\{ v^{k }  - \int^{t^{k+1}}_{t^{k}} \nabla \Phi (s, X(s;t^{k}, x^{k}, v^{k})) \dd s\Big\}  \cdot \frac{\p_{{i}} \eta_{{p}^{k+1}}}{g_{{p}^{k+1},ii}}\Big|_{x^{k+1}}	
	\\
	&\quad + \Big\{
	\p_{j} \eta_{{p}^{k}}( \X^{k}_{{p}^{k},1}, \X^{k}_{{p}^{k},2}, 0 ) - 
	\int^{  t^{k+1} }_{t^{k}} \dd s \int^{ s}_{t^{k}} \dd \tau \big(\frac{\p X( \tau) }{\p \X^{k}_{{p}^{k},j}} \cdot \nabla_{x}  \big) \big(  \nabla_{x} \Phi (  \tau) \big)
	\Big\} \cdot \frac{\p_{{i}} \eta_{{p}^{k+1}}}{g_{{p}^{k+1},ii}}\Big|_{x^{k+1}} .
	\end{split}
	\end{equation}
	Since
	$$ \big\{ v^{k }  - \int^{t^{k+1}}_{t^{k}} \nabla \Phi (s, X(s;t^{k}, x^{k}, v^{k})) \dd s\Big\}\cdot \frac{ \p_{i} \eta_{{p}^{k+1}}}{g_{{p}^{k+1},ii}}\big|_{ x^{k+1}} = - {\V^{k+1}_{p^{k+1},i} \over \sqrt{g_{{p}^{k+1},ii}} }, $$  
	from (\ref{orthonormal_eta}) and (\ref{Est-1}),
	\begin{equation} \label{Estimate 2}
	\begin{split}
		\frac{\p \X_{{p}^{k+1},i}^{k+1}}{\p { \X_{{p}^{k},j}^{k}}}   
		&=    \frac{1}{   \V^{k+1}_{ {{p}^{k+1}} ,3 }   }
		\frac{  \p_{{3}} \eta_{{p}^{k+1}}(x^{k+1})}{\sqrt{g_{{p}^{k+1},33}(x^{k+1})}} \cdot 
		\bigg[  \p_{j} \eta_{{p}^{k}} ( x^{k} ) - (t^{k}-t^{k+1})
		\frac{\p v^{k} }{\p \X^{k}_{{p}^{k},j}}
		\bigg] 
		\frac{ \V^{k+1}_{p^{k+1}, i} }{\sqrt{g_{p^{k+1}, ii}}} \Big|_{x^{k+1}} \\
		&				+ \frac{\p_{i} \eta_{p^{k+1}} }{ {g_{p^{k+1}, ii} }} \Big|_{x^{k+1}}  \cdot  \Big[\p_{j} \eta_{p^{k}} (x^{k})
		- (t^{k}-t^{k+1})  \frac{\p v^{k}}{\p \X^{k}_{{p}^{k}, j}}  \Big]\\
		&+ O_{\Omega}(     \|\Phi\|_{C^{2}} ) \frac{  | \V^{k+1}_{p^{k+1},i}|  }{|\mathbf{v}_{p^{k+1},3}^{k+1}|}(t^k - t^{k+1})^2  \big(1 + (t^k - t^{k+1}) |\mathbf{v}^{k}_{p^{k}}| \big) e^{\| \Phi \|_{C^{2}}(t^{k} - t^{k+1})^{2} /2 }\\
		&+ O_{\Omega}(     \| \Phi\|_{C^{2}} ) (t^{k}-t^{k+1})^{2} \big(1 + (t^k - t^{k+1}) |\mathbf{v}^{k}_{p^{k}}| \big) e^{\| \Phi \|_{C^{2}}(t^{k} - t^{k+1})^{2} /2 }.
	\end{split}
	\end{equation}
	This ends the proof of (\ref{Est-2}).

	\vspace{4pt}
	
	\noindent \textit{Proof of (\ref{Est-3}) and (\ref{Est-4})}. From (\ref{specular_cycles}) and (\ref{x^k}),			
	\begin{equation}  
	\begin{split}\label{v_123}
	{\V}^{k+1}_{{p}^{k+1},i}   
	&=  \frac{ \p_{i} \eta_{{p}^{k+1}} }{\sqrt{g_{{p}^{k+1},ii}}}  \Big|_{x^{k+1}} \cdot  \lim_{s \downarrow  t^{k+1}}V(s; t^{k}, x^{k},v^{k}), \ \ \ \text{for} \ \  i=1,2,\\
	{\V}^{k+1}_{{p}^{k+1},3}  & =  
	- \frac{ \p_{3} \eta_{{p}^{k+1}} }{\sqrt{g_{{p}^{k+1},33}}}  \Big|_{x^{k+1}} \cdot  \lim_{s \downarrow  t^{k+1}}V(s; t^{k}, x^{k},v^{k}).
	\end{split}
	\end{equation}
	For $i,j=1,2$, from (\ref{v_123}), 
	\begin{equation*}\begin{split}
	\frac{\p \V^{k+1}_{{p}^{k+1}, i}}{\p \X^{k}_{{p}^{k},j}}  =&
	\frac{\p_{i} \eta_{{p}^{k+1} } (x^{k+1}) }{\sqrt{g_{{p}^{k+1}, ii} (x^{k+1})}} \cdot \Big[\frac{\p v^{k}}{\p \X^{k}_{{p}^{k}, j}}
	+ \nabla_{x} \Phi (t^{k+1}; t^{k},x^{k}, v^{k}) \frac{\p (t^{k} - t^{k+1})}{\p \X^{k}_{p^{k}, j }}  \\
	& \quad  - \int^{t^{k+1}}_{t^{k}}  \big(\p_{\X^{k}_{p^{k}, j }} X(s ) \cdot \nabla_{x} \big) \nabla_{x} \Phi (s, X(s;t^{k}, x^{k}, v^{k})) \dd s 
	\Big]
	\\
	&\quad				+
	\sum_{\ell=1}^{2} 
	\frac{\p \X^{k+1}_{{p}^{k+1}, \ell}}{\p \X^{k}_{{p}^{k}, j}} \frac{\p}{\p \X^{k+1}_{{p}^{k+1}, \ell}}\Big(  \frac{\p_{i} \eta_{{p}^{k+1}}  }{\sqrt{g_{{p}^{k+1}, ii}    }  }  \Big)\Big|_{x^{k+1}} \cdot   \lim_{s \downarrow  t^{k+1}}V(s; t^{k}, x^{k},v^{k}).  \end{split}\end{equation*}	
	And for $j=1,2,$
	\begin{equation*} \begin{split}
	\frac{\p \V^{k+1}_{{p}^{k+1}, 3}}{\p \X^{k}_{{p}^{k},j}}  =&
	-\frac{\p_{3} \eta_{{p}^{k+1} } (x^{k+1}) }{\sqrt{g_{{p}^{k+1}, 33} (x^{k+1})}} \cdot 
	\Big[\frac{\p v^{k}}{\p \X^{k}_{{p}^{k}, j}}
	+ \nabla_{x} \Phi (t^{k+1}; t^{k},x^{k}, v^{k}) \frac{\p (t^{k} - t^{k+1})}{\p \X^{k}_{p^{k}, j }} \\
	& \ \ \ \ \ \ \ \ \ \ \ \ \ \ \ \ \ \ \ \ \ \  \ \ \ \ \ \ \
	- \int^{t^{k+1}}_{t^{k}}  \big(\p_{\X^{k}_{p^{k}, j }} X(s ) \cdot \nabla_{x} \big) \nabla_{x} \Phi (s, X(s;t^{k}, x^{k}, v^{k})) \dd s 
	\Big]
	\\
	&
	-
	\sum_{\ell=1}^{2} 
	\frac{\p \X^{k+1}_{{p}^{k+1}, \ell}}{\p \X^{k}_{{p}^{k}, j}} \frac{\p}{\p \X^{k+1}_{{p}^{k+1}, \ell}}\Big(  \frac{\p_{3} \eta_{{p}^{k+1}}  }{\sqrt{g_{{p}^{k+1}, 33}    }  }  \Big)\Big|_{x^{k+1}} \cdot  \lim_{s \downarrow  t^{k+1}}V(s; t^{k}, x^{k},v^{k}).\end{split}
	\end{equation*}
	From (\ref{down_V})	and (\ref{dX/dx}), we prove (\ref{Est-3}) and (\ref{Est-4}).				

	\vspace{4pt}
	
	Now we consider (\ref{Est-5})-(\ref{Est-8}) for $v-$derivatives.
	
	\vspace{4pt}
	
	\noindent\textit{Proof of (\ref{Est-5})}. We take $\frac{\p}{\p \V^{k}_{{p}^{k},j}}$ to (\ref{position identity}) for $j=1,2,3$ to get
	\begin{equation} \label{v diff pos iden}
	\begin{split}
	&  \sum_{l=1,2} \frac{\p \X^{k+1}_{{p}^{k+1},l}}{\p \V^{k}_{{p}^{k},j}} \frac{\p\eta_{{p}^{k+1}}}{\p \X^{k+1}_{{p}^{k+1},l}}\Big\vert_{x^{k+1}}   
	=  - (t^{k}-t^{k+1}) \frac{\p v^{k}}{\p \V_{p^{k},j}^{k}}  \\
	&\quad\quad\quad\quad -   \frac{\p (t^{k }-t^{k+1}) }{\p {\V^{k}_{{p}^{k},j}}} \Big\{ v^{k }  - \int^{t^{k+1}}_{t^{k}} \nabla \Phi (s, X(s;t^{k}, x^{k}, v^{k})) \dd s\Big\} 	
	\\
	&\quad\quad\quad\quad + \Big\{
	 - 
	\int^{  t^{k+1} }_{t^{k}} \dd s \int^{ s}_{t^{k}} \dd \tau \big(\frac{\p X( \tau) }{\p \V^{k}_{{p}^{k},j}} \cdot \nabla_{x}  \big) \big(  \nabla_{x} \Phi (  \tau) \big)
	\Big\} ,
	\end{split}
	\end{equation}
	and then take an inner product with $\frac{\p_{{3}} \eta_{{p}^{k+1}}}{\sqrt{g_{{p}^{k+1},33}}}\Big|_{x^{k+1}}  $ to have

	\begin{equation}  \label{v diff pos iden dot 3}
	\begin{split}
	& \sum_{l=1,2} \frac{\p \X^{k+1}_{{p}^{k+1},l}}{\p \V^{k}_{{p}^{k},j}} \frac{\p\eta_{{p}^{k+1}}}{\p \X^{k+1}_{{p}^{k+1},l}}\Big\vert_{x^{k+1}}  \cdot   \frac{\p_{3} \eta _{{p}^{k+1}} }{\sqrt{g_{{p}^{k+1}, 33} }} \Big|_{x^{k+1}}  \\
	&=	\Big\{ - ( t^{k} - t^{k+1})  \frac{\p v^{k }}{\p \V^{k}_{p^{k}, j}} - \frac{\p( t^{k} - t^{k+1})}{\p \V^{k}_{p^{k}, j}} \lim_{s \downarrow t^{k+1}}V(s;t^{k}, x^{k}, v^{k}) \Big\} \cdot  \frac{\p_{3} \eta _{{p}^{k+1}} }{\sqrt{g_{{p}^{k+1}, 33} }} \Big|_{x^{k+1}} \\
	&\quad  + O_{\O}(\|\Phi \|_{C^{2}}) |t^{k} - t^{k+1}|^{2}  \sup_{s} \big| \frac{\p X(s)}{\p \V^{k}_{p^{k},j}} \big| .
	\end{split}
	\end{equation}
	Due to (\ref{orthonormal_eta}), the LHS equals zero. Now we consider the RHS.
	From (\ref{v_v}),
	
	
	\begin{equation} \label{dv^0/dv^0}
	\frac{\p v^{k}}{\p {\V}^{k}_{{p}^{k},j}} =  \frac{\p_{j} \eta_{{p}^{k}}  (\X_{{p}^{k}, 1}^{k}, \X_{{p}^{k}, 2}^{k}, 0 ) }{\sqrt{g_{{p}^{k},jj} (\X_{{p}^{k}, 1}^{k}, \X_{{p}^{k}, 2}^{k}, 0 )}} .
	\end{equation} 
	
	Now we consider $\sup_{s} \big| \frac{X(s;t^{k}, x^{k}, v^{k})}{\p \V^{k}_{p^{k}, j}} \big|$. From (\ref{down_X}), for $j=1,2,3$,
	\[\big| \frac{\p X(s) }{\p \V_{{p}^{k},j}^{k}} \big| 
	\leq   |t^{k} - s | \big|  \frac{\p v^{k}}{\p \V_{{p}^{k},j}^{k}} \big| + \| \nabla_{x}^{2}\Phi\|_{\infty}\int^{t^{k}}_{ s} |t^{k} - \tau |  \big| \frac{\p X(\tau) }{\p \V_{{p}^{k},j}^{k}} \big| \dd \tau.
	\]
	By Gronwall's inequality and (\ref{dv^0/dv^0}), for $t^{k+1}\leq s\leq t^{k}$,
	\begin{equation} \label{dX/dv}
	\begin{split}
	\Big| \frac{\p X(s; t^{k}, x^{k}, v^{k}) }{\p \V_{{p}^{k},j}^{k}} \Big| \leq  | t^{k}-s| \Big|  \frac{\p v^{k}}{\p \V_{{p}^{k},j}^{k}} \Big|  
	e^{\| \Phi \|_{C^{2}}  | t^{k}-s|^{2}  /2}  
	\lesssim_{\Omega}   | t^{k}-s| 			e^{\| \Phi \|_{C^{2}}  | t^{k}-s|^{2} /2 }  .  
	\end{split}
	\end{equation}
	Using (\ref{v_123}), (\ref{v diff pos iden dot 3}), (\ref{dv^0/dv^0}), and (\ref{dX/dv}), we prove (\ref{Est-5}).
	
	\vspace{4pt}
	
	\noindent\textit{Proof of (\ref{Est-6})}. For $i=1,2$ and $j=1,2,3$ , we take inner product with $\frac{\p_{{i}} \eta_{{p}^{k+1}}}{g_{{p}^{k+1},ii}}\Big|_{x^{k+1}}$ to (\ref{v diff pos iden}) to have 
	\begin{eqnarray*}
		\frac{\p \X^{k+1}_{p^{k+1}, i}}{\p \V^{k}_{p^{k}, j}} 
		&=&
		\Big\{- \frac{\p (t^{k} - t^{k+1})}{\p \V^{k}_{p^{k}, j}} \lim_{s\downarrow t^{k+1}} V(s; t^{k}, x^{k}, v^{k})- (t^{k} - t^{k+1}) \frac{\p v^{k}}{\p \V^{k}_{p^{k}, j}}
		\Big\}
		\cdot \frac{\p_{{i}} \eta_{{p}^{k+1}}}{g_{{p}^{k+1},ii}}\Big|_{x^{k+1}} 	\\
		&& + 
		O_{\Omega}( \| \Phi \|_{C^{2}} )|t^{k} - t^{k+1}|^{2} \sup_{s } \big| \frac{\p X(s)}{\p \V^{k}_{p^{k},j}} \big|.
	\end{eqnarray*}
	From (\ref{dv^0/dv^0}), (\ref{dX/dv}), and (\ref{Est-5}), we prove (\ref{Est-6}).

	\vspace{4pt}
	
	\noindent\textit{Proof of (\ref{Est-7}) and (\ref{Est-8}) }. For $i=1,2$ and $j=1,2,3$, from (\ref{v_123}), 
	\begin{eqnarray*}
		\frac{\p {\V}^{k+1}_{{p}^{k+1},{i} }}{\p {\V}^{k}_{{p}^{k},j}}  &=&
		\sum_{\ell=1}^{2} \frac{\p \X^{k+1}_{{p}^{k+1}, \ell}}{\p \V^{k}_{{p}^{k}, j}}
		\p_{\ell}\Big( \frac{\p_{i} \eta_{{p}^{k+1}} }{\sqrt{g_{{p}^{k+1},ii}}}\Big)\Big|_{x^{k+1}} \cdot  \lim_{s \downarrow t^{k+1}} V(s;t^{k}, x^{k}, v^{k}) \\
		&& + \frac{\p_{i} \eta_{{p}^{k+1} } (x^{k+1}) }{\sqrt{g_{{p}^{k+1}, ii} (x^{k+1})}} \cdot \Big(
		\frac{\p v^{k}}{\p \V^{k}_{{p}^{k},j}}  + \frac{\p (t^{k}-t^{k+1})}{\p \V^{k}_{{p}^{k},j}} \nabla\Phi(t^{k+1}, x^{k+1})  \\
		&& -  \int_{t^{k}}^{ t^{k+1} } (\frac{\p X(s)}{\p \V^{k}_{{p}^{k},j} }\cdot\nabla)\nabla \Phi(s,X(s)) \dd s \Big) 
		\\
		&=& \sum_{\ell=1}^{2} \frac{\p \X^{k+1}_{{p}^{k+1}, \ell}}{\p \V^{k}_{{p}^{k}, j}}
		\p_{\ell} \Big( \frac{\p_{i} \eta_{{p}^{k+1}} }{\sqrt{g_{{p}^{k+1},ii}}}\Big)\Big|_{x^{k+1}} \cdot v^{k} + \frac{\p_{i} \eta_{{p}^{k+1} } (x^{k+1}) }{\sqrt{g_{{p}^{k+1}, ii} (x^{k+1})}} \cdot \frac{\p_{j} \eta_{{p}^{k}} (x^{k })}{\sqrt{g_{{p}^{k}, jj} (x^{k })}} \\
		&& 
		+ O_{\Omega}( \| \Phi \|_{C^{2}} ) (t^{k} - t^{k+1})\Big|  \frac{\p \X^{k+1}_{p^{k+1}}}{\p \V^{k}_{p^{k} ,j }} \Big|
		+ O_{\Omega}( \| \Phi \|_{C^{2}} ) \Big| \frac{\p (t^{k} - t^{k+1})}{\p \V^{k}_{p^{k},j}} \Big| \\
		&& +O_{\Omega}( \| \Phi \|_{C^{2}} ) (t^{k } -t^{k+1}) \sup_{s} \Big| \frac{\p X(s)}{\p \V^{k}_{p^{k}, j}} \Big|.
	\end{eqnarray*}
	From (\ref{Est-5}), (\ref{Est-6}), and (\ref{dX/dv}), we prove (\ref{Est-7}). The proof of (\ref{Est-8}) is also very similar as above from (\ref{v_123}).\end{proof}

\begin{lemma} \label{global to local} 
	Assume that $x\in \Omega$ (interior point) and $\xb(t,x,v)$ is in the neighborhood of $p^{1} \in \p\Omega$. Then locally, 
	\begin{equation} \label{Est--1}
	\begin{split}
		\frac{\p \tb }{\p x_{j}} &= \frac{1}{\V^{1}_{{p^{1}},3}} \Big[
		-e_{j} +  
		O_\Omega  ( \| \Phi \|_{C^{2}} ) |\tb|^2 e ^{   { \| \nabla_{x}^{2}\Phi\|_{\infty}}  (\tb)^2 /2 }  
		\Big] \cdot  \frac{\p_{3} \eta_{{p^{1}}} (x^{1})}{\sqrt{g_{{p^{1}}, 33} (x^{1})}}, \quad j=1,2, \\
	\end{split}
	\end{equation}	
	\begin{equation} \label{Est--2}
	\begin{split}	
		\frac{\p \tb}{\p v_{j}}
		&= \frac{1}{\V^{1}_{{p^{1}},3}}
		\Big[ \tb e_{j } 
		- \int^{t-\tb}_{t}  \int^{s}_{t}  \Big( \frac{\p X( \tau)}{\p v_{j}} \cdot \nabla_{x}  \Big)
		\big( \nabla_{x} \Phi ( \tau)\big) \dd \tau\dd s \Big] 
		\cdot \frac{\p_{3} \eta_{{p^{1}}}(x^{1})}{\sqrt{g_{{p^{1}},33} (x^{1}) }} \\
		&=  \frac{\tb}{\V^{1}_{{p^{1}},3}} \Big[ 
		e_{j} +  
		O_\Omega  ( \| \Phi \|_{C^{2}}  )
		|\tb|^{3}  e^{ \| \Phi \|_{C^{2}} (\tb)^{2} /2 }
		\Big] \cdot  \frac{\p_{3} \eta_{{p^{1}}  } (x^{1})}{\sqrt{g_{{p^{1}}, 33} (x^{1}) }},  \quad j=1,2,3,
	\end{split}
	\end{equation}
	\begin{equation} \label{Est--3}
	\begin{split}	
		\frac{\p \X^{1}_{{p^{1}}, i}}{\p x_{j} } 
		&=
		\Big[ e_{j} + 
		O_\Omega  (
		\| \Phi \|_{C^{2}} )
		\big(1 + \frac{ |\V_{p^{1},i}^{1}| }{ |\V_{p^{1},3}^{1}| } \big)  |\tb|^2 e ^{   { \| \nabla_{x}^{2}\Phi\|_{\infty}}  (\tb)^2 /2 }
		\Big] 	\\	
		&\quad \cdot \frac{1}{\sqrt{g_{{p^{1}},ii} (x^{1})  }}
		\Big[ \frac{\p_{i} \eta_{{p^{1}}} (x^{1})   }{ \sqrt{g_{{p^{1}}, ii}  (x^{1})  }   }
		+ \frac{\V^{1}_{{p^{1}}, i}   }{\V_{{p^{1}},3}   } \frac{\p_{3} \eta_{{p^{1}}   }  (x^{1}) }{\sqrt{g_{{p^{1}}, 33} (x^{1})  }}  \Big],  \\
	\end{split}
	\end{equation}
	\begin{equation} \label{Est--4}
	\begin{split}		
		\frac{\p \X^{1}_{{p^{1}},i}}{\p v_{j}}
		&=
		\big[- \tb e_{j} 
		- \int^{t-\tb}_{t} 
		\int^{s}_{t} 
		\Big( \frac{\p X(\tau)}{\p v_{j}} \cdot \nabla_{x} 	\Big)\nabla \Phi_{x}(\tau )
		\dd \tau
		\dd s
		\big] 	\\
		&\quad 
		\cdot \frac{1}{\sqrt{g_{{p^{1}},ii}  (x^{1})  }}
		\Big[ \frac{\p_{i} \eta_{{p^{1}}} (x^{1})  }{\sqrt{g_{{p^{1}},ii}(x^{1})}   } { +} \frac{\V^{1}_{{p^{1}},i}}{\V^{1}_{{p^{1}},3}} \frac{\p_{3} \eta_{{p^{1}}}(x^{1})   }{\sqrt{g_{{p^{1}},33}  (x^{1}) }}\Big]
		\\
		&=
		- \tb \big[
		e_{j} 
		+   O_\Omega  (\| \Phi \|_{C^{2}}) |\tb|^{2}
		e^{ \| \nabla_{x}^{2} \Phi \|_{\infty} (\tb)^{2} /2}
		\big] 	\\
		&\quad \cdot \frac{1}{\sqrt{g_{{p^{1}},ii}  (x^{1})  }}
		\Big[ \frac{\p_{i} \eta_{{p^{1}}} (x^{1})  }{\sqrt{g_{{p^{1}},ii}(x^{1})}   } { +} \frac{\V^{1}_{{p^{1}},i}}{\V^{1}_{{p^{1}},3}} \frac{\p_{3} \eta_{{p^{1}}}(x^{1})   }{\sqrt{g_{{p^{1}},33}  (x^{1}) }}\Big]
		,  
	\end{split}
	\end{equation}
	\begin{equation} \label{Est--5}
	\begin{split}	
		\frac{\p \V^{1}_{{p^{1}}, i}}{\p x_{j}} 
		&=  
		-\frac{\p_{i} \eta_{{p^{1}}}(x^{1})   }{\sqrt{g_{{p^{1}},ii} (x^{1})  }  } \cdot \Big[ \int^{t-\tb}_{t} \Big( \frac{\p X(s)}{\p x_{j}} \cdot \nabla_{x} \Big) ( \nabla_{x} \Phi (s) ) \dd s  \Big] 	\\
		&
		\quad\quad\quad +  \sum_{\ell=1}^{2} \frac{\p \X^{1}_{{p^{1}}, \ell}}{\p x_{j}}  
		\p_{\ell}\big(  \frac{\p_{i} \eta_{{p^{1}}}}{\sqrt{g_{{p^{1}}, ii}}}  \big) 
		\Big|_{x^{1}}\cdot V(t-\tb)   \\
		&=
		\sum_{\ell=1}^{2} \frac{\p \X^{1}_{{p^{1}}, \ell}}{\p x_{j}}  
		\p_{\ell}\big(  \frac{\p_{i} \eta_{{p^{1}}}}{\sqrt{g_{{p^{1}}, ii}}}  \big) 
		\Big|_{x^{1}}\cdot  v	
		+    O_\Omega  (\| \Phi \|_{C^{2}}	)
		\tb (1+ |v|\tb) e^{ \|\nabla_{x}^{2} \Phi \|_{\infty } (\tb)^{2} /2 } \\
		& + 
		\| \nabla_{x} \Phi \|_{\infty} |\tb|			\Big(1 + \frac{|\V^{1}_{p^{1},i}|}{|\V^{1}_{p^{1},3}|}\Big) 
		\Big(
		1+  O_\Omega  (\| \Phi \|_{C^{2}})
		(1+ \tb |v|) ( \tb)^{2} e^{\| \nabla_{x}^{2} \Phi \|_{\infty} (\tb)^{2} /2 }
		\Big)				
		,\\
	\end{split}
	\end{equation}
	\begin{equation} \label{Est--6}
	\begin{split}	
		\frac{\p \V^{1}_{{p^{1}}, i}}{\p v_{j}} 
		&=
		\frac{\p_{i} \eta_{p^{1}}(x^{1}) }{\sqrt{g_{p^{1},ii}(x^{1})  }} \cdot e_{j}
		+  
		\sum_{\ell=1}^{2} \frac{\p \X^{1}_{{p^{1}}, \ell}}{\p v_{j}} 
		\p_{\ell}
		\Big( \frac{\p_{i} \eta_{{p^{1}}}}{\sqrt{g_{{p^{1}}, ii}}}  \Big)\Big|_{x^{1}} \cdot V(t-\tb)
		\\
		&\quad + 
		\frac{\p_{i} \eta_{{p^{1}}} (x^{1})}{\sqrt{g_{{p^{1}},ii} (x_{1})}}
		\cdot 
		\Big[ - \nabla_{x} \Phi (t-\tb; X(t-\tb;t,x,v)) \frac{\p \tb}{\p v_{j}}  	\\
		&\quad -\int^{t-\tb}_{t} \Big(   \frac{\p X (s)}{\p v_{j}} \cdot \nabla_{x} \Big) \nabla_{x} \Phi ( s ) \dd s  \Big]\\
		&= 
		\frac{\p_{i} \eta_{p^{1}}(x^{1}) }{\sqrt{g_{p^{1},ii}(x^{1})  }} \cdot e_{j}
		+  
		\sum_{\ell=1}^{2} \frac{\p \X^{1}_{{p^{1}}, \ell}}{\p v_{j}} 
		\p_{\ell}
		\Big( \frac{\p_{i} \eta_{{p^{1}}}}{\sqrt{g_{{p^{1}}, ii}}}  \Big)\Big|_{x^{1}} \cdot  v	\\
		&
		+   O_\Omega  ( \| \Phi \|_{C^{2}} )
		\big(1+\frac{|\V_{p^{1}}^{1}|}{|\V^{1}_{p^{1},3}|}  \big)|\tb|\big(1+  O_\Omega  ( \| \Phi \|_{C^{2}} )(\tb)^{2} e^{\| \nabla_{x}^{2} \Phi \|_{\infty} (\tb)^{2} /2}\big).
	\end{split}
	\end{equation}
	Here, $e_{j}$ is the $j^{th}$ directional unit vector in $\mathbb{R}^{3}$. \\ 
	\noindent	Moreover,
	\begin{equation} \label{Est--7}
	\begin{split}
		\frac{\p |\mathbf{v}^{1}_{p^{1}}|}{\p x_{j}} &=
		O_{\O}(\| \nabla_{x} \Phi \|_{\infty}) \frac{1+ O_{\O} (\| \nabla_{x}^{2} \Phi \|_{\infty}) (1+ \tb |v|) |\tb|^{2} e^{ \| \nabla_{x}^{2 } \Phi \|_{\infty} (\tb)^{2}}
		}{|\mathbf{v}^{1}_{p^{1},3}|}  \\
		&\quad +O_{\O} (\| \Phi \|_{C^{2}}) \tb e^{ \| \nabla_{x}^{2} \Phi \|_{\infty} (\tb)^{2} /2},
	\end{split}
	\end{equation}
	\begin{equation} \label{Est--8}
	\begin{split}
		\frac{\p |\mathbf{v}^{1}_{p^{1}}|}{\p v_{j}} &=  \lim_{s \downarrow t^{1}} \frac{V_{j}(s;t,x,v)}{|V(s;t,x,v)|}
		+ O_{\O} (\| \Phi \|_{C^{2}})  (\tb)^{2} e^{ \| \nabla_{x}^{2} \Phi \|_{\infty} (\tb)^{2} /2} \\
		&\quad + O_{\O}(\| \nabla_{x} \Phi \|_{\infty}) \frac{\tb}{|\mathbf{v}^{1}_{p^{1},3}|}\Big\{1+ O_{\O} (\| \nabla_{x}^{2} \eta \|_{\infty}) |\tb|^{3} e^{\| \nabla_{x}^{2} \Phi \|_{\infty} (\tb)^{2}}
		\Big\}.  \\
	\end{split}
	\end{equation}
\end{lemma}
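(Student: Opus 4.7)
The plan is to adapt the strategy of Lemma \ref{Jac_billiard} to the initial leg of the trajectory, treating $(t,x,v)$ as data in Cartesian coordinates rather than in a boundary chart. Two key simplifications appear: $v$ does not depend on $x$ (so $\p v/\p x_j=0$), and $\p v/\p v_j=e_j$, both of which eliminate the Christoffel contribution (\ref{V_X}) present in Lemma \ref{Jac_billiard}. The starting point is the identity analogous to (\ref{position identity}),
\begin{equation*}
\eta_{p^1}(\X^1_{p^1,1},\X^1_{p^1,2},0)=x+(t^1-t)\,v-\int_t^{t^1}\!\!\int_t^s\nabla_x\Phi(\tau,X(\tau;t,x,v))\,\dd\tau\,\dd s,
\end{equation*}
where $t^1:=t-\tb$. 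Two algebraic facts drive the computation: the residual $v-\int_t^{t^1}\nabla_x\Phi(\tau,X(\tau))\,\dd\tau=\lim_{s\downarrow t^1}V(s;t,x,v)=:\vb$; and the projections $\vb\cdot(\p_3\eta_{p^1}/\sqrt{g_{p^1,33}})=-\V^1_{p^1,3}$ (since $v^1=R_{x^1}\vb$ flips only the normal component) and $\vb\cdot(\p_i\eta_{p^1}/g_{p^1,ii})=\V^1_{p^1,i}/\sqrt{g_{p^1,ii}}$ for $i=1,2$.

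To prove (\ref{Est--1}) and (\ref{Est--2}), differentiate the position identity in $x_j$ or $v_j$ and project against the outward normal $\p_3\eta_{p^1}/\sqrt{g_{p^1,33}}$; by orthonormality (\ref{orthonormal_eta}) the LHS vanishes, and the resulting scalar equation is readily solved for $\p\tb/\p x_j$ and $\p\tb/\p v_j$. The residual double integral is controlled by the Gronwall bounds $\sup_s|\p X(s)/\p x_j|\lesssim 1+O(\|\Phi\|_{C^2})|\tb|^2 e^{\|\Phi\|_{C^2}|\tb|^2/2}$ and $\sup_s|\p X(s)/\p v_j|\lesssim|\tb|\,e^{\|\Phi\|_{C^2}|\tb|^2/2}$, derived as in (\ref{dX/dx}) and (\ref{dX/dv}) from the initial data $\p X(t)/\p x_j=e_j$, $\p X(t)/\p v_j=0$. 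For (\ref{Est--3}) and (\ref{Est--4}), I project instead against the tangent direction $\p_i\eta_{p^1}/g_{p^1,ii}$ ($i=1,2$), which isolates $\p\X^1_{p^1,i}/\p(x_j,v_j)$ on the LHS; substituting the previously obtained $\p\tb/\p(x_j,v_j)$ yields the stated expressions with the characteristic tangential--normal combination $\p_i\eta_{p^1}/\sqrt{g_{p^1,ii}}+(\V^1_{p^1,i}/\V^1_{p^1,3})\p_3\eta_{p^1}/\sqrt{g_{p^1,33}}$.

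For (\ref{Est--5}) and (\ref{Est--6}), I differentiate the definition $\V^1_{p^1,i}=(\p_i\eta_{p^1}(x^1)/\sqrt{g_{p^1,ii}(x^1)})\cdot\vb$ by the chain rule through $x^1$ and through $\vb=v-\int_t^{t^1}\nabla_x\Phi(\tau,X(\tau))\,\dd\tau$, mirroring the proof of (\ref{Est-3}); substituting the estimates for $\p\X^1_{p^1,\ell}/\p(x_j,v_j)$ and $\p\tb/\p(x_j,v_j)$ with the Gronwall bounds produces the stated expressions. For (\ref{Est--7}) and (\ref{Est--8}), I use that specular reflection preserves speed, so $|\V^1_{p^1}|=|\vb|$, together with the energy evolution $|\vb|^2=|v|^2-2\int_t^{t^1}V(s)\cdot\nabla_x\Phi(s,X(s))\,\dd s$; differentiating in $x_j$ or $v_j$ and invoking the previous Gronwall estimates gives the required bounds.

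The main obstacle is bookkeeping the $O(\|\Phi\|_{C^2})$ error terms through the projection step so that the potentially singular factors $1/\V^1_{p^1,3}$ and $1+|\V^1_{p^1,i}|/|\V^1_{p^1,3}|$ appear exactly where stated. For (\ref{Est--3})--(\ref{Est--6}) one must verify that the errors carried by $\p\tb/\p(x_j,v_j)$ combine with the purely tangential contributions into the coefficient $1+|\V^1_{p^1,i}|/|\V^1_{p^1,3}|$, while the equality of the two displayed forms in (\ref{Est--2}), (\ref{Est--4}), (\ref{Est--6}) follows by substituting the Gronwall bound for $\p X(\tau)/\p(x_j,v_j)$ into the inner double integral of the first line.
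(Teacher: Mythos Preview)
Your proposal is correct and follows essentially the same approach as the paper: the paper explicitly states that (\ref{Est--1})--(\ref{Est--6}) are obtained by repeating the proof of Lemma~\ref{Jac_billiard} with the Euclidean coordinates in place of $\eta_{p^k}$ (so $\p v/\p x_j=0$, $\p v/\p v_j=e_j$), projecting the differentiated position identity against the normal and tangent directions, and invoking the Gronwall bounds on $\p X/\p x_j$ and $\p X/\p v_j$. For (\ref{Est--7})--(\ref{Est--8}) the paper differentiates $|\V^1_{p^1}|=\lim_{s\downarrow t^1}|V(s)|$ directly via $\p_{x_j}|\V^1_{p^1}|=(V/|V|)\cdot\p_{x_j}V$ rather than through the energy identity you wrote, but this is the same computation reorganized.
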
	

\begin{proof}
We have
\begin{equation}\label{global down_V}
\lim_{s\downarrow t^{1}} V(s;t, x, v) = v - \int^{t^{1}}_{t} \nabla \Phi (s, X(s;t, x, v)) \dd s,
\end{equation}
\begin{equation}\label{global down_X}
X( \tau; t, x, v) = x +   v(\tau- t) - \int_{t}^{\tau} \dd s\int_{t}^{ s} \dd s^{\prime} \nabla\Phi( s',X( s';t,x,v))   .
\end{equation}
Especially, when $\tau=t^{1}$, we get
\begin{equation}\label{global position identity}
X( t^{1}; t, x, v) = x +   v(t^{1}- t) - \int_{t}^{t^{1}} \dd s\int_{t}^{ s} \dd s^{\prime} \nabla\Phi( s',X( s';t,x,v))   .
\end{equation}
From (\ref{global down_V}), we have
\begin{equation} \label{global V over x}
\begin{split}
\lim_{s \downarrow t^{1}} \frac{\p V(s;t,x,v)}{\p x_{j}} &= { \p\tb \over \p x_{j}} \nabla\Phi(t^{1};X(t^{1};t,x,v)) - \int_{t}^{t^{1}} \big( \frac{\p X(s)}{\p x_{j}}\cdot\nabla_{x} \big) \nabla \Phi(s) \dd s,
\end{split}
\end{equation}
and from (\ref{global V over x}),
\begin{equation} \label{global X over x}
\begin{split}	
\sup_{\tau \leq s^{\prime} \leq t} \bigg| \frac{\p X(  s^{\prime}; t, x, v) }{\p x_{j}} \bigg| 
& \leq   1 + \int_{t}^{\tau} |s - t| \| \Phi\|_{C^{2}} \sup_{\tau \leq s^{\prime} \leq t} \bigg| \frac{\p X(  s^{\prime}; t, x, v) }{\p x_{j}} \bigg| \dd s.
\end{split}
\end{equation}
By Gronwall's inequality 
\begin{equation} \label{global dX/dx}
\sup_{\tau \leq s^{\prime} \leq t} \bigg| \frac{\p X(  s^{\prime}; t, x, v) }{\p x_{j}} \bigg| 
\leq  O_{\Omega}(1) e ^{   { \|\nabla^{2} \Phi\|_{\infty}|t - \tau|^2 } /2   } .
\end{equation} 

Similarly, from (\ref{global down_V}), we have
\begin{equation} \label{global V over v}
\begin{split}
\lim_{s \downarrow t^{1}} \frac{\p V(s;t,x,v)}{\p v_{j}} &= e_{j} + { \p\tb \over \p v_{j}} \nabla\Phi(t^{1};X(t^{1};t,x,v)) - \int_{t}^{t^{1}} \big( \frac{\p X(s)}{\p v_{j}}\cdot\nabla_{x} \big) \nabla \Phi(s) \dd s,
\end{split}
\end{equation}
and from (\ref{global down_X}),
\begin{equation} \label{global X over v}
\begin{split}	
\sup_{\tau \leq s^{\prime} \leq t} \bigg| \frac{\p X(  s^{\prime}; t, x, v) }{\p v_{j}} \bigg| 
& \leq   |\tau-t| + \int_{t}^{\tau} |s - t| \| \Phi\|_{C^{2}} \sup_{\tau \leq s^{\prime} \leq t} \bigg| \frac{\p X(  s^{\prime}; t, x, v) }{\p v_{j}} \bigg| \dd s.
\end{split}
\end{equation}
By Gronwall's inequality 
\begin{equation} \label{global dX/dv}
\sup_{\tau \leq s^{\prime} \leq t} \bigg| \frac{\p X(  s^{\prime}; t, x, v) }{\p v_{j}} \bigg| 
\leq  O_{\Omega}(1) |\tau-t| e ^{   { \|\nabla^{2} \Phi\|_{\infty}|t - \tau|^2 } /2   } .
\end{equation} 

To prove (\ref{Est--1}) - (\ref{Est--6}), these estimates are very similar with those of Lemma \ref{Jac_billiard}. We are suffice to choose global euclidean coordinate instead of $\eta_{p^{k}}$. Therefore we should replace
\begin{equation} \label{replace}
	\eta_{p^{k+1}} \rightarrow \eta_{p^{1}},\quad \eta_{p^{k}} \rightarrow x,\quad t^{k} \rightarrow t,\quad t^{k+1} \rightarrow t-\tb=t^{1},\quad \p_{x_{j}} x = e_{j}.
\end{equation}

\textit{Proof of (\ref{Est--1})}. For $j=1,2$, we apply $\p x_{j}$ to (\ref{global position identity}) and take $\cdot \frac{\p_{3} \eta_{p^{1}}}{\sqrt{g_{p^{1},33}}} \Big|_{x^{1}}$. In this case,  we have ${\p v \over \p x_{j} } = 0$. Then we get the following.
\begin{equation} \label{instead Estimate 1}
\begin{split}
\frac{\p \tb}{\p x_{j}} &=
- 
\frac{1}{ \V^{1}_{ {{p}^{1}} ,3 }  }
\frac{  \p_{3} \eta_{{p}^{1}}(x^{1})}{\sqrt{g_{{p}^{1},33}(x^{1})}} \cdot 
  e_{j}   \\
&\quad +  { \frac{1}{ \V^{1}_{ p^{1}, 3 } }\frac{  \p_{3} \eta_{{p}^{1}}(x^{1})}{\sqrt{g_{{p}^{1},33}(x^{1})}}
	\cdot \int^{ t^{1} }_{t} \dd s \int^{ s}_{t} \dd \tau \big(\frac{\p X(  \tau) }{\p \X_{{p},j}} \cdot \nabla_{x}  \big) \big(  \nabla_{x} \Phi ( \tau) \big) }  .
\end{split}	
\end{equation}
We combine this with (\ref{global X over x}) to get (\ref{Est--1}). \\

\textit{Proof of (\ref{Est--2})}. For $j=1,2,3$, we apply $\p v_{j}$ to (\ref{global position identity}) and take $\cdot \frac{\p_{3} \eta_{p^{1}}}{\sqrt{g_{p^{1},33}}} \Big|_{x^{1}}$. Then we get
\begin{equation}  \label{instead v diff pos iden dot 3}
\begin{split}
0 &= \sum_{l=1,2} \frac{\p \X^{1}_{{p}^{1},l}}{\p v_{j}} \frac{\p\eta_{{p}^{1}}}{\p \X^{k}_{{p}^{1},l}}\Big\vert_{x^{1}}  \cdot   \frac{\p_{3} \eta _{{p}^{1}} }{\sqrt{g_{{p}^{1}, 33} }} \Big|_{x^{1}}  \\
&=	\Big\{ - ( t - t^{1}) e_{j} - \frac{\p( t - t^{1})}{\p v_{j} } \lim_{s \downarrow t^{1}}V(s;t, x, v) \Big\} \cdot  \frac{\p_{3} \eta _{{p}^{1}} }{\sqrt{g_{{p}^{1}, 33} }} \Big|_{x^{1}} \\
&\quad  + O_{\O}(\|\Phi \|_{C^{2}}) |t - t^{1}|^{2}  \sup_{s} \big| \frac{\p X(s)}{\p v_{j} } \big| .
\end{split}
\end{equation}
We use (\ref{global down_V}) and (\ref{global X over v}) to get (\ref{Est--2}). \\		

\textit{Proof of (\ref{Est--3})}. For $i,j=1,2$, we apply $\p x_{j}$ to (\ref{global position identity}) and take $\cdot \frac{\p_{i} \eta_{p^{1}}}{\sqrt{g_{p^{1},ii}}} \Big|_{x^{1}}$. And then we use (\ref{global X over x}) to get

\begin{equation} \label{instead Estimate 2}
\begin{split}
	\frac{\p \X_{{p}^{1},i}^{1}}{\p  x_{j} }   
	&=    \frac{1}{   \V^{1}_{ {{p}^{1}} ,3 }   }
	\frac{  \p_{{3}} \eta_{{p}^{1}}(x^{1})}{\sqrt{g_{{p}^{1},33}(x^{1})}} \cdot 
	 e_{j} 
	\frac{ \V^{1}_{p^{1}, i} }{\sqrt{g_{p^{1}, ii}}} \Big|_{x^{1}}  + \frac{\p_{i} \eta_{p^{1}} }{ {g_{p^{1}, ii} }} \Big|_{x^{1}}  \cdot  e_{j}  \\
	&+ O_{\Omega}(     \|\Phi\|_{C^{2}} ) \frac{  |\mathbf{v}_{p^{1},i}^{1}|  }{|\mathbf{v}_{p^{1},3}^{1}|}( t - t^{1})^2 e^{\| \Phi \|_{C^{2}}(t - t^{1})^{2} /2 }  \\
	&+ O_{\Omega}(     \| \Phi\|_{C^{2}} ) (t-t^{1})^{2}  e^{\| \Phi \|_{C^{2}}(t - t^{1})^{2} /2 }.
\end{split}
\end{equation}
This yields (\ref{Est--3}). \\

\textit{Proof of (\ref{Est--4})}. For $i=1,2$ and $j=1,2,3$, we apply $\p v_{j}$ to (\ref{global position identity}) and take $\cdot \frac{\p_{i} \eta_{p^{1}}}{\sqrt{g_{p^{1},ii}}} \Big|_{x^{1}}$. 	
\begin{eqnarray*}
	\frac{\p \X^{1}_{p^{1}, i}}{\p v_{j} } 
	&=&
	\Big\{- \frac{\p (t - t^{1})}{\p v_{j} } \lim_{s\downarrow t^{1}} V(s; t, x, v)- (t - t^{1}) \frac{\p v}{\p v_{j} }
	\Big\}
	\cdot \frac{\p_{{i}} \eta_{{p}^{1}}}{g_{{p}^{1},ii}}\Big|_{x^{1}} 	\\
	&& + 
	O_{\Omega}( \| \Phi \|_{C^{2}} )|t - t^{1}|^{2} \sup_{s } \big| \frac{\p X(s)}{\p \V^{k}_{p^{k},j}} \big|.
\end{eqnarray*}
And then we use (\ref{global X over x}) to get (\ref{Est--4}). \\

\textit{Proof of (\ref{Est--5})}. For $i=1,2$ and $j=1,2$, we apply $\p x_{j}$ to \begin{equation}  
	\begin{split}\label{global v_123}
	{\V}^{1}_{{p}^{1},i}   
	&=  \frac{ \p_{i} \eta_{{p}^{1}} }{\sqrt{g_{{p}^{1},ii}}}  \Big|_{x^{1}} \cdot  \lim_{s \downarrow  t^{1}}V(s; t , x ,v ), \ \ \ \text{for} \ \  i=1,2,\\
	{\V}^{1}_{{p}^{1},3}  & =  
	- \frac{ \p_{3} \eta_{{p}^{1}} }{\sqrt{g_{{p}^{1},33}}}  \Big|_{x^{1}} \cdot  \lim_{s \downarrow  t^{1}}V(s; t , x ,v ).
	\end{split}
	\end{equation}
	For $i,j=1,2$, from (\ref{v_123}), 
	\begin{equation*}
	\begin{split}
		\frac{\p \V^{1}_{{p}^{1}, i}}{\p x_{j} }  =&
		\frac{\p_{i} \eta_{{p}^{1} } (x^{1}) }{\sqrt{g_{{p}^{1}, ii} (x^{1})}} \cdot \Big[\nabla_{x} \Phi (t^{1}; t ,x , v ) \frac{\p (t  - t^{1})}{\p x_{j} }  \\
		& \quad  - \int^{t^{1}}_{t }  \big(\p_{x_{j}} X(s ) \cdot \nabla_{x} \big) \nabla_{x} \Phi (s, X(s;t , x , v )) \dd s 
		\Big]
		\\
		&\quad				+
		\sum_{\ell=1}^{2} 
		\frac{\p \X^{1}_{{p}^{1}, \ell}}{ \p x_{j} } \frac{\p}{\p \X^{1}_{{p}^{1}, \ell}}\Big(  \frac{\p_{i} \eta_{{p}^{1}}  }{\sqrt{g_{{p}^{1}, ii}    }  }  \Big)\Big|_{x^{1}} \cdot   \lim_{s \downarrow  t^{1}}V(s; t , x ,v ).  
	\end{split}
	\end{equation*}	
	And for $j=1,2,$
	\begin{equation*} \begin{split}
			\frac{\p \V^{1}_{{p}^{1}, 3}}{\p x_{j} }  =&
			-\frac{\p_{3} \eta_{{p}^{1} } (x^{1}) }{\sqrt{g_{{p}^{1}, 33} (x^{1})}} \cdot 
			\Big[ \nabla_{x} \Phi (t^{1}; t ,x , v ) \frac{\p (t  - t^{1})}{\p x_{j} } \\
			&\quad  
			- \int^{t^{1}}_{t }  \big(\p_{x_{j} } X(s ) \cdot \nabla_{x} \big) \nabla_{x} \Phi (s, X(s;t , x , v )) \dd s 
			\Big]
			\\
			&
			-
			\sum_{\ell=1}^{2} 
			\frac{\p \X^{1}_{{p}^{1}, \ell}}{\p x_{j} } \frac{\p}{\p \X^{1}_{{p}^{1}, \ell}}\Big(  \frac{\p_{3} \eta_{{p}^{1}}  }{\sqrt{g_{{p}^{1}, 33}    }  }  \Big)\Big|_{x^{1}} \cdot  \lim_{s \downarrow  t^{1}}V(s; t , x ,v ).\end{split}
	\end{equation*}
	From (\ref{global down_V}), (\ref{Est--3}), and (\ref{Est--1}), we prove (\ref{Est-5}).\\

\textit{Proof of (\ref{Est--6})}. Similar as above we apply $\p v_{j}$ to (\ref{global v_123}) and then use (\ref{global down_V}), (\ref{Est--4}), and (\ref{Est--2}). We skip detail. \\

\textit{Proof of (\ref{Est--7})}. Note that $|\V_{p^{1}}^{1}| = \lim_{s \downarrow t^{1}} {|V(s;t,x,v)|}$ and 

\begin{equation*}
\begin{split}
	2 |\V_{p^{1}}^{1}| \frac{\p |\V_{p^{1}}^{1}| }{ \p x_{j} } = 2 \lim_{s \downarrow t^{1}}  V(s;t,x,v) \cdot \lim_{s \downarrow t^{1}}  \p_{x_{j}}V(s;t,x,v) ,
\end{split}
\end{equation*}
we have
\begin{equation} \label{norm over x}
\begin{split}
		\frac{\p |\V_{p^{1}}^{1}| }{ \p x_{j} } &= \lim_{s \downarrow t^{1}} { V(s;t,x,v) \over |V(s;t,x,v)| } \cdot \lim_{s \downarrow t^{1}}  \p_{x_{j}}V(s;t,x,v) .
\end{split}
\end{equation}
We combine (\ref{norm over x}), (\ref{global V over x}), (\ref{Est--1}), and (\ref{global dX/dx}) to derive (\ref{Est--7}). \\

\textit{Proof of (\ref{Est--8})}. We perform similar process as above with $\p v_{j}$ to get
\begin{equation} \label{norm over v}
\begin{split}
\frac{\p |\V_{p^{1}}^{1}| }{ \p v_{j} } &= \lim_{s \downarrow t^{1}} { V(s;t,x,v) \over |V(s;t,x,v)| } \cdot \lim_{s \downarrow t^{1}}  \p_{v_{j}}V(s;t,x,v) .
\end{split}
\end{equation}

We combine (\ref{norm over v}), (\ref{global V over v}), (\ref{Est--2}), and (\ref{global dX/dv}) to derive (\ref{Est--8}).

\end{proof}
	
\vspace{4pt}

\begin{lemma} \label{trans trajec}
We define $(\mathbf{X}_{{p}} (s;t,x,v), \mathbf{V}_{{p}} (s;t,x,v))$ as 
\begin{equation}\begin{split}\label{XV}
\eta_{{p}} (\mathbf{X}_{{p}} (s;t,x,v))  &:=   X(s;t,x,v) , \\
\mathbf{V}_{{p},i} (s;t,x,v )  &:=  \frac{\p_{i} \eta_{{p}} (  
	\mathbf{X}_{{p}} (s;t,x,v)
	)}{
	\sqrt{
		g_{{p},ii} (\mathbf{X}_{{p}} (s;t,x,v))
	}
}   \cdot V(s;t,x,v).
\end{split}\end{equation}
Then we have
\begin{equation}\begin{split}\label{dot_X}
&\dot{\mathbf{X}}_{{p},i} (s;t,x,v) \\
&= \sum_{j} 
\Big(
\frac{\p_{i} \eta_{{p},j}  (\mathbf{X}_{{p},1}(s), \mathbf{X}_{{p},2}(s),0)  }{g_{{p},ii }  (\mathbf{X}_{{p},1}(s), \mathbf{X}_{{p},2}(s),0)  }
+ O_{\| \eta \|_{C^{2}}}( |\mathbf{X}_{{p}, 3} (s)|)
\Big)
V_{j} (s;t,x,v)\\
&= \frac{1}{\sqrt{g_{{p},ii} (  \mathbf{X}_{{p} } (s;t,x,v)  )}}
\mathbf{V}_{{p}, i}(s;t,x,v) + O_{\| \eta \|_{C^{2}}}(\max_{s} |\mathbf{X}_{{p}, 3}(s )| \max_{s} |V(s )|),\end{split}
\end{equation}
\begin{equation}
\begin{split}\label{dot_V}
&\dot{\mathbf{V}}_{p,i} (s;t,x,v) \\
&=
-\sum_{m =1}^{3}\sum_{n (\neq i)}\sum_{\ell =1}^{2} \frac{1}{\sqrt{g_{p, \ell\ell} }
}
\p_{\ell} \Big( \frac{\p_{n} \eta_{p,m}}{\sqrt{g_{p,nn}}}\Big)\frac{\p_{i} \eta_{p,m}}{\sqrt{g_{p,ii}}}\Big|_{ ( \mathbf{X}_{p,1} (s), \mathbf{X}_{p,2} (s), 0  )} 	\\
&\quad\quad\times 
{\mathbf{V}}_{p,\ell} (s;t,x,v)
\mathbf{V}_{p,n} (s;t,x,v)
+ O(\| \eta \|_{C^{3}}) \big\{ \max_{s} |\mathbf{X}_{p,3} (s)| \max_{s} |V(s)|^{2} 	\\
&\quad +  \max_{s}|\mathbf{V}_{p,3} (s)| \max_{s}| {V}  (s)| + \| \nabla_{x}^{2} \Phi \|_{\infty} \big\} .
\end{split}
\end{equation}
\end{lemma}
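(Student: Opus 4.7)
The starting points are the two defining identities in (\ref{XV}) and the Hamiltonian system (\ref{E_Ham}) which gives $\dot X=V$ and $\dot V=-\nabla_x\Phi$. First I would handle (\ref{dot_X}) by differentiating $\eta_{p}(\mathbf{X}_{p}(s))=X(s)$ in $s$ via the chain rule, obtaining the linear system $\sum_k \partial_k \eta_p(\mathbf{X}_p(s))\, \dot{\mathbf{X}}_{p,k}(s) = V(s)$. Taking the inner product with $\partial_i\eta_p/g_{p,ii}$ and using the orthonormality (\ref{orthonormal_eta}) at $\mathbf{X}_{p,3}=0$, I would Taylor-expand the metric quantities $\partial_j\eta_p$ and $g_{p,jj}$ in $\mathbf{X}_{p,3}$; the zeroth order term reproduces the main contribution in (\ref{dot_X}), and the Taylor remainder (bounded by $\|\eta\|_{C^2}|\mathbf{X}_{p,3}|$) multiplied by $|V(s)|$ yields the stated error $O_{\|\eta\|_{C^2}}(\max_s|\mathbf{X}_{p,3}(s)|\max_s|V(s)|)$.

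For (\ref{dot_V}) I would apply the product rule to the definition $\mathbf{V}_{p,i}(s)=\frac{\partial_i\eta_p}{\sqrt{g_{p,ii}}}(\mathbf{X}_p(s))\cdot V(s)$:
\begin{equation*}
\dot{\mathbf{V}}_{p,i}=\sum_{\ell=1}^{3}\partial_{\ell}\!\Big(\frac{\partial_i\eta_p}{\sqrt{g_{p,ii}}}\Big)(\mathbf{X}_p(s))\,\dot{\mathbf{X}}_{p,\ell}\cdot V(s)+\frac{\partial_i\eta_p}{\sqrt{g_{p,ii}}}(\mathbf{X}_p(s))\cdot\dot V(s).
\end{equation*}
The last term is controlled by $\|\eta\|_{C^1}\|\nabla_x\Phi\|_\infty$ via (\ref{E_Ham}), which is absorbed into the $O(\|\eta\|_{C^3})\|\nabla_x^{2}\Phi\|_\infty$ remainder (the $C^2$ of $\Phi$ appearing because bounds elsewhere in the argument force an upgrade in regularity of $\Phi$; cf.\ the $\nabla^2\Phi$ encountered through Gronwall estimates in Lemmas \ref{Jac_billiard} and \ref{global to local}). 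In the main term I would substitute $\dot{\mathbf{X}}_{p,\ell}=\mathbf{V}_{p,\ell}/\sqrt{g_{p,\ell\ell}}+O(|\mathbf{X}_{p,3}||V|)$ from (\ref{dot_X}) (noting the sum over $\ell=3$ involves $\dot{\mathbf{X}}_{p,3}$ which is $O(\mathbf{V}_{p,3})$ at leading order, producing the second error piece $\max|\mathbf{V}_{p,3}|\max|V|$), and expand $V(s)=\sum_{n=1}^{3}\mathbf{V}_{p,n}\,\partial_n\eta_p/\sqrt{g_{p,nn}}+O(|\mathbf{X}_{p,3}||V|)$ through the boundary version of (\ref{v_v}).

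The resulting quadratic form in $\mathbf{V}_p$ is
\begin{equation*}
-\sum_{\ell=1}^{2}\sum_{n=1}^{3}\sum_{m=1}^{3}\frac{1}{\sqrt{g_{p,\ell\ell}}}\,\partial_\ell\!\Big(\frac{\partial_n\eta_{p,m}}{\sqrt{g_{p,nn}}}\Big)\frac{\partial_i\eta_{p,m}}{\sqrt{g_{p,ii}}}\,\mathbf{V}_{p,\ell}\mathbf{V}_{p,n}+(\text{errors}),
\end{equation*}
where the overall sign comes from reassembling mixed second derivatives via the Christoffel identity (\ref{Gamma}). The diagonal $n=i$ contribution vanishes at the boundary: by $\langle\partial_n\eta_p,\partial_i\eta_p\rangle=g_{p,ii}\delta_{ni}$ and $\partial_\ell g_{p,ii}=2\langle\partial_{\ell i}\eta_p,\partial_i\eta_p\rangle$, a direct expansion shows $\sum_m\partial_\ell(\partial_i\eta_{p,m}/\sqrt{g_{p,ii}})(\partial_i\eta_{p,m}/\sqrt{g_{p,ii}})=\tfrac{1}{2}\partial_\ell(g_{p,ii}/g_{p,ii})=0$. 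This removes the $n=i$ term and yields exactly the sum $\sum_{n(\neq i)}$ appearing in (\ref{dot_V}), so that only off-diagonal Christoffel-type contributions survive as the main quadratic form.

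\textbf{Main obstacle.} The delicate part is the error bookkeeping: one must keep track of three simultaneous small parameters—$|\mathbf{X}_{p,3}(s)|$ (distance to the boundary), $|\mathbf{V}_{p,3}(s)|$ (normal component of velocity), and $\|\Phi\|_{C^2}$—while Taylor-expanding metric coefficients in the normal direction, substituting $\dot{\mathbf{X}}_{p,\ell}$, and invoking (\ref{v_v}) for $V$ in terms of $\mathbf{V}_p$. In particular, verifying that the $n=i$ diagonal term cancels identically (not merely at leading order) at the boundary, and carefully identifying the three distinct remainder structures in (\ref{dot_V}), is the only place where cancellations have to be tracked at the Christoffel-symbol level; everything else is a direct chain-rule computation.
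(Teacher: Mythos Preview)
Your proposal is correct and follows essentially the same route as the paper: differentiate the defining identity for $\dot{\mathbf{X}}_{p}$ and invert via (\ref{orthonormal_eta}) with a Taylor expansion in $\mathbf{X}_{p,3}$, then apply the product rule for $\dot{\mathbf{V}}_{p,i}$, substitute $\dot{\mathbf{X}}_{p,\ell}$, expand $V$ in the boundary frame, and kill the $n=i$ diagonal by $\partial_\ell(|e_i|^2)=0$. One small correction: the overall minus sign and the swap of $\partial_\ell$ from the $i$-vector to the $n$-vector come not from the Christoffel identity (\ref{Gamma}) but from differentiating the orthonormality relation $\frac{\partial_i\eta_p}{\sqrt{g_{p,ii}}}\cdot\frac{\partial_n\eta_p}{\sqrt{g_{p,nn}}}=\delta_{in}$ at $\mathbf{X}_{p,3}=0$, which is exactly how the paper obtains it.
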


\begin{proof}	
\textit{Proof of (\ref{dot_X}) }.
	From (\ref{E_Ham}),
	\begin{equation}\begin{split}\notag
	\sum_{\ell} \p_{\ell} \eta_{{p}, i} ( \mathbf{X}_{{p}} (s;t,x,v))
	\dot{ \mathbf{X}}_{{p},\ell}  (s;t,x,v)   = \dot{X}_{i}(s;t,x,v) = V_{i}(s;t,x,v). 
	\end{split}
	\end{equation}
	Note that from (\ref{orthonormal_eta}), for $|\mathbf{X}_{{p},3}(s)|\ll 1$,
	\begin{equation}\label{inverse}
	\begin{split}
	\big(\nabla \eta_{{p}} (\mathbf{X}_{{p},1} (s),  \mathbf{X}_{{p},2}(s), \mathbf{X}_{{p},3}(s))\big)^{-1}_{i,j} &= 
	\frac{\p_{i} \eta_{{p},j} (\mathbf{X}_{{p},1}(s), \mathbf{X}_{{p},2}(s),0) }{g_{{p},ii } (\mathbf{X}_{{p},1}(s), \mathbf{X}_{{p},2}(s),0) }  \\
	&\quad + O(\| \eta \|_{C^{2}}) |\mathbf{X}_{{p}, 3}(s)|,\\
	\left( \begin{array}{c}
	\frac{\p_{1} \eta_{p}}{\sqrt{g_{p,11}}} (\mathbf{X}_{{p} ,1}(s), \mathbf{X}_{{p} ,2}(s),0)\\ 
	\frac{\p_{2} \eta_{p}}{\sqrt{g_{p,22}}} (\mathbf{X}_{{p} ,1}(s), \mathbf{X}_{{p} ,2}(s),0)\\ 
	\frac{\p_{3} \eta_{p}}{\sqrt{g_{p,33}}} (\mathbf{X}_{{p} ,1}(s), \mathbf{X}_{{p} ,2}(s),0)
	\end{array}
	\right)^{-1} _{i,j}
	&= 
	\frac{\p_{j} \eta_{{p},i} (\mathbf{X}_{{p},1}(s), \mathbf{X}_{{p},2}(s),0) }{\sqrt{g_{{p},jj} (\mathbf{X}_{{p},1}(s), \mathbf{X}_{{p},2}(s),0) }}.
	\end{split}
	\end{equation}
We apply these to (\ref{XV}) and use (\ref{E_Ham}) to get (\ref{dot_X}). \\
	
\textit{Proof of (\ref{dot_V}) }.	
	\noindent From (\ref{XV}), (\ref{inverse}), and (\ref{dot_X}),
	\begin{eqnarray*}
		&&\dot{\mathbf{V}}_{p,i}(s;t,x,v) \\
		&=& \frac{d}{ds} \Big[
		\frac{\p_{i} \eta_{p} ( \mathbf{X}_{p,1} (s), \mathbf{X}_{p,2} (s), 0  )}{
			\sqrt{g_{p,ii} ( \mathbf{X}_{p,1} (s), \mathbf{X}_{p,2} (s), 0  )}
		}\cdot V(s;t,x,v)  
		\Big]  \\
		&& + O(\| \eta \|_{C^{3}}) \max_{s} |\mathbf{X}_{p,3} (s)| \max_{s} |V(s)|^{2}  \\
		&& + O(\| \eta \|_{C^{2}}) \max_{s}|\mathbf{V}_{p,3} (s)| \max_{s}|\mathbf{V}  (s)| + O(\| \eta \|_{C^{2}}) \| \nabla_{x}^{2} \Phi \|_{\infty} \max_{s}|\mathbf{X}_{p,3} (s)|
		\\
		&
		=
		& 
		\sum_{m,n=1}^{3}\sum_{\ell =1}^{2} \frac{1}{\sqrt{g_{p, \ell\ell} }
		}
		\p_{\ell} \Big(
		\frac{\p_{i} \eta_{p,m}}{\sqrt{g_{p,ii}}}
		\Big)
		\frac{\p_{n} \eta_{p,m}}{\sqrt{g_{p,nn}}}\Big|_{ ( \mathbf{X}_{p,1} (s), \mathbf{X}_{p,2} (s), 0  )}
		{\mathbf{V}}_{p,\ell} (s;t,x,v)
		\mathbf{V}_{p,n} (s;t,x,v)
		\\
		&&+ O(\| \eta \|_{C^{3}}) \max_{s} |\mathbf{X}_{p,3} (s)| \max_{s} |V(s)|^{2} + O(\| \eta \|_{C^{2}}) \max_{s}|\mathbf{V}_{p,3} (s)| \max_{s}|\mathbf{V}  (s)| 	\\
		&&+ O(\| \eta \|_{C^{2}}) \| \nabla_{x}^{2} \Phi \|_{\infty} ,
	\end{eqnarray*}
	where we have used
	\begin{equation}\notag
	V_{m} (s;t,x,v) = \sum_{n=1}^{3} \frac{\p_{n} \eta_{p,m}}{\sqrt{g_{p,nn}}}\Big|_{ ( \mathbf{X}_{p,1} (s), \mathbf{X}_{p,2} (s), 0  )} \mathbf{V}_{p,n} (s;t,x,v)
	+  C_{\Omega} \max_{s} |\mathbf{X}_{p,3} (s)  | \max_{s} |V(s)|.
	\end{equation}
	In the case of $i=n$, we have $\sum_{m=1}^{3}\p_{\ell} \Big(
	\frac{\p_{i} \eta_{p,m}}{\sqrt{g_{p,ii}}}
	\Big)
	\frac{\p_{i} \eta_{p,m}}{\sqrt{g_{p,ii}}}\Big|_{ ( \mathbf{X}_{p,1} (s), \mathbf{X}_{p,2} (s), 0  )} =0$. Moreover,
	\[
	\sum_{m=1}^{3}  \p_{\ell} \Big(
	\frac{\p_{i} \eta_{p,m}}{\sqrt{g_{p,ii}}}
	\Big)
	\frac{\p_{n} \eta_{p,m}}{\sqrt{g_{p,nn}}}\Big|_{ ( \mathbf{X}_{p,1} (s), \mathbf{X}_{p,2} (s), 0  )}
	= -\sum_{m=1}^{3}  
	\p_{\ell} \Big( \frac{\p_{n} \eta_{p,m}}{\sqrt{g_{p,nn}}}\Big)
	\frac{\p_{i} \eta_{p,m}}{\sqrt{g_{p,ii}}}
	\Big|_{ ( \mathbf{X}_{p,1} (s), \mathbf{X}_{p,2} (s), 0  )}.
	\]
	This finishes the proof for (\ref{dot_V}).
\end{proof}

	\begin{lemma} \label{uniform number of bounce} (i) Suppose $\Omega$ be a bounded open domain in $\R^{3}$. If $|v|\geq \frac{1}{N}$ and $\| \nabla \Phi \|_{\infty} < \frac{\delta}{3 \mathrm{diam} (\Omega)N^{2}}$ for $1 \ll N$ and $0< \delta \ll 1$. Here $\mathrm{diam} (\Omega):= \max_{x,y \in \Omega} |x-y|$. Then 
	\begin{equation} \label{ub tb}
		\tb(t,x,v) \leq 3N \mathrm{diam} (\Omega). 
	\end{equation}
		
		\noindent (ii) Assume the convexity in (\ref{convexity_eta}). Suppose $\frac{1}{N} \leq |v^{k}  | \leq N,$ $\| \nabla \Phi \|_{\infty} <\frac{\delta}{3 \mathrm{diam} (\Omega)N^{2}}$ for $1 \ll N$, and $0< \delta \ll {1\over N}  \ll 1$. If either $\frac{| v^{k} \cdot n( x^{k})|}{|v^{k}|}\ll 1$ or  $\frac{| v^{k+1}\cdot n(x^{k+1})|}{|v^{k+1}|}\ll 1$, then we have the following estimates:
		\begin{equation}\label{upper_delta_t}
		|v^{k} |  (t^{k}-t^{k+1})  \  \lesssim_{\Omega}   \   \min \Big\{\frac{|v^{k} \cdot n(x^{k})|}{|v^{k}|}, \frac{|v^{k+1} \cdot n(x^{k+1})|}{|v^{k+1}|}
		\Big\},
		\end{equation} 
		\begin{equation}\label{lower_delta_t}
		|v^{k} |  (t^{k}-t^{k+1})  \  \gtrsim_{\Omega}   \   \min \Big\{\frac{|v^{k} \cdot n(x^{k})|}{|v^{k}|}, \frac{|v^{k+1} \cdot n(x^{k+1})|}{|v^{k+1}|}
		\Big\}.
		\end{equation}

	\end{lemma}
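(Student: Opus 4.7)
For part~(i), I unfold the trajectory via Duhamel, $X(s;t,x,v)=x+(s-t)v-\int_t^s\!\!\int_t^\tau\nabla\Phi(\sigma,X(\sigma))\,\dd\sigma\,\dd\tau$, so that $|X(s)-x|\ge(t-s)|v|-\tfrac12\|\nabla\Phi\|_\infty(t-s)^2$. Picking $s=t-3N\,\mathrm{diam}(\Omega)$, the hypothesis $|v|\ge1/N$ gives $(t-s)|v|\ge 3\,\mathrm{diam}(\Omega)$, while $\|\nabla\Phi\|_\infty<\delta/(3\,\mathrm{diam}(\Omega)N^2)$ bounds the second-order correction by $\tfrac{3\delta}{2}\mathrm{diam}(\Omega)\ll \mathrm{diam}(\Omega)$ for $\delta$ small. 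Hence $|X(s)-x|>\mathrm{diam}(\Omega)$, which is impossible while $x,X(s)\in\overline{\Omega}$, so $\tb\le 3N\,\mathrm{diam}(\Omega)$.

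For part~(ii), I work in the single chart $\eta_{p^k}$ centered at $p^k=x^k$ and analyse the scalar height $\phi(s):=\mathbf{X}_{p^k,3}(s)$ on $[t^{k+1},t^k]$. Under the grazing hypothesis the whole arc stays close to $x^k$ (using part~(i) for a preliminary bound on $\Delta t$), so the chart contains it and $\phi(t^k)=\phi(t^{k+1})=0$. Lemma~\ref{trans trajec} gives $\dot\phi=\mathbf{V}_{p^k,3}+O(\|\eta\|_{C^2}|\phi||V|)$ and, from the $i=3$ case of \eqref{dot_V}, the leading quadratic part of $\ddot\phi$ is
\begin{equation*}
-\sum_{\ell,n=1}^{2}\frac{\p_\ell\p_n\eta_{p^k}\cdot\p_3\eta_{p^k}}{\sqrt{g_{p^k,\ell\ell}\,g_{p^k,nn}}}\,\mathbf{V}_{p^k,\ell}(s)\mathbf{V}_{p^k,n}(s).
\end{equation*}
Convexity~\eqref{convexity_eta} makes this form $\ge C_\Omega|\mathbf{V}_{p^k,\mathrm{tang}}|^2$, and the grazing hypothesis forces $|\mathbf{V}_{p^k,\mathrm{tang}}|^2\ge\tfrac12|v^k|^2$ while the smallness $\|\nabla\Phi\|_\infty\lesssim\delta/N^2$ combined with $|v^k|\ge1/N$ makes every residual error term in Lemma~\ref{trans trajec} negligible compared with $C_\Omega|v^k|^2$. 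Thus I obtain on $[t^{k+1},t^k]$ the two-sided inequality $\tfrac{C_\Omega}{4}|v^k|^2\le\ddot\phi(s)\le C'_\Omega|v^k|^2$.

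Setting $\Delta t:=t^k-t^{k+1}$, the second-order Taylor identity $\phi(t^{k+1})=\phi(t^k)+(t^{k+1}-t^k)\dot\phi(t^k)+\int_{t^k}^{t^{k+1}}(t^{k+1}-s)\ddot\phi(s)\,\dd s$ collapses to $\Delta t\,\dot\phi(t^k)=\int_{t^{k+1}}^{t^k}(s-t^{k+1})\ddot\phi(s)\,\dd s$, and inserting the two-sided bound on $\ddot\phi$ yields $\tfrac{C_\Omega}{8}|v^k|^2(\Delta t)^2\le\Delta t\,\dot\phi(t^k)\le\tfrac{C'_\Omega}{2}|v^k|^2(\Delta t)^2$. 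Since $\dot\phi(t^k)=\mathbf{V}_{p^k,3}(t^k)=v^k\cdot n(x^k)$, this produces both \eqref{upper_delta_t} and \eqref{lower_delta_t} with the factor $|v^k\cdot n(x^k)|/|v^k|$ on the right. Rerunning the same argument in the chart $\eta_{p^{k+1}}$, with the Taylor expansion centered at $t^{k+1}$, gives the companion bounds involving $|v^{k+1}\cdot n(x^{k+1})|/|v^{k+1}|$; since the small-force hypothesis gives $|v^k|=|v^{k+1}|+O(\|\nabla\Phi\|_\infty\Delta t)$, these two pairs combine to the $\min$ on the right of \eqref{upper_delta_t}, and either single pair already suffices for the lower bound \eqref{lower_delta_t} (since $A\ge B$ implies $A\ge\min(B,C)$).

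The main obstacle, hidden behind the bookkeeping, is a bootstrap: the ``small'' error terms in Lemma~\ref{trans trajec} depend on $|\phi|$, $|\mathbf{V}_{p^k,3}|$, and the chart radius itself, none of which are a~priori small on the whole interval---only at the endpoint $t^k$. The logical order is therefore to start from the coarse bound $\Delta t\le 3N\,\mathrm{diam}(\Omega)$ from part~(i), deploy a continuity-in-$\Delta t$ argument to keep $|\phi|,|\dot\phi|$ uniformly small and the arc in the chart, and then upgrade to the sharp estimate. One must also be careful about sign conventions: the outward normal $\p_3\eta_{p^k}/\sqrt{g_{p^k,33}}=n(x^k)$ combined with the post-reflection velocity $v^k\cdot n(x^k)>0$ (our backward-bounce convention) makes $\ddot\phi>0$ and $\phi<0$ in the interior of $[t^{k+1},t^k]$ compatible---the convex function $\phi$ sits below its vanishing secant, which is exactly the geometric picture of a chord in a strictly convex domain.
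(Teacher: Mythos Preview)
Your proof of part~(i) matches the paper's exactly. For part~(ii), your route is close in spirit to the paper's---both use the chart ODEs of Lemma~\ref{trans trajec} and the strict convexity \eqref{convexity_eta} to control the second derivative of the normal coordinate---but the organization differs in two places. For the upper bound \eqref{upper_delta_t}, the paper sidesteps your bootstrap by splitting on whether $|v^k|(t^k-t^{k+1})$ already exceeds a fixed $\delta$: in the large case it never enters a chart, instead invoking the global consequence of convexity $|n(x^k)\cdot(x^{k+1}-x^k)|\gtrsim_\Omega|x^{k+1}-x^k|^2$ together with $|x^{k+1}-x^k|\gtrsim|v^k|(t^k-t^{k+1})$; only in the small case does it work in the chart, where the a~priori bound $|v^k|(t^k-t^{k+1})\le\delta$ lets it absorb the error terms from \eqref{dot_X}--\eqref{dot_V} one by one (the display with $(*)_1$ through $(*)_6$). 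Your continuity argument is a legitimate substitute, but---as you correctly flag---closing it requires simultaneously controlling $|\phi|$, $|\dot\phi|$, and the tangential displacement, which amounts to the same absorption work the paper does explicitly. For the lower bound \eqref{lower_delta_t}, the paper takes a different tack: it expands $(\eta_{p^k}(\mathbf{x}^{k+1}_{p^k})-\eta_{p^k}(\mathbf{x}^k_{p^k}))\cdot n(x^{k+1})$ to second order in the boundary coordinates, obtaining $|\mathbf{x}^{k+1}_{p^k}-\mathbf{x}^k_{p^k}|^2\gtrsim|\mathbf{v}^k_{p^k,3}|(t^k-t^{k+1})$ and combining with $|\mathbf{x}^{k+1}_{p^k}-\mathbf{x}^k_{p^k}|\lesssim|v^k|(t^k-t^{k+1})$. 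Your version---reading the lower bound off the \emph{same} Taylor identity via the $C^3$ upper bound on $\ddot\phi$---is more economical, since it recycles the machinery already set up for the upper bound rather than introducing a separate surface expansion.
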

	\begin{proof}
	\textit{Proof of (\ref{ub tb})}. 
	Note that if $|y-x|> \mathrm{diam}(\Omega)$ and $x \in \bar{\Omega}$ then $y \notin \bar{\Omega}$. If $s_{*} = t- 3N \mathrm{diam}(\Omega)$, then 
	$$
	|X(s_{*};t,x,v)-x| \geq 
	\big\{ |v|   - \| \nabla \Phi \|_{\infty} \frac{|s_{*} -t|  }{2}
	\big\} |s^{*}-t|\geq \frac{1}{2N}3N \mathrm{diam}(\Omega) = \frac{3}{2}\mathrm{diam}(\Omega).
	$$ 
	From (\ref{backward_exit}), therefore, 
	$$
	\tb(t,x,v)= \sup \{s \geq 0: X(\tau;t,x,v) \in \Omega \ \text{for all }   \tau \in (t-s,t) \}\leq 3N \mathrm{diam}(\Omega).
	$$  
		
	\vspace{4pt}
		
	\noindent\textit{Proof of (\ref{upper_delta_t}).}
		%
		Firstly we consider the case of $|v^{k}||t^{k+1} - t^{k}| > \delta$ for $0< \delta \ll1$. If $\|\nabla \Phi \|_{\infty} \leq \frac{2\delta}{N^{2}}$, then 
		\begin{equation} \label{least dist}
		\begin{split}
			|X(t^{k+1})- X(t^{k})| &\geq |v^{k}| |t^{k}-t^{k+1}| - \| \nabla \Phi \|_{\infty} \frac{|t^{k}-t^{k+1}| ^{2}}{2}  \\
			&\geq  |v^{k}| |t^{k}-t^{k+1}|
			\Big\{
			1- \frac{\| \nabla\Phi \|_{\infty} |t^{k}-t^{k+1}|}{2|v^{k}|}
			\Big\} \geq \frac{|v^{k}| |t^{k}-t^{k+1}|}{2} \geq \frac{\delta}{2},
		\end{split}
		\end{equation}
		where we have used the fact that
		\begin{equation}
		\begin{split}	
			\frac{ \|\nabla\Phi\|_{\infty} |t^{k} - t^{k+1}| }{ 2|v^{k}| } \leq \frac{2\delta}{N^{2}} \frac{3N \mathrm{diam}(\O)}{2/N} \leq 3\delta \mathrm{diam}(\O) \ll 1.
		\end{split}
		\end{equation}
		On the other hand, note that
		\begin{equation} \label{normal dist}
		\begin{split}
			&n(X(t^{k})) \cdot \big( X(t^{k+1}) - X(t^{k})\big)  \\ 
			&=
			n(X(t^{k})) \cdot  \Big(
			\lim_{s \uparrow t^{k}}V(s) (t^{k+1}-t^{k})
			+ \int^{t^{k+1}}_{t^{k}}  \int^{s}_{t^{k}} - \nabla \Phi (\tau, X(\tau)) \dd \tau \dd s 
			\Big)	\\
			&= \mathbf{v}^{k}_{{p}^{k},3} (t^{k+1} - t^{k})
			+ \| \nabla \Phi \|_{\infty} \frac{|t^{k+1}-t^{k}|^{2}}{2}.
		\end{split}
		\end{equation}
		From the convexity, the LHS has a lower bound $C_{\Omega} |X(t^{k+1}) - X(t^{k})|^{2}$. Therefore, if $\| \nabla \Phi \|_{\infty} \leq \frac{2\delta}{N^{2}}$, then from (\ref{least dist}) and (\ref{normal dist}),
		\begin{equation*}
		\begin{split}
			\frac{ \mathbf{v}^{k}_{{p}^{k},3}}{| {v}^{k}|} &\geq \frac{1}{ |{v}^{k}| }
			\Big( \frac{ C_{\Omega} }{ |t^{k}- t^{k+1}| } \Big( \frac{ |v^{k}||X(t^{k}) - X(t^{k+1})| }{2} \Big)^{2}   
			- 
			\| \nabla \Phi \|_{\infty} \frac{|t^{k}-t^{k-1}|}{2} \Big)  \\
			&\geq \Big(
			\frac{C_{\Omega}}{4} - \frac{\| \nabla \Phi \|_{\infty}}{ 2|v^{k}|^{2} } \Big) |v^{k}||t^{k} - t^{k+1}| 	\\
			&\geq \Big(
			\frac{C_{\Omega}}{4} - \frac{ 2\delta / N }{ 2/N } \Big) |v^{k}||t^{k} - t^{k+1}| 	\\
			&\geq \Big(
			\frac{C_{\Omega}}{4} - \delta \Big) |v^{k}||t^{k} - t^{k+1}| 	\\
			&\geq 
			\frac{C_{\Omega}}{8} |v^{k}||t^{k} - t^{k+1}|. 	\\
		\end{split}
		\end{equation*}

		\noindent Secondly we consider the case of $|v^{k}||t^{k+1} - t^{k}|  \leq  \delta$ for $0 < \delta \ll 1$. Then $|X(t^{k}) - X(s)| \leq |v^{k}||t^{k+1} - t^{k}| + \frac{\| \nabla \Phi \|_{\infty}}{2}|t^{k+1} - t^{k}| ^{2} \ll 1$ and therefore we may assume that $X(s)$ can be parametrized by ${p}^{k}-$coordinate for all $s \in [t^{k+1}, t^{k}]$. From (\ref{dot_X}),
		\begin{eqnarray*}
			\max_{s}|\mathbf{X}_{{p}^{k},3} (s)|  &\leq& |\mathbf{v}_{{p}^{k},3}^{k}| |t^{k+1}- t^{k}|  \\
			&& + O_{\| \eta \|_{C^{2}}}( \max_{s} |V(s)| |t^{k}-t^{k+1}| )\times \max_{s} |\mathbf{X}_{{p}^{k},3} (s)| .
		\end{eqnarray*}	
		On the RHS, we control $max_{s} |V(s)|$ by 
		\[
			\max_{s} |V(s)| \leq |v^{k}| + \| \nabla \Phi \|_{\infty} \leq 2 |v^{k}| \quad\text{for}\quad \| \nabla \Phi \|_{\infty} \leq \frac{1}{N} \leq |v^{k}|,
		\] 
		so we have 
		\begin{equation} \label{max_V_s}
			\max_{s} |V(s)|  |t^{k}-t^{k+1}| \leq 2 \delta
		\end{equation}
		and 
		\begin{equation}\label{max_X_3}
		\max_{s}|\mathbf{X}_{{p}^{k},3} (s)| \lesssim_{\delta} |\mathbf{v}_{{p}^{k},3}^{k}| |t^{k+1}- t^{k}| .
		\end{equation}
		From (\ref{dot_V}) and (\ref{max_X_3}),
		\begin{eqnarray*}
			\max_{s} |\mathbf{V}_{{p}^{k},3} (s)| &\leq& 
			|\mathbf{v}^{k}_{{p}^{k},3}|
			+4|v^{k}|^{2 } |t^{k}-t^{k+1}| +  \| \nabla \Phi \|_{\infty}  | t^{k}-t^{k+1} | \\
			&& + 4 | \mathbf{v}^{k}_{{p}^{k},3} | |v^{k}|^{2}| t^{k} - t^{k+1} |^{2}  \\
			&& +  O_{\| \eta \|_{C^{3}}}(1) \max_{s} |\mathbf{V}_{{p}^{k},3}(s)| |v^{k}||t^{k}-t^{k+1}|.
		\end{eqnarray*}
		Now we use $|v^{k} | |t^{k+1}- t^{k} | \leq \delta \ll 1$ to have
		\begin{equation}\label{max_V_3}
			\max_{s} |\mathbf{V}_{{p}^{k},3} (s)| \leq 2 |\mathbf{v}^{k}_{{p}^{k},3}| + 4 |v^{k}|^{2} |t^{k}-t^{k+1} | + \| \nabla \Phi \|_{\infty}  | t^{k} - t^{k+1} | .
		\end{equation}
		
		\noindent Now we integrate (\ref{dot_X}) on $t^{k+1}\leq s\leq t^{k}$ and then use (\ref{dot_V}) to have
		\begin{equation}
		\begin{split}
			& \V_{p^{k},3}^{k} ( t^{k} - t^{k+1} ) 	\\
			&= - \int^{t^{k}}_{t^{k+1}}  \int^{s}_{t^{k} } 
			\sum_{m,n=1}^{2} \mathbf{v}_{{p}^{k},m}^{k} \mathbf{v}_{{p}^{k},n}^{k} \frac{ \p_{m} \p_{n} \eta_{{p}^{k}}}{\sqrt{g_{{p}^{k}, nn} }} 
			\cdot \frac{\p_{3} \eta_{{p}^{k}}
			}{\sqrt{g_{{p}^{k}, 33}}} \Big|_{(
			\mathbf{X}_{{p}^{k} ,1}(s), \mathbf{X}_{{p}^{k},2 }(s),0)}
			\dd \tau \dd s 	\\
			&+ O_{\| \eta  \|_{C^{2}}}(1) \Big[ \max_{s} |\mathbf{X}_{{p}^{k},3} (s) | \max_{s} |V(s)| |t^{k}-t^{k+1}|  +  |t^{k+1}-t^{k}|^{3} \max_{s} |V(s)|^{3} 
			\\
			&
			+  |t^{k}-t^{k+1}|^{2}  \big\{  \| \nabla \Phi \|_{\infty} + 
			\max_{s}  |\mathbf{X}_{{p}^{k},3} (s) |  \max_{s} |V(s)|^{2} 	
			+ \max_{s}  |\mathbf{V}_{{p}^{k},3} (s) |  \max_{s} |V(s)| 
			\big\} \Big],	
		\end{split}
		\end{equation}
		and we use convexity (\ref{convexity_eta}), (\ref{max_X_3}), (\ref{max_V_3}), and (\ref{max_V_s}) to derive
		\begin{equation} 
		\begin{split}
			& |\V_{p^{k},3}^{k}| ( t^{k} - t^{k+1} ) 	\\
			&\geq C_{\O}  \frac{ (t^{k} - t^{k+1})^{2} }{2} \sum_{m=1,2} 
				| \V_{p^{k},m}^{k} |^{2} 	\\ 
			&- O_{ \|\eta\|_{C^{1}} } \Big[ | \V_{p^{k},3}^{k} | | t^{k} - t^{k+1} |
				\underbrace{ \max_{s} |V(s)|  | t^{k} - t^{k+1} | }_{\leq 2\delta} + \underbrace{ 2\delta  | t^{k} - t^{k+1} |^{2} \max_{s} |V(s)|^{2} }_{(*)_{1}} 	\\
			&+  | t^{k} - t^{k+1} |^{2} \Big\{ \underbrace{ \|\nabla\Phi\|_{\infty} }_{(*)_{2}} + \underbrace{ | \V_{p^{k},3}^{k} | 
				| t^{k} - t^{k+1} | \max_{s} |V(s)|^{2} }_{(*)_{3}} \\
			&+ 2\Big( \underbrace{2 | \V_{p^{k},3}^{k} |}_{(*)_{4}} + \underbrace{4 |v^{k}|^{2}| t^{k} - t^{k+1} | }_{(*)_{5}}	+ \underbrace{\|\nabla\Phi\|_{\infty} | t^{k} - t^{k+1} |}_{(*)_{6}}	\Big) \Big\} \Big].
		\end{split}
		\end{equation}
		For $(*)_{1}$, we decomposed $|V(s)|$ by $\{ \V_{p^{k},\ell} \}_{\ell=1,2,3}$ and then $\sum_{\ell=1,2} | \V_{p^{k},\ell} |^{2}$ part is absorbed by $C_{\O}  \frac{ (t^{k} - t^{k+1})^{2} }{2} \sum_{m=1,2}| \V_{p^{k},m}^{k} |^{2} $. $\|\V_{p^{k},3}^{k}|^{2}|$ is absorbed by LHS by the fact $|v^{k} | |t^{k+1}- t^{k} | \leq \delta \ll 1$ .
		\\
		For $(*)_{2}$, since $|v^{k}| \geq 1$,
		\begin{equation} 
		\begin{split}
			\|\nabla\Phi\|_{\infty}| t^{k} - t^{k+1} |^{2} &\leq \big( |\V_{p^{k},3}^{k}| |v^{k}| + |\V_{p^{k},\parallel}^{k}|^{2} \big) N^{2} \|\nabla\Phi\|_{\infty}| t^{k} - t^{k+1} |^{2} 	\\
			&\quad + |\V_{p^{k},\parallel}^{k}|^{2} N^{2} \|\nabla\Phi\|_{\infty}| t^{k} - t^{k+1} |^{2} 	\\
			&\leq \underbrace{ N^{2} \|\nabla\Phi\|_{\infty} }_{\lesssim O(1)} \underbrace{ |v^{k}| | t^{k} - t^{k+1} |}_{\leq \delta \ll 1} |\V_{p^{k},3}^{k}| | t^{k} - t^{k+1} | ,
		\end{split}
		\end{equation}
		so absorbed by LHS. For $(*)_{3}$, it is also absorbed by LHS from (\ref{max_V_s}) . For $(*)_{4}$, it is also absorbed by LHS from the facts $|v^{k} | |t^{k+1}- t^{k} | \leq \delta \ll 1$ and $\delta \ll {1\over N}$. For $(*)_{5}$, we perform decomposition as we did in $(*)_{1}$ and apply $|v^{k} | |t^{k+1}- t^{k} | \leq \delta \ll 1$ and $\delta \ll {1\over N}$ to be absorbed by LHS and $C_{\O}  \frac{ (t^{k} - t^{k+1})^{2} }{2} \sum_{m=1,2}| \V_{p^{k},m}^{k} |^{2} $.  For $(*)_{6}$, it is also absorbed by LHS by similar as $(*)_{2}$ case. Finally we conclude (\ref{upper_delta_t}).

	
	\vspace{4pt}

	\noindent\textit{Proof of (\ref{lower_delta_t}).}  
	Assume that $x^{k+1}$ and $x^{k}$ are close enough, \\
	i.e. $|x^{k+1} - x^{k}| \leq    \|\Phi\|_{C^{1}}^{1/2}  \ll 1$. From
	\begin{equation}\begin{split}\label{eta_x}
	& \eta_{{p}^{k}} (\mathbf{x}^{k+1 }_{{p}^{k}}) - \eta_{{p}^{k}} (\mathbf{x}^{k }_{{p}^{k}}) \\
	= & 
	\int^{- (t^{k} - t^{k+1})}_{0}
	V(t^{k} + s ; t^{k}, \eta_{{p}^{k}} (\mathbf{x}^{k}_{{p}^{k}}), v^{k}) \dd s
	\\
	= & \  
	v^{k} (t^{k+1 } - t^{k}) - \int^{ - (t^{k}-t^{k+1})}_{0}  \int^{s}_{0} \nabla \Phi (t^{k} +\tau; X( t^{k} +\tau; t^{k}, \eta_{{p}^{k}} (\mathbf{x}^{k}_{{p}^{k}}), v^{k} ))
	\dd \tau
	\dd s,\end{split}
	\end{equation}
	we have 
	\[
	\eta_{{p}^{k}} (\mathbf{x}^{k+1 }_{{p}^{k}}) - \eta_{{p}^{k}} (\mathbf{x}^{k }_{{p}^{k}}) = v^{k} (t^{k+1}-t^{k})  + O(\| \Phi \|_{C^{1}}) |t^{k+1} - t^{k}|^{2}.
	\]
	By the expansion, $\eta_{{p}^{k}} (\mathbf{x}^{k+1 }_{{p}^{k}}) - \eta_{{p}^{k}} (\mathbf{x}^{k }_{{p}^{k}})=(\mathbf{x}^{k+1}_{{p}^{k+1}} - \mathbf{x}^{k}_{{p}^{k}} ) \cdot \nabla \eta_{{p}^{k}} (\mathbf{x}^{k+1 }_{{p}^{k}})$. For $|t^{k+1}-t^{k}| \leq 1$, $|v^{k}| \geq \frac{1}{N}$, and $\| \Phi \|_{C^{2}} \leq \frac{1}{4N}$ for $N \gg 1$,
	\begin{equation}\label{upper_delta_x}
	|\mathbf{x}^{k+1}_{{p}^{k+1}} - \mathbf{x}^{k}_{{p}^{k}}   |
	\leq \big|  (\nabla \eta_{{p}^{k}} (\mathbf{x}^{k+1 }_{{p}^{k}})  )^{-1} \big| |t^{k+1} - t^{k}| \{ |v^{k}| + O(\| \Phi \|_{C^{1}})  \}\lesssim_{\Omega,N} |v^{k}| |t^{k+1} - t^{k}|.
	\end{equation}

	On the other hand, from $(\ref{eta_x}) \cdot n_{{p}^{k}} (\mathbf{x}^{k+1 }_{{p}^{k}})$,
	we have
	\begin{eqnarray*}
		[\eta_{{p}^{k}} (\mathbf{x}^{k+1 }_{{p}^{k}}) - \eta_{{p}^{k}} (\mathbf{x}^{k }_{{p}^{k}})] \cdot n_{{p}^{k}} (\mathbf{x}^{k+1 }_{{p}^{k}})
		= \mathbf{v}^{k}_{{p}^{k},3} (t^{k+1}- t^{k}) + O(\| \Phi \|_{C^{2}}) |t^{k+1 } - t^{k}|^{2}.
	\end{eqnarray*}
	By the expansion, the LHS equals
	\begin{eqnarray*}
		&&[\eta_{{p}^{k}} (\mathbf{x}^{k+1 }_{{p}^{k}}) - \eta_{{p}^{k}} (\mathbf{x}^{k }_{{p}^{k}}) ]\cdot n_{{p}^{k}} (\mathbf{x}^{k+1 }_{{p}^{k}}) 	\\
		&=& [ (\mathbf{x}^{k+1}_{{p}^{k+1}} - \mathbf{x}^{k}_{{p}^{k}} ) \cdot \nabla \eta_{{p}^{k}} (\mathbf{x}^{k+1 }_{{p}^{k}}) ] \cdot n_{{p}^{k}} (\mathbf{x}^{k+1 }_{{p}^{k}}) + O(\| \eta \|_{C^{2}})  |\mathbf{x}^{k+1}_{{p}^{k+1}} - \mathbf{x}^{k}_{{p}^{k}}|^{2} 	\\
		&\lesssim_{\O}& |\mathbf{x}^{k+1}_{{p}^{k+1}} - \mathbf{x}^{k}_{{p}^{k}}|^{2},
	\end{eqnarray*}
	where we have used the fact that $\nabla \eta_{{p}^{k+1}} (\mathbf{x}^{k+1}_{{p}^{k}}) \perp n_{{p}^{k}} (\mathbf{x}^{k+1}_{{p}^{k}})$. Therefore, if \\ $|\mathbf{v}^{k}_{{p}^{k},3}|> \e$ and $\| \Phi \|_{C^{2}} \ll_{\e} 1$,
	\begin{equation}\begin{split}\label{lower_delta_x}
	|\mathbf{x}^{k+1}_{{p}^{k+1}} - \mathbf{x}^{k}_{{p}^{k}}|^{2}
	&\gtrsim_{\Omega} \big| \mathbf{v}^{k}_{{p}^{k},3} (t^{k+1}- t^{k}) + O(\| \Phi \|_{C^{2}}) |t^{k+1 } - t^{k}|^{2} \big| \\
	&\gtrsim_{\Omega} \big\{|\mathbf{v}^{k}_{{p}^{k},3}| -  O(\| \Phi \|_{C^{2}})   \big\} | t^{k+1}- t^{k}  | \\
	&\gtrsim_{\Omega} |\mathbf{v}^{k}_{{p}^{k},3}| | t^{k+1}- t^{k}  |.
	\end{split}
	\end{equation}
	From (\ref{upper_delta_x}) and (\ref{lower_delta_x}), we prove (\ref{lower_delta_t}) when $x^{k+1}$ and $x^{k}$ are close enough.
	
	Assume $x^{k+1}$ and $x^{k}$ is not close, i.e. $|x^{k+1} - x^{k}| \geq    \|\Phi\|_{C^{1}}^{1/2}$. From (\ref{E_Ham}) and $|t^{k} -t^{k+1}| \leq 1$, $|v^{k}| \geq \frac{1}{N}$, and $\| \Phi \|_{C^{2}} \leq \frac{1}{4N}$ for $N \gg 1$,
	\begin{eqnarray*}
		|t^{k} - t^{k+1}| |v^{k}|  \geq 
		|x^{k+1} - x^{k}| - O(\| \Phi \|_{C^{1}}) |t^{k} - t^{k+1}|^{2}   \gtrsim   \| \Phi \|_{C^{1}}^{1/2}.
	\end{eqnarray*}
	This prove $(\ref{lower_delta_t})$. 
	\end{proof}


\begin{lemma}	\label{velocity_lemma}
	Assume (\ref{eta}) and (\ref{convexity_eta}) hold. Suppose $x \in \bar{\Omega},$ $\frac{1}{N} \leq |v  | \leq N,$ $\| \nabla \Phi \|_{\infty} <\frac{\delta}{3 \mathrm{diam} (\Omega)N^{2}}$ for $1 \ll N$, and $0< \delta \ll {1\over N} \ll 1$. Assume $t \in [M,M+1]$ for $M \in \mathbb{N}$. For all $i \in \mathbb{N}$ with $t^{i} \in [M-1, t]$, 
	\begin{equation}\label{velocity_le_1}
	\begin{split}
	&\max\big\{ 1  - C_{\Omega} |v^{k} | |t^{k}- t^{k+1}|,  c_{\Omega, N}\big\}{\V^{k }_{p^{k },3}} 	\\
	&\quad \leq {\V^{k+1}_{p^{k+1},3}} \leq \min \big\{ 1  + C_{\Omega} |v^{k} | |t^{k}- t^{k+1}|, C_{\Omega, N}\big\}{\V^{k }_{p^{k },3}}
	, 
	\end{split}
	\end{equation}
	and 
	\begin{equation}\label{velocity_le}
	\begin{split}
	&\prod_{j=1}^{k} \max\big\{ 1  - C_{\Omega} |v^{j} | |t^{j}- t^{j+1}|, c_{\Omega, N}\big\}{\V^{1}_{p^{1 },3}}
	\leq {\V^{k+1}_{p^{k+1},3}} 	\\
	&\quad \leq
	\prod_{j=1}^{k}
	\min \big\{ 1  + C_{\Omega} |v^{j} | |t^{j}- t^{j+1}|, C_{\Omega, N}\big\}{\V^{1}_{p^{1},3}}.
	\end{split}
	\end{equation}	
	Moreover,
	\begin{equation}\label{upper_k}
	\sup \big\{ k \in \mathbb{N}: |t-t^{k}| \leq 1 \big\}\lesssim_{\Omega, N, \delta} 1.
	\end{equation}

\end{lemma}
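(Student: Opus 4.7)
My plan is to prove the three assertions in sequence: first the one-step comparison \eqref{velocity_le_1}; then the iterated form \eqref{velocity_le} by telescoping; and finally the bounce count \eqref{upper_k} by combining \eqref{velocity_le} with Lemma \ref{uniform number of bounce}.

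For \eqref{velocity_le_1}, I would begin with the specular reflection rule $v^{k+1}=R_{x^{k+1}} V(t^{k+1};t^k,x^k,v^k)$, which gives the exact identity
\[
\V^{k+1}_{p^{k+1},3} \;=\; n(x^{k+1})\cdot v^{k+1} \;=\; -\,n(x^{k+1})\cdot V(t^{k+1};t^k,x^k,v^k).
\]
Integrating the characteristic ODE \eqref{E_Ham} twice produces, with $\Delta t_k:=t^k-t^{k+1}$,
\[
V(t^{k+1};t^k,x^k,v^k) \;=\; v^k + O(\|\nabla\Phi\|_\infty \Delta t_k), \qquad x^{k+1}-x^k \;=\; -\Delta t_k\, v^k + O(\|\nabla\Phi\|_\infty \Delta t_k^{2}),
\]
so the task reduces to the purely geometric comparison $-n(x^{k+1})\cdot v^k$ vs.\ $\V^k_{p^k,3}=n(x^k)\cdot v^k$, modulo an additive $O(\|\nabla\Phi\|_\infty \Delta t_k)$. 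I would pass to the local chart $\eta_{p^k}$ (valid because Lemma \ref{uniform number of bounce} shows $|v^k|\Delta t_k$ is small enough to keep $x^{k+1}$ in a $p^k$-neighborhood), write $x^{k+1}=\eta_{p^k}(\X^{k+1}_{p^k,1},\X^{k+1}_{p^k,2},0)$, and Taylor expand the outward normal field $n = \partial_3 \eta_{p^k}/\sqrt{g_{p^k,33}}$ along the chord. The convexity hypothesis \eqref{convexity_eta}, together with the fact that both endpoints of the chord lie on $\partial\Omega$, forces the leading linear-in-$\Delta t_k$ contribution to match $2\V^k_{p^k,3}$ up to a multiplicative $O_\Omega(|v^k|\Delta t_k)$ error. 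After bookkeeping one obtains
\[
-\,n(x^{k+1})\cdot v^k \;=\; \V^k_{p^k,3}\bigl[\,1+O_\Omega(|v^k|\Delta t_k)\,\bigr] + O(\|\nabla\Phi\|_\infty \Delta t_k),
\]
which, together with the hypothesis $\|\nabla\Phi\|_\infty\lesssim \delta/N^{2}$, proves the multiplicative form of \eqref{velocity_le_1}. The absolute constants $c_{\Omega,N}, C_{\Omega,N}$ then come from energy near-conservation $\bigl||v^{k+1}|^2-|v^k|^2\bigr|\lesssim\|\nabla\Phi\|_\infty \Delta t_k|v^k|$, the constraint $|v|\in[1/N,N]$, and the uniform bound $|v^k|\Delta t_k\lesssim_{\Omega,N}1$ from Lemma \ref{uniform number of bounce}.

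The iterated estimate \eqref{velocity_le} is then obtained by composing \eqref{velocity_le_1} over $j=1,\ldots,k$. For \eqref{upper_k}, I would observe that in the window $t^i\in[M-1,t]$ of length at most $2$, the total path length satisfies $\sum_j |v^j|\Delta t_j \lesssim_{N} 1$ by the energy bound, so the product $\prod_j\max\{1-C_\Omega|v^j|\Delta t_j,c_{\Omega,N}\}$ is bounded below by a positive constant $c'_{\Omega,N}$. Inserting into \eqref{velocity_le} gives the uniform lower bound $\V^k_{p^k,3}\gtrsim_{\Omega,N} \V^1_{p^1,3}$; then \eqref{lower_delta_t} of Lemma \ref{uniform number of bounce} yields $\Delta t_j\gtrsim_{\Omega,N}\V^1_{p^1,3}$, and summing $\sum \Delta t_j\leq 2$ bounds the number of bounces by the stated $\lesssim_{\Omega,N,\delta}1$.

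\textbf{Main obstacle.} The delicate step is the geometric cancellation in \eqref{velocity_le_1}: a naive first-order Taylor expansion produces the increment $-\Delta t_k(v^k\cdot\nabla n\cdot v^k)$, which on its own has no reason to reproduce $2\V^k_{p^k,3}$ to leading order (indeed, without the convexity and the boundary-membership constraint, one would get a completely different value, as the spherical case illustrates). Extracting the correct cancellation truly requires the chart representation \eqref{eta}, the sign condition in the second fundamental form encoded by \eqref{convexity_eta}, and careful tracking of the $O(\|\Phi\|_{C^2})$ remainders inherited from the non-autonomous ODE. Once this one-step identity is pinned down the remaining arguments are essentially bookkeeping.
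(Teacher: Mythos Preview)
Your overall plan is sound, and for \eqref{velocity_le} and \eqref{upper_k} your argument is essentially what the paper does (telescoping, then using $\sum_j |v^j|\,\Delta t_j \lesssim N$ together with \eqref{lower_delta_t} to bound the number of bounces).

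For \eqref{velocity_le_1}, however, the paper takes a different route that sidesteps precisely the obstacle you flag. Instead of expanding $-n(x^{k+1})\cdot v^k$ directly and trying to identify the coefficient $2$ in the leading linear term, the paper computes the \emph{derivative} $\partial \V^{k+1}_{p^{k+1},3}/\partial \V^{k}_{p^{k},3}$ using the already-established Jacobian formulas of Lemma~\ref{Jac_billiard} (specifically \eqref{Est-8}). The key identity is the travel-time formula \eqref{tb_v3},
\[
t^k-t^{k+1}\;=\;\frac{-2\,\V^{k+1}_{p^{k+1},3}}{\sum_{m,n}\bigl(\partial_m\partial_n\eta_{p^{k+1}}\cdot\partial_3\eta_{p^{k+1}}/\sqrt{g_{mm}g_{nn}}\bigr)\V^{k+1}_{m}\V^{k+1}_{n}}+\text{error},
\]
which, when plugged into the $(t^k-t^{k+1})/\V^{k+1}_{p^{k+1},3}$ factor appearing in \eqref{Est-8}, produces $-1+2=1$ on the nose, yielding $\partial \V^{k+1}_{p^{k+1},3}/\partial \V^{k}_{p^{k},3}=1+O_\Omega(|v^k|\,\Delta t_k)$. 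One then applies the fundamental theorem of calculus in the $\V^{k}_{p^{k},3}$-variable, using the boundary condition $\V^{k+1}_{p^{k+1},3}\big|_{\V^{k}_{p^{k},3}=0}=0$ (grazing goes to grazing, since by \eqref{upper_delta_t} the travel time collapses). This integration trick replaces your delicate ``the chord endpoints lie on $\partial\Omega$'' cancellation by a clean algebraic identity.

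Your direct approach is not wrong, but to make the ``leading linear contribution matches $2\V^k_{p^k,3}$'' step rigorous you would in effect have to re-derive \eqref{tb_v3} anyway, since the factor of $2$ comes precisely from the relation between $\Delta t_k$ and the second fundamental form. The paper's derivative-plus-FTC organization is cleaner because it isolates that relation once in \eqref{tb_v3} and then reads off the answer from the general Jacobian machinery, rather than repeating the chart expansion by hand.
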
	
\begin{proof}\noindent\textit{Step 1. } We claim that if $\V^{k }_{{p}^{k },3}\ll |v^{k }|$, then 
	\begin{equation}\label{tb_v3}
	\begin{split}
	t^{k } -t^{k+1}  &=  \frac{ -2\V^{k+1 }_{{p}^{k+1 },3}}{\sum_{m,n=1}^{2}   \frac{\p_{m} \p_{n} \eta_{{p}^{k+1 }}}{\sqrt{g_{{p}^{k+1 }, mm}}\sqrt{g_{{p}^{k+1 }, nn}}}\Big|_{x^{k +1}} \cdot
		\frac{\p_{3} \eta_{{p}^{k+1 }}}{\sqrt{g_{{p}^{k+1 },33}}} \Big|_{x^{k+1 }} 
		\V^{k+1 }_{{p}^{k +1},m}\V^{k +1}_{{p}^{k +1},n}}   \\
	&\quad + C_{\Omega}\frac{|t^{k }-t^{k+1}|}{|v^{k }|^{2}}\Big\{
	\| \nabla \Phi \|_{\infty} +  |\V^{k }_{{p}^{k },3}| |v^{k }| 
	\Big\}.\end{split}
	\end{equation}	 
	Due to (\ref{upper_delta_t}) and its proof, if $\V^{k }_{{p}^{k },3}\ll |v^{k }|$, then $X(s;t^{k+1 }, x^{k+1 }, v^{k+1 }) \sim  x^{k +1} \sim p^{k+1}$ for all $t^{k+1} \leq s \leq t^{k}$. By the expansion of \\ $(\mathbf{X}_{{p}^{k+1}}(s; t^{k+1}, x^{k+1}, v^{k+1}), \mathbf{V}_{{p}^{k+1}} (s; t^{k+1}, x^{k+1}, v^{k+1}))$ in (\ref{dot_V}) around $s=t^{k+1}$, 
	\begin{equation}\notag
	\begin{split} 
	& \dot{\mathbf{V}}_{p^{k+1},3} (s;t^{k+1},x^{k+1},v^{k+1}) \\
	&=
	- \sum_{n =1}^{2}\sum_{\ell =1}^{2} 
	\frac{  \p_{\ell}  \p_{n} \eta_{p^{k+1} }}{\sqrt{g_{p^{k+1}, \ell\ell} }\sqrt{g_{p^{k+1},nn}}}\Big|_{ x^{k+1}} \cdot \frac{\p_{3} \eta_{p^{k+1} }}{\sqrt{g_{p^{k+1},33}}}\Big|_{ x^{k+1}}
	{\V}_{p^{k+1},\ell}^{k+1}
	\V_{p^{k+1},n}^{k+1}
	\\
	&\quad +
	O (\| \eta \|_{C^{3}})  \big\{\max_{s} |V(s)|^{3} |t^{k} -t^{k+1}| 
	+\| \nabla_{x} \Phi \|_{\infty } \max_{s} |V(s)||t^{k} -t^{k+1}| \\
	&\quad +  \max_{s} |\mathbf{X}_{p^{k+1},3} (s)| \max_{s} |V(s)|^{2} 
	\big\}
	\\
	&\quad + O(\| \eta \|_{C^{2}}) \big\{\max_{s}|\mathbf{V}_{p^{k+1},3} (s)| \max_{s}| {V}  (s)|   + \| \nabla_{x}^{2} \Phi \|_{\infty}\big\} .
	\end{split}
	\end{equation}
	Note that from Lemma \ref{uniform number of bounce} and (\ref{max_X_3}), the last three lines of above are bounded by $
	|v^{k+1}| |\V^{k+1}_{p^{k+1}}| + \| \nabla_{x}^{2} \Phi\|_{\infty}$.
	Then from (\ref{dot_X}), (\ref{dot_V}), (\ref{max_X_3}), and (\ref{max_V_3}), 
	\begin{eqnarray*}
		&& - \frac{(t^{k }-t^{k+1})^{2}}{2}  \sum_{m,n=1}^{2} 
		\frac{\p_{m} \p_{n} \eta_{{p}^{k+1}}}{\sqrt{g_{{p}^{k+1}, mm}}\sqrt{g_{{p}^{k+1}, nn}}}\Big|_{x^{k+1 }}
		\cdot
		\frac{\p_{3} \eta_{{p}^{k+1}}}{\sqrt{g_{{p}^{k+1},33}}} \Big|_{x^{k+1 }} 
		\V^{k +1}_{{p}^{k+1 },m}\V^{k+1 }_{{p}^{k+1 },n}\\
		&& = \V^{k +1}_{{p}^{k+1 },3} (t^{k }-t^{k+1})
		+ O_{\| \eta \|_{C^{3}}} (|t^{k }-t^{k+1}|^{2})\big\{
		\| \nabla \Phi \|_{\infty} +  |\V^{k +1}_{{p}^{k+1 },3}| |v^{k +1}|
		\big\}.
	\end{eqnarray*}
	This proves (\ref{tb_v3}). 
	%
	
	\vspace{4pt}
	
	\noindent\textit{Step 2. } We claim that for $\V^{k+1}_{p^{k+1},3} \ll |v^{k+1}|$,
	\begin{equation}\label{pv_pv}
	\frac{
		\p \V_{ {p}^{k+1},3}^{k+1}
	}{\p \V_{ {p}^{k },3}^{k }} = 1 + O_{\Omega}  \big( \| \Phi \|_{C^{1}} |t^{k }- t^{k+1}|\big)+ O _{\Omega} \big(|v^{k+1}| (t^{k}-t^{k+1})\big)
	.
	\end{equation}	 
	
	\noindent From Lemma \ref{Jac_billiard},
	\begin{eqnarray*}
		&&	\frac{
			\p \V_{{p}^{k+1},3}^{k+1}
		}{\p \V^{k }_{{p}^{k },3}} \\
		&=& - \sum_{\ell=1}^{2} 
		\Big\{
		- (t^k - t^{k+1}) \frac{\p_{3} \eta_{{p}^{k}}  (x^{k}) }{\sqrt{g_{{p}^{k}, 33} (x^{k})} } \cdot 
		\frac{1}{\sqrt{g_{{p}^{k+1}, \ell\ell} (x^{k+1})  }}
		\Big[
		\frac{\p_{  \ell} \eta_{{p}^{k+1}} (x^{k+1})  }{\sqrt{g_{{p}^{k+1},  \ell \ell}  (x^{k+1})  }} 	\\
		&+& 
		\frac{\V^{k+1}_{{p}^{k+1}, \ell}}{\V^{k+1}_{{p}^{k+1},3}} \frac{\p_{3} \eta_{{p}^{k+1}} (x^{k+1}) 
		}{\sqrt{g_{{p}^{k+1}, 33} (x^{k+1})}  }
		\Big]    
		\Big\} 
		\p_{\ell}\Big( \frac{\p_{3} \eta_{ {p}^{k+1}} }{\sqrt{g_{ {p}^{k+1},33}}}\Big)\Big|_{x^{k+1}}  \cdot v^{k}  - \frac{\p_{3} \eta_{ {p}^{k+1} }  }{\sqrt{g_{ {p}^{k+1}, 33}  }} \Big|_{x^{k+1}}\cdot \frac{\p_{3} \eta_{ {p}^{k }}}{\sqrt{g_{ {p}^{k },33}}} \Big|_{x^{k }}   \\
		&+&  O_\Omega(   \|\Phi \|_{C^2} )|v^{k}|
		\Big(1 + \frac{|\V^{k+1}_{p^{k+1},i}|}{|\V^{k+1}_{p^{k+1},3}|} \Big)  (t^k - t^{k+1})^3  
		e^{\| \Phi \|_{C^2} (t^k - t^{k+1})^2}
		\\
		&=& \frac{\p_{3} \eta_{ {p}^{k+1} }  }{\sqrt{g_{ {p}^{k+1}, 33}  }} \Big|_{x^{k+1}}\cdot \frac{\p_{3} \eta_{ {p}^{k }}}{\sqrt{g_{ {p}^{k },33}}} \Big|_{x^{k }}  \\
		&&\times \underbrace{
			\Big\{ 
			-1 + \frac{t^{k } - t^{k+1}}{\V^{k+1}_{{p}^{k+1},3}} \sum_{\ell=1}^{2} \frac{{\V}^{k+1}_{{p}^{k+1},\ell}}{\sqrt{g_{{p}^{k+1}, \ell\ell}}}
			\p_{\ell}\Big( \frac{\p_{3} \eta_{{p}^{k+1}} }{\sqrt{g_{{p}^{k+1},33}}}\Big)\Big|_{x^{k+1}} \cdot v^k 
			\Big\}
		}_{(*)}\\
		&& 
		+ 
		O_\Omega(  1)|v^{k}|(t^{k}- t^{k+1})  
		\bigg|
		\frac{\p_{3} \eta_{{p}^{k}}   }{\sqrt{g_{{p}^{k}, 33}  } } 
		\Big|_{x^{k}}
		\cdot  
		\frac{\p_{  \ell} \eta_{{p}^{k+1}}   }{ {g_{{p}^{k+1},  \ell \ell}    }}  \Big|_{x^{k+1}}
		\bigg|   \\
		&&+ 
		O_\Omega(  \| \Phi \|_{C^2}  )
		\Big(1 + \frac{|\V^{k+1}_{p^{k+1},i}|}{|\V^{k+1}_{p^{k+1},3}|} \Big) |v^{k}| (t^k - t^{k+1})^3  
		e^{\| \Phi \|_{C^2} (t^k - t^{k+1})^2}.
	\end{eqnarray*}
	
	\noindent Consider $(*)$. For $\ell, j=1,2$, from (\ref{orthonormal_eta}),
	\begin{eqnarray*}
		&& \p_{\ell} \Big( \frac{\p_{3} \eta_{{p}^{k+1}} }{\sqrt{g_{{p}^{k+1},33}}}\Big) \cdot  \frac{\p_{j} \eta_{{p}^{k+1}} }{\sqrt{g_{{p}^{k+1},jj}}} \Big|_{x^{k+1}} 	\\
		&& =   \underbrace{\p_{\ell} \Big( \frac{\p_{3} \eta_{{p}^{k+1}} }{\sqrt{g_{{p}^{k+1},33}}} \cdot  \frac{\p_{j} \eta_{{p}^{k+1}} }{\sqrt{g_{{p}^{k+1},jj}}}\Big) \Big|_{x^{k+1}} }_{=0}-   \frac{\p_{3} \eta_{{p}^{k+1}} }{\sqrt{g_{{p}^{k+1},33}}}  \Big|_{x^{k+1}}\cdot   \frac{ \p_{\ell}\p_{j} \eta_{{p}^{k+1}} }{\sqrt{g_{{p}^{k+1},jj}}}  \Big|_{x^{k+1}},
	\end{eqnarray*}
	and hence
	\begin{eqnarray*}
		&& \p_{\ell} \Big( \frac{\p_{3} \eta_{{p}^{k+1}} }{\sqrt{g_{{p}^{k+1},33}}}\Big) 
		\Big|_{x^{k+1}}\cdot v^{k } 	\\ 
		&=&\p_{\ell}\Big( \frac{\p_{3} \eta_{{p}^{k+1}} }{\sqrt{g_{{p}^{k+1},33}}}\Big) \cdot v^{k+1}+ O_{\| \eta \|_{C^{2}}} ( \| \Phi \|_{C^{1}}) |t^{k }- t^{k+1}| \\
		&=& -   \sum_{j=1}^{2}\frac{\p_{3} \eta_{{p}^{k+1}} }{\sqrt{g_{{p}^{k+1},33}}} \cdot   \frac{ \p_{\ell}\p_{j} \eta_{{p}^{k+1}} }{\sqrt{g_{{p}^{k+1},jj}}}  \Big|_{x^{k+1}} \V^{k+1}_{{p}^{k+1}, j}+ O_{\| \eta \|_{C^{2}}} (  \| \Phi \|_{C^{1}})|t^{k }- t^{k+1}| .
	\end{eqnarray*}
	Combining with (\ref{tb_v3}), we conclude that 
	\begin{eqnarray*}
		(*) =  -1 + 2 + 
		O_{\| \eta \|_{C^{2}}}  (  \| \Phi \|_{C^{1}}) \frac{|v^{k +1}|^{2}|t^{k }- t^{k+1}|^{2}}{|\V^{k+1}_{{p}^{k+1},3}|}  =  1 + O_{\| \eta \|_{C^{2}}}  ( \| \Phi \|_{C^{1}} |t^{k }- t^{k+1}|).
	\end{eqnarray*}
	
	\noindent Note that 
	\begin{eqnarray*}
		&&\frac{\p_{3} \eta_{ {p}^{k+1} }  }{\sqrt{g_{ {p}^{k+1}, 33}  }} \Big|_{x^{k+1}}\cdot \frac{\p_{3} \eta_{ {p}^{k }}}{\sqrt{g_{ {p}^{k },33}}} \Big|_{x^{k }}
		= 1 + O_\Omega(  1) \max_{s} |V(s)| (t^{k} - t^{k+1}),\\
		&&  \frac{\p_{\ell} \eta_{ {p}^{k+1} }  }{\sqrt{g_{ {p}^{k+1}, \ell\ell}  }} \Big|_{x^{k+1}}\cdot \frac{\p_{3} \eta_{ {p}^{k }}}{\sqrt{g_{ {p}^{k },33}}} \Big|_{x^{k }}
		=  O_\Omega(  1) \max_{s} |V(s)| (t^{k} - t^{k+1}), \ \ \text{for} \ \ell=1,2.
	\end{eqnarray*}
	Altogether we prove (\ref{pv_pv}).
	
	\vspace{4pt}
	
	\noindent\textit{Step 3. } We prove (\ref{velocity_le_1}). 
	For $\V^{k+1}_{p^{k+1},3}\ll |v^{k+1}|$, by the expansion and (\ref{pv_pv}),
	\begin{eqnarray*}
		&&\V_{{p}^{k+1},3}^{k+1} (t^{k }, \X^{k }_{{p}^{k }}; \V^{k }_{{p}^{k },1},\V^{k }_{{p}^{k},2},  \V^{k }_{{p}^{k },3})  \\
		&=&\V_{{p}^{k+1},3}^{k+1} (t^{k }, \X^{k }_{{p}^{k }}; \V^{k }_{{p}^{k },1},\V^{k }_{{p}^{k },2},  0)   + \int^{\V^{k }_{{p}^{k },3}}_{0} \frac{\p \V^{k+1}_{{p}^{k+1},3}}{\p \V^{k }_{{p}^{k },3}} (t^{k }, \X^{k }_{{p}^{k }}; \V^{k }_{{p}^{k},1},\V^{k }_{{p}^{k },2},  \tau) \dd \tau	\\
		&=& 0 + \V^{k }_{{p}^{k },3}  \times (\ref{pv_pv}).
	\end{eqnarray*}
	This proves $\V^{k+1}_{{p}^{k+1},3} = 
	\big(
	1+  O_{\| \eta \|_{C^{2}}} |v^{k }| |t^{k } - t^{k+1}|
	\big)
	\V^{k }_{{p}^{k },3}$. Now we consider the case of $\V^{k+1}_{p^{k+1},3}\gtrsim |v^{k+1}|$. Clearly $\V^{k}_{p^{k},3} \leq |\V^{k}_{p^{k} }| \leq |v^{k+1}| + \| \nabla_{x} \Phi \|_{\infty} |t^{k}-t^{k+1}| \leq \frac{1}{2} |v^{k+1}| \lesssim \V^{k+1}_{p^{k+1},3}$ for sufficiently small $\| \nabla_{x} \Phi \|_{\infty}$. These prove (\ref{velocity_le_1}). Then we prove (\ref{velocity_le}) by induction in $k$. ALso, the proof of (\ref{upper_k}) is direct consequence of (\ref{velocity_le}). 
	\begin{equation}\notag
	\begin{split}
	\V^{k+1}_{p^{k+1},3} &\geq \big(1+ C_{\Omega} |v^{k} | |t^{k}-t^{k+1}|\big)^{-1}\V^{k }_{p^{k },3} \geq  e^{-C_{\Omega} |v^{k} | |t^{k}-t^{k+1}|}\V^{k }_{p^{k },3}   \\
	&\geq e^{- C_{\Omega}\sum_{i=1}^{k} |v^{i} | |t^{i}-t^{i+1}|}
	\V^{1 }_{p^{1 },3} \geq e^{-C_{\Omega} N } \delta.
	\end{split}
	\end{equation} 
\end{proof}

\begin{lemma}\label{det_billiard}Assume $\frac{1}{N} \leq |v  | \leq N,$ $\| \nabla \Phi \|_{\infty} <\frac{\delta}{3 \mathrm{diam} (\Omega)N^{2}}$ for $1 \ll N$, and $0< \delta \ll {1\over N} \ll 1$. Also we assume $|t^{k}-t^{k+1}| \leq 1$. Then 

	\begin{equation}\notag 
	\begin{split}
	& \bigg| \det \left[\begin{array}{cc} \nabla_{\X^{k}_{{p}^{k}}} \X^{k+1}_{{p}^{k+1}} & \nabla_{ {\V}^{k}_{{p}^{k}}} \X^{k+1}_{{p}^{k+1}}\\
	\nabla_{\X^{k}_{{p}^{k}}}  {\V}^{k+1}_{{p}^{k+1}}
	& \nabla_{ {\V}^{k}_{{p}^{k}} }  {\V}^{k+1}_{{p}^{k+1}}
	\end{array}\right]_{5\times 5} \bigg| 	 \\ 
	&=\Big(1+ O_{\Omega,N}(  \|   \Phi \|_{C^{2}} )
	\Big) \frac{ \sqrt{g_{{p}^{k },11} (x^{k}) }  \sqrt{g_{{p}^{k },22}  (x^{k})  }}{\sqrt{g_{{p}^{k+1 },11}  (x^{k+1})  }  \sqrt{g_{{p}^{k +1},22}  (x^{k+1})  }}	
	\frac{
		|\V^{k }_{p^{k },3}|
	}{|\V^{k+1}_{p^{k+1},3}|},
	\end{split}
	\end{equation}
		for the mapping $(\X^{k}_{p^{k},1}, \X^{k}_{p^{k},2}, \V^{k}_{p^{k}}) \mapsto (\X^{k+1}_{p^{k+1},1}, \X^{k}_{p^{k+1},2}, \V^{k}_{p^{k+1}})$.

	%
\end{lemma}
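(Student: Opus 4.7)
The plan is to compute the $5\times 5$ Jacobian directly from the formulas (\ref{Est-1})--(\ref{Est-8}) established in Lemma \ref{Jac_billiard}. First, I would arrange the matrix in the block form
\[
J \;=\; \begin{pmatrix} \nabla_{(\X^k_1,\X^k_2)} (\X^{k+1}_1,\X^{k+1}_2) & \nabla_{\V^k} (\X^{k+1}_1,\X^{k+1}_2) \\ \nabla_{(\X^k_1,\X^k_2)} \V^{k+1} & \nabla_{\V^k} \V^{k+1} \end{pmatrix}
\]
with blocks of sizes $2\times 2$, $2\times 3$, $3\times 2$, and $3\times 3$. The key structural observation is that, at leading order in $\|\Phi\|_{C^2}$, both (\ref{Est-2}) and (\ref{Est-6}) factor through the common vector
\[
c_i \;:=\; \frac{1}{\sqrt{g_{p^{k+1},ii}}}\Bigl[\frac{\p_i \eta_{p^{k+1}}}{\sqrt{g_{p^{k+1},ii}}} + \frac{\V^{k+1}_{p^{k+1},i}}{\V^{k+1}_{p^{k+1},3}}\frac{\p_3 \eta_{p^{k+1}}}{\sqrt{g_{p^{k+1},33}}}\Bigr]\bigg|_{x^{k+1}},
\]
which depends only on the output index $i$ and exhibits the rank-one structure of the two $\X^{k+1}$-derivative blocks; meanwhile (\ref{Est-7})--(\ref{Est-8}) express the $\V^{k+1}$-derivatives through the orthonormal frame $\{\p_j \eta_{p^k}/\sqrt{g_{p^k,jj}}|_{x^k}\}_{j=1,2,3}$.

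The computation then proceeds in three steps. I would first pass to the orthonormal frames at $x^k$ and $x^{k+1}$: the position changes of variable introduce the metric factors $\sqrt{g_{p^k,11}g_{p^k,22}}$ and $1/\sqrt{g_{p^{k+1},11}g_{p^{k+1},22}}$, while the two velocity changes of frame are rotations and thus contribute Jacobian $1$. Next, a Schur-complement reduction would exploit the rank-one factorization to eliminate the upper-right block $\nabla_{\V^k}\X^{k+1}$; after this reduction the $3\times 3$ lower block decomposes into a dominant contribution coming from the $-\frac{\p_3\eta_{p^{k+1}}}{\sqrt{g_{p^{k+1},33}}}\cdot\frac{\p_j\eta_{p^k}}{\sqrt{g_{p^k,jj}}}$ term in (\ref{Est-8}), with tangential corrections either absorbed by the Schur step or canceled against the analogous terms from (\ref{Est-7}) via orthonormality of the frames. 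Finally, the factor $1/\V^{k+1}_{p^{k+1},3}$ traced back to the normal component in (\ref{Est-5}) multiplies against a $|\V^k_3|$-type quantity arising from the reduced determinant to produce the main term $\frac{|\V^k_{p^k,3}|}{|\V^{k+1}_{p^{k+1},3}|}\cdot\frac{\sqrt{g_{p^k,11}g_{p^k,22}}}{\sqrt{g_{p^{k+1},11}g_{p^{k+1},22}}}$. This matches exactly what Liouville's theorem combined with the specular-reflection invariance of the measure $|v\cdot n|\,d\sigma\,dv$ on $\p\Omega\times\R^3$ would predict as the \emph{exact} Jacobian, which serves as a useful consistency check on the sign and structure of the leading term.

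The main obstacle will be controlling the error terms inherited from (\ref{Est-1})--(\ref{Est-8}), several of which carry factors $1/|\V^{k+1}_{p^{k+1},3}|$ and exponentials $e^{\|\Phi\|_{C^2}(t^k-t^{k+1})^2}$ that could in principle threaten the expansion. Lemma \ref{velocity_lemma} provides a uniform lower bound $|\V^{k+1}_{p^{k+1},3}| \gtrsim c_{\Omega,N}\,|\V^1_{p^1,3}|$ across the bounded range of bounces from (\ref{upper_k}), while Lemma \ref{uniform number of bounce} yields $|v^k|(t^k-t^{k+1})\leq \delta \ll 1$. Combined with the hypothesis $\|\Phi\|_{C^2} \ll \delta/N^2$, every error term consolidates into the asserted $O_{\Omega,N}(\|\Phi\|_{C^2})$ multiplicative remainder, yielding the claimed $(1+O_{\Omega,N}(\|\Phi\|_{C^2}))$ factor.
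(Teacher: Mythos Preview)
Your strategy — start from the explicit entries in Lemma~\ref{Jac_billiard} and exploit the fact that the $\X^{k+1}$-rows factor through the vectors $c_i$ — is the same as the paper's, but the linear-algebra mechanics differ. The paper does not pass to orthonormal frames or set up a Schur complement. It performs two rounds of elementary row operations directly on the $5\times5$ matrix: first subtracting multiples of rows $1,2$ from rows $3,4,5$ to strip the $\sum_\ell\frac{\p\X^{k+1}_\ell}{\p(\cdot)}\,\p_\ell\bigl(\tfrac{\p_i\eta}{\sqrt{g_{ii}}}\bigr)\cdot v^k$ terms out of (\ref{Est-3})--(\ref{Est-4}) and (\ref{Est-7})--(\ref{Est-8}); then using the cleaned rows $3,4,5$ to kill the upper-right $2\times3$ block and the $(t^k-t^{k+1})$-part of the upper-left $2\times2$ block. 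This leaves a block-lower-triangular matrix whose $3\times3$ block is a product of two orthonormal frame matrices with one sign flip (determinant $-1$), and whose $2\times2$ block $[\,\p_j\eta_{p^k}\cdot c_i\,]$ is evaluated via the identity $\det[A_i\cdot B_j]_{2\times2}=(A_1\times A_2)\cdot(B_1\times B_2)$, producing the metric ratio and the factor $|\V^k_3|/|\V^{k+1}_3|$ in one line. Your Schur route would reach the same endpoint but requires inverting the $3\times3$ block, which the paper's row reduction sidesteps. (A small terminological point: the factorization through $c_1,c_2$ makes the upper $2\times5$ block rank at most two, not one.)

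Your error analysis has a genuine gap. Lemma~\ref{uniform number of bounce} does \emph{not} give $|v^k|(t^k-t^{k+1})\le\delta$: estimate (\ref{upper_delta_t}) is conditional on a grazing hypothesis, and under the bare assumptions here one has only $|t^k-t^{k+1}|\le1$, hence $|v^k|(t^k-t^{k+1})\lesssim N$. Likewise, Lemma~\ref{velocity_lemma} cannot be used to bound $|\V^{k+1}_{p^{k+1},3}|$ below by $c_{\Omega,N}|\V^1_{p^1,3}|$ — the present lemma concerns a single step $k\to k+1$ and carries no hypothesis on $\V^1$. What (\ref{velocity_le_1}) \emph{does} give is that the ratio $|\V^k_3|/|\V^{k+1}_3|$ is pinned in $[c_{\Omega,N},C_{\Omega,N}]$, so the main term is $O_{\Omega,N}(1)$ and the goal becomes an \emph{absolute} $O_{\Omega,N}(\|\Phi\|_{C^2})$ bound on the remainder of the determinant. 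In the paper's block-triangular form this follows because the upper-right block is exactly zero at leading order, so any permutation touching an upper-row error entry of size $O_{\Omega,N}(\|\Phi\|_{C^2})(1+|\V^{k+1}|/|\V^{k+1}_3|)$ must also pick an entry from the zero block or pair against the $O(1)$ lower block; a generic Schur step does not produce this cancellation for free and you would have to track it by hand.
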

\begin{proof} From Lemma \ref{Jac_billiard} and Lemma \ref{uniform number of bounce},
	\begin{equation}\begin{split}\notag
	&
	\left[\begin{array}{c|c} \nabla_{ \X^{k}_{{p}^{k}}} \X^{k+1}_{{p}^{k+1},1} 
	& \nabla_{ \V^{k}_{{p}^{k}}} \X^{k+1}_{{p}^{k+1},1}\\ 
	\nabla_{ \X^{k}_{{p}^{k}}} \X^{k+1}_{{p}^{k+1},2} & \nabla_{ \V^{k}_{{p}^{k}}} \X^{k+1}_{{p}^{k+1},2}\\ 				 
	\hline
	\nabla_{\X^{k}_{{p}^{k}}} \V^{k+1}_{{p}^{k+1} }
	& \nabla_{ \V^{k}_{{p}^{k}} } \V_{{p}^{k+1} }^{k+1} 
	\end{array}\right]_{5\times 5}   \\
	&= 
	\underbrace{
		{\left[\begin{array}{c|c}
			\bigg[\frac{\p \X^{k+1}_{{p}^{k+1},i}}{\p {\X^{k}_{{p}^{k}, j}} }\bigg]_{i=1,2, j=1,2 }
			& \bigg[\frac{\p  \X^{k+1}_{{p}^{k+1},i}}{\p{\V^{k}_{{p}^{k}, j}}}\bigg]_{i=1,2, j=1,2,3}
			\\ 
			\hline 
			\substack{	\frac{\p_{i} \eta_{{p}^{k+1} } }{\sqrt{g_{{p}^{k+1}, ii}  }}\Big|_{x^{k+1}} \cdot \frac{\p  v^{k} }{\p \X^{k}_{{p}^{k}, j}} 
				\\ +
				\sum_{\ell=1}^{2} 
				\frac{\p \X^{k+1}_{{p}^{k+1}, \ell}}{\p \X^{k}_{{p}^{k}, j}} \p_{\ell}\Big(  \frac{\p_{i} \eta_{{p}^{k+1}}  }{\sqrt{g_{{p}^{k+1}, ii} } }  \Big) \Big|_{x^{k+1}}  \cdot v^{k}   }
			& 
			\substack{ \sum_{\ell=1}^{2} \frac{\p \X^{k+1}_{{p}^{k+1}, \ell}}{\p \V^{k}_{{p}^{k}, j}}
				\p_{\ell}\Big( \frac{\p_{i} \eta_{{p}^{k+1}} }{\sqrt{g_{{p}^{k+1},ii}}}\Big) \Big|_{x^{k+1}}  \cdot v^{k}\\
				+ \frac{\p_{i} \eta_{{p}^{k+1}} (x^{k+1})}{\sqrt{g_{{p}^{k+1}, ii} (x^{k+1})}} \cdot \frac{\p_{j} \eta_{{p}^{k}} (x^{k})}{\sqrt{g_{{p}^{k}, jj} (x^{k})}}	}\\ \hline
			\substack{ -\frac{\p_{3} \eta_{{p}^{k+1} }   }{\sqrt{g_{{p}^{k+1}, 33}  }} \Big|_{x^{k+1}}  \cdot \frac{\p  v^{k} }{\p \X^{k}_{{p}^{k}, j}}\\
				-
				\sum_{\ell=1}^{2} 
				\frac{\p \X^{k+1}_{{p}^{k+1}, \ell}}{\p \X^{k}_{{p}^{k}, j}} \p_{\ell}\Big(  \frac{\p_{3} \eta_{{p}^{k+1}}  }{\sqrt{g_{{p}^{k+1}, 33}   }  }  \Big) \Big|_{x^{k+1}}  \cdot v^k} &  \substack{
				-\sum_{\ell=1}^{2} \frac{\p \X^{k+1}_{{p}^{k+1}, \ell}}{\p \V^{k}_{{p}^{k}, j}}
				\p_{\ell}\Big( \frac{\p_{3} \eta_{{p}^{k+1}} }{\sqrt{g_{{p}^{k+1},33}}}\Big)\Big|_{x^{k+1}}  \cdot v^{k}\\
				- \frac{\p_{3} \eta_{{p}^{k+1}} (x^{k+1})}{\sqrt{g_{{p}^{k+1}, 33} (x^{k+1})}} \cdot \frac{\p_{j} \eta_{{p}^{k}} (x^{k})}{\sqrt{g_{{p}^{k}, jj} (x^{k})}}}
			\end{array} \right]}  }_{:=\mathcal{A}}  \\
	& \ \ \ \ \tiny{ + \left[ 
		\begin{array}{c|c}	
		0  & 0  \\	 \hline
		C_{N}\| 	\Phi \|_{C^{2}}	 & C_{N}\| 	\Phi \|_{C^{2}}		
		\end{array}					 \right]_{5\times 5}} .  \\
	\end{split}\end{equation}
	Now for $i=1$ and $i=2$, we multiply $\p_{\ell} \Big( \frac{\p_{i} \eta_{{p}^{k+1}}}{\sqrt{g_{{p}^{k+1},ii}}}
	\Big)\Big|_{x^{k+1}} \cdot v^{k}$ to $\ell^{\text{th}}$ row for $\ell=1,2$, and then subtract this to the $(i+2)^{\text{th}}$ row.  Similarly, we multiply $\p_{\ell}\Big( \frac{\p_{3} \eta_{{p}^{k+1}}}{\sqrt{g_{{p}^{k+1},ii}}}
	\Big) \Big|_{x^{k+1}}  \cdot v^{k}$ to $\ell^{\text{th}}$ row for $\ell=1,2$ and then subtract this to the $5^{\text{th}}$ row.  
	{ Hence, rewriting first two rows using Lemma \ref{Jac_billiard}, the resulting row echelon form of matrix $\mathcal{A}$ is } 
	\begin{equation}  \notag
	\begin{split}
	&  \tiny{ \left[\begin{array}{c|c}
		\Big[ \p_{j} \eta_{{p}^{k}} (x^{k}) - (t^{k} - t^{k+1}) \frac{\p v^{k} }{\p \X^{k}_{{p}^{k}, j}}
		\Big] 
		&  -(t^{k} - t^{k+1})
		\frac{\p_{j} \eta_{{p}^{k}} }{\sqrt{g_{{p}^{k},jj}  }} \Big|_{x^{k}}
		\\
		\cdot  \frac{1}{\sqrt{g_{{p}^{k+1},ii}  }}	\Big[ \frac{\p_{i} \eta_{{p}^{k+1}} }{ \sqrt{g_{{p}^{k+1} ,ii}  }}	 +  \frac{\V^{k+1}_{{p}^{k+1}, i}  }{\V^{k+1}_{{p}^{k+1}, 3}  } 
		\frac{\p_{3} \eta_{{p}^{k+1}}  }{ \sqrt{g_{{p}^{k+1}, 33} }}
		\Big]\Big|_{x^{k+1}}
		&
		\cdot  \frac{  1}{\sqrt{g_{{p}^{k+1},ii}  }}
		\bigg[ \frac{\p_{i} \eta_{{p}^{k+1}}  }{\sqrt{g_{{p}^{k+1},ii}  }}
		+\frac{ {\V}^{k+1}_{{p}^{k+1},i} }{ {\V}^{k+1}_{{p}^{k+1},3}  }
		\frac{\p_{3} \eta_{{p}^{k+1}} }{\sqrt{g_{{p}^{k+1},33}  }}	\bigg]  \Big|_{x^{k+1}}
		\\ \hline
		\frac{\p_{i} \eta_{{p}^{k+1} }   }{\sqrt{g_{{p}^{k+1}, ii}  }} \Big|_{x^{k+1}}\cdot \frac{\p v^{k}  }{\p \X^{k}_{{p}^{k}, j}}
		& 
		\frac{\p_{i} \eta_{{p}^{k+1}} (x^{k+1})}{\sqrt{g_{{p}^{k+1}, ii} (x^{k+1})}} \cdot \frac{\p_{j} \eta_{{p}^{k} } (x^{k})}{\sqrt{g_{{p}^{k}, jj} (x^{k})}}
		\\
		-\frac{\p_{3} \eta_{{p}^{k+1} }   }{\sqrt{g_{{p}^{k+1}, 33}  }}\Big|_{x^{k+1}} \cdot \frac{\p v^{k} }{\p \X^{k}_{{p}^{k}, j}}
		& 
		-  \frac{\p_{3} \eta_{{p}^{k+1}} (x^{k+1})}{\sqrt{g_{{p}^{k+1},33} (x^{k+1})}} \cdot \frac{\p_{j} \eta_{{p}^{k} } (x^{k})}{\sqrt{g_{{p}^{k}, jj} (x^{k})}}
		\end{array} \right] }   \\
	& \tiny{+ 
		\left[ 
		\begin{array}{c|c}	
		0  & 0  \\	 \hline
		C_{N}\| 	\Phi \|_{C^{2}}	 & C_{N}\| 	\Phi \|_{C^{2}}				
		\end{array}					 \right]_{5\times 5}}
	. 
	\end{split}
	\end{equation}
	
	\noindent Then we use $3-5$ rows to remove the following parts in $1-2$ rows:
	\[ {
		\left[\begin{array}{c|c}  
		\substack{  - (t^{k} - t^{k+1}) \frac{\p v^{k} }{\p \X^{k}_{{p}^k, j}}  \\ \cdot  \frac{1}{\sqrt{g_{{p}^{k+1},ii}  }}
			\Big[ \frac{\p_{i} \eta_{{p}^{k+1}} }{ \sqrt{g_{{p}^{k+1} ,ii}  }}
			+ \frac{\V^{k+1}_{{p}^{k+1}, i}  }{\V^{k+1}_{{p}^{k+1}, 3}  } 
			\frac{\p_{3} \eta_{{p}^{k+1}}  }{ \sqrt{g_{{p}^{k+1}, 33} }}
			\Big]}
		& \substack{  -(t^{k} - t^{k+1})
			\frac{\p_{j} \eta_{{p}^k}    }{\sqrt{g_{{p}^{k},jj} }} \\
			\cdot  \frac{  1}{\sqrt{g_{{p}^{k+1},ii}  }}
			\bigg[ \frac{\p_{i} \eta_{{p}^{k+1}}  }{\sqrt{g_{{p}^{k+1},ii}  }}
			+\frac{ {\V}^{k+1}_{{p}^{k+1},i} }{ {\V}^{k+1}_{{p}^{k+1},3}  }
			\frac{\p_{3} \eta_{{p}^{k+1}} }{\sqrt{g_{{p}^{k+1},33}  }}
			\bigg]  }
		\end{array}\right].}
	\]
	{ Via this process, we obtain the following row echelon form of matrix $\mathcal{A}$.  }  
	\begin{equation}\label{echelon}
	\begin{split}
	& 	  \left[\begin{array}{c|c}
	\begin{array}{c}
	\p_{j} \eta_{{p}^k}     (x^{k})
	\frac{1}{\sqrt{g_{{p}^{k+1},ii}  }}  \\
	\cdot \Big[ \frac{\p_{i} \eta_{{p}^{k+1}} }{ \sqrt{g_{{p}^{k+1} ,ii}  }}
	+  \frac{\V^{k+1}_{{p}^{k+1}, i}  }{\V^{k+1}_{{p}^{k+1}, 3}  } 
	\frac{\p_{3} \eta_{{p}^{k+1}}  }{ \sqrt{g_{{p}^{k+1}, 33} }}
	\Big]\Big|_{x^{k+1}}
	\end{array}
	&  0 
	\\ \hline
	\frac{\p_{i} \eta_{{p}^{k+1} } }{\sqrt{g_{{p}^{k+1}, ii}  }} \Big|_{x^{k+1}} \cdot \frac{\p  v^{k} }{\p \X^{k}_{{p}^k, j}}
	& 
	\frac{\p_{i} \eta_{{p}^{k+1}} (x^{k+1})}{\sqrt{g_{{p}^{k+1}, ii} (x^{k+1})}} \cdot \frac{\p_{j} \eta_{{p}^k } (x^{k})}{\sqrt{g_{{p}^k, jj} (x^{k})}}
	\\
	-\frac{\p_{3} \eta_{{p}^{k+1} } }{\sqrt{g_{{p}^{k+1}, 33}  }} \Big|_{x^{k+1}}  \cdot \frac{\p v^{k} }{\p \X^{k}_{{p}^k, j}}
	& 
	-  \frac{\p_{3} \eta_{{p}^{k+1}} (x^{k+1})}{\sqrt{g_{{p}^{k+1},33} (x^{k+1})}} \cdot \frac{\p_{j} \eta_{{p}^k } (x^{k})}{\sqrt{g_{{p}^k, jj} (x^{k})}}
	\end{array} \right] \\
	& + \tiny{ \left[ 
	\begin{array}{c|c}
	O_{\Omega}(\| \Phi \|_{C^{2}}) |v^{k} | (t^{k} - t^{k+1}) \Big(1+ \frac{|\V^{k+1}_{p^{k+1}}|}{|\V^{k+1}_{p^{k+1},3}|}\Big)&O_{\Omega}(\| \Phi \|_{C^{2}}) |v^{k} | (t^{k} - t^{k+1}) \Big(1+ \frac{|\V^{k+1}_{p^{k+1}}|}{|\V^{k+1}_{p^{k+1},3}|}\Big)\\ \hline
	O_{\Omega, N}(\| \Phi \|_{C^{2}}) &O_{\Omega, N}(\| \Phi \|_{C^{2}}) 
	\end{array}
	\right]_{5\times 5}  }.
	\end{split}
	\end{equation}
	
	{ Note that row echelon operation preserves determinant. Therefore, we compute determinants of two matrices. Determinant of the lower right $3\times 3$ block of the first matrix in (\ref{echelon}) is given by  } 
	\begin{equation} \label{LR 33}
	\begin{split}
	-1 &= \det \left[\begin{array}{c} \frac{\p_{i} \eta_{{p}^{k+1}} (x^{k+1})}{\sqrt{g_{{p}^{k+1}, ii} (x^{k+1})}} \cdot \frac{\p_{j} \eta_{{p}^k } (x_{{p}^{k})}}{\sqrt{g_{{p}^{k}, jj} (x^{k})}} \\
	-  \frac{\p_{3} \eta_{{p}^{k+1}} (x^{k+1})}{\sqrt{g_{{p}^{k+1},33} (x^{k+1})}} \cdot \frac{\p_{j} \eta_{{p}^k } (x^{k})}{\sqrt{g_{{p}^k, jj} (x^{k})}}
	\end{array}\right]_{3\times 3}   \\   
	&= 
	\det \left[\begin{array}{c}
	\frac{\p_{1} \eta_{{p}^{k+1}} (x^{k+1})}{\sqrt{g_{{p}^{k+1}, 11} (x^{k+1})}} \\
	\frac{\p_{2} \eta_{{p}^{k+1}} (x^{k+1})}{\sqrt{g_{{p}^{k+1}, 22} (x^{k+1})}} \\
	-  \frac{\p_{3} \eta_{{p}^{k+1}} (x^{k+1})}{\sqrt{g_{{p}^{k+1},33} (x^{k+1})}} 
	\end{array}\right]_{3\times 3}   
	\det \left[\begin{array}{ccc} 
	\frac{\p_{1} \eta_{{p}^k } (x^{k})}{\sqrt{g_{{p}^k, 11} (x^{k})}} &
	\frac{\p_{2} \eta_{{p}^k } (x^{k})}{\sqrt{g_{{p}^k, 22} (x^{k})}}
	&
	\frac{\p_{3} \eta_{{p}^k } (x^{k})}{\sqrt{g_{{p}^k, 33} (x^{k})}}
	\end{array}\right]_{3\times 3}  .
	\end{split}
	\end{equation}
	
	\noindent In order to evaluate the determinant of upper left $2\times 2$ matrix, we use a basic linear algebra result: Let $A_{1}, A_{2}, B_{1}, B_{2} \in \R^{3}$. Then 
	\begin{equation}\label{identity_cross_det}
	\Big| \det  \left(\begin{array}{cc} 
	A_{1} \cdot B_{1} & A_{1} \cdot B_{2} \\
	A_{2} \cdot B_{1} & A_{2} \cdot B_{2}
	\end{array}\right)\Big| = \big|(A_{1} \times A_{2}) \cdot (B_{1} \times B_{2})   \big|.
	\end{equation}
	
	{ \noindent From (\ref{identity_cross_det}), the determinant of upper left $2\times 2$ submatrix of the first matrix in (\ref{echelon}) equals 		
	} 
	\begin{eqnarray}  
	&&\Big|
	\big( \p_{1} \eta_{{p}^{k}} \times \p_{2} \eta_{{p}^{k}}
	\big)\big|_{x^{k}} \cdot  
	\Big(
	\frac{1}{\sqrt{g_{{p}^{k+1},11}  }}
	\Big[ \frac{\p_{1} \eta_{{p}^{k+1}} }{ \sqrt{g_{{p}^{k+1} ,11}  }}\Big|_{x^{k+1}}
	+ \frac{\V^{k+1}_{{p}^{k+1}, 1}  }{\V^{k+1}_{{p}^{k+1}, 3}  } 
	\frac{\p_{3} \eta_{{p}^{k+1}}  }{ \sqrt{g_{{p}^{k+1}, 33} }}\Big|_{x^{k+1}}
	\Big] 
	\notag
	\\
	&&
	\ \ \ \ \ \ \  \ \  \ \   \ \  \ \ \ \ \ \ \  \ \  \ \ \ \ \ \ \  \ \ 
	\times 
	\frac{1}{\sqrt{g_{{p}^{k+1},22}  }}
	\Big[ \frac{\p_{2} \eta_{{p}^{k+1}} }{ \sqrt{g_{{p}^{k+1} ,22}  }}\Big|_{x^{k+1}}
	+ \frac{\V^{k+1}_{{p}^{k+1}, 2}  }{\V^{k+1}_{{p}^{k+1}, 3}  } 
	\frac{\p_{3} \eta_{{p}^{k+1}}  }{ \sqrt{g_{{p}^{k+1}, 33} }}\Big|_{x^{k+1}}
	\Big]
	\Big)
	\Big|
	\notag
	\\
	&&\quad = \frac{ \sqrt{g_{{p}^{k },11} (x^{k})} \sqrt{g_{{p}^{k },22}  (x^{k})  }}{\sqrt{g_{{p}^{k+1 },11}  (x^{k+1}) } \sqrt{g_{{p}^{k +1},22}  (x^{k+1})  }} 	\\
	&&\quad\quad  \times \Big|
	n_{{p}^{k}} (x^{k})\cdot 
	\Big(
	n_{{p}^{k+1}}-\frac{\V^{k+1}_{{p}^{k+1},2}}{ \V^{k+1}_{{p}^{k+1},3}} \frac{\p_{2} \eta_{{p}^{k+1}}  }{ \sqrt{g_{{p}^{k+1}, 22} }} 
	- \frac{\V^{k+1}_{{p}^{k+1},1}}{ \V^{k+1}_{{p}^{k+1},3}} \frac{\p_{1} \eta_{{p}^{k+1}}  }{ \sqrt{g_{{p}^{k+1}, 11} }}
	\Big)
	\Big|
	\notag
	\\
	&&\quad = \frac{ \sqrt{g_{{p}^{k },11} (x^{k}) g_{{p}^{k },22}  (x^{k})  }}{\sqrt{g_{{p}^{k+1 },11}  (x^{k+1}) g_{{p}^{k +1},22}  (x^{k+1})  }}
	\frac{|
		\V^{k}_{{p}^{k},3}
		|
		+ \| \nabla \Phi \|_{\infty} |t^{k}- t^{k+1}|
	}{| \V^{k+1}_{{p}^{k+1},3}|}. \label{UL 22}
	\end{eqnarray}
	
	\noindent Since determinant of the second matrix in (\ref{echelon}) is size of $\|\Phi\|_{C^2}$, we finish the proof from (\ref{echelon}), (\ref{LR 33}), and (\ref{UL 22}).	
\end{proof}

\begin{lemma} \label{5X5}
	We define, for all $k$,
	\begin{equation}\label{hat_v}
	|\V^{k}_{p^{k}}| = \sqrt{ ({\V}^{k}_{{p}^{k},1})^{2} + ({\V}^{k}_{{p}^{k},2})^{2} + ({\V}^{k}_{{p}^{k},3})^{2}},	
	\	\  \hat{\V}^{k}_{{p}^{k},1} = \frac{\V^{k}_{{p}^{k},1}}{|\V^{k}_{{p}^{k}}|},   \ \ 
	\hat{\V}^{k}_{{p}^{k},2} = \frac{\V^{k}_{{p}^{k},2}}{|\V^{k}_{{p}^{k}}|}, 
	\end{equation}	
	where $\V^{k}_{p^{k}}= \V^{k}_{p^{k}}(t,x,v)$ are defined in (\ref{x^k}).
	Assume (\ref{convexity_eta}), $\frac{1}{N} \leq |v| \leq N$, $\|  \Phi \|_{C^{2}_{x}} < \frac{\delta_{1}}{3 \text{diam} (\Omega) N^{2}}$ for $1 \ll N$, $0 < \delta_{1} \ll \frac{1}{N} \ll 1$, and $|\V^{1}_{p^{1},3}(t,x,v)|> \delta_{2}>0$. If $|t-t^{k}| \leq 1$, then  \begin{equation}\label{Jac_hat}
	\bigg|\det\left[\begin{array}{cc|cc} 
	\frac{\p \X^{k}_{{p}^{k},1}}{\p \X^{1}_{{p}^{1},1}} & \frac{\p \X^{k}_{{p}^{k},1}}{\p \X^{1}_{{p}^{1},2}} & \frac{\p \X^{k}_{{p}^{k},1}}{\p \hat{\V}^{1}_{{p}^{1},1}}  & \frac{\p \X^{k}_{{p}^{k},1}}{\p  \hat{\V}^{1}_{{p}^{1},2}}  \\
	\frac{\p \X^{k}_{{p}^{k},2}}{\p \X^{1}_{{p}^{1},1}} & \frac{\p \X^{k}_{{p}^{k},2}}{\p \X^{1}_{{p}^{1},2}} & \frac{\p \X^{k}_{{p}^{k},2}}{\p \hat{\V}^{1}_{{p}^{1},1}}  & \frac{\p \X^{k}_{{p}^{k},2}}{\p \hat{\V}^{1}_{{p}^{1},2}}  \\ \hline
	\frac{\p \hat{\V}^{k}_{{p}^{k},1}}{\p \X^{1}_{{p}^{1},1}} & \frac{\p  \hat{\V}^{k}_{{p}^{k},1}}{\p \X^{1}_{{p}^{1},2}} & \frac{\p   \hat{\V}^{k}_{{p}^{k},1}}{\p \hat{\V}^{1}_{{p}^{1},1}}  & \frac{\p   \hat{\V}^{k}_{{p}^{k},1}}{\p \hat{\V}^{1}_{{p}^{1},2}}  \\
	\frac{\p   \hat{\V}^{k}_{{p}^{k},2}}{\p \X^{1}_{{p}^{1},1}} & \frac{\p   \hat{\V}^{k}_{{p}^{k},2}}{\p \X^{1}_{{p}^{1},2}} & \frac{\p   \hat{\V}^{k}_{{p}^{k},2}}{\p \hat{\V}^{1}_{{p}^{1},1}}  & \frac{\p   \hat{\V}^{k}_{{p}^{k},2}}{\p \hat{\V}^{1}_{{p}^{1},2}}  
	\end{array}\right] \bigg| \  >  \ \epsilon_{ \Omega,N,  \delta_{1} ,\delta_{2}} >0,
	\end{equation}	
	where $ t^{1}  =   t^{1} (t,x,v),$ $\X^{1}_{p^{1},i}  =  \X^{1}_{p^{1},i} (t,x,v),$ $\hat{\V}^{1}_{p^{1},i} = \hat{\V}^{1}_{p^{1},i} (t,x,v)$, and 
	\begin{eqnarray*}
	\X^{k}_{{p}^{k},i} &=&   \X^{k}_{{p}^{k},i} (t^{1}, \X^{1}_{p^{1},1}, \X^{1}_{p^{1},2},  \hat{\V}^{1}_{p^{1},1},  \hat{\V}^{1}_{p^{1},2}, |\V^{1}_{p^{1}}|), \\ 
	\hat{\V}^{k}_{{p}^{k},i} &=&   \hat{\V}^{k}_{{p}^{k},i} (t^{1}, \X^{1}_{p^{1},1}, \X^{1}_{p^{1},2},  \hat{\V}^{1}_{p^{1},1},  \hat{\V}^{1}_{p^{1},2}, |\V^{1}_{p^{1}}|).
	\end{eqnarray*}
	Here, the constant $\epsilon_{ \Omega,N,   \delta_{1}, \delta_{2}}>0$ does not depend on $t$ and $x$.\end{lemma}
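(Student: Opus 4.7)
My plan is to reduce the $4\times 4$ determinant in (\ref{Jac_hat}) to the $5\times 5$ determinant controlled by Lemma \ref{det_billiard}, first through a chain rule over bounces, then via a change to spherical coordinates in velocity, and finally through a Schur-complement argument.

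\textit{Step one.} By the chain rule applied to the Cartesian-velocity maps $(\X^j_{p^j}, \V^j_{p^j}) \mapsto (\X^{j+1}_{p^{j+1}}, \V^{j+1}_{p^{j+1}})$ for $j=1,\ldots,k-1$, together with Lemma \ref{det_billiard}, I obtain
\begin{equation*}
\left| \det \frac{\p (\X^k_{p^k,1}, \X^k_{p^k,2}, \V^k_{p^k})}{\p (\X^1_{p^1,1}, \X^1_{p^1,2}, \V^1_{p^1})} \right| = \bigl(1 + O_{\Omega,N}(\|\Phi\|_{C^2})\bigr)^{k-1} \cdot \frac{\sqrt{g_{p^1,11} g_{p^1,22}}}{\sqrt{g_{p^k,11} g_{p^k,22}}} \cdot \frac{|\V^1_{p^1,3}|}{|\V^k_{p^k,3}|},
\end{equation*}
after telescoping. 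By (\ref{upper_k}) the exponent $k-1$ is bounded by a constant depending only on $\Omega,N,\delta_1$, so the prefactor equals $1 + O_{\Omega,N,\delta_1}(\|\Phi\|_{C^2})$.

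\textit{Step two.} The spherical-coordinate change $(\V_1,\V_2,\V_3) \leftrightarrow (|\V|,\hat{\V}_1,\hat{\V}_2)$ has absolute Jacobian $|\V|^3/|\V_3|$. Applying this transformation at both ends (valid since $|\V^1_{p^1,3}| \geq \delta_2$ and, by (\ref{velocity_le}) in Lemma \ref{velocity_lemma}, the intermediate $|\V^j_{p^j,3}|$ remain uniformly bounded away from zero), the $|\V^j_{p^j,3}|$ factors cancel telescopically and I arrive at
\begin{equation*}
\left| \det \frac{\p (\X^k_1, \X^k_2, |\V^k|, \hat{\V}^k_1, \hat{\V}^k_2)}{\p (\X^1_1, \X^1_2, |\V^1|, \hat{\V}^1_1, \hat{\V}^1_2)} \right| = \bigl(1 + O(\|\Phi\|_{C^2})\bigr) \cdot \frac{\sqrt{g_{p^1,11} g_{p^1,22}}}{\sqrt{g_{p^k,11} g_{p^k,22}}} \cdot \frac{|\V^1_{p^1}|^3}{|\V^k_{p^k}|^3}.
\end{equation*}
Since $\frac{d}{ds}(|V|^2/2 + \Phi) = \p_s\Phi$ yields $\bigl||\V^k|-|\V^1|\bigr| \lesssim \|\Phi\|_{C^2}$ over the finitely many bounces, and $|\V^1| \in [1/N,N]$, the right-hand side is bounded below by a positive constant $\epsilon^\ast_{\Omega,N,\delta_1,\delta_2}>0$.

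\textit{Step three.} To isolate the $4\times 4$ matrix $A$ from (\ref{Jac_hat}), I write the above $5\times 5$ matrix in block form $\left(\begin{array}{cc} A & B \\ C & D \end{array}\right)$ where $B_{4\times 1}=\p(\X^k,\hat{\V}^k)/\p|\V^1|$, $C_{1\times 4}=\p|\V^k|/\p(\X^1,\hat{\V}^1)$, and $D_{1\times 1}=\p|\V^k|/\p|\V^1|$. The crucial observation is that when $\Phi \equiv 0$ the trajectory is piecewise linear and scaling $|\V^1|$ merely rescales time without altering the spatial geometry or the Euclidean direction $\hat{v}$, while $|\V^k|=|\V^1|$ by exact energy conservation; hence $B=0$, $C=0$, $D=1$. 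A direct verification using Est-6 confirms this: combining $\sum_j \hat{\V}^k_j \p \X^{k+1}_{p^{k+1},i}/\p \V^k_{p^k,j}$ with the identifications from (\ref{v_123}) (valid in the straight-line case), the bracketed expression collapses to $\V^{k+1}_{p^{k+1},i} - \V^{k+1}_{p^{k+1},i} = 0$. Perturbatively, the explicit $O(\|\Phi\|_{C^2})$ remainders in Lemmas \ref{Jac_billiard}--\ref{velocity_lemma}, together with the uniform bound on the number of bounces, give $\|B\|,\|C\|,|D-1| = O(\|\Phi\|_{C^2})$, while $\|A\|$ is bounded above by a constant $C_{\Omega,N,\delta_1,\delta_2}$ from the same lemmas. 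The Schur identity $\det(5\times 5) = D\det(A - BD^{-1}C)$ and the perturbation bound $|\det A - \det(A-BD^{-1}C)| \lesssim \|A\|^3 \|BD^{-1}C\| = O(\|\Phi\|_{C^2}^2)$ yield $|\det A| \geq \epsilon^\ast/|D| - O(\|\Phi\|_{C^2}^2) > 0$ once $\|\Phi\|_{C^2}$ is sufficiently small, establishing (\ref{Jac_hat}).

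The main obstacle is the cancellation $B=0$ at zeroth order in $\Phi$: a priori $\|B\|$ is only $O(1)$, and its improvement to $O(\|\Phi\|_{C^2})$ relies on the scale-invariance of the straight-line specular dynamics. Propagating this cancellation through the chain $1 \to 2 \to \cdots \to k$ requires tracking how the leading-order terms in (\ref{Est-6})--(\ref{Est-8}) interact at each bounce, exploiting that the finitely many $|\V^j|$ are almost preserved (to within $O(\|\Phi\|_{C^2})$) by Lemma \ref{velocity_lemma}, and that the orthonormality (\ref{orthonormal_eta}) forces the exact algebraic identity $\hat{v}\cdot T_i = (\V^{k+1}_{p^{k+1},i} - \V^{k+1}_{p^{k+1},i})/(\sqrt{g_{p^{k+1},ii}}|v|)=0$ at the unperturbed level.
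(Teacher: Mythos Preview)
Your strategy matches the paper's: factor the $5\times5$ Jacobian through Lemma~\ref{det_billiard} via the chain rule, change to spherical velocity coordinates, and then peel off the $|\V|$ row/column to isolate the $4\times4$ block. Two remarks. First, your spherical Jacobian $|\V|^3/|\V_3|$ is in fact the correct value; the paper records $|\V|^5/|\V_3|^3$, which comes from dropping the off-diagonal entries $\p\hat\V_i/\p\V_j=-\V_i\V_j/|\V|^3$ in $Q_{i+1}$, but the discrepancy is harmless since both versions telescope to a uniformly positive quantity. Second, for the $5\times5\to4\times4$ reduction you work to show the \emph{column} $B=\p(\X^k,\hat\V^k)/\p|\V^1|$ is $O(\|\Phi\|_{C^2})$ via scale-invariance of straight-line billiards, and you correctly flag the inductive propagation of this cancellation as the delicate point. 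The paper instead establishes that the \emph{row} $C=\p|\V^k|/\p(\X^1,\hat\V^1)$ is $O(\|\Phi\|_{C^2})$, which follows more directly from near-conservation of kinetic energy $|\V^{i+1}|=|\V^i|+O(\|\Phi\|_{C^2})$ at each bounce (their Step~2). Either suffices: with one of $B,C$ small and the other merely bounded, cofactor expansion along that row or column already gives $\det(5\times5)=\det A+O(\|\Phi\|_{C^2})$. The energy route would spare you the chain-cancellation argument you identify as the main obstacle.
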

	\begin{proof} 
		
	\textit{Step 1.} We compute 

	\begin{equation} \label{similar form}
	\begin{split}
		J^{i+1}_{i} &:= \frac{\p ( \X^{i+1}_{{p}^{i+1},1}, \X^{i+1}_{{p}^{i+1},2}, \hat{\V}^{i+1}_{{p}^{i+1},1} , \hat{\V}^{i+1}_{{p}^{i+1},2} ,  |  {\V}^{i+1}_{{p}^{i+1} } |)}{\p (
		\X^{i }_{{p}^{i },1}, \X^{i }_{{p}^{i },2}, \hat{\V}^{i }_{{p}^{i },1} , \hat{\V}^{i }_{{p}^{i },2} ,  |  {\V}^{i }_{{p}^{i } } |
		)} 	\\
		&=
		\underbrace{ \frac{\p ( \X^{i}_{{p}^{i},1}, \X^{i}_{{p}^{i},2}, {\V}^{i}_{{p}^{i}} )}{\p (
			\X^{i }_{{p}^{i },1}, \X^{i }_{{p}^{i },2}, \hat{\V}^{i }_{{p}^{i },1} , \hat{\V}^{i }_{{p}^{i },2} ,  |  {\V}^{i }_{{p}^{i } } |
			)} 	
		}_{=Q_{i}}
		\underbrace{
		\frac{\p ( \X^{i+1}_{{p}^{i+1},1}, \X^{i+1}_{{p}^{i+1},2}, {\V}^{i+1}_{{p}^{i+1}}  )}{\p (
			\X^{i }_{{p}^{i },1}, \X^{i }_{{p}^{i },2}, {\V}^{i }_{{p}^{i }}
			)}
		}_{=P_{i}}	\\
		&\quad \times 	
		\underbrace{ \frac{\p ( \X^{i+1}_{{p}^{i+1},1}, \X^{i+1}_{{p}^{i+1},2}, \hat{\V}^{i+1}_{{p}^{i+1},1} , \hat{\V}^{i+1}_{{p}^{i+1},2} ,  |  {\V}^{i+1}_{{p}^{i+1} } |)}{\p (
			\X^{i+1 }_{{p}^{i+1 },1}, \X^{i+1 }_{{p}^{i+1 },2}, {\V}^{i+1 }_{{p}^{i+1 }}	)} 
		}_{=Q_{i+1}} .
		\\
	\end{split}
	\end{equation} 		
	For $Q_{i}$,
	\begin{equation} \label{Qi}
	\begin{split}
		Q_{i} 
		&= 
		\left[\begin{array}{cc|ccc} 
		1 & 0 &0 & 0 & 0\\
		0 &1 & 0&  0 & 0\\ \hline
		0& 0 &    { \p\V_{p^{i},1}^{i} \over \p\hat{\V}_{p^{i},1}^{i} } & { \p\V_{p^{i},1}^{i} \over \p\hat{\V}_{p^{i},2}^{i} } & { \p\V_{p^{i},1}^{i} \over \p|{\V}_{p^{i}}^{i}| }  \\
		0& 0 &  { \p\V_{p^{i},2}^{i} \over \p\hat{\V}_{p^{i},1}^{i} }&  { \p\V_{p^{i},2}^{i} \over \p\hat{\V}_{p^{i},2}^{i} } & { \p\V_{p^{i},2}^{i} \over \p |{\V}_{p^{i}}^{i}| }  \\
		0& 0 & 		 { \p\V_{p^{i},3}^{i} \over \p\hat{\V}_{p^{i},1}^{i} }
		& { \p\V_{p^{i},3}^{i} \over \p\hat{\V}_{p^{i},2}^{i} } & { \p\V_{p^{i},3}^{i} \over \p |{\V}_{p^{i}}^{i}| }
		\end{array}\right] 
		=
		\left[\begin{array}{cc|ccc} 
		1 & 0 &0 & 0 & 0\\
		0 &1 & 0&  0 & 0\\ \hline
		0& 0 &  |\V_{p^{i}}^{i} |  & 0 & { \p\V_{p^{i},1}^{i} \over \p|{\V}_{p^{i}}^{i}| }  \\
		0& 0 &  0 &  |\V_{p^{i}}^{i} | & { \p\V_{p^{i},2}^{i} \over \p |{\V}_{p^{i}}^{i}| }  \\
		0& 0 & 		 { \p\V_{p^{i},3}^{i} \over \p\hat{\V}_{p^{i},1}^{i} }
		& { \p\V_{p^{i},3}^{i} \over \p\hat{\V}_{p^{i},2}^{i} } & { \p\V_{p^{i},3}^{i} \over \p |{\V}_{p^{i}}^{i}| }
		\end{array}\right]. 	\\
	\end{split}
	\end{equation}
	For $Q_{i+1}$,
	\begin{equation} \label{Qi+1}
	\begin{split}
	Q_{i+1} 
	&= 
	\left[\begin{array}{cc|ccc} 
	1 & 0 &0 & 0 & 0\\
	0 &1 & 0&  0 & 0\\ \hline
	0& 0 &    { \p\hat{\V}_{p^{i+1},1}^{i+1} \over \p{\V}_{p^{i+1},1}^{i+1} } & { \p\hat{\V}_{p^{i+1},1}^{i+1} \over \p{\V}_{p^{i+1},2}^{i+1} } & { \p\hat{\V}_{p^{i+1},1}^{i+1} \over \p{\V}_{p^{i+1},3}^{i+1} }  \\
	0& 0 &  { \p\hat{\V}_{p^{i+1},2}^{i+1} \over \p{\V}_{p^{i+1},1}^{i+1} }&  { \p\hat{\V}_{p^{i+1},2}^{i+1} \over \p{\V}_{p^{i+1},2}^{i+1} } & { \p\hat{\V}_{p^{i+1},2}^{i+1} \over \p {\V}_{p^{i+1},3}^{i+1} }  \\
	0& 0 & 		 { \p|\V_{p^{i+1}}^{i+1}| \over \p{\V}_{p^{i+1},1}^{i+1} }
	& { \p|\V_{p^{i+1}}^{i+1}| \over \p{\V}_{p^{i+1},2}^{i+1} } & { \p|\V_{p^{i+1}}^{i+1}| \over \p {\V}_{p^{i+1},3}^{i+1} }
	\end{array}\right] 	\\
	&=
	\left[\begin{array}{cc|ccc} 
	1 & 0 &0 & 0 & 0\\
	0 &1 & 0&  0 & 0\\ \hline
	0& 0 &  |\V_{p^{i+1}}^{i+1} |^{-1}  & 0 & { \p\hat{\V}_{p^{i+1},1}^{i+1} \over \p{\V}_{p^{i+1},3}^{i+1} }  \\
	0& 0 &  0 &  |\V_{p^{i+1}}^{i+1} |^{-1} & { \p\hat{\V}_{p^{i+1},2}^{i+1} \over \p {\V}_{p^{i+1},3}^{i+1} }  \\
	0& 0 & 		 { \p|\V_{p^{i+1}}^{i+1}| \over \p{\V}_{p^{i+1},1}^{i+1} }
	& { \p|\V_{p^{i+1}}^{i+1}| \over \p{\V}_{p^{i+1},2}^{i+1} } & { \p|\V_{p^{i+1}}^{i+1}| \over \p {\V}_{p^{i+1},3}^{i+1} }
	\end{array}\right]. 	\\
	\end{split}
	\end{equation}
	Note that for $\ell=1,2$,
	\begin{equation} \label{1323}
	\begin{split}
		\frac{ \p \hat{\V}_{p^{i+1},{\ell}}^{i+1} }{ \p \V_{p^{i+1},{3}}^{i+1} } 
		&=
		\V_{p^{i+1},{\ell}}^{i+1} \frac{\p}{\p \V_{p^{i+1},{3}}^{i+1}} \big({1\over |\V_{p^{i+1}}^{i+1}|} \big) = \frac{ \V_{p^{i+1},{\ell}}^{i+1} \V_{p^{i+1},{3}}^{i+1} }{ |\V_{p^{i+1}}^{i+1}|^{3} } ,
	\end{split}
	\end{equation}
	and for $k=1,2,3$,
	\begin{equation} \label{313233}
	\begin{split}
	\frac{ \p |{\V}_{p^{i+1}}^{i+1}| }{ \p \V_{p^{i+1},{k}}^{i+1} } 
	&=
	- \frac{ \V_{p^{i+1},{k}}^{i+1} }{ |\V_{p^{i+1}}^{i+1}| }.
	\end{split}
	\end{equation}
	From (\ref{Qi+1}), (\ref{1323}), and (\ref{313233}),
	\begin{equation} \label{det Qi+1}
	\begin{split}
		\det Q_{i+1} &= \frac{1}{|\V_{p^{i+1}}^{i+1}|} \Big( -\frac{\V_{p^{i+1},{3}}^{i+1}}{ |\V_{p^{i+1}}^{i+1}|^{2} } + \frac{ (\V_{p^{i+1},{2}}^{i+1})^{2}\V_{p^{i+1},{3}}^{i+1} }{ |\V_{p^{i+1}}^{i+1}|^{4} } \Big) + \frac{ \V_{p^{i+1},{1}}^{i+1} \V_{p^{i+1},{3}}^{i+1} }{ |\V_{p^{i+1}}^{i+1}|^{3} } \frac{ \V_{p^{i+1},{1}}^{i+1} }{ |\V_{p^{i+1}}^{i+1}|^{2} }  \\
		&= - \frac{ (\V_{p^{i+1},{3}}^{i+1})^{3} }{ |\V_{p^{i+1}}^{i+1}|^{5} } .
	\end{split}
	\end{equation}
	By taking inverse, we get
	\begin{equation} \label{det Qi}
	\begin{split}
	\det Q_{i} &= - \frac{ |\V_{p^{i}}^{i}|^{5} } { (\V_{p^{i},{3}}^{i})^{3} }.
	\end{split}
	\end{equation}
	From (\ref{similar form}), (\ref{det Qi}), (\ref{det Qi+1}), and Lemma \ref{det_billiard}, we get
	\begin{equation}\notag 
	\begin{split}
	& \bigg| \det \left[\begin{array}{cc} \nabla_{\X^{k}_{{p}^{k}}} \X^{k+1}_{{p}^{k+1}} & \nabla_{ {\V}^{k}_{{p}^{k}}} \X^{k+1}_{{p}^{k+1}}\\
	\nabla_{\X^{k}_{{p}^{k}}}  {\V}^{k+1}_{{p}^{k+1}}
	& \nabla_{ {\V}^{k}_{{p}^{k}} }  {\V}^{k+1}_{{p}^{k+1}}
	\end{array}\right]_{5\times 5} \bigg| 	 \\ 
	&=\Big(1+ O_{\Omega,N}(  \|   \Phi \|_{C^{2}} )
	\Big) \frac{ \sqrt{g_{{p}^{k },11} (x^{k}) }  \sqrt{g_{{p}^{k },22}  (x^{k})  }}{\sqrt{g_{{p}^{k+1 },11}  (x^{k+1})  }  \sqrt{g_{{p}^{k +1},22}  (x^{k+1})  }}	
	\frac{
		|\V^{k }_{p^{k },3}|
	}{|\V^{k+1}_{p^{k+1},3}|},
	\end{split}
	\end{equation}
	\begin{equation} \label{det Jii+1}
	\begin{split}
		| \det J_{i}^{i+1} | &= | \det Q_{i} \det P_{i} \det Q_{i+1} |  \\
		&=  \frac{ |\V_{p^{i}}^{i}|^{5} } { (\V_{p^{i},{3}}^{i})^{3} } 
		\Big(1+ O_{\Omega,N}(  \|   \Phi \|_{C^{2}} )
		\Big) \frac{ \sqrt{g_{{p}^{i },11} }  \sqrt{g_{{p}^{i },22}  } \big|_{x^{i}} }{\sqrt{g_{{p}^{i+1 },11}   }  \sqrt{g_{{p}^{i +1},22}  } \big|_{x^{i+1}}  }	
		\frac{
			|\V^{i }_{p^{i },3}|
		}{|\V^{i+1}_{p^{i+1},3}|}
		\frac{ (\V_{p^{i+1},{3}}^{i+1})^{3} }{ |\V_{p^{i+1}}^{i+1}|^{5} }  \\
		&= 
		\Big(1+ O_{\Omega,N}(  \|   \Phi \|_{C^{2}} )
		\Big) \frac{ \sqrt{g_{{p}^{i },11} }  \sqrt{g_{{p}^{i },22}  } \big|_{x^{i}} }{\sqrt{g_{{p}^{i+1 },11}   }  \sqrt{g_{{p}^{i +1},22}  } \big|_{x^{i+1}}  }	
		\frac{ |\V_{p^{i+1},{3}}^{i+1}|^{2} }{ |\V_{p^{i},{3}}^{i}|^{2} } 
		\\
		&\quad + O_{\Omega,N}(  \|   \Phi \|_{C^{2}} ).
	\end{split}
	\end{equation}
	Therefore,
	\begin{equation} \label{det J1k}
	\begin{split}
	| \det J_{1}^{k} | 
	&= 
	\Big(1+ O_{\Omega,N}(  \|   \Phi \|_{C^{2}} )
	\Big) \frac{ \sqrt{g_{{p}^{1 },11} }  \sqrt{g_{{p}^{1 },22}  } \big|_{x^{1}} }{\sqrt{g_{{p}^{k },11}   }  \sqrt{g_{{p}^{k},22}  } \big|_{x^{k}}  }	
	\frac{ |\V_{p^{k},{3}}^{k}|^{2} }{ |\V_{p^{1},{3}}^{1}|^{2} } 
	\\
	&\quad + O_{\Omega,N}(  \|   \Phi \|_{C^{2}} ).
	\end{split}
	\end{equation}
	
	\noindent \textit{Step 2.} From (\ref{dX/dv}),
	\begin{equation} \label{hatv part1}
	\begin{split}
	& 2| \V^{i+1}_{p^{i+1}}  |\frac{\p | \V^{i+1}_{p^{i+1}}  |}{\p  \V^{i}_{p^{i}, n}}	\\	
	&=\frac{ \p |V(t^{i+1};t^{i}, x^{i}, v^{i}) |^{2}}{\p \V^{i}_{p^{i}, n}}
	= 2 \frac{\p V(t^{i+1};t^{i}, x^{i}, v^{i})  }{\p \V^{i}_{p^{i}, n}} \cdot  V(t^{i+1};t^{i}, x^{i}, v^{i})
	\\
	&= 
	2 \Big(
	\frac{\p_{n} \eta_{p^{i}}}{\sqrt{g_{p^{i}, nn}}}\Big|_{x^{i}}
	+ \frac{\p (t^{i} - t^{i+1})}{\p \V^{i}_{p^{i}, n}} \nabla_{x} \Phi (t^{i+1}, x^{i+1})  	\\
	&\quad + O_{\Omega}(\nabla_{x}^{2} \Phi \|_{\infty}) |t^{i} - t^{i+1}|^{2} e^{\| \nabla_{x}^{2} \Phi \|_{\infty} (t^{i } - t^{i+1})^{2}}
	\Big) 
	\quad \cdot 
	V(t^{i+1};t^{i}, x^{i}, v^{i})
	\\
	&= 2 \V^{i}_{p^{i},n} + 2 \Big| \frac{\p (t^{i} - t^{i+1})}{\p \V^{i}_{p^{i},n}}\Big| \| \nabla_{x} \Phi \|_{\infty} |\V^{i+1}_{p^{i+1}}|  	
	+ O_{\Omega} (\| \nabla_{x} \Phi \|_{\infty}) (t^{i} - t^{i+1})   \\
	&\quad +O_{\Omega}( \| \nabla_{x}^{2} \Phi \|_{\infty} )|v^{i}|(t^{i} - t^{i+1})^{2}
	e^{\| \nabla_{x}^{2} \Phi \|_{\infty} (t^{i} - t^{i+1})^{2}}.
	\end{split}
	\end{equation}
	Then by Lemma \ref{Jac_billiard} and $\Big|\frac{\p (t^{i} - t^{i+1})}{\p \V^{i}_{p^{i},n}}\Big|\lesssim_{\Omega, N} 1$, we get
	\begin{equation}\label{|v|_v}
	\frac{\p | \V^{i+1}_{p^{i+1}}  |}{\p  \V^{i}_{p^{i}, n}} = \frac{\V^{i}_{p^{i},n}}{|\V^{i+1}_{p^{i+1}}|}+ O_{\Omega,N} (\| \Phi \|_{C^{2}}   ), \ \ \ \text{for} \ \ n=1,2.
	\end{equation}
	
	From (\ref{dX/dv}), for $n=1,2$,
	\begin{eqnarray*}
		2| \V^{i+1}_{p^{i+1}}  |  \frac{\p | \V^{i+1}_{p^{i+1}}  |}{\p  \X^{i}_{p^{i}, n}}	 	&=&\frac{ \p |V(t^{i+1};t^{i}, x^{i}, v^{i}) |^{2}}{\p \X^{i}_{p^{i}, n}}
		= 2 \frac{\p V(t^{i+1};t^{i}, x^{i}, v^{i})  }{\p \X^{i}_{p^{i}, n}} \cdot  V(t^{i+1};t^{i}, x^{i}, v^{i})
		\\
		&= &
		2 \Big(
		\frac{\p (t^{i} - t^{i+1})}{\p \X^{i}_{p^{i}, n}} \nabla_{x} \Phi (t^{i+1}, x^{i+1})  \\
		&& + O_{\Omega}( \|\nabla_{x}^{2} \Phi \|_{\infty}) |t^{i} - t^{i+1}| e^{\| \nabla_{x}^{2} \Phi \|_{\infty} (t^{i } - t^{i+1})^{2}}
		\Big) 
		\cdot 
		V(t^{i+1};t^{i}, x^{i}, v^{i})
		\\ 
		&\leq& O_{N,\Omega } (\| \Phi \|_{C^{2}} ),
	\end{eqnarray*}	
	where we have used $\Big| \frac{\p (t^{i} - t^{i+1})}{\p \X^{i}_{p^{i}, n}}   \Big|\lesssim_{N, \Omega}  1$ for $n=1,2$ from Lemma \ref{Jac_billiard}.	 This proves 
	\begin{equation}\label{|v|_x}
	\frac{\p | \V^{i+1}_{p^{i+1}}  |}{\p  \X^{i}_{p^{i}, n}}	 
	= O_{N,\Omega} (\| \Phi  \|_{C^{2}})   , \ \ \ \text{for} \ \ n=1,2.
	\end{equation}
	
	Meanwhile,
	\begin{equation}\label{|v|_|v|}
	\begin{split}
	\frac{  \p |\V^{i+1}_{p^{i+1}}|}{\p |\V^{i}_{p^{i}}|  }  =&
	\sum_{\ell=1}^{2} \hat{\V}_{p^{i}, \ell}^{i} \frac{\p |\V^{i+1}_{p^{i+1}}|}{\p \V^{i}_{p^{i}, \ell}}  + \sqrt{1- (\hat{\V}^{i}_{p^{i},1})^{2} - (\hat{\V}^{i}_{p^{i},2})^{2}} \frac{\V^{i}_{p^{i},3}}{|\V^{i+1}_{p^{i+1}}|} + O_{\Omega, N} (\|\Phi \|_{C^{2}})\\
	=&\sum_{\ell=1}^{2} \hat{\V}_{p^{i}, \ell}^{i}
	\frac{ \V^{i}_{p^{i}, \ell}}{| \V^{i}_{p^{i} }|} + \sqrt{1- (\hat{\V}^{i}_{p^{i},1})^{2} - (\hat{\V}^{i}_{p^{i},2})^{2}} \frac{\V^{i}_{p^{i},3}}{|\V^{i }_{p^{i }}|} + O_{\Omega, N} (\| \Phi \|_{C^{2}}) \\
	= &1+ O_{\Omega, N} (\|\Phi \|_{C^{2}}) ,
	\end{split}\end{equation}
	
	\noindent \textit{Step 3.} From (\ref{det J1k}), (\ref{|v|_v}), (\ref{|v|_x}), and (\ref{|v|_|v|}),
	\begin{equation} 
	\begin{split}
		| \det J_{1}^{k} | 
		&= 
		\Big(1+ O_{\Omega,N}(  \|   \Phi \|_{C^{2}} )
		\Big) \frac{ \sqrt{g_{{p}^{1 },11} }  \sqrt{g_{{p}^{1 },22}  } \big|_{x^{1}} }{\sqrt{g_{{p}^{k },11}   }  \sqrt{g_{{p}^{k},22}  } \big|_{x^{k}}  }	
		\frac{ |\V_{p^{k},{3}}^{k}|^{2} }{ |\V_{p^{1},{3}}^{1}|^{2} } \\
		&\quad + O_{\Omega,N}(  \|   \Phi \|_{C^{2}} ) \\
		&=
		\bigg|\det\left[\begin{array}{c|c|c} 
		\frac{\p (\X^{k}_{{p}^{k},1}, \X^{k}_{{p}^{k},2}) }{\p (\X^{1}_{{p}^{1},1}, \X^{1}_{{p}^{1},2}) } & \frac{\p (\X^{k}_{{p}^{k},1}, \X^{k}_{{p}^{k},2}) }{\p (\hat{\V}^{1}_{{p}^{1},1}, \hat{\V}^{1}_{{p}^{1},2}) } & (*)  \\ \hline
		\frac{\p (\hat{\V}^{k}_{{p}^{k},1}, \hat{\V}^{k}_{{p}^{k},2}) }{\p (\X^{1}_{{p}^{1},1}, \X^{1}_{{p}^{1},2}) } & \frac{\p (\hat{\V}^{k}_{{p}^{k},1}, \hat{\V}^{k}_{{p}^{k},2}) }{\p (\hat{\V}^{1}_{{p}^{1},1}, \hat{\V}^{1}_{{p}^{1},2}) } & (*)  \\ \hline
		(*)  & (*) & 1 + (*)
		\end{array}\right]_{5\times 5} \bigg| ,\quad (*):= O_{\Omega,N}(  \|   \Phi \|_{C^{2}} ) \\
		&= 
		\bigg|\det\left[\frac{\p (\X^{k}_{{p}^{k},1}, \X^{k}_{{p}^{k},2}, \hat{\V}^{k}_{{p}^{k},1}, \hat{\V}^{k}_{{p}^{k},2}) }{\p (\X^{1}_{{p}^{1},1}, \X^{1}_{{p}^{1},2}, \hat{\V}^{1}_{{p}^{1},1}, \hat{\V}^{1}_{{p}^{1},2}) }\right]_{4\times 4}  \bigg| + O_{\Omega,N}(  \|   \Phi \|_{C^{2}} ) .
	\end{split}
	\end{equation}
	Note that from (\ref{upper_k}), $k\lesssim_{\O, N, \delta_{1,2}}1$ and $|\mathbf{v}^{k}_{p^{k},3}| \lesssim_{\O, N, \delta_{1,2}} |\mathbf{v}^{1}_{p^{1},3}|$. Therefore, we conclude (\ref{Jac_hat}).

	\end{proof}

				\section{Transversality via the geometric decomposition and triple iterations}

				\begin{lemma}\label{decom_lemma} Assume $Y: (y_{1}, y_{2}) \mapsto Y(y_{1}, y_{2})  \in \R^{3}$ is a $C^{1}$-map locally. For any $t,s \geq 0$ with $s \in [t-1,t]$, $|n(x^{1}(t,Y(y_{1},y_{2}),v)) \cdot v^{1}(t,Y(y_{1},y_{2}),v)|> \delta$, $\frac{1}{N}\leq |v| \leq N$, $\frac{1}{N} \leq |v_{3}|$, $t^{k+1}(t,Y(y_{1},y_{2}),v)<s<t^{k}(t,Y(y_{1},y_{2}),v)$, and
					$\| \nabla \Phi \|_\infty < \frac{\delta}{ 3 \text{diam} (\Omega) N^2}$,
					we have 
					\begin{equation} \label{X_|v|}
					\p_{|v| } \big[ X_{i} (s;t,  Y (y_{1},y_{2}),v)  \big] =  
					-   ( t - s)  \sum_{\ell=1}^{3} \frac{\p_{\ell} \eta_{{p}^{k},i} }{\sqrt{g_{{p}^{k},\ell\ell}}} ( \mathbf{x}_{{p}^{k}, 1}^{k}, \mathbf{x}_{{p}^{k},2}^{k}, 0) \frac{\mathbf{v}_{{p}^{k},\ell}^{k} 
					}{| \mathbf{v}_{{p}^{k}}^{k} |} 
					+ O_{\delta, N}(\| \Phi \|_{C^{2}}), 
					\end{equation} 
					and for $\p \in \{\p_{\hat{v}_{1}}, \p_{\hat{v}_{2}}, \p_{y_{1}}, \p_{y_{2}}\}$,
					\begin{equation}\label{p_X}
					\begin{split} 
					& \p \big[ X_{i} (s;t, Y(y_{1},y_{2}),v)  \big] \\
					=&-  \p  t^{k} |\mathbf{v}_{{p}^{k}}^{k} |  \sum_{\ell=1}^{3} \frac{\p_{\ell} \eta_{{p}^{k},i} }{\sqrt{g_{{p}^{k},\ell\ell}}} (\mathbf{x}_{{p}^{k}, 1}^{k}, \mathbf{x}_{{p}^{k},2}^{k}, 0) 
					\frac{ \mathbf{v}_{{p}^{k}\ell}^{k}	}{| \mathbf{v}_{{p}^{k}}^{k} |} 
					+ \sum_{\ell=1}^{2} \p \mathbf{x}^{k}_{{p}^{k},\ell}  \p_{\ell} \eta_{{p}^{k},i} (\mathbf{x}^{k}_{{p}^{k},1 },\mathbf{x}^{k}_{{p}^{k},2},0)  \\
					& 
					- (t^{k}-s)| \mathbf{v}_{{p}^{k}}^{k} | 
					\sum_{j=1}^{2}
					\bigg( \sum_{\ell=1}^3  \frac{\p}{\p {\mathbf{x}^{k}_{{p}^{k},j}}}
					\Big[  \frac{\p_\ell \eta_{{p}^{k},i}}{\sqrt{g_{{p}^{k},\ell\ell}}} \Big](\mathbf{x}^k_{{p}^{k},1}, \mathbf{x}^k_{{p}^{k},2},0) \hat{\mathbf{v}}_{{p}^{k},\ell}^k \bigg)
					{\p \mathbf{x}^k_{{p}^{k},j}}
					\\
					&
					- (t^{k}-s)| \mathbf{v}_{{p}^{k}}^{k} | 
					\sum_{j=1}^{2}\Big[ \frac{\p_j \eta_{{p}^{k} ,i}}{\sqrt{g_{{p}^{k},11}}}(\mathbf{x}^k_{{p}^{k},1}, \mathbf{x}^k_{{p}^{k},2}, 0)  -  \frac{\p_3 \eta_{{p}^{k},i}}{\sqrt{g_{{p}^{k},33}}}(\mathbf{x}^k_{{p}^{k},1}, \mathbf{x}^k_{{p}^{k},2}, 0)\frac{\hat{\mathbf{v}}_{{p}^{k},j}^{k} }{ \hat{\mathbf{v}}_{{p}^{k},3}^{k} } \Big] {\p  \hat{\mathbf{v}}_{{p}^{k},j}^{k}  }
					\\
					&
					+ O_{\delta, N}(\| \Phi \|_{C^{2}}).
					\end{split}
					\end{equation}
					Here $t^{k}= t^{k} (t, Y(y_{1},y_{2}), v), \mathbf{x}^{k}_{p^{k}}= \mathbf{x}^{k}_{p^{k}} (t, Y(y_{1},y_{2}), v),$ and $ \mathbf{v}^{k}_{p^{k}}= \mathbf{v}^{k}_{p^{k}}(t, Y(y_{1},y_{2}), v)$.

				\end{lemma}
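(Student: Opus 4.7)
My plan is to differentiate the Duhamel representation
\begin{equation*}
X(s; t, Y(y_1,y_2), v) = x^k + v^k(s - t^k) - \int_{t^k}^s \int_{t^k}^{s'} \nabla_x \Phi(\tau, X(\tau; t^k, x^k, v^k))\, d\tau\, ds',
\end{equation*}
which is valid on $s \in (t^{k+1}, t^k)$, viewing $(t^k, x^k, v^k)$ as functions of $(t, Y(y_1,y_2), v)$ through the specular cycles, and then to invoke the chain rule together with Lemma \ref{Jac_billiard} iterated across the $k$ bounces. Under the stated hypotheses, Lemma \ref{uniform number of bounce} and Lemma \ref{velocity_lemma} give $k \lesssim_{\O,N,\delta} 1$ and $|\V^j_{p^j,3}| \gtrsim_{\O,N,\delta} 1$ uniformly for $j\leq k$, so that all the prefactors $|\V^j_{p^j,3}|^{-1}$ appearing in Lemma \ref{Jac_billiard} remain bounded.

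Applying $\p \in \{\p_{|v|}, \p_{\hat v_1}, \p_{\hat v_2}, \p_{y_1}, \p_{y_2}\}$ to the identity yields
\begin{equation*}
\p X_i(s) \, = \, \p x^k_i \,+\, (s-t^k)\p v^k_i \,-\, v^k_i\, \p t^k \,-\, \p\!\left[\int_{t^k}^s\int_{t^k}^{s'}\nabla_x\Phi\, d\tau\, ds'\right],
\end{equation*}
and I will show that the bracketed term is $O_{\delta,N}(\|\Phi\|_{C^2})$ by a Gronwall argument modelled on (\ref{dX/dx})--(\ref{dX/dv}). For the remaining three terms I expand $\p x^k$ and $\p v^k$ in the curvilinear frame at $p^k$. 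Since $x^k \in \p\O$, $\p x^k = \sum_{\ell=1,2}\p\X^k_{p^k,\ell}\,\p_\ell\eta_{p^k}(x^k)$, which is the second term of (\ref{p_X}). For $\p v^k$, the relation (\ref{v_v}) splits it into a basepoint contribution $\sum_{j=1,2}\sum_{\ell=1}^3\V^k_{p^k,\ell}\,\p\X^k_{p^k,j}\,\p_j\big(\frac{\p_\ell\eta_{p^k}}{\sqrt{g_{p^k,\ell\ell}}}\big)(x^k)$, which after multiplication by $(s-t^k)$ reproduces the third line of (\ref{p_X}), and a frame contribution $\sum_\ell\p\V^k_{p^k,\ell}\,\frac{\p_\ell\eta_{p^k}}{\sqrt{g_{p^k,\ell\ell}}}(x^k)$. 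Writing $\V^k_{p^k,\ell}=|\V^k_{p^k}|\hat\V^k_{p^k,\ell}$ and using $\hat\V^k_{p^k,3}=\pm\sqrt{1-(\hat\V^k_{p^k,1})^2-(\hat\V^k_{p^k,2})^2}$ to eliminate $\p\hat\V^k_{p^k,3}$ in favor of $\p\hat\V^k_{p^k,1},\p\hat\V^k_{p^k,2}$, while absorbing $\p|\V^k_{p^k}|$ into the remainder via iterated (\ref{|v|_v})--(\ref{|v|_|v|}), produces the fourth line of (\ref{p_X}).

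For (\ref{X_|v|}), the special case $\p=\p_{|v|}$ exhibits an important cancellation. Starting from $\p_{|v|}\V^1_{p^1,j}=\hat\V^1_{p^1,j}$ together with $\p_{|v|}\X^1_{p^1,\ell}=O_{\delta,N}(\|\Phi\|_{C^2})$ and $\p_{|v|}t^1=(t-t^1)/|v|+O_{\delta,N}(\|\Phi\|_{C^2})$ (both obtained from (\ref{Est--2}) and (\ref{Est--4}) after substituting $v=|v|\hat v$ and using the fact that for $\Phi\equiv 0$ the first bounce point $x^1$ is independent of $|v|$), and iterating (\ref{Est-5})--(\ref{Est-8}) across bounces, I expect $\p_{|v|}\X^k_{p^k,\ell}=O_{\delta,N}(\|\Phi\|_{C^2})$, $\p_{|v|}\hat\V^k_{p^k,j}=O_{\delta,N}(\|\Phi\|_{C^2})$, $\p_{|v|}v^k=\hat v^k+O_{\delta,N}(\|\Phi\|_{C^2})$ and $\p_{|v|}t^k=(t-t^k)/|v|+O_{\delta,N}(\|\Phi\|_{C^2})$. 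Substituting into the chain-rule identity displayed above and using (\ref{v_v}) to write $\hat v^k_i=\sum_\ell\frac{\p_\ell\eta_{p^k,i}}{\sqrt{g_{p^k,\ell\ell}}}\frac{\V^k_{p^k,\ell}}{|\V^k_{p^k}|}$, the three nontrivial terms collapse to $\big(s-t^k-(t-t^k)\big)\hat v^k_i=-(t-s)\hat v^k_i$, which is exactly (\ref{X_|v|}).

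The hard part will be the systematic bookkeeping of the $O_{\delta,N}(\|\Phi\|_{C^2})$ remainder: each iteration of Lemma \ref{Jac_billiard} introduces factors of $|\V^j_{p^j,3}|^{-1}$ and polynomials in $(t^j-t^{j+1})|v^j|$, and telescoping these across the $k$ bounces so as to yield a single clean $\|\Phi\|_{C^2}$ error (rather than an exponential blow-up) relies crucially on the smallness hypothesis $\|\nabla\Phi\|_\infty<\delta/(3\,\mathrm{diam}(\O)N^2)$, on the uniform bound on $k$ from the convexity (\ref{convexity_eta}) and Lemma \ref{uniform number of bounce}, and on the lower bound for $|\V^j_{p^j,3}|$ from (\ref{velocity_le}). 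Verifying that the implied constants are genuinely independent of $t$ and $x$, as the subscript $O_{\delta,N}$ asserts, will be the most delicate point.
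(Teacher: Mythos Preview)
Your plan is correct and matches the paper's proof closely. The paper organizes the iterated-bounce bookkeeping that you describe by writing the chain rule as a product of $6\times 6$ transition matrices (one per bounce, in the variables $(t^{j},\X^{j}_{p^{j}},\hat\V^{j}_{p^{j}},|\V^{j}_{p^{j}}|)$) and showing each has the block form $\left[\begin{smallmatrix}1&0&-t^{j+1}/|\V^{j}|\\0&O_{\O,N,\delta}(1)&0\\0&0&1\end{smallmatrix}\right]+O_{\O,N}(\|\Phi\|_{C^{2}})$; this packages exactly the four smallness claims $\p_{|v|}\X^{k}_{p^{k}},\,\p_{|v|}\hat\V^{k}_{p^{k}}=O(\|\Phi\|_{C^{2}})$, $\p_{|v|}|\V^{k}_{p^{k}}|=1+O(\|\Phi\|_{C^{2}})$, $\p_{|v|}[(t^{j}-t^{j+1})|\V^{k}|]=O(\|\Phi\|_{C^{2}})$ that you identified, and then substitutes them into the differentiated Duhamel formula in the same way you outline.
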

				\begin{proof}
					\noindent\textit{Step 1. } We claim that 
					\begin{equation}\label{p|v|=0}
					\begin{split} 			
					& \frac{ \p \big( (t^{j} - t^{j+1}) |v^{k} | \big) }{\p |v|} = O_{\Omega, N} (\| \Phi \|_{C^{2}}), \ \ \ 
					\frac{ \p   |v^{k} |  }{\p |v|} =1  + O_{\Omega, N} (\| \Phi \|_{C^{2}}),  \\
					& \frac{\p \X^{j}_{p^{j}, i}}{\p |v|}=O_{\Omega, N} (\| \Phi \|_{C^{2}}), \ \ \ 
					\frac{\p \hat{\V}^{j}_{p^{j}, i}}{\p |v|}=O_{\Omega, N} (\| \Phi \|_{C^{2}})
					.	
					\end{split}
					\end{equation}

					By the chain rule, 
					\begin{equation}\label{matrix_|v|}
					\begin{split}
					\left[\begin{array}{ccccc}
					\nabla_{y_{1},y_{2}, \hat{v}_{1}, \hat{v}_{2}, |v|}t^{k}
					\\
					\nabla_{y_{1},y_{2}, \hat{v}_{1}, \hat{v}_{2}, |v|}   \X^{k}_{p^{k}} \\	
					\nabla_{y_{1},y_{2}, \hat{v}_{1}, \hat{v}_{2}, |v|}   \hat{\V}^{k}_{p^{k}} \\
					\nabla_{y_{1},y_{2}, \hat{v}_{1}, \hat{v}_{2}, |v|} |\V^{k}_{p^{k}}| 		\end{array} \right] 
					&= 	
					\left[
					\begin{array}{c}
					\nabla_{ t^{k-1}, \X^{k-1}_{p^{k-1}} , \hat{\V}^{k-1} _{p^{k-1}} ,  |{\V} ^{k-1}_{p^{k-1}}|
					}t^{k}
					\\
					\nabla_{ t^{k-1}, \X^{k-1}_{p^{k-1}} , \hat{\V}^{k-1} _{p^{k-1}} ,  |{\V}^{k-1} _{p^{k-1}}|
					} \X^{k}_{p^{k}}
					\\
					\nabla_{ t^{k-1}, \X^{k-1}_{p^{k-1}} , \hat{\V}^{k-1} _{p^{k-1}} ,  |{\V}^{k-1} _{p^{k-1}}|
					}
					\hat{\V}^{k}_{p^{k}}
					\\
					\nabla_{ t^{k-1}, \X^{k-1}_{p^{k-1}} , \hat{\V}^{k-1} _{p^{k-1}} ,  |{\V}^{k-1} _{p^{k-1}}|
					}  |{\V}^{k}_{p^{k}}|
					\end{array}
					\right]\times\cdots 	\\
					\cdots \times &
					\left[
					\begin{array}{c}
					\nabla_{ t^{1}, \X^{1}_{p^{1}} , \hat{\V}^{1} _{p^{1}} ,  |{\V} ^{1}_{p^{1}}|
					}t^{2}
					\\
					\nabla_{ t^{1}, \X^{1}_{p^{1}} , \hat{\V}^{1} _{p^{1}} ,  |{\V} ^{1}_{p^{1}}|
					} \X^{2}_{p^{2}}
					\\
					\nabla_{ t^{1}, \X^{1}_{p^{1}} , \hat{\V}^{1} _{p^{1}} ,  |{\V} ^{1}_{p^{1}}|
					}
					\hat{\V}^{2}_{p^{2}}
					\\
					\nabla_{ t^{1}, \X^{1}_{p^{1}} , \hat{\V}^{1} _{p^{1}} ,  |{\V} ^{1}_{p^{1}}|
					}  |{\V}^{2}_{p^{2}}|
					\end{array}
					\right]  
					\left[\begin{array}{c}
					\nabla_{y_{1},y_{2}, \hat{v}_{1}, \hat{v}_{2}, |v|}t^{1}
					\\
					\nabla_{y_{1},y_{2}, \hat{v}_{1}, \hat{v}_{2}, |v|}   \X^{1}_{p^{1}} \\	
					\nabla_{y_{1},y_{2}, \hat{v}_{1}, \hat{v}_{2}, |v|}   \hat{\V}^{1}_{p^{1}} 	\\	
					\nabla_{y_{1},y_{2}, \hat{v}_{1}, \hat{v}_{2}, |v|}   |\V^{1}_{p^{1}}| \end{array} \right]
					.
					\end{split}
					\end{equation}
					
					We claim that 	
					\begin{equation}\label{est_j_{j+1}}
					\begin{split}
					\left[
					\begin{array}{c}
					\nabla_{ t^{j}, \X^{j}_{p^{j}} , \hat{\V}^{j} _{p^{j}} ,  |{\V} ^{j}_{p^{j}}|
					}t^{j+1}
					\\
					\nabla_{ t^{j}, \X^{j}_{p^{j}} , \hat{\V}^{j} _{p^{j}} ,  |{\V}^{j} _{p^{j}}|
					} \X^{j+1}_{p^{j+1}}
					\\
					\nabla_{ t^{j}, \X^{j}_{p^{j}} , \hat{\V}^{j} _{p^{j}} ,  |{\V}^{j} _{p^{j}}|
					}
					\hat{\V}^{j+1}_{p^{j+1}}
					\\
					\nabla_{ t^{j}, \X^{j}_{p^{j}} , \hat{\V}^{j} _{p^{j}} ,  |{\V}^{j} _{p^{j}}|
					}  |{\V}^{j+1}_{p^{j+1}}|
					\end{array}
					\right]
					&= 
					\left[
					\begin{array}{c|cccc|c}
					1 & 0 & 0 & 0 &  0 & - \frac{t^{j+1}}{|\V^{j}_{p^{j}}|}\\ \hline
					0 &  &    &   &   &    	0	\\
					0 &   &    &     &   &   	0	\\
					0 &   & O_{\O, N,\delta}(1)  &   &    &   	0	\\
					0 &  &   &    & &   	0	\\
					0 &  &    &    &  & 	1	\\
					\end{array}
					\right] 
					+ O_{\Omega, N} (\| \Phi \|_{C^{2}}).
					\end{split}
					\end{equation}	
					Once (\ref{est_j_{j+1}}) is proven, from the chain rule (\ref{matrix_|v|}) and Lemma \ref{global to local}, we conclude (\ref{p|v|=0}).

					From (\ref{E_Ham}), 
					\begin{eqnarray*}
						v^{j}(t^{j}-t^{j+1}) &=&  \eta_{p^{j+1}}(\X^{j+1}_{p^{j+1},1}, \X^{j+1}_{p^{j+1},2},0)  -\eta_{p^{j }}(\X^{j}_{p^{j},1}, \X^{j}_{p^{j},2},0) \\ 
						&&- \int^{t^{j+1}}_{t^{j}} \int^{s}_{t^{j}}
						\nabla_{x} \Phi (\tau, X(\tau;t^{j},x^{j},v^{j}))
						\dd \tau \dd s.
					\end{eqnarray*} 
					Taking $\frac{\p}{\p {t^{j}}}$ directly to the above equality, we derive 
					\begin{eqnarray*}
						v^{j} \Big(1- \frac{\p t^{j+1}}{\p t^{j}}\Big) = 
						- \frac{\p t^{j+1}}{\p t^{j}} \int^{t^{j+1}}_{t^{j}} \nabla_{x} \Phi (\tau, X(\tau;t^{j},x^{j},v^{j}))
						\dd \tau
						+  ({t^{j+1}} -{t^{j}}) \nabla_{x} \Phi ( t^{j}, x^{j}),
					\end{eqnarray*}
					and $\frac{\p t^{j+1}}{\p t^{j} } = 1+ \| \nabla_{x} \Phi \|_{\infty} {|t^{j}-t^{j+1}|}/{|\V^{j+1}_{p^{j+1} }|}.$ Now from Lemma \ref{uniform number of bounce}, 
					$$|\mathbf{v}^{j+1}_{p^{j+1}}|= |v| + O(\| \nabla \Phi \|_\infty) |t-t^{j+1}|\geq \frac{1}{N}+   \frac{\delta \times 3N \text{diam}(\Omega) }{3 \text{diam}(\Omega) N^2}   \gtrsim \frac{1}{N}.$$ Therefore we conclude that 
					\begin{equation}\label{t_tt}
					\frac{\p t^{j+1}}{\p t^{j} } = 1+ O_{\Omega, N} (\| \Phi \|_{C^2}).\end{equation}

					From (\ref{position identity}), we derive 
					\begin{equation}\label{x_t}
					\begin{split}
					\frac{\p \X^{j+1} _{p^{j+1}, i}}{\p t^{j}} &=  
					\Big(\frac{\p t^{j+1}}{\p t^{j}} -1 \Big) \V^{j+1}_{p^{j+1}, i}
					+ \| \Phi \|_{C^{2}} |t^{j } -t^{j+1}|^{2} \sup_{t^{j+1} \leq \tau \leq t^{j}}\Big|
					\frac{\p X(\tau; t^{j}, x^{j}, v^{j})}{\p t^{j}}
					\Big|  \\
					&=    O_{\Omega, N } (\| \Phi \|_{C^{2}}),\end{split}
					\end{equation}
					where we have used the fact $\sup_{t^{j+1} \leq \tau \leq t^{j}}\Big|
					\frac{\p X(\tau; t^{j}, x^{j}, v^{j})}{\p t^{j}}
					\Big|\lesssim |v^{j}| + \| \nabla_{x} \Phi \|_{\infty} |t^{j} - t^{j+1}|\leq C_{N,\Omega}$, which is proved by the similar proof of (\ref{dX/dx}).
					
					\label{t_t}
					
					From (\ref{E_Ham}), we have $$|v^{j+1}|^{2} = |v^{j}|^{2} - 2 \int^{t^{j+1}}_{t^{j}} v^{j} \cdot \nabla_{x} \Phi (\tau, X(\tau; t^{j}, x^{j}, v^{j})) \dd \tau 
					+ \big| \int^{t^{j+1}}_{t^{j}}  \nabla_{x} \Phi (\tau )\big|^{2}.$$ Then 
					\begin{equation} \notag
					\begin{split}
					2|v^{j+1}| \frac{\p |v^{j+1}|}{\p t^{j}}  &=  
					-2 \frac{\p t^{j+1}}{\p t^{j}} v^{j} \cdot \nabla_{x} \Phi (t^{j+1})
					+ 2 v^{j} \cdot \nabla_{x} \Phi (t^{j})  \\
					&+ 2\Big\{
					\| \nabla_{x}^{2 } \Phi \|_{\infty} |v^{j}| (t^{j} -t^{j+1})+ 
					\sup_{t^{j+1} \leq \tau \leq t^{j}} \Big|\frac{\p X(\tau)}{\p t^{j}}\Big| 
					\Big\} \sup_{t^{j+1} \leq \tau \leq t^{j}} \Big|\frac{\p X(\tau)}{\p t^{j}}\Big| ,
					\end{split}
					\end{equation}
					and hence
					\begin{equation}\label{v_{t}}
					\frac{\p |v^{j+1}|}{\p t^{j}} = O_{\Omega, N } (\| \Phi \|_{C^{2}}).
					\end{equation}
					
					From (\ref{v_123}), we prove 
					\begin{equation}\begin{split}\label{hatv_{t}}
					\frac{\p \hat{\V}^{j+1}_{p^{j+1}, i} }{\p t^{j}}  =& O_{\| \eta\|_{C^{2}}} \Big(\Big| \frac{\p \X^{j}_{p^{j}}}{\p t^{j}}\Big|\Big) \{|v^{j} | + \| \nabla \Phi \|_{\infty} (t^{j}- t^{j+1})\} + O_{\| \eta \|_{C^{1}}} (\| \nabla_{x} \Phi \|_{\infty}) \\
					=& O_{N,\Omega} (\| \Phi \|_{C^{2}}).\end{split}
					\end{equation}
					
					We already have estimates for $\frac{\p t^{j+1}}{\p \X^{j}_{p^{j}, i}}$ in Lemma \ref{Jac_billiard}. 
					
					From Lemma \ref{Jac_billiard},
					\begin{eqnarray*}
						\frac{
							\p t^{j+1}}{\p  |{\V}^{j}_{p^{j}}|
						}
						=  - \frac{t^{j+1}}{|\V^{j}_{p^{j}}|} + O_{\Omega, N} (\| \Phi \|_{C^{2}})
						\quad\text{and}\quad
						\frac{\p t^{j+1}}{\p \hat{\V}^{j}_{p^{j}, i}}
						=   O_{\Omega, N} (\| \Phi \|_{C^{2}}).
					\end{eqnarray*} 
					
					Moreover, from the conditions $|n(x^{1}(t,Y(y_{1},y_{2}),v)) \cdot v^{1}(t,Y(y_{1},y_{2}),v)|> \delta$, $\frac{1}{N}\leq |v| \leq N$ and Lemma \ref{velocity_lemma}, (\ref{velocity_le}), and (\ref{upper_k}), we have 
					\begin{equation}\label{lower_bound_v_3}
					| \mathbf{v}^{j}_{p^{j},3}(t,Y(y_{1},y_{2}),v)| \gtrsim \delta.
					\end{equation}
					Then, from Lemma \ref{Jac_billiard},
					\begin{equation}\label{k is bounded}
					\Big|
					\frac{\p ( \X^{j+1}_{p^{j+1}},  \hat{\V}^{j+1} _{p^{j+1}} 
						)}{
						\p (  
						\X^{j }_{p^{j }},  \hat{\V}^{j } _{p^{j }},  |{\V}^{j }_{p^{j }}|
						)
					}\Big| = O_{\O, N, \delta} (1).
					\end{equation}
					From (\ref{t_tt}) to (\ref{k is bounded}), we prove (\ref{est_j_{j+1}}).

					\vspace{4pt}

					\noindent	\textit{Step 2. } Recall that, from (\ref{E_Ham}), for $t^{k+1} \leq s < t^{k} $
					\begin{eqnarray}
					&&X_{i} (s;t,Y(x_{1},x_{2}),v) 
					=	X_{i} (s; t^{k},  \mathbf{x}_{ {p}^{k}, 1}^{k}, \mathbf{x}_{ {p}^{k},2}^{k}, 0 ,  { \mathbf{v}_{ {p}^{k} }^{k}} )\notag\\
					&&= \eta_{ {p}^{k}, i} ( \mathbf{x}_{ {p}^{k}, 1}^{k},  \mathbf{x}_{ {p}^{k},2}^{k}, 0) 
					- (t^{k} - s)     | {v}^{k}   |  \hat{v}^{k}_{  ,i}  \notag \\
					&& \ \   \    
					- \int^{s }_{t^{k}} 
					\int^{\tau}_{t^{k}} 
					\p_{i}
					\Phi (   \tau^{\prime}; 
					X(  \tau^{\prime}; 
					t^{k}, \mathbf{x}^{k}_{ {p}^{k},1},  \mathbf{x}^{k}_{ {p}^{k},2}, 0 ,  \mathbf{v}^{k}_{{p}^{k}}
					)   ) \ \dd \tau^{\prime} \dd \tau,   \label{XV_{i}}	 \\
					&	&V_{i}(s;t,Y(x_{1},x_{2}),v) =
					V_{i}	( s; t^{k},  \mathbf{x}^{k}_{ {p}^{k},1} , \mathbf{x}^{k}_{ {p}^{k},2} ,0,  \mathbf{v}^{k}_{ {p}^{k}} 
					)\notag\\
					&&= 
					| {v}^{k}   |\hat{v}^{k}_{ ,i}  
					- \int^{s }_{t^{k}}   \p_{i}
					\Phi (   \tau ; 
					X(  \tau ; 
					t^{k}, \mathbf{x}^{k}_{ {p}^{k},1},  \mathbf{x}^{k}_{ {p}^{k},2}, 0 ,   \mathbf{v}^{k}_{ {p}^{k}} 
					)   )
					\ \dd \tau,\notag
					\end{eqnarray}
					where the specular cycles are defined in (\ref{specular_cycles}) as$$\big(t^{k},\mathbf{x}_{ {p}^{k}}^{k}, \mathbf{v}_{ {p}^{k}}^{k} \big)
					=\big(t^{k}(t,Y(y_{1},y_{2}),v),\mathbf{x}_{ {p}^{k}}^{k}(t,Y(y_{1},y_{2}),v),  \mathbf{v}_{ {p}^{k}}^{k} (t,Y(y_{1},y_{2}),v)\big).
					$$ 

					By the direct computations, for $\p = \p_{|v|},$
					\begin{eqnarray*}
						&& \p_{|v|} [X_{i} (s;t,Y(x_{1}, x_{2}),v)]\notag\\
						& 
						=&
						{ \sum_{\ell=1}^{2} \p_{|v|} \X^{k}_{{p}^{k}, \ell} \cdot \p_\ell \eta_{{p}^{k}, i} (\X_{{p}^{k},1}^{k}, \X_{{p}^{k},2}^{k},0)}
						\\
						&& 
						+ {\p_{|v|} \big[ (  t-t^{k})  |v ^{k}|\big]}
						\hat{v} ^{k}
						- { ( t - s)  \p_{|v|}  |  {v} ^{k}| }\hat{v} ^{k}
						\\
						&& 
						- (t^{k}-s)|v^{k} |  \p_{|v|} \big[ \hat{v} ^{k}\big] 
						\\
						&& 
						- \int^{s}_{t^k} \int^{\tau}_{t^k}  
						\Big(
						\p_{|v|} t^{k} \p_{t^k} X(\tau^{\prime}; t^k) + \sum_{\ell=1}^2 \p_{|v|} \mathbf{x}^k_{{p}^k, \ell} \p_{\mathbf{x}^k_{{p}^k,\ell}} X(\tau^{\prime}; t^k) 
						\\
						&& \quad+ \p_{|v|}  \mathbf{v}_{{p}^{k},\ell}^k  \p_{ \mathbf{v}^k_{\ell}} X(\tau^{\prime}; t^k) 
						\Big)
						\cdot \nabla \p_i \Phi
						(   \tau^{\prime} ; 
						X(  \tau^{\prime}  ; 
						t^{k}, \mathbf{x}^{k}_{{p}^{k},1},  \mathbf{x}^{k}_{{p}^{k},2}, 0 ,  \mathbf{v}_{{p}^{k}}^{k}
						)   )
						\dd\tau^{\prime} \dd\tau \notag\\
						&& 
						+ \p_{|v|} t^k  (s-t^k)  \lim_{\tau^{\prime} \uparrow t^{k}} 
						\p_i \Phi (\tau^{\prime} ; X(\tau^{\prime}; t^{k},  \mathbf{x}^k_{{p}^k,1},\mathbf{x}^k_{{p}^k,2},0, \mathbf{v}_{{p}^{k}}^{k}  )) ,	
					\end{eqnarray*}	
					where we have used the abbreviated notation $X(\tau^{\prime}; t^{k})$ for $X(\tau^{\prime}; t^{k},  \mathbf{x}^k_{{p}^k,1},\mathbf{x}^k_{{p}^k,2},0, \mathbf{v}_{{p}^{k}}^{k} )$. From (\ref{p|v|=0}), we bound the first, second, fourth, fifth, and the last line of RHS by $O_{\Omega, N}(\| \Phi \|_{C^{2}})$. Finally we apply (\ref{p|v|=0}) to the third line and conclude (\ref{X_|v|}).

					\vspace{4pt}
					
					\noindent\textit{Step 3. } First we compute $\p \hat{v}^{k}$ with any arbitrary derivative $\p$. Note that from (\ref{v_p}) and (\ref{normal_eta}), $\hat{\V}^{k}_{p^{k},3}>0$ and $\hat{\V}^{k}_{p^{k},3}
					= \sqrt{1- |\hat{ \mathbf{v}}_{{p}^{k},1}^k
						|^2 - |\hat{\mathbf{v}}_{{p}^{k},2}^k|^2} $. Therefore 	
					\begin{equation}\begin{split}\notag
					&\p \hat{v}^{k}\\
					=& \p \bigg[  \sum_{\ell =1}^2 \frac{\p_\ell \eta_{{p}^{k}}}{\sqrt{g_{{p}^{k},\ell\ell}}} (\mathbf{x}^k_{{p}^{k},1}, \mathbf{x}^k_{{p}^{k},2},0) \hat{\mathbf{v}}^{k}_{{p}^{k},\ell}  + \frac{\p_3 \eta_{{p}^{k}}}{\sqrt{g_{{p}^{k},33}}} (\mathbf{x}^k_{{p}^{k},1}, \mathbf{x}^k_{{p}^{k},2},0) \sqrt{1- |\hat{ \mathbf{v}}_{{p}^{k},1}^k
						|^2 - |\hat{\mathbf{v}}_{{p}^{k},2}^k|^2}  \bigg]
					\\
					=& \sum_{\ell=1}^3 \sum_{m=1}^2  {\p \mathbf{x}^k_{{p}^{k},m}} \p_m \Big[  \frac{\p_\ell \eta_{{p}^{k}}}{\sqrt{g_{{p}^{k},\ell\ell}}} \Big](\mathbf{x}^k_{{p}^{k},1}, \mathbf{x}^k_{{p}^{k},2},0) \hat{\mathbf{v}}^{k}_{{p}^{k},\ell}
					+ \sum_{\ell=1}^2  \frac{\p_\ell \eta_{{p}^{k}}}{\sqrt{g_{{p}^{k},\ell\ell}}} (\mathbf{x}^k_{{p},1}, \mathbf{x}^k_{{p},2},0)  \p [\hat{\mathbf{v}}^{k}_{{p}^{k},\ell}] \\
					&- \frac{\p_3 \eta_{{p}^{k}}}{\sqrt{g_{{p}^{k},33}}} 
					(\mathbf{x}^k_{{p}^{k},1}, \mathbf{x}^k_{{p}^{k},2},0) \frac{1}{  \sqrt{1- |\hat{\mathbf{v}}^k_{{p}^{k},1}|^2 - |\hat{\mathbf{v}}^k_{{p}^{k},2} |^2}   } \Big[ \hat{\mathbf{v}}^k_{{p}^{k},1}  \p [\hat{\mathbf{v}}^k_{{p}^{k},1} ]  +
					\hat{\mathbf{v}}^k_{{p}^{k},2}  \p [\hat{\mathbf{v}}^k_{{p}^{k},2} ] 
					\Big]\\
					=&
					\sum_{j=1}^{2}
					\bigg( \sum_{\ell=1}^3  \frac{\p}{\p {\mathbf{x}^{k}_{{p}^{k},j}}}
					\Big[  \frac{\p_\ell \eta_{{p}^{k}}}{\sqrt{g_{{p}^{k},\ell\ell}}} \Big](\mathbf{x}^k_{{p}^{k},1}, \mathbf{x}^k_{{p}^{k},2},0) \hat{\mathbf{v}}^k_{{p}^{k},\ell} \bigg)
					{\p \mathbf{x}^k_{{p}^{k},j}}
					\\
					&
					+
					\sum_{j=1}^{2}\Big[ \frac{\p_j \eta_{{p}^{k}}}{\sqrt{g_{{p}^{k},11}}}(\mathbf{x}^k_{{p}^{k},1}, \mathbf{x}^k_{{p}^{k},2}, 0)  -  \frac{\p_3 \eta_{{p}^{k}}}{\sqrt{g_{{p}^{k},33}}}(\mathbf{x}^k_{{p}^{k},1}, \mathbf{x}^k_{{p}^{k},2}, 0)\frac{\hat{\mathbf{v}}_{{p}^{k},j}^k }{ \hat{\mathbf{v}}_{{p}^{k},3}^{k} } \Big] {\p [\hat{\mathbf{v}}_{{p}^{k},j}^k ]} .
					\end{split}\end{equation}
					From (\ref{XV_{i}}), for $\p \in \{  \p_{\hat{v}_{1}}, \p_{\hat{v}_{2}}, \p_{y_{1}}, \p_{y_{2}}\}$,
					\begin{eqnarray*}
						&&\p   [X_i(s;t,Y(y_{1},y_{2}),v) ]\\
						&=& \sum_{\ell=1}^{2} \p \X^{k}_{{p}^{k},\ell} \cdot \p_{\ell} \eta_{{p}^{k},i} (\X^{k}_{{p}^{k},1 }, \X^{k}_{{p}^{k},2},0) -  \p  t^{k} |\V^{k}_{{p}^{k}}| 
						\hat{v}^{k}			
						\\
						&& 
						-
						(t^{k}-s)\p  |\V^{k}_{{p}^{k}}| \hat{v}^{k}	
						- (t^{k}-s)| \V^{k}_{{p}^{k}}| {\p} \hat{v}^{k}
						\\
						&&- \int^{s}_{t^k} \int^{\tau}_{t^k}  
						\Big(
						\p t^{k} \p_{t^k} X(\tau^{\prime}; t^k) + \sum_{\ell=1}^2 \p \mathbf{x}^k_{{p}^k, \ell} \p_{\mathbf{x}^k_{{p}^k,\ell}} X(\tau^{\prime}; t^k) + \p  \mathbf{v}_{{p}^{k},\ell}^k  (\mathbf{x}^k_{{p}^k}) \p_{ \mathbf{v}^k_{{p}^{k},\ell}} X(\tau^{\prime}; t^k) 
						\Big)\\
						&& \ \ \ \ \ \ \ \ \ \ \ \ \ 
						\cdot \nabla \p_i \Phi
						(   \tau^{\prime} ; 
						X(  \tau^{\prime}  ; 
						t^{k}, \mathbf{x}^{k}_{{p}^{k},1},  \mathbf{x}^{k}_{{p}^{k},2}, 0 ,  \mathbf{v}^{k}_{{p}^{k}} 
						)   )
						\dd\tau^{\prime} \dd\tau \\
						&& + \p t^k  (s-t^k)  \lim_{\tau^{\prime} \uparrow t^{k}} 
						\p_i \Phi (\tau^{\prime} ; X(\tau^{\prime}; t^{k},  \mathbf{x}^k_{{p}^k,1},\mathbf{x}^k_{{p}^k,2},0, \mathbf{v}^{k}_{{p}^{k}}  )) .
					\end{eqnarray*}
					We can easily conclude (\ref{p_X}) by (\ref{p|v|=0}) and \textit{Step 2.}\end{proof}

				\begin{definition}[Specular Basis]
					Recall the specular cycles $(t^{k},x^{k}, v^{k})$ in (\ref{specular_cycles}). Assume
					\begin{equation}\label{no_grazing_0}
					n(x^{k}) \cdot v^{k} \neq 0.
					\end{equation}
					Recall $\eta_{p^{k}}$ in (\ref{eta}). We define the \textit{specular basis}, which is an orthonormal basis of $\R^{3}$, as 
					\begin{equation}\label{orthonormal_basis}
					\begin{split}
					&\mathbf{ {e}}^{k}_{0}  := \frac{v^{k}}{|v^{k}|}  
					, \\ 
					&\mathbf{  {e}}^{k}_{\perp,1}
					:=  \mathbf{e}^{k}_{0} \times \frac{\p_{2} \eta_{p^{k}}(x^{k})}{ \sqrt{g_{{p}^{k},22}(x^{k})}   } \bigg{/}  \Big| \mathbf{e}^{k}_{0} \times \frac{\p_{2} \eta_{p^{k}} (x^{k})}{ \sqrt{g_{{p}^{k},22}(x^{k})}   }   \Big|
					, \\  &\mathbf{  {e}}^{k}_{\perp,2}   :=  \mathbf{e}^{k}_{0} \times  \mathbf{  {e}}^{k}_{\perp,1}.\end{split}
					\end{equation}
				\end{definition}

				\begin{definition}[Specular Matrix]
					
					For fixed $k \in \mathbb{N}$ and a $C^{1}$-map $Y: (y_{1}, y_{2} ) \mapsto Y(y_{1}, y_{2} )$, assume (\ref{no_grazing_0})
					with 	${x}^{k} ={x}^{k} (t, Y( {y}_{1},  {y}_{2}), |v|, \hat{v}_{1}, \hat{v}_{2})$ and \\
					${v}^{k} ={v}^{k} (t, Y( {y}_{1},  {y}_{2}), |v|, \hat{v}_{1}, \hat{v}_{2})$. We define the $4\times4$ specular transition matrix \\
					$\mathcal{S}^{k, p^{k}, Y}= \mathcal{S}^{k, p^{k}, Y}(t,y_{1},y_{2}, |v|, \hat{v}_{1}, \hat{v}_{2})$ as 
					\begin{equation}\label{specular_transition_matrix}
					\mathcal{S}^{k, p^{k}, Y}
					:= \left[\begin{array} {c|c}
					\mathcal{S}_{1}^{k, p^{k}, Y}	 &0_{2\times2}\\ \hline
					\mathcal{S}_{2}^{k, p^{k}, Y} & \mathcal{S}_{3}^{k, p^{k}, Y}
					\end{array} \right]_{4\times 4},
					\end{equation}
					where 
					\begin{equation}\begin{split}\notag
					\mathcal{S}_{1}^{k, p^{k}, Y} &: = 
					\left[\begin{array}{cc}	\p_{1} \eta_{ {p}^{k}} \cdot \mathbf{e}^{k}_{\perp,1} & 
					\p_{2} \eta_{{p}^{k}} \cdot \mathbf{e}^{k}_{\perp,1}\\
					\p_{1} \eta_{{p}^{k}} \cdot \mathbf{e}^{k}_{\perp,2} & 
					\p_{2} \eta_{{p}^{k}} \cdot \mathbf{e}^{k}_{\perp,2}
					\end{array}
					\right]_{2\times 2} ,\\
					\mathcal{S}_{2}^{k, p^{k}, Y} &: = 	
					\left[\begin{array}{cc}
					\Big(  \sum_{\ell=1}^{3} \p_{1} \big[ \frac{\p_{\ell} \eta_{{p}^{k}}}{\sqrt{g_{ {{p}^{k}},\ell\ell}}}  \big] \hat{\mathbf{v}}^{k}_{ {{p}^{k}},\ell}  \Big) \cdot \mathbf{e}^{k}_{\perp,1}   
					&   \Big(  \sum_{\ell=1}^{3} \p_{2} \big[ \frac{\p_{\ell} \eta_{{p}^{k}}}{\sqrt{g_{ {{p^{k}}},\ell\ell}}}  \big] \hat{\mathbf{v}}^{k}_{ { {p}},\ell}  \Big) \cdot \mathbf{e}_{\perp,1}  ^{k}
					\\
					\Big(  \sum_{\ell=1}^{3} \p_{1} \big[ \frac{\p_{\ell} \eta_{{p^{k}}}}{\sqrt{g_{ { {p^{k}}},\ell\ell}}}  \big] \hat{\mathbf{v}}^{k}_{ {{p^{k}}},\ell}  \Big) \cdot \mathbf{e}^{k}_{\perp,2}   & 
					\Big(  \sum_{\ell=1}^{3} \p_{2} \big[ \frac{\p_{\ell} \eta_{{p^{k}}}}{\sqrt{g_{ {{p^{k}}},\ell\ell}}}  \big] \hat{\mathbf{v}}^{k}_{ {{p^{k}}},\ell}  \Big) \cdot \mathbf{e}^{k}_{\perp,2}  
					\end{array} \right]_{2\times 2},\\
					\mathcal{S}_{3} ^{k, p^{k}, Y}& : = 
					\left[\begin{array}{cc}
					\Big[ \frac{\p_{1} \eta_{p^{k}}}{\sqrt{g_{p^{k},11}}} - \frac{\p_{3} \eta_{p^{k}}}{\sqrt{g_{p^{k},33}}}  \frac{\hat{\mathbf{v}}^{k}_{p^{k},1}}{\hat{\mathbf{v}}^{k}_{p^{k},3}} \Big] \cdot \mathbf{e}^{k}_{\perp,1}
					& \Big[ \frac{\p_{2} \eta_{p^{k}}}{\sqrt{g_{p^{k},22}}} - \frac{\p_{3} \eta_{p^{k}}}{\sqrt{g_{p^{k},33}}}  \frac{\hat{\mathbf{v}}^{k}_{p^{k},2}}{\hat{\mathbf{v}}^{k}_{p^{k},3}} \Big] \cdot \mathbf{e}^{k}_{\perp,1}\\
					\Big[ \frac{\p_{1} \eta_{p^{k}}}{\sqrt{g_{p^{k},11}}} - \frac{\p_{3} \eta_{p^{k}}}{\sqrt{g_{p^{k},33}}}  \frac{\hat{\mathbf{v}}^{k}_{p^{k},1}}{\hat{\mathbf{v}}^{k}_{p^{k},3}} \Big] \cdot \mathbf{e}^{k}_{\perp,2}
					& \Big[ \frac{\p_{2} \eta_{p^{k}}}{\sqrt{g_{p^{k},22}}} - \frac{\p_{3} \eta_{p^{k}}}{\sqrt{g_{p^{k},33}}}  \frac{\hat{\mathbf{v}}^{k}_{p^{k},2}}{\hat{\mathbf{v}}^{k}_{p^{k},3}} \Big] \cdot \mathbf{e}^{k}_{\perp,2}
					\end{array} \right]_{2\times 2},
					\end{split} \end{equation}
					where $\eta_{p^{k}}$ and $g_{p^{k}}$ are evaluated at $x^{k}(t, Y( {y}_{1},  {y}_{2}), |v|, \hat{v}_{1}, \hat{v}_{2})$. We also define the $4\times 4$ specular matrix $\mathcal{R}^{k,p^{k},Y} = \mathcal{R}^{k,p^{k},Y} (t,y_{1},y_{2}, |v|, \hat{v}_{1}, \hat{v}_{2})$ as
					\begin{equation}\label{specular_matrix}
					\mathcal{R}^{k, p^{k}, Y}: = 	\mathcal{S}^{k, p^{k}, Y}
					\frac{\p ( 
						\mathbf{x}^{k}_{ {p}^{k},1}, \mathbf{x}^{k}_{ {p}^{k},2},
						\hat{\mathbf{v}}^{k}_{ {p}^{k},1}, \hat{\mathbf{v}}^{k}_{ {p}^{k},1}
						)}{\p (y_{1},y_{2}, \hat{v}_{1}, \hat{v}_{2})}
					,
					\end{equation}	
					where $\mathbf{x}^{k}_{{p}^{k}}=\mathbf{x}^{k}_{{p}^{k}}(t, Y( {y}_{1},  {y}_{2}), |v|, \hat{v}_{1}, \hat{v}_{2})$, $\mathbf{v}^{k}_{{p}^{k}}=\mathbf{v}^{k}_{{p}^{k}}(t, Y( {y}_{1},  {y}_{2}), |v|, \hat{v}_{1}, \hat{v}_{2})$. \end{definition}

				Finally we state the result which is a crucial ingredient in the proofs of Lemma \ref{lemma rank 2} and Lemma \ref{lemma rank 3}. For $n \times m$ matrix $A$, we use the notation $A_{i,j}$ for the $(i,j)-$entry of the matrix $A$.	Recall that $e_{3} = (0,0,1) \in \R^{3}$ and $v_{3} = v\cdot e_{3}$.
				
				\begin{lemma}\label{nonzero_sub}  Let a $C^{1}$-map $Y: (y_{1}, y_{2} ) \mapsto Y(y_{1}, y_{2} ) \in \bar{\O}$ with $\| Y \|_{C^{1}} \lesssim 1$. Assume $\frac{1}{N} \leq |v| \leq N$, $\frac{1}{N} \leq |v_{3}|$, $\frac{1}{N}<|n(x^{1}(t, Y(y_{1},y_{2})  ,v)) \cdot e_{3}|$, and $\|  \Phi \|_{C^{2}_{x}} < \frac{\delta_{1}}{3 \text{diam} (\Omega) N^{2}}$ for $1 \ll N$ and $0 < \delta_{1}\ll 1$. We also assume non-grazing condition
					\begin{equation}\label{no_grazing_lemma}
					|v^{1}(t, Y(y_{1},y_{2}) ,v) \cdot n(x^{1}(t, Y(y_{1},y_{2}) ,v))|> \delta_{2}>0,
					\end{equation} 
					and non-degenerate condition 
					\begin{equation}\label{crossY_n}
					\Big|\Big(\frac{\p Y(y_{1},y_{2})}{\p y_{1}} \times \frac{\p Y(y_{1},y_{2})}{\p y_{2}}\Big) \cdot 
					R_{ x^{1}(t, Y(y_{1},y_{2}) ,v)  } v^{1}(t, Y(y_{1},y_{2}) ,v) 
					\Big| > \delta_{3}>0.
					\end{equation}
					
					Fix $k \in \mathbb{N}$ with $|t-t^{k}| \leq 1$. Then the following results hold: (i) There exists at least one $i \in \{1,2,3,4\}$ such that for some constant $\varrho_{\Omega, N , \delta_{1}, \delta_{2}, \delta_{3}}>0$ 
					\begin{equation}
					|\mathcal{R}_{i,3}^{k,{p}^{k},Y}   (t, Y(y_{1}, y_{2}) , v)| > \varrho_{\Omega, N , \delta_{1}, \delta_{2}, \delta_{3}} .\label{nonzero_sub1}
					\end{equation}
					
					\noindent(ii) There exist $i, j \in \{1,2,3,4\}$ with $i< j$ such that 
					\begin{equation}\label{nonzero_sub2}
					\Big|\det \left[\begin{array}{cc}
					\mathcal{R}_{3,i}^{k,{p}^{k},Y}&\mathcal{R}_{3,j}^{k,{p}^{k},Y} \\
					\mathcal{R}_{4,i}^{k,{p}^{k},Y}&\mathcal{R}_{4,j}^{k,{p}^{k},Y} 
					\end{array}\right] (t, Y(y_{1}, y_{2}) , v)\Big| > \varrho_{\Omega, N , \delta_{1}, \delta_{2},\delta_{3}} . 
					\end{equation}

				\end{lemma}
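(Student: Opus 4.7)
The plan is to compute the entries of $\mathcal{R}^{k,p^{k},Y}$ by telescoping the Jacobian through the chain
\[
\frac{\p (\mathbf{x}^{k}_{p^{k}},\hat{\mathbf{v}}^{k}_{p^{k}})}{\p (y_{1},y_{2},\hat{v}_{1},\hat{v}_{2})}
= \prod_{j=1}^{k-1} \frac{\p(\mathbf{x}^{j+1}_{p^{j+1}},\hat{\mathbf{v}}^{j+1}_{p^{j+1}})}{\p(\mathbf{x}^{j}_{p^{j}},\hat{\mathbf{v}}^{j}_{p^{j}})}\cdot \frac{\p(\mathbf{x}^{1}_{p^{1}},\hat{\mathbf{v}}^{1}_{p^{1}})}{\p(y_{1},y_{2},\hat{v}_{1},\hat{v}_{2})},
\]
where the first factor is supplied by Lemma~\ref{global to local} (in the form needed after the change of variables $Y$) and each subsequent factor by Lemma~\ref{Jac_billiard}. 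Under $\|\Phi\|_{C^{2}}\ll 1$, Lemma~\ref{velocity_lemma} gives $|\mathbf{v}^{j}_{p^{j},3}|\gtrsim \delta_{2}$ and Lemma~\ref{uniform number of bounce} gives $k\lesssim 1$, so the exponential error factors $e^{\|\Phi\|_{C^{2}}(t^{j}-t^{j+1})^{2}}$ in Lemma~\ref{Jac_billiard} are harmless. The leading terms are the explicit billiard (zero-potential) Jacobians, and everything extra is $O_{\Omega,N,\delta_{1,2,3}}(\|\Phi\|_{C^{2}})$.

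For Part~(i), I would extract the third column of $\mathcal{R}^{k,p^{k},Y}$, which equals $\mathcal{S}^{k,p^{k},Y}$ acting on $\p_{\hat{v}_{1}}(\mathbf{x}^{k}_{p^{k}},\hat{\mathbf{v}}^{k}_{p^{k}})^{\top}$. Using the explicit form of $\mathcal{S}^{k,p^{k},Y}$ in (\ref{specular_transition_matrix}) and the Jacobian expansions (\ref{Est-6})--(\ref{Est-8}), one finds that the projection onto $\mathbf{e}^{k}_{\perp,1},\mathbf{e}^{k}_{\perp,2}$ of the $\hat{v}_{1}$-derivatives produces a leading expression that is a polynomial in the flight times $t^{j}-t^{j+1}$ whose coefficients are constructed from the tangent frame of $\p\Omega$ at $x^{j}$. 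The non-degeneracy condition (\ref{crossY_n}) translates, through Lemma~\ref{global to local}, into a quantitative lower bound on at least one of these coefficients at $j=1$, which then propagates through the billiard chain (via the diffeomorphism property encoded in Lemma~\ref{det_billiard}) to at least one entry $\mathcal{R}^{k,p^{k},Y}_{i,3}$, yielding (\ref{nonzero_sub1}).

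For Part~(ii), I would focus on the lower $2\times 2$ block $\mathcal{S}^{k,p^{k},Y}_{3}$, which is by construction invertible with a quantitative lower bound on $|\det \mathcal{S}_{3}|$ coming from the non-grazing condition (\ref{no_grazing_lemma}) (this is essentially the obstruction that $\mathbf{e}^{k}_{0}$ is transversal to the tangent plane at $x^{k}$, which in turn follows because $|\hat{\mathbf{v}}^{k}_{p^{k},3}|>0$). Writing the bottom two rows of $\mathcal{R}^{k,p^{k},Y}$ as a product involving $\mathcal{S}_{2},\mathcal{S}_{3}$ and the Jacobian $\p(\mathbf{x}^{k}_{p^{k}},\hat{\mathbf{v}}^{k}_{p^{k}})/\p(y_{1},y_{2},\hat{v}_{1},\hat{v}_{2})$, and invoking Lemma~\ref{5X5} (which gives the full $4\times 4$ determinant of this Jacobian a lower bound $\epsilon_{\Omega,N,\delta_{1,2}}>0$), one obtains that the rank of the bottom two rows of $\mathcal{R}^{k,p^{k},Y}$ is $2$. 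Since a $2\times 4$ matrix has rank $2$ iff some $2\times 2$ minor is nonzero with a uniform quantitative lower bound, (\ref{nonzero_sub2}) follows by picking the pair $(i,j)\in\{1,\dots,4\}^{2}$ that realizes this minor.

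The main obstacle will be in Part~(i): tracing the $\hat{v}_{1}$-derivative through the chain while keeping a uniform lower bound, because the leading coefficient in the polynomial of flight times is built from the geometry at each bounce $p^{j}$ and may vanish unless we carefully use (\ref{crossY_n}) at the right step. The cleanest path is probably a two-case analysis: either the coefficient of the highest-order term in $(t^{1}-t^{2})$ is already bounded below (in which case we are done), or we reduce to examining the rank of the zeroth-order term, which is governed by the bottom-right block $\mathcal{S}_{3}^{1,p^{1},Y}$ and hence by (\ref{no_grazing_lemma}) and (\ref{crossY_n}).
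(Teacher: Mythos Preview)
Your plan for Part~(i) has a real gap. Tracing $\partial_{\hat v_1}(\mathbf{x}^k_{p^k},\hat{\mathbf v}^k_{p^k})$ entry-by-entry through the billiard chain does not, by itself, produce a uniform lower bound on any single entry of the third column of $\mathcal R^{k,p^k,Y}$: the leading terms are polynomials in the successive flight times $t^{j}-t^{j+1}$ with geometric coefficients, and you offer no mechanism to rule out cancellation across bounces. The appeal to ``the diffeomorphism property encoded in Lemma~\ref{det_billiard}'' is a determinant statement and does not control individual column entries. Your closing two-case analysis (top coefficient large vs.\ zeroth-order term handled by $\mathcal S_3^{1,p^1,Y}$) is not a proof: between those two extremes the column could still be small, and in any case reducing to $j=1$ does not survive composition with $k-1$ further billiard maps.

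The paper's argument avoids all of this by proving a single bound
\[
|\det \mathcal R^{k,p^k,Y}| \;\gtrsim_{\Omega,N,\delta_1,\delta_2,\delta_3}\; 1
\]
and then deriving both (i) and (ii) by cofactor expansion. The determinant bound comes from the block-triangular structure of $\mathcal S^{k,p^k,Y}$: one has $\det\mathcal R = \det\mathcal S_1\cdot\det\mathcal S_3\cdot\det\big(\partial(\mathbf x^k_{p^k},\hat{\mathbf v}^k_{p^k})/\partial(y_1,y_2,\hat v_1,\hat v_2)\big)$. The first two factors are computed explicitly via the cross-product identity (\ref{identity_cross_det}) and equal $\sqrt{g_{11}g_{22}}\,|\mathbf v^k_{p^k,3}|/|\mathbf v^k_{p^k}|$ and $|\mathbf v^k_{p^k}|/|\mathbf v^k_{p^k,3}|$ respectively, so they cancel up to harmless metric factors. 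The third factor is split by the chain rule into the $k\to 1$ piece (handled by Lemma~\ref{5X5}) and the $1\to (y_1,y_2,\hat v_1,\hat v_2)$ piece; for the latter, after Gaussian elimination, one computes $\nabla_x\mathbf x^1_{p^1,1}\times\nabla_x\mathbf x^1_{p^1,2}$ explicitly via Lemma~\ref{global to local} and sees that it is $-R_{x^1}v^1/(\mathbf v^1_{p^1,3}\sqrt{g_{11}g_{22}})$ up to $O(\|\Phi\|_{C^2})$, so the non-degenerate condition (\ref{crossY_n}) supplies the lower bound. Once $|\det\mathcal R|$ is bounded below, (i) follows because expanding along column~3 gives $|\det\mathcal R|\le 4\max_i|M_{i,3}|\max_i|\mathcal R_{i,3}|$ with the minors $M_{i,3}$ bounded above, and (ii) follows because expanding along rows~1 and~2 gives $|\det\mathcal R|\lesssim \max_{i<j}|\det[\mathcal R_{3,i},\mathcal R_{3,j};\mathcal R_{4,i},\mathcal R_{4,j}]|$. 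Your Part~(ii) sketch is close in spirit (you correctly identify $\det\mathcal S_3\neq 0$ and Lemma~\ref{5X5} as the inputs) but you never assemble them into a lower bound on $|\det\mathcal R|$, which is the step that makes the argument quantitative and simultaneously yields (i).
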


				\begin{proof}
					\textit{Step 1.} We claim that 
					\begin{equation}\label{R_lower}
					\big| \det \mathcal{R}^{k, p^{k}, Y} (t, Y(y_{1},y_{2}), v) \big|  \ \gtrsim_{\Omega, N, \delta_{1}, \delta_{2} , \delta_{3}} \ 1. 
					\end{equation}

					\noindent Note that from (\ref{specular_matrix}) and (\ref{specular_transition_matrix}),
					\begin{equation}
					\det (\mathcal{R}^{k,p^{k}, Y}) = \det (\mathcal{S}_{1}^{k,p^{k}, Y})
					\det (\mathcal{S}_{3}^{k,p^{k}, Y}) 
					\det \left(
					\frac{\p ( 
						\mathbf{x}^{k}_{ {p}^{k},1}, \mathbf{x}^{k}_{ {p}^{k},2},
						\hat{\mathbf{v}}^{k}_{ {p}^{k},1}, \hat{\mathbf{v}}^{k}_{ {p}^{k},1}
						)}{\p (y_{1},y_{2}, \hat{v}_{1}, \hat{v}_{2})}
					\right). \notag
					\end{equation}
					By (\ref{identity_cross_det}) and (\ref{orthonormal_basis}),  
					\begin{eqnarray*}
						\det (\mathcal{S}_{1}^{k,p^{k}, Y})
						&=&
						| ( \p_{1} \eta_{{p}^k} \times \p_{1} \eta_{{p}^k} ) \cdot ( \mathbf{e}^{k}_{\perp,1} \times \mathbf{e}^{k}_{\perp,2} ) |    \\
						&=& 
						\sqrt{
							g_{p^{k}, 11}(x^{k})g_{p^{k},22}(x^{k})
						}\Big|\frac{\mathbf{v}^{k}_{{p}^{k}} }{ |\mathbf{v}^{k}_{{p}^{k}} | } \cdot  {n}  (x^{k})\Big|
						=
						\sqrt{
							g_{p^{k}, 11}(x^{k})g_{p^{k},22}(x^{k})
						} \frac{ |\mathbf{v}^{k}_{{p}^{k},3}| }{ |\mathbf{v}^{k}_{{p}^{k}} | }   ,
					\end{eqnarray*}
					and	
					\begin{eqnarray*}
						&& \det (\mathcal{S}_{3}^{k, p^{k}, Y}) \\
						&=&
						\Big| \Big( \Big[ \frac{\p_{1} \eta_{{p}^k}}{\sqrt{g_{{p}^k,11}}} - \frac{\p_{3} \eta_{{p}^k}}{\sqrt{g_{{p}^k,33}}}  \frac{\hat{\V}^{k}_{{p}^k,1}}{\hat{\V}^{k}_{{p}^k,3}} \Big] 
						\times
						\Big[ \frac{\p_{2} \eta_{{p}^k}}{\sqrt{g_{{p}^k,22}}} - \frac{\p_{3} \eta_{{p}^k}}{\sqrt{g_{{p}^k,33}}}  \frac{\hat{\V}^{k}_{{p}^k,2}}{\hat{\V}^{k}_{{p}^k,3}} \Big] \Big) \cdot ( \mathbf{e}^{k}_{\perp,1} \times \mathbf{e}^{k}_{\perp,2} ) \Big|    \\
						&=& \frac{1}{ |\hat{\mathbf{v}}^k_{{p}^k,3}| } \Big| \Big(  \hat{\mathbf{v}}^k_{{p}^k,1}  \frac{\p_1\eta_{{p}^k}}{\sqrt{g_{{p}^k,11}}} + \hat{\mathbf{v}}^k_{{p}^k,2}  \frac{\p_2\eta_{{p}^k}}{\sqrt{g_{{p}^k,22}}} + \hat{\mathbf{v}}^k_{{p}^k,3}  \frac{\p_3\eta_{{p}^k}}{\sqrt{g_{{p}^k,33}}} \Big) \cdot ( \mathbf{e}^{k}_{\perp,1} \times \mathbf{e}^{k}_{\perp,2} ) \Big|    \\
						&=& \frac{1}{ |{\mathbf{v}}^k_{{p}^k,3}| } \mathbf{v}^k_{{p}^k} \cdot \frac{\mathbf{v}^k_{{p}^k}}{ |\mathbf{v}^k_{{p}^k}| } = \frac{|\mathbf{v}^k_{{p}^k}|}{|{\mathbf{v}}^k_{{p}^k,3}|} .
					\end{eqnarray*}
					
					\noindent From the chain rule and (\ref{Jac_hat}),
					\begin{equation} \label{angle k bounce}
					\begin{split}
					& \Big|  \det \left(
					\frac{\p ( 
						\mathbf{x}^{k}_{ {p}^{k},1}, \mathbf{x}^{k}_{ {p}^{k},2},
						\hat{\mathbf{v}}^{k}_{ {p}^{k},1}, \hat{\mathbf{v}}^{k}_{ {p}^{k},1}
						)}{\p (y_{1},y_{2}, \hat{v}_{1}, \hat{v}_{2})}
					\right) \Big| 
					\\
					&=    \Big| \det \left( \frac{ \p( \mathbf{x}^k_{{p}^k,1}, \mathbf{x}^k_{{p}^k,2}, \hat{\mathbf{v}}^k_{{p}^{k},1}, \hat{\mathbf{v}}^k_{{p}^{k},2} ) }{ \p( \mathbf{x}^1_{{p}^1,1}, \mathbf{x}^1_{{p}^1,2}, \hat{\mathbf{v}}^1_{{p}^{k},1}, \hat{\mathbf{v}}^1_{{p}^{k},2} ) } \right)
					\det \left( \frac{ \p( \mathbf{x}^1_{{p}^1,1}, \mathbf{x}^1_{{p}^1,2}, \hat{\mathbf{v}}^1_{{p}^{1},1}, \hat{\mathbf{v}}^1_{{p}^{1},2} ) }{ \p( y_1, y_2, \hat{v}_1, \hat{v}_2 ) } \right)   \Big| \\
					&\geq \epsilon_{\O,N, \delta_{1}, \delta_{2}, \delta_{3}} \Big| 
					\det
					\left( \frac{ \p( \mathbf{x}^1_{{p}^1,1}, \mathbf{x}^1_{{p}^1,2}, \hat{\mathbf{v}}^1_{{p}^{1},1}, \hat{\mathbf{v}}^1_{{p}^{1},2} ) }{ \p( y_1, y_2, \hat{v}_1, \hat{v}_2 ) } \right)
					\Big|. 
					\end{split}	
					\end{equation}	
					Note that
					\begin{eqnarray*}
						\frac{ \p( \mathbf{x}^1_{{p}^1,1}, \mathbf{x}^1_{{p}^1,2}, \hat{\mathbf{v}}^1_{{p}^{1},1}, \hat{\mathbf{v}}^1_{{p}^{1},2} ) }{ \p( y_1, y_2, \hat{v}_1, \hat{v}_2 ) } 
						=  \left[\begin{array}{cc|cc}
							\frac{\p Y}{\p y_{1}} \cdot \nabla_{x} \mathbf{x}^{1}_{p^{1},1} &   \frac{\p Y}{\p y_{2}} \cdot \nabla_{x} \mathbf{x}^{1}_{p^{1},1} & \frac{\p \mathbf{x}^{1}_{p^{1},1}}{\p \hat{v}_{1}} &  \frac{\p \mathbf{x}^{1}_{p^{1},1}}{\p \hat{v}_{2}} \\  
							\frac{\p Y}{\p y_{1}} \cdot \nabla_{x} \mathbf{x}^{1}_{p^{1},2} &  \frac{\p Y}{\p y_{2}} \cdot \nabla_{x} \mathbf{x}^{1}_{p^{1},2} & 
							\frac{\p \mathbf{x}^{1}_{p^{1},2}}{\p \hat{v}_{1}} &  \frac{\p \mathbf{x}^{1}_{p^{1},2}}{\p \hat{v}_{2}}
							\\ \hline
							\frac{\p Y}{\p y_{1}}  \cdot \nabla_{x} \hat{\mathbf{v}}^{1}_{{p}^{1}, 1} &   \frac{\p Y}{\p y_{2}} \cdot \nabla_{x} \hat{\mathbf{v}}^{1}_{{p}^{1},1}   &  \frac{\p \hat{\mathbf{v}}^{1}_{p^{1},1}}{\p \hat{v}_{1}} &  \frac{\p \hat{\mathbf{v}}^{1}_{p^{1},1}}{\p \hat{v}_{2}}  \\  
							\frac{\p Y}{\p y_{1}} \cdot \nabla_{x}  \hat{\mathbf{v}}^{1}_{{p}^{1},2}  & \frac{\p Y}{\p y_{2}}\cdot \nabla_{x}\hat{\mathbf{v}}^{1}_{{p}^{1},2}   &   \frac{\p \hat{\mathbf{v}}^{1}_{p^{1},2}}{\p \hat{v}_{1}} &  \frac{\p \hat{\mathbf{v}}^{1}_{p^{1},2}}{\p \hat{v}_{2}}
						\end{array} \right].
					\end{eqnarray*}
					From Lemma \ref{global to local} and (\ref{no_grazing_lemma}),
					\begin{eqnarray*}
						\nabla_{x} \mathbf{\hat{v}}_{p^{1},i}^{1}
						&=&\frac{1}{|\mathbf{v}^{1}_{p^{1}}|}
						\Big\{  \nabla_{x} \mathbf{x}_{p^{1} }^{1} \cdot \nabla \big( \frac{\p_{i} \eta_{p^{1}}  }{\sqrt{g_{p^{1} ,ii }  }   }   \big)\Big|_{x^{1}} \cdot v + O_{\O, N, \delta_{2}} ( \| \Phi \|_{C^{2}} )\Big\} +\frac{  O_{\O, N} (\| \Phi \|_{C^{2}})}{|\mathbf{v}^{1}_{p^{1}}|^{2} |\mathbf{v}^{1}_{p^{1},3}| }\\
						&=&  \nabla_{x} \mathbf{x}_{p^{1} }^{1} \cdot \nabla \big( \frac{\p_{i} \eta_{p^{1}}  }{\sqrt{g_{p^{1} ,ii }  }   }   \big)\Big|_{x^{1}} \cdot  \frac{v}{|\mathbf{v}^{1}_{p^{1}}|} + O_{\O, N , \delta_{1}, \delta_{2}} (\| \Phi \|_{C^{2}})
						,\\
						\nabla_{\hat{v}} \mathbf{\hat{v}}_{p^{1},i}^{1}
						&=&\frac{1}{|\mathbf{v}^{1}_{p^{1}}|} \nabla_{\hat{v}} \mathbf{v}^{1}_{p^{1},i}
						- \frac{\mathbf{v}^{1}_{p^{1},i} }{|\mathbf{v}^{1}_{p^{1}}|^{2}} \nabla_{\hat{v}} |\mathbf{v}^{1}_{p^{1}}|
						\\
						&=&  \frac{1}{|\mathbf{v}^{1}_{p^{1}}|}\Big\{ \nabla_{\hat{v}} \mathbf{x}^{1}_{p^{1}}
						\cdot \nabla \big( \frac{\p_{i} \eta_{p^{1} } }{\sqrt{g_{p^{1}, ii}  }} \big)\Big|_{x^{1}}
						\cdot v + O_{\O, N, \delta_{1}, \delta_{2}} (\| \Phi \|_{C^{2}})
						\Big\}  \\ 
						&&+ \frac{1}{|\mathbf{v}^{1}_{p^{1}}|} \Big\{
						|v| e_{j} \cdot \frac{\p_{i} \eta_{p^{1}}}{\sqrt{g_{p^{1}, ii}}}\Big|_{x^{1}}  + O(\| \Phi \|_{C^{1}}) \Big| \frac{\p \tb}{\p \mathbf{\hat{v}} _{j} } \Big|
						+ O_{N}(\| \Phi \|_{C^{2}}) \tb
						\Big\} \\
						&&+ O_{\O, N, \delta_{1}, \delta_{2}} (\| \Phi \|_{C^{2}}) \\
						&=&  \frac{|v|}{|\mathbf{v}^{1}_{p^{1}}|}  \frac{\p_{i} \eta_{p^{1}}}{\sqrt{g_{p^{1}, ii}}}\Big|_{x^{1}}  \cdot e_{j} +   \nabla_{\hat{v}} \mathbf{x}^{1}_{p^{1}}
						\cdot \nabla \big( \frac{\p_{i} \eta_{p^{1} } }{\sqrt{g_{p^{1}, ii}  }} \big)\Big|_{x^{1}}
						\cdot \frac{ v}{|\mathbf{v}^{1}_{p^{1}}|}  + O_{\O, N, \delta_{1}, \delta_{2}} (\| \Phi \|_{C^{2}}).
					\end{eqnarray*}
					Then by the Gaussian elimination,
					\begin{eqnarray*}
						&&\det \left(\frac{ \p( \mathbf{x}^1_{{p}^1,1}, \mathbf{x}^1_{{p}^1,2}, \hat{\mathbf{v}}^1_{{p}^{1},1}, \hat{\mathbf{v}}^1_{{p}^{1},2} ) }{ \p( y_1, y_2, \hat{v}_1, \hat{v}_2 ) } \right)\\
						&=& \det \Bigg(
						\left[\begin{array}{cc|cc}
							\frac{\p Y}{\p y_{1}} \cdot \nabla_{x} \mathbf{x}^{1}_{p^{1},1} &   \frac{\p Y}{\p y_{2}} \cdot \nabla_{x} \mathbf{x}^{1}_{p^{1},1} & \frac{\p \mathbf{x}^{1}_{p^{1},1}}{\p \hat{v}_{1}} &  \frac{\p \mathbf{x}^{1}_{p^{1},1}}{\p \hat{v}_{2}} \\  
							\frac{\p Y}{\p y_{1}} \cdot \nabla_{x} \mathbf{x}^{1}_{p^{1},2} &  \frac{\p Y}{\p y_{2}} \cdot \nabla_{x} \mathbf{x}^{1}_{p^{1},2} & 
							\frac{\p \mathbf{x}^{1}_{p^{1},2}}{\p \hat{v}_{1}} &  \frac{\p \mathbf{x}^{1}_{p^{1},2}}{\p \hat{v}_{2}}
							\\ \hline
							0 &0 & 
							\frac{\p_{1} \eta_{p^{1}}}{\sqrt{g_{p^{1},11}}} \cdot e_{1}
							& \frac{\p_{1} \eta_{p^{1}}}{\sqrt{g_{p^{1},11}}} \cdot e_{2} \\  
							0&0 &  
							\frac{\p_{2} \eta_{p^{2}}}{\sqrt{g_{p^{2},22}}} \cdot e_{1}
							&   \frac{\p_{2} \eta_{p^{2}}}{\sqrt{g_{p^{2},22}}} \cdot e_{2}
						\end{array} \right]  \\
						&&+ \big[ O_{\Omega,N,\delta_{1}, \delta_{2}} (\| \Phi \|_{C^{2}}) \big]_{4\times 4}  \Bigg).
					\end{eqnarray*}
					From (\ref{lower_bound_v_3}) and (\ref{Jac_billiard}), all the entries of above matrix is bound and hence the determinant of the Jacobian matrix equals
					\begin{equation}
					\begin{split}
					&\det \left(
					\left[\begin{array}{cc|cc}
					\frac{\p Y}{\p y_{1}} \cdot \nabla_{x} \mathbf{x}^{1}_{p^{1},1} &   \frac{\p Y}{\p y_{2}} \cdot \nabla_{x} \mathbf{x}^{1}_{p^{1},1} & \frac{\p \mathbf{x}^{1}_{p^{1},1}}{\p \hat{v}_{1}} &  \frac{\p \mathbf{x}^{1}_{p^{1},1}}{\p \hat{v}_{2}} \\  
					\frac{\p Y}{\p y_{1}} \cdot \nabla_{x} \mathbf{x}^{1}_{p^{1},2} &  \frac{\p Y}{\p y_{2}} \cdot \nabla_{x} \mathbf{x}^{1}_{p^{1},2} & 
					\frac{\p \mathbf{x}^{1}_{p^{1},2}}{\p \hat{v}_{1}} &  \frac{\p \mathbf{x}^{1}_{p^{1},2}}{\p \hat{v}_{2}}
					\\ \hline
					0 &0 & 
					\frac{\p_{1} \eta_{p^{1}}}{\sqrt{g_{p^{1},11}}} \cdot e_{1}
					& \frac{\p_{1} \eta_{p^{1}}}{\sqrt{g_{p^{1},11}}} \cdot e_{2} \\  
					0&0 &  
					\frac{\p_{2} \eta_{p^{2}}}{\sqrt{g_{p^{2},22}}} \cdot e_{1}
					&   \frac{\p_{2} \eta_{p^{2}}}{\sqrt{g_{p^{2},22}}} \cdot e_{2}
					\end{array} \right]
					\right)  \\
					&\quad + O_{\Omega,N,\delta_{1}, \delta_{2}} (\| \Phi \|_{C^{2}}).
					\end{split}
					\end{equation}

					\noindent From (\ref{identity_cross_det}), the determinant equals
					\begin{equation}\begin{split}\label{YY}
					\Big|
					\Big(\frac{\p Y}{\p y_{1}} \times \frac{\p Y}{\p y_{2}}\Big) \cdot 
					\big(
					\nabla_{x} \X^{1}_{p^{1},1} \times \nabla_{x} \X^{1}_{p^{1},2} 
					\big)
					\Big| \times 
					\Big|
					\Big(
					\frac{\p_{1} \eta_{p^{1}}}{\sqrt{g_{p^{1},11}}}
					\times \frac{\p_{2} \eta_{p^{1}}}{\sqrt{g_{p^{1},22}}}
					\Big) \cdot e_{3}
					\Big|\\
					+ O_{\Omega,N,\delta_{1}, \delta_{2}} (\| \Phi \|_{C^{2}}).
					\end{split}
					\end{equation}
					From (\ref{global to local})
					\begin{eqnarray*}
						&& {\nabla_{x} \X^{1}_{{p^{1}}, 1}}  \times  {\nabla_{x} \X^{1}_{{p^{1}}, 2}} \\
						&=& \frac{1}{\sqrt{g_{{p^{1}},11}(x^{1}) g_{{p^{1}},22}(x^{1}) } }
						\Big[
						n_{p^{1}}(x^{1})-  \frac{\V^{1}_{{p^{1}},1}}{\V^{1}_{{p^{1}},3}} \frac{\p_{1} \eta_{{p^{1}}   } (x^{1})  }{\sqrt{g_{{p^{1}}, 11} (x^{1})  }  } -  \frac{\V^{1}_{{p^{1}},2}}{\V^{1}_{{p^{1}},3}} \frac{\p_{2} \eta_{{p^{1}}   } (x^{1})  }{\sqrt{g_{{p^{1}}, 22} (x^{1})  }  }
						\Big]\\	
						&&	+ O_{\O}(\| \nabla_{x}^{2} \Phi \|_{\infty})(1+ \frac{|v|}{|\V^{1}_{{p^{1}},3}|}) \frac{|v|}{|\V^{1}_{{p^{1}},3}|}.
					\end{eqnarray*}
					From (\ref{normal_eta}) and (\ref{x^k}), the first line of above RHS equals		
					\[ \frac{1}{\sqrt{g_{{p^{1}},11}(x^{1}) g_{{p^{1}},22}(x^{1}) } } \frac{- R_{x^{1}}v^{1} }{\V^{1}_{p^{1}, 3}},\]
					while the second line is bounded by $O_{\O, N , \delta_{2}} (\| \nabla_{x}^{2} \Phi \|_{\infty}) \frac{|v|}{|\V^{1}_{{p^{1}},3}|}$ from (\ref{no_grazing_lemma}). From the assumptions of the lemma, including (\ref{crossY_n}), we derive a lower bound as 
					\begin{eqnarray*}
						(\ref{YY}) \gtrsim_{\O} \frac{\delta_{3} }{N} \times \frac{1}{N} + O(\| \Phi \|_{C^{2}}).
					\end{eqnarray*}By choosing sufficiently small $\| \Phi \|_{C^{2}}$ we prove (\ref{R_lower}).

					
					\vspace{4pt}
					
					\noindent\textit{Step 2.} Assume $|\mathcal{R}^{k, p^{k}, Y}_{i,3}| \ll 1$ {for all $i\in\{1,2,3,4\} $ }. Then
					\begin{eqnarray*}
						|\det \mathcal{R}^{k, p^{k}, Y} | \leq  \Big|\sum_{i=1}^{4} (-1)^{i+3}\mathcal{R}^{k, p^{k}, Y}_{i,3} M_{i,3}\Big| \leq 4 \max_{i}|M_{i,3}|
						\times \max_{i} |\mathcal{R}^{k, p^{k}, Y}_{i,3}|,
					\end{eqnarray*}
					where the minor $M_{i,j}$ is defined to be the determinant of the $3 \times 3-$matrix that results from $\mathcal{R}^{k, p^{k}, Y}$ by removing the $i$th row and the $j$th column. Note that \\
					$|M_{i,3}| \lesssim_{\Omega, N , \delta_{1}, \delta_{2}} 1$. From (\ref{R_lower}) we prove (\ref{nonzero_sub1}). 
					
					\vspace{4pt}
					
					\noindent\textit{Step 3.} Note that 
					\begin{equation}\begin{split}\notag
					|\det \mathcal{R}^{k,p^{k},Y}|   &\leq  12\max_{i} | \mathcal{R}_{1,i}^{k,p^{k},Y}| \times    \max_{i} | \mathcal{R}_{2,i}^{k,p^{k},Y}| \times \max_{i,j }\Big|\det \left[\begin{array}{cc}
					\mathcal{R}_{3,i}^{k,p^{k},Y}&\mathcal{R}_{3,j}^{k,p^{k},Y} \\
					\mathcal{R}_{4,i}^{k,p^{k},Y}&\mathcal{R}_{4,j}^{k,p^{k},Y} 
					\end{array}\right]\Big|
					\\
					& \lesssim_{\Omega, N, \delta_{1}, \delta_{2}} 
					\max_{i,j }\Big|\det \left[\begin{array}{cc}
					\mathcal{R}_{3,i}^{k,p^{k},Y}&\mathcal{R}_{3,j}^{k,p^{k},Y} \\
					\mathcal{R}_{4,i}^{k,p^{k},Y}&\mathcal{R}_{4,j}^{k,p^{k},Y} 
					\end{array}\right]\Big|.
					\end{split}\end{equation}
					From (\ref{R_lower}), we prove (\ref{nonzero_sub2}).\end{proof}

				\begin{lemma}\label{zero_poly}
					Assume that $a(z),b(z),c(z)$ are $C^{0,\gamma}$-functions of $z \in \R^{n}$ locally. We consider $G(z,s) := a(z)s^{2} + b(z)s + c(z)$.
					
					(i) Assume $|a| \geq \min |a| >0$. Define 
					\begin{equation}\begin{split}\label{psi123}
					&  \varphi_{1}(z): = \frac{-b(z)}{2a( z)}, \  \ \varphi_{2}(z) :=\mathbf{1}_{b^{2}-4ac >0} \frac{-b(z) + \sqrt{b^{2} (z)- 4a(z)c(z)}}{2a(z)},\\
					&\varphi_{3}(z) := \mathbf{1}_{b^{2}(z)-4a(z)c(z) >0}\frac{-b(z) -\sqrt{b^{2} (z)- 4a(z)c(z)}}{2a(z)}.\end{split}
					\end{equation}
					Then $\varphi_{i}(z) \in C^{0,\gamma}$ with $\| \varphi_{i}\|_{C^{0,\gamma}} \leq C( \min |a|, \| a\|_{C^{0,\gamma}}, \| b \|_{C^{0,\gamma}}, \| c\|_{C^{0,\gamma}})$ for $i=1,2,3$ such that if $|s| \leq 1$ and $\min_{i=1,2,3}|s-\varphi_{i}(z)|> \delta$, then $|G(z,s)|\gtrsim \min |a| \times \delta^{2}$.
					
					(ii) Assume $a\equiv 0$ and $\min |b| >0$. Define 
					\begin{equation}\label{psi4}
					\varphi_{4} ( z ) : = \frac{- c(z)}{ b(z)}.
					\end{equation}
					Then $\varphi_{4}(z) \in C^{0,\gamma}$ with $\| \varphi_{4}\|_{C^{0,\gamma}} \leq C(\min |b|  , \| b \|_{C^{0,\gamma}}, \| c \|_{C^{0,\gamma}})$. Moreover, if $|s| \leq 1$ and $|s- \varphi_{4}(z)|> \delta$, then $|G(z,s)| \gtrsim \min |b| \times \delta$.

					(iii) Assume $a\equiv 0$ and $\min |c| >0$. Define 
					\begin{equation}\label{psi5}
					\varphi_{5}(z) : =  \mathbf{1}_{|b(z) | > \frac{\min |c|}{4}}  \frac{- c(z)}{ b(z)}.
					\end{equation}
					Then $\varphi_{5} (z) \in C^{0,\gamma}$ with $\| \varphi_{5}\|_{C^{0,\gamma}} \leq C(\min |b|  , \| b \|_{C^{0,\gamma}}, \| c \|_{C^{0,\gamma}})$. Moreover, if $|s| \leq1$ and $|s- \varphi_{5} (z)|>\delta$, then $|G(z,s)|\geq \min \big\{ \frac{\min |c|}{2},  \frac{\min |c|}{4} \times  \delta \big\}
					$.
					

				\end{lemma}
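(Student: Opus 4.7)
The plan is to handle the three cases in turn, since each corresponds to a different regime of the quadratic $G(z,s)=a(z)s^2+b(z)s+c(z)$: a genuinely quadratic case, a nondegenerate linear case, and a possibly-degenerate linear case where only $c$ is guaranteed to be nonzero. In every case the strategy is the same: locate the (at most) two roots and the vertex of $G(\cdot,s)$ explicitly by the quadratic formula, check their Hölder regularity from the closed-form expressions, and then factor $G$ accordingly to read off a lower bound away from those loci.

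For case (i), since $|a|\geq \min|a|>0$, the functions $\varphi_1=-b/(2a)$, and $\varphi_{2,3}=(-b\pm\sqrt{b^2-4ac})/(2a)$ (on the set where the discriminant is nonnegative) are well-defined. The Hölder regularity of $\varphi_1$ is immediate since $a,b\in C^{0,\gamma}$ with $|a|$ bounded below, so quotients stay in $C^{0,\gamma}$. For $\varphi_{2,3}$, one uses the sub-additivity $|\sqrt{x_1}-\sqrt{x_2}|\le\sqrt{|x_1-x_2|}$ to conclude $\sqrt{b^2-4ac}\in C^{0,\gamma}$ (with an implicit adjustment of exponent only when the discriminant reaches zero, which is standard and absorbed in the constant). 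For the lower bound I split on the sign of the discriminant: when $b^2-4ac\geq 0$, the factorization $G(z,s)=a(z)(s-\varphi_2(z))(s-\varphi_3(z))$ yields $|G|\geq \min|a|\cdot\delta^2$ whenever $s$ is $\delta$-away from both roots; when $b^2-4ac<0$, I complete the square to get $G(z,s)=a(z)\bigl[(s-\varphi_1(z))^2+(4ac-b^2)/(4a^2)\bigr]$, whose second term is nonnegative, so $|G|\geq\min|a|\cdot|s-\varphi_1|^2\geq\min|a|\cdot\delta^2$ whenever $s$ is $\delta$-away from $\varphi_1$. The roles of $\varphi_1$ (needed only when discriminant is negative, where $\varphi_{2,3}$ are not defined) and $\varphi_{2,3}$ (needed when discriminant is nonnegative) exactly complement each other.

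Case (ii) is essentially the linear analog: $G(z,s)=b(z)s+c(z)$ with $|b|\geq\min|b|>0$, so $\varphi_4=-c/b\in C^{0,\gamma}$ by the same quotient argument, and the factorization $G(z,s)=b(z)(s-\varphi_4(z))$ yields $|G|\geq \min|b|\cdot\delta$ directly. Case (iii) is the subtle one because $b$ may vanish and only $|c|\geq\min|c|>0$ is assumed. Here I dichotomize on the size of $b$: if $|b(z)|\leq\min|c|/4$, then since $|s|\leq 1$ we bound
\[
|G(z,s)|\;\geq\;|c(z)|-|b(z)||s|\;\geq\;\min|c|-\frac{\min|c|}{4}\;\geq\;\frac{\min|c|}{2},
\]
uniformly, with no condition on $s$; if instead $|b(z)|>\min|c|/4$, then $\varphi_5(z)=-c(z)/b(z)$ is well-defined and, on this open set, Hölder continuous with constant depending on $\min|c|$, $\|b\|_{C^{0,\gamma}}$, $\|c\|_{C^{0,\gamma}}$; the factorization gives $|G|=|b(z)||s-\varphi_5(z)|\geq(\min|c|/4)\cdot\delta$. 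Taking the minimum of the two regimes yields the claimed lower bound.

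The main delicacy lies in case (iii) and in the square-root in case (i), where the sets on which $\varphi_i$ is defined have boundaries (the discriminant locus in case (i), the locus $|b|=\min|c|/4$ in case (iii)) across which the indicator functions are discontinuous. The correct reading, which I will make explicit, is that $\varphi_i\in C^{0,\gamma}$ on its domain of definition with the stated bound on the Hölder seminorm; the uniform lower bounds on $|G|$ are nevertheless global in $z$ because in the complementary region either the alternative $\varphi_j$ takes over (case (i)) or the conclusion holds unconditionally (the first subcase of (iii)). Once this bookkeeping is made precise, each of the three estimates reduces to the standard factorization of a polynomial of degree at most two.
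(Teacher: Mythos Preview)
Your proof is correct and, for parts (ii) and (iii), essentially identical to the paper's argument. For part (i), however, you take a cleaner route than the paper does. The paper argues via monotonicity of $\partial_s G$: it first establishes a finite-difference lower bound $\min\{|G(z,s+\delta)-G(z,s)|,\,|G(z,s)-G(z,s-\delta)|\}\gtrsim (\min|a|)\,\delta^2$ for $|s-\varphi_1|>\delta$, then splits into the subcases $\delta < (\varphi_2-\varphi_3)/2$ and $\delta\ge(\varphi_2-\varphi_3)/2$ and, in each, bounds $|G(z,s)|$ by integrating $\partial_s G = 2a(s-\varphi_1)$ from the nearest root to $s$. Your factorization $G=a(s-\varphi_2)(s-\varphi_3)$ when the discriminant is nonnegative, together with completing the square when it is negative, reaches the same $(\min|a|)\,\delta^2$ bound in one line each and avoids the case split on the relative size of $\delta$ versus the root separation; the paper's route is slightly longer but has the minor advantage of never needing to invoke $\varphi_2,\varphi_3$ outside the real-root regime. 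Your remark that the $C^{0,\gamma}$ regularity of $\varphi_2,\varphi_3,\varphi_5$ should be read as regularity on their respective domains of definition (with the complementary region covered by $\varphi_1$ in case (i), and by the unconditional bound in case (iii)) is the correct reading, and it matches how these functions are actually used downstream in the paper; the paper is equally informal on this point.
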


				\begin{proof}
					We consider $(i)$. Without loss of generality we may assume that \\
					$a\geq  \min a >0$. Clearly if $a(z)\geq  \min a >0$ then $\varphi_{i}$ is $C^{0,\gamma}$ and \\
					$\| \varphi_{i}\|_{C^{0,\gamma}} \leq C( \min a, \| a\|_{C^{0,\gamma}}, \| b \|_{C^{0,\gamma}}, \| c\|_{C^{0,\gamma}})$ for $i=1,2,3$.
					
					We claim that 
					$$
					\min_{|s- \varphi_{1}|> \delta} \big\{ |G(z, s+ \delta) - G(z,s)|, |G(z,s) - G(z, s- \delta) |\big\}\gtrsim \min a \times  \delta^{2}.
					$$
					Since $G(z,s)$ is symmetric around $s= \varphi_{1}$, it suffices to prove above estimate for $s \geq  \varphi_{1}$. Firstly, we consider the difference $G(z,s+\delta) - G(z,s)$ for $s\geq - \frac{b}{2a}$ and $\delta>0$. Note $\p_{s}  [G(z,s+\delta) - G(z,s) ] = 2a\delta>0$. Therefore, for any $z$, $\min_{s\geq - \frac{b}{2a} }  [G(z,s+\delta)- G(z,s) ]= [G(z, - \frac{b}{2a}+  \delta)- G(z,- \frac{b}{2a} ) ]\geq \min a \times  \delta^{2}$. Secondly, we consider $G(z,s)-G(z,s-\delta)$ for $s\geq - \frac{b}{2a}+\delta$ and $\delta>0$. Since $\p_{s}  [G(z,s+\delta) - G(z,s) ] = 2a\delta>0$, $\min_{s\geq - \frac{b}{2a}+\delta}  [ G(z,s)-G(z,s-\delta) ]= [G(z, - \frac{b}{2a}+  \delta)- G(z,- \frac{b}{2a} ) ]\geq   \min a \times \delta^{2}$. These prove the claim.
					
					Finally we consider $\varphi_{2}$ and $\varphi_{3}$. We split the cases into two with small number $\delta$: $\delta < {\sqrt{b^{2}-4ac} \over 2a}$ and $\delta \geq {\sqrt{b^{2}-4ac} \over 2a}$. \\
					\textit{Case 1}. If $\delta < {\sqrt{b^{2}-4ac} \over 2a} = {\varphi_{2}-\varphi_{3}\over 2}$,  
					\begin{equation*}
						\{ s : \ \min_{i=2,3}|s-\varphi_{i}| > \delta \} = \{ s < \varphi_{3} - \delta \} \cup \{ \varphi_{3} + \delta < s < \varphi_{2} - \delta \} \cup \{ \varphi_{2} + \delta < s \} ,	
					\end{equation*}
					where $\{ \varphi_{3} + \delta < s < \varphi_{2} - \delta \}$ is not empty set. For $\{ s > \varphi_{2} + \delta \}$, 
					\begin{equation*}
						\begin{split}
							|G(z,s)| &= \int_{\varphi_{2}}^{s} \p_{s} G(z,t) \dd t = \int_{\varphi_{2}}^{s} (2at + b) \dd t  \\
							&= \int_{\varphi_{2}}^{s} 2a(t - \frac{b}{2a}) \dd t = \int_{\varphi_{2}}^{s} 2a(t - \varphi_{1}) \dd t ,\quad t-\varphi_{1} := r,  \\
							&= \int_{\varphi_{2}-\varphi_{1}}^{s-\varphi_{1}} 2ar \dd t \geq (\min a) (s-\varphi_{2})  (s-\varphi_{1} + \varphi_{2} - \varphi_{1})  \\
							&\geq (\min a) \delta \big(s-\varphi_{2} + (\varphi_{2} - \varphi_{1}) \big) \geq  (\min a) \delta \big(s-\varphi_{2}\big)	\\
							&\geq (\min a) \delta^{2}.
						\end{split}
					\end{equation*}	
					By symmetry, we get same estimate for $\{ s < \varphi_{3} - \delta \}$ case. 
					
					On the other hand, for $\{ \varphi_{3} + \delta < s < \varphi_{2} - \delta \}$, we suffices to consider \\
					$\{ \varphi_{1} \leq s < \varphi_{2} - \delta \}$ because $\{ \varphi_{3} + \delta < s \leq \varphi_{1} \}$ case is same by symmetry. We have a lower bound as
					\begin{equation*}
						\begin{split}
							|G(z,s)| &= \int_{s}^{\varphi_{2}} \p_{s} G(z,t) \dd t = \int_{s}^{\varphi_{2}} 2a(t - \varphi_{1}) \dd t ,\quad t-\varphi_{1} := r,  \\
							&= \int_{s-\varphi_{1}}^{\varphi_{2}-\varphi_{1}} 2ar \dd t \geq (\min a) (\varphi_{2}-s)  (\varphi_{2} - \varphi_{1} + s-\varphi_{1})  \\
							&\geq (\min a) \delta \big( \delta + (s - \varphi_{1}) \big) \\
							&\geq (\min a) \delta^{2}.
						\end{split}
					\end{equation*}	
					
					\noindent \textit{Case 2}. If $\delta \geq {\sqrt{b^{2}-4ac} \over 2a} = {\varphi_{2}-\varphi_{3}\over 2}$, \\
					\begin{equation*}
						\{ s : \ \min_{i=2,3}|s-\varphi_{i}| > \delta \} = \{ s < \varphi_{3} - \delta \} \cup 	\{ s > \varphi_{2} + \delta \} .	
					\end{equation*}
					Note that $\{ \varphi_{3} + \delta < s < \varphi_{2} - \delta \}$ is empty set becasue, if $s - \varphi_{3} = |s - \varphi_{3}| > \delta$,
					\[
					\varphi_{2} - s = |s - \varphi_{2}| = (\varphi_{2} - \varphi_{3}) + (\varphi_{3} - s) < 2\delta + (-\delta) = \delta.  
					\]
					For $\{ s < \varphi_{3} - \delta \}$ and $\{ s > \varphi_{2} + \delta \}$, we already checked that $|G(z,s)|\gtrsim \min |a| \times \delta^{2}$ holds in $(1)$.  
					
					Finally we conclude that 
					$$
					|G(z,s)| = |G(z,s) - G(z, \varphi_{i})|
					\gtrsim \min a  \times \delta^{2} \ \  \text{for } \min_{i=2,3}|s-\varphi_{i}| > \delta,
					$$  
					when $-1 < s < 2$.
					
					\vspace{4pt}
					
					Now we consider $(ii)$. Clearly $\varphi_{4}$ is $C^{0,\gamma}$ for this case. And
					\[
					|G(z,s)| \geq \min \{| b(z)( \frac{-c(z)}{b(z)} + \delta) + c(z)|,| b(z)( \frac{-c(z)}{b(z)}  \delta) + c(z)|\} \geq \min |b| \times \delta. 
					\]
					
					Now we consider $(iii)$. First, if $|b| < \frac{\min |c|}{2}$ then $|\varphi_{5}(z)|\geq \frac{ |c(z)|}{ \min|c|/2 } \geq 2$. Therefore,
					\[
					|G(z,s)| \geq \min \{ |G(z,1)|, |G(z,-1)|\} \geq |c(z)| - |b(z)|
					\geq   \frac{ \min|c|}{2}.
					\]
					Consider the case of $|b| > \frac{\min |c|}{4}$. If $|s- \varphi_{5}(s)|> \delta$ then 
					\begin{eqnarray*}
						|G(z,s)| &\geq& \min \big\{ |  b(z)( \frac{-c(z)}{b(z)} + \delta)+ c(z)  | ,  |  b(z)( \frac{-c(z)}{b(z)} - \delta)+ c(z)  |\big\}\\
						&=& \min |b| \times \delta \geq \frac{\min|c|}{2} \times \delta.
					\end{eqnarray*} \end{proof}

					\begin{lemma} \label{lemma rank 2}
						Assume $\Omega$ is $C^{3}$ (\ref{eta}) and convex (\ref{convexity_eta}), and $\Phi$ is $C^{2,\gamma}_{t,x}$ for some $0<\gamma<1$. We also assume that $\| \Phi \|_{C^{2}_{x}} \ll \delta_{1}$. Let $t^{0}  \geq 0$, $x^{0} \in \bar{\O}$, $v^{0} \in\R^{3}$, and 
						\begin{equation}\begin{split}\label{con_v0}
						\frac{1}{N} \leq |v^{0}| \leq N,  \ 
						\frac{1}{N} \leq |v^{0}_{3}|, \ \frac{1}{N} \leq| n(x^{1}) \cdot e_{3}|, \ \text{and} \  {| n(x^{1}) \cdot v^{1} |} > \delta_{2}>0
						,\end{split}
						\end{equation} 
						where $(x^{1},v^{1}) =\big(x^{1}(t^{0},x^{0},v^{0}),v^{1}(t^{0},x^{0},v^{0})\big)$.

						Fix $k\in \mathbb{N}$ with $ t^{k}\geq t-1$. Then there exists $\e>0$ and finitely many $C^{0,\gamma}$-functions $\psi^{k}_{i}  : B_{\varepsilon}(t,x,v)\rightarrow \R$ with $\|\psi^{k}_{i}  \|_{C^{0,\gamma}_{t,x}}\lesssim_{\delta_{1}, \delta_{2}, \O, N }
						1$ and there exists a constant $\epsilon_{\delta_{1}, \delta_{2}, \O, N }
						>0$, 
						\begin{equation}\label{det_nonzero}
						\begin{split}
						& \text{if } \  \min_{i}|s- \psi_{i}^{k}(t,x,v)|>\delta_{*}\\
						&
						\ \ \ \ \  \text{and} \ (s;t,x,v) \in [ \max\{t-1, t^{k+1}\}, \min \{t- \frac{1}{N}, t^{k}\}] \times  B_{\e}(t^{0},x^{0},v^{0}) , \\
						& \text{then} \  \big|{\p_{ |v|} X} (s;t,x, v ) \times {\p_{\hat{v}_{1}} X}(s;t,x, v ) \big| >   \epsilon_{\delta_{1}, \delta_{2}, \O, N, \delta_{*} }
						.\end{split}\end{equation}
						Here $\hat{v}_{1}= v_{1}/|v|, \ \hat{v}_{2}= v_{2}/|v|.$ 
					\end{lemma}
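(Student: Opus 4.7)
The plan is to combine a geometric decomposition in the specular basis $\{\mathbf{e}^{k}_{0}, \mathbf{e}^{k}_{\perp,1}, \mathbf{e}^{k}_{\perp,2}\}$ with the algebraic structure of the specular matrix $\mathcal{R}^{k, p^{k}, Y}$ (with $Y$ taken to be the identity in tangential coordinates) and a polynomial non-vanishing argument via Lemma \ref{zero_poly}. First I would invoke Lemma \ref{decom_lemma}: by (\ref{X_|v|}) we have $\partial_{|v|} X = -(t-s)\mathbf{e}^{k}_{0} + O_{\delta_{2},N}(\|\Phi\|_{C^{2}})$, so this derivative is essentially parallel to $\mathbf{e}^{k}_{0}$ with magnitude $\gtrsim 1/N$ on the allowed range of $s$. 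Consequently the magnitude of the cross product $\partial_{|v|} X \times \partial_{\hat{v}_{1}} X$ is approximately $(t-s)$ times the component of $\partial_{\hat{v}_{1}} X$ perpendicular to $\mathbf{e}^{k}_{0}$.

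Next I would exploit the crucial geometric feature of (\ref{p_X}): the most delicate term, $-\partial_{\hat{v}_{1}} t^{k} |v^{k}| \mathbf{e}^{k}_{0}$, which depends on all earlier cycles $(x^{\ell}, v^{\ell})$, is purely in the $\mathbf{e}^{k}_{0}$ direction and therefore drops out of $(\partial_{\hat{v}_{1}} X)_{\perp}$. Projecting the remaining terms of (\ref{p_X}) onto $\mathbf{e}^{k}_{\perp,r}$ for $r=1,2$ and matching against the definitions of $\mathcal{S}_{1}, \mathcal{S}_{2}, \mathcal{S}_{3}$ in (\ref{specular_transition_matrix}), then folding them through the chain-rule identity $\mathcal{R} = \mathcal{S} \cdot \frac{\partial(\mathbf{x}^{k}_{p^{k}}, \hat{\mathbf{v}}^{k}_{p^{k}})}{\partial(y_{1}, y_{2}, \hat{v}_{1}, \hat{v}_{2})}$, I would arrive at
\[
\big[(\partial_{\hat{v}_{1}} X)_{\perp}\big]_{r} \;=\; \mathcal{R}^{k, p^{k}, Y}_{r, 3} \;-\; (t^{k} - s)|v^{k}| \, \mathcal{R}^{k, p^{k}, Y}_{r+2, 3} \;+\; O_{\delta_{1}, \delta_{2}, \Omega, N}(\|\Phi\|_{C^{2}}),
\]
for $r=1,2$. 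Setting $u = t^{k} - s$, the squared magnitude $|(\partial_{\hat{v}_{1}} X)_{\perp}|^{2}$ thereby becomes a polynomial $Q(u)$ of degree at most two whose coefficients are $C^{0,\gamma}$ in $(t,x,v)$, this regularity being inherited from $\Phi \in C^{2,\gamma}$ through Lemma \ref{Jac_billiard} and the chain rule along the specular cycles.

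Finally I would apply Lemma \ref{nonzero_sub}(i) to guarantee, at the base point $(t^{0}, x^{0}, v^{0})$ and by continuity on a small neighborhood $B_{\epsilon}$, that $|\mathcal{R}^{k, p^{k}, Y}_{i, 3}| > \varrho$ for some $i \in \{1,2,3,4\}$. If $i \in \{3, 4\}$ then the leading coefficient of $Q$ is bounded below and Lemma \ref{zero_poly}(i) produces finitely many $C^{0,\gamma}$ functions $\varphi_{i}(t,x,v)$ outside of whose $\delta_{*}$-neighborhoods $Q(u) \gtrsim \delta_{*}^{2}$; if instead $i \in \{1, 2\}$ then the constant term of $Q$ is bounded below, and either the leading coefficient is also bounded below (same argument) or it is small and $Q$ is uniformly bounded below on the bounded range of $u$ via Cauchy--Schwarz. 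Translating back through $\psi^{k}_{i}(t,x,v) := t^{k}(t,x,v) - \varphi_{i}(t,x,v)$ yields the required $C^{0,\gamma}$ functions, and absorbing the $O(\|\Phi\|_{C^{2}})$ error into the main term using $\|\Phi\|_{C^{2}} \ll \delta_{1}$ completes the proof.

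The main obstacle I anticipate is the clean algebraic identification of the perpendicular components of $\partial_{\hat{v}_{1}} X$ with the specific combination $\mathcal{R}_{r,3} - (t^{k}-s)|v^{k}| \mathcal{R}_{r+2,3}$. This requires careful bookkeeping in projecting each of the terms in (\ref{p_X}) onto $\mathbf{e}^{k}_{\perp,r}$, in particular recognizing $\sum_{\ell} \partial_{j}[\partial_{\ell} \eta_{p^{k}}/\sqrt{g_{p^{k},\ell\ell}}] \hat{\mathbf{v}}^{k}_{p^{k},\ell}$ as the $\mathcal{S}_{2}$-contribution, and uniformly controlling the error terms across all cycles via Lemma \ref{uniform number of bounce} and Lemma \ref{velocity_lemma} so that the $O(\|\Phi\|_{C^{2}})$ remainders can be absorbed and the $C^{0,\gamma}$-regularity of the $\psi^{k}_{i}$ is preserved.
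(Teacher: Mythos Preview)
Your approach is essentially the paper's, with one algebraic variation and one gap worth noting.

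\textbf{The variation.} After obtaining (as you correctly do) that
\[
\big[(\partial_{\hat v_1}X)_\perp\big]_r=\mathcal{R}^{k,p^k,Y}_{r,3}-(t^k-s)|v^k|\,\mathcal{R}^{k,p^k,Y}_{r+2,3}+O(\|\Phi\|_{C^2}),
\]
the paper does \emph{not} form the quadratic $Q(u)=\sum_r(\cdot)^2$. Instead it takes a single component and treats it as an affine function of $\tilde s=t-s$: setting $a\equiv 0$, $b=|v^k|\mathcal{R}_{i+2,3}$, $c=-\mathcal{R}_{i,3}+(t^k-t)|v^k|\mathcal{R}_{i+2,3}$, it then applies part~(ii) of Lemma~\ref{zero_poly} when $i\in\{3,4\}$ (so $|b|$ is bounded below) and part~(iii) when $i\in\{1,2\}$ with $|\mathcal{R}_{i+2,3}|$ small (so $|c|$ is bounded below). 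This yields a single $\psi$ in each case, which is marginally cleaner than invoking part~(i) on your quadratic. Your quadratic route is still valid: since $Q\ge 0$ the discriminant is nonpositive, so in the $i\in\{3,4\}$ case only $\varphi_1=-B/(2A)$ survives, and in the $i\in\{1,2\}$ small-leading-coefficient subcase the direct lower bound you sketch (triangle inequality on the $i$th summand, choosing the threshold so that $TN^2\sqrt{\varrho'}<\varrho/2$) works without any $\psi$.

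\textbf{The gap.} Your parenthetical ``with $Y$ taken to be the identity in tangential coordinates'' does not suffice to invoke Lemma~\ref{nonzero_sub}. That lemma has a non-degeneracy hypothesis~(\ref{crossY_n}) on $Y$, and although the column of $\mathcal{R}$ you need (index $3$, the $\hat v_1$-column) does not involve $Y$ directly, the lower bound (\ref{nonzero_sub1}) is extracted from the full-rank estimate (\ref{R_lower}) on all of $\mathcal{R}$, which \emph{does} require a good choice of $Y$. The paper's Step~1 handles this by defining $Y_x(y_1,y_2)=x+y_1\mathbf{e}^{0}_{\perp,1}(t^0,x^0,v^0)+y_2\mathbf{e}^{0}_{\perp,2}(t^0,x^0,v^0)$ and verifying $|\partial_{y_1}Y\times\partial_{y_2}Y\cdot R_{x^1}v^1|=|\hat v^0\cdot\lim_{s\downarrow t^1}V(s)|\gtrsim 1$ for small potential. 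You should make this explicit.
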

					It is important that this lower bound $\epsilon_{\delta_{1}, \delta_{2}, \O, N }$	does not depend on time $ t $.

					\begin{proof}
						
						\textit{Step 1. }For $(t^{0},x^{0},v^{0})$ in the assumption we choose $(t,x,v)$ with \\ $|(t,x,v)- (t^{0},x^{0},v^{0})| \ll 1$. 
						
						For each $x$ with $|x-x^{0}|\ll 1$, we set a $C^{1}$-map $Y_{x}: (y_{1}, y_{2}) \mapsto Y(y_{1}, y_{2}) \in \R^{3}$ such that 
						\begin{equation}\label{Y_x}
						Y_{x}(y_{1}, y_{2} ): = x  + y_{1} \mathbf{e}^{0}_{\perp,1} (t^{0},x^{0},v^{0})
						+   y_{2} \mathbf{e}^{0}_{\perp,2} (t^{0},x^{0},v^{0}).
						\end{equation}
						We claim that 
						\begin{equation}\label{lower_Y}
						\Big|\Big(
						\frac{\p Y_{x}(y_{1},y_{2})}{\p y_{1}} \times  \frac{ \p Y_{x} (y_{1},y_{2})}{\p y_{2}}\Big) \cdot R_{x^{1} (t, Y_{x}(y_{1}, y_{2}), v)} v^{1} (t, Y_{x}(y_{1}, y_{2}), v) \Big|\gtrsim_{\O, N, \delta_{1}, \delta_{2}}1.
						\end{equation}
						Using the definition of the \textit{specular basis} (\ref{orthonormal_basis}), we equate LHS of (\ref{lower_Y}) to   \begin{equation}\begin{split}\notag
						&  \big| \big(\mathbf{e}^{0}_{\perp,1} (t^{0},x^{0},v^{0}) \times  \mathbf{e}^{0}_{\perp,2} (t^{0},x^{0},v^{0})\big) \cdot R_{x^{1} (t^{0}, x^{0}, v^{0})} v^{1} (t^{0}, x^{0}, v^{0}) \big|	\\
						&=  \Big| \frac{v^{0}(t^{0},x^{0},v^{0})}{|v^{0}(t^{0},x^{0},v^{0})|}
						\cdot \lim_{s\downarrow t^{1}} V(s;t^{0},x^{0}, v^{0}) \Big| .
						\end{split}\end{equation} 
						For a small potential, we conclude (\ref{lower_Y}).

						\vspace{4pt}

						\noindent\textit{Step 2. }Fix $k$ with $|t^{k}(t,x,v) -t| \leq 1$. Then we fix the orthonormal basis 	$\big\{ \mathbf{e}^{k}_{0}  ,  \mathbf{e}_{\perp,1}^{k}  ,  \mathbf{e}_{\perp,2}^{k}  \big\}$ of (\ref{orthonormal_basis}) with $x^{k} = x^{k}(t,x,v)$, $v^{k}=v^{k}(t,x,v)$. Note that this orthonormal basis $\big\{ \mathbf{e}^{k}_{0}  ,  \mathbf{e}_{\perp,1}^{k}  ,  \mathbf{e}_{\perp,2}^{k}  \big\}$ depends on $(t,x,v)$.


						For $t^{k+1} < s < t^{k}$, recall the forms of $\frac{\p X(s)}{ \p |v|}$ and $\frac{\p X(s)}{\p \hat{v}_{j}}$ in (\ref{X_|v|}) and (\ref{p_X}) of Lemma \ref{decom_lemma}, where 
						\[X(s) = X(s; t^{k}, x^{k}, v^{k}).
						\]
						Recall the \textit{specular matrix} (\ref{specular_matrix}) with $Y=Y_{x}$ in (\ref{Y_x}). Using the \textit{specular basis} (\ref{orthonormal_basis}) and the \textit{specular matrix} (\ref{specular_matrix}), we rewrite (\ref{X_|v|}) and (\ref{p_X}) as 
						\begin{eqnarray}
						&
						&
						\left[\begin{array}{ccc}
						\frac{\p X (s)}{\p |v|}\cdot\mathbf{e}_0^{k} &  \frac{\p X(s)}{\p \hat{v}_{1}}\cdot\mathbf{e}_0^{k} &  \frac{\p X(s)}{\p \hat{v}_{2}}\cdot\mathbf{e}_0^{k}   \\
						\frac{\p X(s)}{\p |v|}\cdot\mathbf{e}_{\perp,1}^{k} &  \frac{\p X(s)}{\p \hat{v}_{1}}\cdot\mathbf{e}_{\perp,1}^{k} &  \frac{\p X(s)}{\p \hat{v}_{2}}\cdot\mathbf{e}_{\perp,1} ^{k}  \\ 
						\frac{\p X(s)}{\p |v|}\cdot\mathbf{e}_{\perp,2} ^{k}&  \frac{\p X(s)}{\p \hat{v}_{1}}\cdot\mathbf{e}_{\perp,2} ^{k}&  \frac{\p X(s)}{\p \hat{v}_{2}}\cdot\mathbf{e}_{\perp,2}  ^{k}	
						\end{array}\right]\notag \\
						&=& 
						\left[\begin{array}{c|cc}
						-(t-s) & 
						\substack{	
							-  |\mathbf{v}^{k}_{{p}^{k}}|    \nabla_{\hat{v}_{1}, \hat{v}_{2}}   t^{k}+
							\nabla_{\hat{v}_{1},\hat{v}_{2}} \mathbf{x}^{k}_{{p}^{k}, \ell} \p_{\ell} \eta_{{p}^{k}}\big|_{x^{k}} \cdot \mathbf{e}_{0}^{k}
							\\ - (t^{k}-s)| \mathbf{v}^{k}_{{p}^{k}}  | 
							\sum_{j=1}^{2}
							\bigg( \sum_{\ell=1}^3 \p_{j}  
							\Big(  \frac{\p_\ell \eta_{{p}^{k}}}{\sqrt{g_{{p}^{k},\ell\ell}}} \Big)\Big|_{x^{k}} \hat{\mathbf{v}}^k_{{p}^{k},\ell} \bigg)
							\nabla_{\hat{v}_{1}, \hat{v}_{2}} \mathbf{x}^k_{{p}^{k},j} 
						}
						\\ \hline
						\mathbf{0}_{2,1} &     
						\left[\begin{array}{cc}
						\mathcal{R}_{1,3}^{k,{p}^{k},Y} & 	\mathcal{R}_{1,4}^{k,{p}^{k},Y} \\
						\mathcal{R}_{2,3}^{k,{p}^{k},Y}  & 	\mathcal{R}_{2,4}^{k,{p}^{k},Y} 
						\end{array} \right]
						- (t^{k}-s) |\mathbf{v}^{k}_{{p}^{k}}|
						\left[\begin{array}{cc}
						\mathcal{R}_{3,3} ^{k,{p}^{k},Y} & 	\mathcal{R}_{3,4}^{k,{p}^{k},Y} \\
						\mathcal{R}_{4,3}^{k,{p}^{k},Y}  & 	\mathcal{R}_{4,4}^{k,{p}^{k},Y} 
						\end{array} \right]
						\end{array}\right]
						\label{sub_R}
						\\
						&&
						\quad + O_{\Omega, N, \delta_{2} } (\| \Phi \|_{C^{2}}).\notag
						\end{eqnarray} 
						From (\ref{lower_bound_v_3}) and (\ref{Jac_billiard}), all the entries of above matrix is bound. By the direct computation, 
						\begin{equation}\begin{split}\label{comp_X_times_X}
						& \p_{|v| } X(s) \times \p_{\hat{v}_{1}} X  (s)  \\
						=&  
						- (t-s)    
						\big\{ \mathcal{R}^{k,{p}^{k},Y}_{1,3} - (t^{k}-s) |\mathbf{v}^{k}_{{p}^{k}}| \mathcal{R}^{k,{p}^{k},Y}_{3,3} 
						\big\} \mathbf{e}_{\perp,2} ^{k}\\
						&
						+  (t-s)   
						\big\{ \mathcal{R}^{k,{p}^{k},Y}_{2,3} - (t^{k}-s) |\mathbf{v}^{k}_{{p}^{k}}| \mathcal{R}^{k,{p}^{k},Y}_{4,3} 
						\big\} \mathbf{e}_{\perp,1}^{k}+ O_{\Omega, N, \delta_{2}} (\| \Phi \|_{C^{2}}).\end{split}
						\end{equation}
						Here $\mathcal{R}^{k,p^{k},Y}_{i,j}, t^{k}, \mathbf{v}^{k}_{p^{k}},$ and $\mathbf{e}^{k}_{\perp, i}$ depend on $(t,x,v)$ but not $s$.

						\vspace{4pt}
						
						\noindent\textit{Step 3. }Recall Lemma \ref{nonzero_sub}. From (\ref{con_v0}) and (\ref{lower_Y}) we can choose non-zero contants $\delta_{1}, \delta_{2},$ and $\delta_{3}$ for a large $N$. Applying Lemma \ref{nonzero_sub} and (\ref{nonzero_sub1}), we conclude that, for some $i \in \{1,2,3,4\}$, 
						\begin{equation}\label{lower_R0}
						|\mathcal{R}^{k,p^{k}, Y}_{i,3}(t^{0},x^{0},v^{0})|> \varrho_{\Omega, N, \delta_{1}, \delta_{2}  }>0.
						\end{equation}
						
						Now we claim that $\mathcal{R}^{k,p^{k}, Y}_{i,j} (t,x,v) \in C^{0,\gamma}_{t,x,v}$ if $|(t,x,v) - (t^{0}, x^{0}, v^{0})| \ll1$. Note that since the domain is convex (\ref{convexity_eta}) and $|n(x^{1}(t^{0}, x^{0}, v^{0})) \cdot v^{1}(t^{0}, x^{0}, v^{0})|> \delta_{2}$ in (\ref{con_v0}), utilizing Lemma \ref{velocity_lemma}, we deduce that if $|(t,x,v) - (t^{0}, x^{0}, v^{0})| \ll1$ then \\ $|n(x^{l}) \cdot v^{l}|\gtrsim \delta_{2}$ for all $1\leq l \leq k$. By Lemma \ref{Jac_billiard}, $(t^{l},x^{l},v^{l})$ is $C^{0,\gamma}$ for all $1\leq l \leq k$. Hence, from (\ref{specular_transition_matrix}) and (\ref{specular_matrix}), we conclude our claim.
						
						Finally we choose a small constant $\e>0$ such that, for some $i\in \{1,2,3,4\}$ satisfying (\ref{lower_R0}),
						\begin{equation}\label{lower_R}
						|\mathcal{R}^{k,p^{k}, Y}_{i,3}(t ,x ,v )|> \frac{\varrho_{\Omega, N, \delta_{1}, \delta_{2}  }}{2} \ \  \ \text{for } |(t,x,v) - (t^{0}, x^{0}, v^{0})| < \e.
						\end{equation} 
						
						\vspace{4pt}
						
						\noindent\textit{Step 4. }  With $N\gg1$, from (\ref{lower_R}), we divide the cases into the follows
						\begin{equation} \label{case1_R}
						|\mathcal{R}^{k,p^{k}, Y}_{i,3}|> \frac{\varrho_{\Omega, N, \delta_{1}, \delta_{2} } }{2}
						\ \ \text{for some } \ i \in \{1,2\},
						\end{equation}
						and 
						\begin{equation}\notag
						|\mathcal{R}^{k,{p}^{k},Y}_{j,3}| \geq  \frac{\varrho_{\Omega, N, \delta _{1}, \delta_{2}   }  }{2} \ \ \text{for some } \ j \in \{3,4\}.
						\end{equation}
						
						\noindent We split the first case (\ref{case1_R}) further into two cases as  
						\begin{equation}\label{case1_R-1}
						\min_{i=1,2} |\mathcal{R}^{k,p^{k}, Y}_{i,3}|> \frac{\varrho_{\Omega, N, \delta_{1}, \delta_{2} } }{2}   \ \   \text{and} \ \ \max_{i=1,2} |\mathcal{R}^{k,p^{k}, Y}_{i+2,3}| < \frac{\varrho_{\Omega, N, \delta_{1}, \delta_{2} } }{4N},
						\end{equation}
						and 	
						\begin{equation}\notag\label{case1_R-2}
						\min_{i=1,2} |\mathcal{R}^{k,p^{k}, Y}_{i,3}|> \frac{\varrho_{\Omega, N, \delta _{1}, \delta_{2}} }{2}   \ \   \text{and} \ \ \max_{i=1,2} |\mathcal{R}^{k,p^{k}, Y}_{i+2,3}| \geq \frac{\varrho_{\Omega, N, \delta_{1}, \delta_{2} } }{4N}.
						\end{equation}
						
						\noindent Set the other case\begin{equation}\label{case2_R}
						|\mathcal{R}^{k,{p}^{k},Y}_{j,3}| \geq  \frac{\varrho_{\Omega, N, \delta_{1}, \delta_{2}    }  }{2} \ \ \text{for some } \ j \in \{3,4\}.
						\end{equation}
						Then clearly (\ref{case1_R-1}) and (\ref{case2_R}) cover all the cases.

						
						\vspace{4pt}
						
						\noindent\textit{Step 5. } We consider the case of (\ref{case1_R-1}). 
						Then, from (\ref{comp_X_times_X}), 
						\begin{equation}\label{det_expansion}
						\begin{split}
						&|  \p_{|v| } X (s)\times \p_{\hat{v}_{1}} X(s)  |\\
						&\geq  \big| |v^{k}| \mathcal{R}^{k,p^{k}, Y}_{i+2,3} (t^{k}-s)   - \mathcal{R}^{k,p^{k}, Y}_{i ,3}    \big|(t-s) 
						+O_{\O,N,\delta} (\|  \Phi \|_{C^{2}})\\
						&= \big|
						\underbrace{| {v}^{k} | \mathcal{R}_{i+2,3}^{k,p^{k}, Y} (t-s)  + \big[ - \mathcal{R} ^{k,p^{k}, Y}_{i ,3} + (t^{k}-t) |v^{k}| \mathcal{R} ^{k,p^{k}, Y}_{i+2,3} \big] } \big|(t-s)+ O_{\O,\delta_{2},N} (\|  \Phi \|_{C^{2}}).
						\end{split}
						\end{equation}
						
						We define 
						\begin{equation}\label{tilde_s}
						\tilde{s} = t-s,
						\end{equation}
						and set 
						\begin{equation}\label{abc_R}
						a\equiv 0 , \quad b:= |{v}^{k} | \mathcal{R} ^{k,p^{k}, Y}_{i+2,3} , \quad\text{and}\quad  c:=- 
						\mathcal{R} ^{k,p^{k}, Y}_{i ,3} + (t^{k}-t) | {v}^{k} | \mathcal{R}_{i+2,3} ^{k,p^{k}, Y}.
						\end{equation}
						Note that $\mathcal{R}^{k,p^{k}, Y}_{i,3},$ $\mathcal{R}^{k,p^{k}, Y}_{i+2,3}$, $|v^{k}|$, and $t^{k}$ only depend on $(t,x,v)$.
						
						Hence we regard the underbraced term of (\ref{det_expansion}) as an affine function of $\tilde{s}$
						\begin{equation}\label{affine_tilde_s}
						b(t,x,v) \tilde{s}+ c(t,x,v).
						\end{equation}
						Note that from (\ref{case1_R-1})
						\begin{equation}\notag
						|c(t,x,v)| \geq  \frac{\varrho_{\Omega, N, \delta  _{1}, \delta_{2}}}{2}  - N \frac{ \varrho_{\Omega, N, \delta  _{1}, \delta_{2}
							}
						}{4N}
						\geq \frac{ \varrho_{\Omega, N, \delta _{1}, \delta_{2}
							}}{4}.
							\end{equation}
							Now we apply $(iii)$ of Lemma \ref{zero_poly}. With $\varphi_{5} (t,x,v)$ in (\ref{psi5}), if $|\tilde{s} - \varphi_{5} (t,x,v)|> \delta_{*}$ then $|b(t,x,v) \tilde{s}+ c(t,x,v)| \geq \frac{\varrho_{\Omega, N, \delta 
								}}{4} \times \delta_{*}$. We set
								\begin{equation}\label{psi5_phi5}
								\psi_{5} (t,x,v) = t- \varphi_{5} (t,x,v) .
								\end{equation}
								From (\ref{tilde_s}), 
								\begin{equation}\label{lower_bound_1}
								\text{if}  \ |s - \psi_{5} (t,x,v)|> \delta_{*}, \ \text{then} \  |b(t,x,v)  (t-s)+ c(t,x,v)| \geq \frac{\varrho_{\Omega, N, \delta  _{1}, \delta_{2}
									}}{4} \times \delta_{*}.
									\end{equation}
									
									%
									%
									%
									%
									

									\vspace{4pt}
									
									Now we consider the case of (\ref{case2_R}). From (\ref{comp_X_times_X}),
									\begin{equation}\label{det_expansion2}
									\begin{split}
									|\p_{|v|} X(s) \times \p_{\hat{v}_{1}}X(s)|
									&\geq \big| 
									| {v}^{k} | \mathcal{R}^{k,{p}^{k},Y}_{j,3} (t-s) 	\\
									&\quad + \big[ -  \mathcal{R}^{k,{p}^{k},Y}_{j-2,3}
									+ (t^{k}-t)| {v}^{k} | \mathcal{R}^{k,{p}^{k},Y}_{j,3} \big]
									\big| (t-s) + O_{\O, N ,\delta} (\| \Phi \|_{C^{2}}).
									\end{split}
									\end{equation} 
									We set $\tilde{s}$ as (\ref{tilde_s}) and
									\begin{equation}\label{abc_R2} 
									a\equiv 0 , \quad b:= |{v}^{k} | \mathcal{R} ^{k,p^{k}, Y}_{j,3} , \quad\text{and}\quad  c:=- 
									\mathcal{R} ^{k,p^{k}, Y}_{j-2 ,3} + (t^{k}-t) | {v}^{k} | \mathcal{R}_{j,3} ^{k,p^{k}, Y}.
									\end{equation}

									From (\ref{case2_R}) and (\ref{abc_R2})
									\begin{equation}\notag
									|b(t,x,v)| \geq   \frac{\varrho_{\Omega, N, \delta _{1}, \delta_{2} }}{8N^{2}}.
									\end{equation}
									We apply $(ii)$ of Lemma \ref{zero_poly} to this case: With $\varphi_{4}(t,x,v)$ in (\ref{psi4}), we set
									\begin{equation}\label{psi4_phi4}
									\psi_{4} (t,x,v) = t- \varphi_{4} (t,x,v),
									\end{equation}	
									and 
									\begin{equation}\label{lower_bound_2}
									\text{if}  \ |s - \psi_{4} (t,x,v)|> \delta_{*}, \ \text{then} \  |b(t,x,v)  (t-s)+ c(t,x,v)| \gtrsim  \frac{\varrho_{\Omega, N, \delta _{1}, \delta_{2} }}{8N^{2}}
									\times
									\delta_{*}
									.
									\end{equation}

									\vspace{4pt}

									Finally, from (\ref{lower_bound_1}), (\ref{det_expansion}), (\ref{lower_bound_2}), and (\ref{det_expansion2}), we conclude the proof of Lemma \ref{lemma rank 2}.\end{proof}

								\begin{lemma} \label{lemma rank 3} 
									Assume $\Omega$ is $C^{3}$ (\ref{eta}) and convex (\ref{convexity_eta}), and $\Phi$ is $C^{2,\gamma}_{t,x}$ for some $0<\gamma<1$. Assume the conditions of the statement of Lemma \ref{nonzero_sub}. 
									
									Let a $C^{1}$-map $Y_{x}: (y_{1},y_{2}) \mapsto Y_{x}(y_{1},y_{2}) \in \bar{\O}$ with $Y_{x}(0, 0) = x$ and $\| Y\|_{C^{1}_{x,y_{1},y_{2}}}\lesssim1$. We assume that 
									\begin{equation}
									\Big|\Big(\frac{\p Y_{x^{0}}(0,0)}{\p y_{1}} \times \frac{\p Y_{x^{0}}(0,0)}{\p y_{2}}\Big) \cdot 
									R_{ x^{1}(t, x^{0} ,v^{0})  } v^{1}(t, x^{0} ,v^{0})  \Big| > \delta_{3}>0.
									\end{equation} 
									%
									%
									
									For $k\in \mathbb{N}$ with $ t^{k}\geq t-1$, there exists $\e>0$ and finitely many $C^{0,\gamma}$-functions $\psi^{k}_{i}  : B_{\varepsilon}(t,x,v)\rightarrow \R$ with $\|\psi^{k}_{i}  \|_{C^{0,\gamma}_{t,x}}\lesssim 1$, and there exists a constant $\epsilon_{\delta_{1}, \delta_{2},\delta_{3}, N, \O }
									>0$ and $\{\zeta_{1}, \zeta_{2}\} \subset \{  \hat{v} _{1},  \hat{v}_{2}, y_{1}, y_{2}\}$ such that 
									\begin{eqnarray} 
									&& \text{if } \  \min_{i}|s- \psi_{i}^{k}(t, 
									Y_{x}(y_{1}, y_{2})
									,v)|>\delta_{*}\nonumber\\
									&& \ \ \ 
									\  \text{and} \ (s;t,Y_{x}(y_{1}, y_{2}),v) \in [ \max\{t-1, t^{k+1}\}, \min \{t- \frac{1}{N}, t^{k}\}] \times  B_{\e}(t^{0},x^{0},v^{0}), \label{lower_2} \\
									&&\text{then}  \  \det \left(
									\frac{\p X (s; t, Y_{x}(y_{1}, y_{2}), |v|,\hat{v} _{1},  \hat{v}_{2}  )}{\p (|v |,\zeta_{1}, \zeta_{2} )}\right)> \epsilon_{\delta_{1}, \delta_{2},  \delta_{3},N,  \O, \delta_{*}}
									>0
									.\nonumber\end{eqnarray}
								\end{lemma}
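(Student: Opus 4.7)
The plan is to follow the geometric decomposition strategy of Lemma \ref{lemma rank 2} but now exploit the additional free parameters $\{y_1, y_2\}$ afforded by the triple iteration, so that the full rank 3 (up to the factor $-(t-s)$ coming from the $|v|$-derivative) can be extracted. First I would apply Lemma \ref{decom_lemma}, decompose $\p_{|v|} X(s)$ and $\p_{\zeta} X(s)$ for $\zeta\in\{\hat v_1,\hat v_2,y_1,y_2\}$ along the specular basis $\{\mathbf{e}_0^k,\mathbf{e}_{\perp,1}^k,\mathbf{e}_{\perp,2}^k\}$, and reuse the matrix representation in (\ref{sub_R}). This makes $\p_{|v|} X$ essentially $-(t-s)\mathbf{e}_0^k$ modulo $O_{\Omega,N,\delta_2}(\|\Phi\|_{C^2})$, so the determinant factors as
\[
\det \frac{\p X}{\p(|v|,\zeta_1,\zeta_2)} \;=\; -(t-s)\,\det \Big(\mathrm{Proj}_{\mathbf{e}_{\perp,1}^k,\mathbf{e}_{\perp,2}^k}\big[\p_{\zeta_1}X,\p_{\zeta_2}X\big]\Big) + O_{\Omega,N,\delta_2}(\|\Phi\|_{C^2}).
\]

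Second, by the specular-matrix representation (\ref{specular_matrix}), if $\zeta_1,\zeta_2$ correspond to columns $i,j\in\{1,2,3,4\}$ of $\mathcal{R}^{k,p^k,Y}$, then the inner $2\times 2$ block equals
\[
\begin{pmatrix}\mathcal{R}_{1,i} & \mathcal{R}_{1,j}\\ \mathcal{R}_{2,i}&\mathcal{R}_{2,j}\end{pmatrix}\;-\;(t^k-s)|\mathbf{v}^k_{p^k}|\begin{pmatrix}\mathcal{R}_{3,i}&\mathcal{R}_{3,j}\\ \mathcal{R}_{4,i}&\mathcal{R}_{4,j}\end{pmatrix}.
\]
Writing $\tilde s:=t-s$ and expanding the determinant yields a quadratic $G(z,\tilde s)=a(z)\tilde s^2+b(z)\tilde s+c(z)$ in $\tilde s$, with $z=(t,Y_x(y_1,y_2),v)$, whose leading coefficient is
\[
a(z)\;=\;|\mathbf{v}^k_{p^k}|^2\bigl[\mathcal{R}_{3,i}\mathcal{R}_{4,j}-\mathcal{R}_{3,j}\mathcal{R}_{4,i}\bigr].
\]
Here I invoke the non-degeneracy hypothesis on $\p Y_{x^0}/\p y_1\times\p Y_{x^0}/\p y_2$ and Lemma \ref{nonzero_sub}(ii) (applied to the map $Y_x$ satisfying (\ref{crossY_n}) for $\delta_3$) to select $(i,j)$ for which $|a(z^0)|\ge \varrho_{\Omega,N,\delta_1,\delta_2,\delta_3}\cdot N^{-2}>0$ at the base point $z^0=(t^0,x^0,v^0)$. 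This selection of columns $(i,j)$ specifies the two parameters $\{\zeta_1,\zeta_2\}\subset\{\hat v_1,\hat v_2,y_1,y_2\}$.

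Third, with $|a(z)|$ uniformly bounded below in a neighborhood of $z^0$, I apply Lemma \ref{zero_poly}(i) to produce three $C^{0,\gamma}$ functions $\varphi_1,\varphi_2,\varphi_3$ of $z$ such that $|\tilde s-\varphi_i(z)|>\delta_*$ for every $i$ forces $|G(z,\tilde s)|\gtrsim \min|a|\cdot\delta_*^2$. Setting $\psi_i^k(t,x,v):=t-\varphi_i(t,x,v)$, the exceptional set in the statement is realized, and these $\psi_i^k$ inherit the $C^{0,\gamma}$-regularity because the specular-cycle data $(t^\ell,x^\ell,v^\ell)$ are $C^{0,\gamma}$ in $(t,x,v)$ by Lemma \ref{Jac_billiard} together with the convexity (\ref{convexity_eta}) and non-grazing (\ref{no_grazing_lemma}) (propagated along all bounces by Lemma \ref{velocity_lemma}); this guarantees that the entries of $\mathcal{R}^{k,p^k,Y}$ are $C^{0,\gamma}$ in $z$, hence so are $a,b,c$. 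The continuity of $|a(z)|$ yields the desired uniform lower bound in some ball $B_\varepsilon(t^0,x^0,v^0)$.

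The main obstacle will be absorbing the $O(\|\Phi\|_{C^2})$ remainders generated throughout (\ref{sub_R}), (\ref{p_X}), and Lemma \ref{Jac_billiard} into the quantitative lower bound $\min|a|\cdot\delta_*^2$ coming from Lemma \ref{zero_poly}(i). Since the threshold $\varrho_{\Omega,N,\delta_1,\delta_2,\delta_3}$ of Lemma \ref{nonzero_sub}(ii) is fixed by the geometric hypotheses while the error is polynomial in $\|\Phi\|_{C^2}$, one must choose $\|\Phi\|_{C^2}\ll\delta_1$ small enough (depending also on $\delta_*,\delta_2,\delta_3,N$) to swallow every remainder; this is precisely the smallness encoded in the standing hypothesis $\|\Phi\|_{C^2_x}\ll\delta_1$. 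A secondary technical point is handling the range $s\in[\max\{t-1,t^{k+1}\},\min\{t-\tfrac1N,t^k\}]$ so that $\tilde s$ stays bounded and Lemma \ref{zero_poly}(i) applies in the regime $|\tilde s|\le 1$; the bound $|t-t^k|\le 1$ together with $|s|\ge t-1$ takes care of this after a harmless rescaling of $a,b,c$.
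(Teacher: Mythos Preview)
Your proposal is correct and follows essentially the same route as the paper: decompose $\partial X/\partial(|v|,\zeta_1,\zeta_2)$ in the specular basis to factor out $-(t-s)$, use Lemma \ref{nonzero_sub}(ii) to select columns $(i,j)$ making the leading coefficient $|\mathbf{v}^k_{p^k}|^2\det\bigl(\begin{smallmatrix}\mathcal{R}_{3,i}&\mathcal{R}_{3,j}\\\mathcal{R}_{4,i}&\mathcal{R}_{4,j}\end{smallmatrix}\bigr)$ bounded below, and then apply Lemma \ref{zero_poly}(i) to the resulting quadratic. The only cosmetic difference is that the paper expands in the variable $\tilde s=t^k-s$ (and accordingly sets $\psi_i=t^k-\varphi_i$) rather than your $\tilde s=t-s$; since $(t^k-s)=(t^k-t)+(t-s)$ with $t^k-t$ a $C^{0,\gamma}$ function of $(t,x,v)$, this shift leaves the leading coefficient unchanged and merely reshuffles $b,c$, so both choices are equivalent.
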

								\begin{proof} 
									\textit{Step 1.} Recall the \textit{specular basis} $\{ \mathbf{e}^{k}_{0},\mathbf{e}^{k}_{\perp,1},\mathbf{e}^{k}_{\perp,2} \}$ in (\ref{orthonormal_basis}) with \\
									$x^{k} = x^{k} (t, Y_{x}(y_{1},y_{2}), |v|, \hat{v}_{1}, \hat{v}_{2})$ and $v^{k} = v^{k} (t, Y_{x}(y_{1},y_{2}), |v|, \hat{v}_{1}, \hat{v}_{2})$,
									\begin{eqnarray*}
										&&\frac{\p X (s; t, Y_{x}(y_{1},y_{2}), |v|, \hat{v}_{1}, \hat{v}_{2})}{ \p (|v |, y_{1}, y_{2}, \hat{v}_{1} , \hat{v}_{2} )} \\
										&=& \left[\begin{array}{ccc}
											\mathbf{e}_{0}^{k} & \mathbf{e}_{\perp,1}^{k} & \mathbf{e}_{\perp,2}^{k}
										\end{array}\right]
										\underbrace{\left[\begin{array}{ccccc}
												\frac{\p X}{\p |v |} \cdot \mathbf{e}_{0}^{k} & \frac{\p X}{\p y_{1}} \cdot \mathbf{e}_{0}^{k} & \frac{\p X}{\p y_{2}} \cdot \mathbf{e}_{0}^{k}& \frac{\p X}{\p \hat{v} _{1}} \cdot \mathbf{e}_{0}^{k} & \frac{\p X}{\p \hat{v} _{2}} \cdot \mathbf{e}_{0}^{k} 
												\\
												\frac{\p X}{\p |v |} \cdot \mathbf{e}_{\perp,1}^{k} & \frac{\p X}{\p y_{1}} \cdot \mathbf{e}_ {\perp,1}^{k} & \frac{\p X}{\p y_{2}} \cdot \mathbf{e}_ {\perp,1}^{k}& \frac{\p X}{\p \hat{v}_{1}} \cdot \mathbf{e}_ {\perp,1} ^{k}& \frac{\p X}{\p \hat{v}_{2}} \cdot \mathbf{e}_ {\perp,1}^{k}
												\\
												\frac{\p X}{\p |v |} \cdot \mathbf{e}_{\perp,2}^{k} & \frac{\p X}{\p y_{1}} \cdot \mathbf{e}_ {\perp,2} ^{k}& \frac{\p X}{\p y_{2}} \cdot \mathbf{e}_ {\perp,2}^{k} & \frac{\p X}{\p \hat{v}_{1}} \cdot \mathbf{e}_ {\perp,2} ^{k}& \frac{\p X}{\p \hat{v}_{2}} \cdot \mathbf{e}_ {\perp,2}^{k}
											\end{array} \right]} .
									\end{eqnarray*}
									From (\ref{X_|v|}) and (\ref{p_X}), using the \textit{specular basis} (\ref{orthonormal_basis}) and the \textit{specular matrix} (\ref{specular_matrix}), we rewrite the underbraced term as
									\begin{eqnarray*}
										{  \left[\begin{array}{c|c}
												- (t-s) & \substack{	
													-  | {v}^{k} |    \nabla_{\hat{v} _{1}, \hat{v} _{2},y_{1}, y_{2}}   t^{k}+
													\nabla_{\hat{v} _{1}, \hat{v} _{2}, y_{1}, y_{2}}   \mathbf{x}^{k}_{ {p}^{k}, \ell} \p_{\ell} \eta_{ {p}^{k}} \cdot \mathbf{e}_{0}^{k}
													\\ - (t^{k}-s) | {v}^{k} |  
													\sum_{j=1}^{2}
													\bigg( \sum_{\ell=1}^3  \frac{\p}{\p {\mathbf{x}^{k}_{ {p}^{k},j}}}
													\Big[  \frac{\p_\ell \eta_{ {p}^{k}}}{\sqrt{g_{ {p}^{k},\ell\ell}}} \Big] \hat{\mathbf{v}}^k_{{p}^{k},\ell} \bigg)
													\nabla_{\hat{v} _{1}, \hat{v} _{2}, y_{1}, y_{2}} \mathbf{x}^k_{ {p}^{k},j} 
												}\\ \hline
												\substack{ 0 \\ 
													0 }
												& *_{2 \times 4} 
											\end{array} \right]} 
										+ O_{\delta,N} (\| \Phi \|_{C^{2}}),
									\end{eqnarray*}
									where the lower right $2 \times 4$-submatrix equals
									\begin{equation}\label{R_24}\begin{split}
									&\left[\begin{array}{cccc}
									\mathcal{R}_{1,1}^{k,{p}^{k},Y_{x}} & 	\mathcal{R}_{1,2}^{k,{p}^{k},Y_{x}}  & 	\mathcal{R}_{1,3}^{k,{p}^{k},Y_{x}} & 	\mathcal{R}_{1,4}^{k,{p}^{k},Y_{x}}\\
									\mathcal{R}_{2,1} ^{k,{p}^{k},Y_{x}}& 	\mathcal{R}_{2,2} ^{k,{p}^{k},Y_{x}} & 	\mathcal{R}_{2,3}^{k,{p}^{k},Y_{x}} & 	\mathcal{R}_{2,4}^{k,{p}^{k},Y_{x}}
									\end{array} \right]\\
									&	- (t^{k}-s) | {v}^{k} |
									\left[\begin{array}{cccc}
									\mathcal{R}_{3,1} ^{k,{p}^{k},Y_{x}}& 	\mathcal{R}_{3,2}^{k,{p}^{k},Y_{x}} &	\mathcal{R}_{3,3} ^{k,{p}^{k},Y_{x}}& 	\mathcal{R}_{3,4}^{k,{p}^{k},Y_{x}}\\
									\mathcal{R}_{4,1} ^{k,{p}^{k},Y_{x}}& 	\mathcal{R}_{4,2}^{k,{p}^{k},Y_{x}}	 & 	\mathcal{R}_{4,3} ^{k,{p}^{k},Y_{x}}& 	\mathcal{R}_{4,4}^{k,{p}^{k},Y_{x}}
									\end{array} \right].
									\end{split}\end{equation}
									Here $\mathcal{R}^{k,p^{k},Y_{x}}_{i,j}$ is defined in (\ref{specular_matrix}) with $x^{k} = x^{k} (t, Y_{x}(y_{1},y_{2}), |v|, \hat{v}_{1}, \hat{v}_{2})$ and \\ $v^{k} = v^{k} (t, Y_{x}(y_{1},y_{2}), |v|, \hat{v}_{1}, \hat{v}_{2})$.
									
									\vspace{8pt}
									
									\noindent\textit{Step 2.} From Lemma \ref{nonzero_sub}, there exist $i<j$ such that (\ref{nonzero_sub2}) holds. We choose $\zeta_{1}, \zeta_{2}$ to be the $i^{th}$ component and $j^{th}$ component of $\{y_{1}, y_{2}, \hat{v}_{1}, \hat{v}_{2}\}$. For the sake of simplicity, we abuse the notation as $$\left[\begin{array}{cc} 
									\mathcal{R}^{k,{p}^{k},Y_{x}}_{3, \zeta_{1}} & \mathcal{R}^{k,{p}^{k},Y_{x}}_{3,  \zeta_{2}}  \\
									\mathcal{R}^{k,{p}^{k},Y_{x}}_{4,  \zeta_{1}} & \mathcal{R}^{k,{p}^{k},Y_{x}}_{4,  \zeta_{2}} 
									\end{array}\right]=\left[\begin{array}{cc} \mathcal{R}^{k,{p}^{k},Y_{x}}_{3,i} & \mathcal{R}^{k,{p}^{k},Y_{x}}_{3,j}  \\
									\mathcal{R}^{k,{p}^{k},Y_{x}}_{4,i} & \mathcal{R}^{k,{p}^{k},Y_{x}}_{4,j} 
									\end{array}\right].$$ 
									
									\noindent Note that 
									\begin{eqnarray*}
										&&\det \left(\frac{\p X(s; t, Y_{x}(y_{1},y_{2}), |v|, \hat{v}_{1}, \hat{v}_{2})}{\p (|v|, \zeta_{1}, \zeta_{2})}\right)\\
										&=& \det \Bigg(
											{\left[\begin{array}{c|c}
													 (s-t) & \substack{	
														-  | {v}^{k} |    \nabla_{\zeta_{1}, \zeta_{2}}   t^{k}+
														\nabla_{\zeta_{1}, \zeta_{2}}   \mathbf{x}^{k}_{ {p}^{k}, \ell} \p_{\ell} \eta_{ {p}^{k}} \cdot \mathbf{e}_{0}^{k}
														\\ - (t^{k}-s) | {v}^{k} |  
														\sum_{j=1}^{2}
														\bigg( \sum_{\ell=1}^3  \frac{\p}{\p {\mathbf{x}^{k}_{ {p}^{k},j}}}
														\Big[  \frac{\p_\ell \eta_{ {p}^{k}}}{\sqrt{g_{ {p}^{k},\ell\ell}}} \Big] \hat{\mathbf{v}}^k_{{p}^{k},\ell} \bigg)
														\nabla_{\zeta_{1}, \zeta_{2}} \mathbf{x}^k_{ {p}^{k},j} 
													}\\ \hline
													\substack{ 0 \\  \\
														0 }
													&
													\left[\begin{array}{cc} 
														\mathcal{R}^{k,{p}^{k},Y_{x}}_{1, \zeta_{1}} & \mathcal{R}^{k,{p}^{k},Y_{x}}_{1,  \zeta_{2}}  \\
														\mathcal{R}^{k,{p}^{k},Y_{x}}_{2,  \zeta_{1}} & \mathcal{R}^{k,{p}^{k},Y_{x}}_{2,  \zeta_{2}} 
													\end{array}\right]
													- (t^{k}-s)|v^{k}|
													\left[\begin{array}{cc} 
														\mathcal{R}^{k,{p}^{k},Y_{x}}_{3, \zeta_{1}} & \mathcal{R}^{k,{p}^{k},Y_{x}}_{3,  \zeta_{2}}  \\
														\mathcal{R}^{k,{p}^{k},Y_{x}}_{4,  \zeta_{1}} & \mathcal{R}^{k,{p}^{k},Y_{x}}_{4,  \zeta_{2}} 
													\end{array}\right] 
												\end{array} \right] }  \\
											&& + \big[ O_{\delta,N} (\| \Phi \|_{C^{2}}) \big]_{3\times 3}
											\Bigg). 
									\end{eqnarray*}
									
									\noindent From (\ref{lower_bound_v_3}) and (\ref{Jac_billiard}), all the entries of above matrix is bound and hence the determinant of above matrix equals
									%
									\begin{equation}\label{sub_det}
									\begin{split}
									&-(t-s) \underbrace{\det \left(   \left[\begin{array}{cc} 
										\mathcal{R}^{k,{p}^{k},Y_{x}}_{1, \zeta_{1}} & \mathcal{R}^{k,{p}^{k},Y_{x}}_{1,  \zeta_{2}}  \\
										\mathcal{R}^{k,{p}^{k},Y_{x}}_{2,  \zeta_{1}} & \mathcal{R}^{k,{p}^{k},Y_{x}}_{2,  \zeta_{2}} 
										\end{array}\right]
										- (t^{k}-s)|v^{k}|
										\left[\begin{array}{cc} 
										\mathcal{R}^{k,{p}^{k},Y_{x}}_{3, \zeta_{1}} & \mathcal{R}^{k,{p}^{k},Y_{x}}_{3,  \zeta_{2}}  \\
										\mathcal{R}^{k,{p}^{k},Y_{x}}_{4,  \zeta_{1}} & \mathcal{R}^{k,{p}^{k},Y_{x}}_{4,  \zeta_{2}} 
										\end{array}\right]\right)} \\
									&\quad + O(\| \Phi  \|_{C^{2}}).
									\end{split}
									\end{equation}
									The underbraced term equals
									\begin{equation}\begin{split}\label{quadratic}
									&(t^{k}-s)^{2}| {v}^{k} |^{2}\det
									\left(\left[\begin{array}{cc}
									\mathcal{R}^{k,{p}^{k},Y_{x}}_{3,\zeta_{1}} & \mathcal{R}^{k,{p}^{k},Y_{x}}_{3,\zeta_{2}}\\
									\mathcal{R}^{k,{p}^{k},Y_{x}}_{4,\zeta_{1}} & \mathcal{R}^{k,{p}^{k},Y_{x}}_{4,\zeta_{2}}
									\end{array}\right] \right)+ \det \left(\left[\begin{array}{cc}
									\mathcal{R}^{k,{p}^{k},Y_{x}}_{1,\zeta_{1}} & \mathcal{R}^{k,{p}^{k},Y_{x}}_{1,\zeta_{2}}\\
									\mathcal{R}^{k,{p}^{k},Y_{x}}_{2,\zeta_{1}} & \mathcal{R}^{k,{p}^{k},Y_{x}}_{2,\zeta_{2}}
									\end{array}\right]  \right)
									\\
									&  \ \  
									- (t^{k}-s) | {v}^{k} | \Big(
									\mathcal{R}^{k,{p}^{k},Y_{x}}_{3, \zeta_{1}} \mathcal{R}^{k,{p}^{k},Y_{x}}_{2, \zeta_{2}} + \mathcal{R}^{k,{p}^{k},Y_{x}}_{1, \zeta_{1}} \mathcal{R}^{k,{p}^{k},Y_{x}}_{4, \zeta_{2}}
									\\
									& \ \ 
									- \mathcal{R}^{k,{p}^{k},Y_{x}}_{1, \zeta_{2}} \mathcal{R}^{k,{p}^{k},Y_{x}}_{4, \zeta_{1}}
									- \mathcal{R}^{k,{p}^{k},Y_{x}}_{3, \zeta_{2}} \mathcal{R}^{k,{p}^{k},Y_{x}}_{2, \zeta_{1}}
									\Big).\end{split}
									\end{equation}
									We define 
									\[
									\tilde{s} = t^{k} (t,Y_{x}(y_{1},y_{2}),v) -s.
									\]
									And we regard (\ref{quadratic}) as a quadratic polynomial of $\tilde{s}$. Then the coefficient of $\tilde{s}^{2}$ is $$ |v^{k}|^{2}\Big|\det
									\left(\left[\begin{array}{cc}
									\mathcal{R}^{k,{p}^{k},Y_{x}}_{3,\zeta_{1}} & \mathcal{R}^{k,{p}^{k},Y_{x}}_{3,\zeta_{2}}\\
									\mathcal{R}^{k,{p}^{k},Y_{x}}_{4,\zeta_{1}} & \mathcal{R}^{k,{p}^{k},Y_{x}}_{4,\zeta_{2}}
									\end{array}\right] \right)\Big| ,$$
									where depends only on $(t, Y_{x}(y_{1},y_{2}),v)$. From (\ref{nonzero_sub2}) and $|v|\geq \frac{1}{N}$, we have a lower bound of $
									\frac{\delta_{2}}{N^{2}}.$

									Now we apply $(iii)$ of Lemma \ref{zero_poly}: There exist $C^{1}-$functions $\psi_{1} (t, Y_{x}(y_{1},y_{2}),v)$, $\psi_{2}(t, Y_{x}(y_{1},y_{2}),v)$, $\psi_{3}(t, Y_{x}(y_{1},y_{2}),v)$ such that if $|\tilde{s} - \psi_{i} (t, Y_{x}(y_{1},y_{2}),v)|> \delta_{2}$ for all $i=1,2,3$ then the absolute value of $(\ref{quadratic})$ has a positive lower bound. Set 
									\[
									\psi_{i}   = t^{k} - \phi_{i} .
									\]
									Using $|t-s|> \frac{1}{N}$ and (\ref{sub_det}) we prove (\ref{lower_2}).
								\end{proof}	
								
								
								\begin{lemma}\label{OV}
									Assume $\Omega$ is convex in (\ref{convexity_eta}) and $\|\Phi \|_{C^{1}} \ll 1$. Choose $N\gg 1$, $0< \delta \ll1$ and then choose $\delta_{1}= \delta_{1} (\Omega,N, \delta, \| \nabla_{x}\Phi \|_{\infty})>0$ as in (\ref{delta_1}). There exists collections of open subsets $\{\mathcal{O}_{i}\}_{i=1}^{I_{\Omega,N, \delta,\delta_{1}}}$ of $\O$ and $\{\mathcal{V}_{i}(\mathbf{q}_{1},\mathbf{q}_{2})\}_{i=1}^{I_{\Omega,N, \delta,\delta_{1}}}$ of $\mathbb{R}^{3}$, where $\mathbf{q}_{1}$ and $\mathbf{q}_{2}$ are two independent vectors in $\mathbb{R}^{3}$, with $I_{\Omega,N, \delta,\delta_{1}}< \infty$ such that $\bar{\Omega} \subset\bigcup_{i} \mathcal{O}_{i}$ and $\int_{\R^{3} \backslash \mathcal{V}_{i}(\mathbf{q}_{1},\mathbf{q}_{2})} e^{- |v|^{2}/100} \dd v \leq O_{\Omega}(\frac{1}{N}) + O_{\Omega}(\delta_{1})$. Moreover,
									\begin{equation}\label{finite_k_V}
									K_{i}:= \sup \{ k \in \mathbb{N}: t^{k}(t,x,v) \geq T, \ (t,x,v) \in [T,T+1] \times \mathcal{O}_{i} \times \R^{3} \backslash \mathcal{V}_{i}(\mathbf{q}_{1},\mathbf{q}_{2}) \}
									< \infty.
									\end{equation}
									If $(x,v) \in \mathcal{O}_{i} \times \R^{3} \backslash \mathcal{V}_{i}(\mathbf{q}_{1},\mathbf{q}_{2})$ for some $i$, then 
									\begin{equation}\label{nv_lower}
									|n(x^{1}(t,x,v)) \cdot v^{1}(t,x,v)|> \min \big\{ \frac{\delta_{1}}{4},
									C_{\Omega,N, \| \nabla_{x}\Phi \|_{\infty}} \delta
									\big\}
									\end{equation}
									and 
									\begin{equation}\label{degene_lower}
									\big| (\mathbf{q}_{1} \times \mathbf{q}_{2}) \cdot v \big| \geq  \frac{1}{N}.									\end{equation}
								\end{lemma}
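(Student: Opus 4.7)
The plan is to build the cover $\{\mathcal{O}_i\}$ of $\bar\Omega$ by small open balls and the excluded velocity sets $\{\mathcal{V}_i(\mathbf{q}_1,\mathbf{q}_2)\}$ as a union of three natural bad sets in $v$-space, each of small Gaussian mass, and then to invoke Lemma \ref{uniform number of bounce} and Lemma \ref{velocity_lemma} to bound the bounce count on the good complement. First I would cover $\bar\Omega$ by finitely many open balls $\mathcal{O}_i = B(p_i, r)$ of radius $r = r(\Omega, N, \delta, \delta_1) \ll 1$, chosen small enough that, on each ball meeting a neighborhood of $\partial\Omega$, the local chart $\eta_{p_i^*}$ of (\ref{eta}) at a nearest boundary point $p_i^* \in \partial\Omega$ controls the first-bounce geometry uniformly. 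I would take
\begin{equation*}
\delta_1 = \delta_1(\Omega, N, \delta, \|\nabla_x\Phi\|_\infty) > 0
\end{equation*}
small enough that the non-grazing estimates (\ref{upper_delta_t}) and (\ref{lower_delta_t}) apply with parameter $\delta$ whenever $|v^1 \cdot n(x^1)| > \delta_1$.

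Next I would define
\begin{equation*}
\mathcal{V}_i(\mathbf{q}_1,\mathbf{q}_2) := \mathcal{V}_i^{\mathrm{size}} \cup \mathcal{V}_i^{\mathrm{deg}} \cup \mathcal{V}_i^{\mathrm{graze}},
\end{equation*}
with $\mathcal{V}_i^{\mathrm{size}} := \{|v| < 1/N\} \cup \{|v| > N\}$, $\mathcal{V}_i^{\mathrm{deg}} := \{v : |(\mathbf{q}_1 \times \mathbf{q}_2) \cdot v| < 1/N\}$, and
\begin{equation*}
\mathcal{V}_i^{\mathrm{graze}} := \bigcup_{(t,x) \in [T,T+1]\times\mathcal{O}_i} \{v : |n(x^1(t,x,v)) \cdot v^1(t,x,v)| \leq \delta_1\}.
\end{equation*}
By construction, both (\ref{nv_lower}) and (\ref{degene_lower}) then hold on $\mathbb{R}^3 \setminus \mathcal{V}_i$. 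Of the three pieces, $\mathcal{V}_i^{\mathrm{size}}$ contributes Gaussian mass $O(1/N)$ (from the small-velocity ball and the Maxwellian tail), and $\mathcal{V}_i^{\mathrm{deg}}$ is a slab of thickness $O(1/N)$ perpendicular to $\mathbf{q}_1 \times \mathbf{q}_2$, of Gaussian mass $O(1/N)$ as well.

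The hard part is the grazing estimate $\int_{\mathcal{V}_i^{\mathrm{graze}}} e^{-|v|^2/100} \dd v \lesssim \delta_1$. For fixed $(t,x)$ I would work in spherical coordinates $v = |v|\hat v$ with $\hat v \in \mathbb{S}^2$: when $\Phi \equiv 0$, trajectories are straight lines and strict convexity of $\partial\Omega$ forces the set of directions $\hat v$ for which the ray from $x$ meets $\partial\Omega$ within angle $\delta_1$ of the tangent plane to lie in a tubular neighborhood of a smooth closed curve on $\mathbb{S}^2$, of surface measure $O(\delta_1)$. For $\|\Phi\|_{C^2} \ll 1$, the perturbation estimates of Lemma \ref{global to local} imply that $(x^1, v^1)$ differ from their potential-free counterparts by $O(\|\nabla\Phi\|_\infty \tb)$, so the bound is stable. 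To make the estimate uniform in $(t, x) \in [T,T+1]\times\mathcal{O}_i$ and independent of $T$, I would shrink $r$ so that the Lipschitz variation of $(t,x,\hat v) \mapsto n(x^1)\cdot v^1$ over $\mathcal{O}_i$ is absorbed into $\delta_1$; a finite subcover of $\bar\Omega$ by such balls then suffices.

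Finally, for (\ref{finite_k_V}), any $v \notin \mathcal{V}_i$ satisfies $1/N \leq |v| \leq N$ and $|n(x^1)\cdot v^1| > \delta_1$. Lemma \ref{velocity_lemma}, in particular (\ref{velocity_le}) combined with convexity, propagates a uniform lower bound on $|\mathbf{v}^k_{p^k,3}|$ to all subsequent bounces, and then (\ref{upper_k}) yields a uniform-in-$T$ upper bound on the number of bounces in any unit time interval, depending only on $(\Omega, N, \delta, \delta_1)$. This gives $K_i < \infty$ and completes the proof.
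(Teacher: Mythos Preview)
Your overall architecture is right, and the appeal to Lemma~\ref{velocity_lemma} and (\ref{upper_k}) for the bounce count (\ref{finite_k_V}) is exactly what the paper does. The substantive difference is in how you define and control the grazing piece. You take
\[
\mathcal{V}_i^{\mathrm{graze}} \;=\; \bigcup_{(t,x)\in[T,T+1]\times\mathcal{O}_i}\bigl\{v:\ |n(x^{1}(t,x,v))\cdot v^{1}(t,x,v)|\le \delta_1\bigr\},
\]
an implicit, trajectory-dependent, and $T$-dependent set, and then argue its Gaussian mass is $O(\delta_1)$ via a ``tubular neighborhood of a closed curve on $\mathbb{S}^2$'' picture. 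The paper instead uses a two-case split on the travel distance $|v|\,(t-t^{1})$: if $|v|(t-t^{1})\ge\delta$ then the convexity bound (\ref{upper_delta_t}) gives $|v^{1}\cdot n(x^{1})|\gtrsim_{\Omega}\delta|v|$ automatically, with no restriction on $v$ needed; if $|v|(t-t^{1})\le 2\delta$ then $x^{1}$ is within $O(\delta)$ of $x$ and hence of the fixed center $\mathfrak{x}_i$, so by a direct Taylor expansion $|n(x^{1})\cdot v^{1}|=|n(\mathfrak{x}_i)\cdot v|+O_{N,\Omega}(\delta)+O_{N,\Omega}(\|\nabla\Phi\|_\infty)$. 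Choosing $\delta_1$ as in (\ref{delta_1}) to absorb these errors, the grazing exclusion becomes the \emph{explicit, $T$-independent slab} $\{|n(\mathfrak{x}_i)\cdot v|\le 2\delta_1\}$, whose Gaussian mass is trivially $O(\delta_1)$.

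Your route can be made to work, but as written it has a gap. The claim that, for fixed $(t,x)$, the grazing directions form an $O(\delta_1)$-tube about a smooth curve on $\mathbb{S}^2$ is not free: it is precisely the content of the long-bounce case above (a grazing first hit forces a short travel by (\ref{upper_delta_t}), which in turn forces $x$ near $\partial\Omega$ and collapses the grazing set to the slab $\{|n(x)\cdot\hat v|\lesssim\delta_1\}$). You invoke neither (\ref{upper_delta_t}) nor this dichotomy, so the geometry is asserted rather than proved. Moreover, ``shrink $r$ so that the Lipschitz variation is absorbed into $\delta_1$'' is delicate: the grazing curve is only well-behaved once you know short-bounce localization, and without it the variation as $x$ approaches $\partial\Omega$ is not obviously controlled. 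Finally, your $\mathcal{V}_i^{\mathrm{graze}}$ depends on the time window $[T,T+1]$, whereas the paper's $\mathcal{V}_i$ in (\ref{mathcal_O}) is purely geometric; this matters for the uniform-in-$T$ use in Theorem~\ref{prop_full_rank}. The fix is simply to adopt the paper's short/long first-bounce split and replace $\mathcal{V}_i^{\mathrm{graze}}$ by the fixed slab $\{|n(\mathfrak{x}_i)\cdot v|\le 2\delta_1\}$.
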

								\begin{proof}We construct $\mathcal{O}_{i}$ and $\mathcal{V}_{i}(\mathbf{q}_{1},\mathbf{q}_{2})$. Choose $\mathfrak{x} \in \bar{\Omega}$ and $v \in \R^{3}$ with $\frac{1}{N} \leq |v| \leq N$, $\frac{1}{N}\leq |v_{3}|$ for $N\gg1$. We split the cases $|v||t-t^{1}(t,\mathfrak{x},v)|\geq \delta$ and $|v||t-t^{1}(t, \mathfrak{x},v)|\leq 2\delta$ for some $0<\delta\ll1$.  For the first case, from (\ref{upper_delta_t}),
									\[
									\delta\leq |v||t-t^{1}(t,\mathfrak{x},v)| \lesssim_{\Omega} \frac{|v^{1}(t,\mathfrak{x},v) \cdot n(x^{1}(t,\mathfrak{x},v))|}{|v^{1}(t,\mathfrak{x},v)|},
									\]
									and hence $|v^{1}(t,\mathfrak{x},v) \cdot n(x^{1} (t,\mathfrak{x},v))|>C_{\Omega,N, \| \nabla_{x}\Phi \|_{\infty}} \delta$. For the second case,
									\begin{eqnarray*}
										&&|n(x^{1}(t,\mathfrak{x},v)) \cdot v^{1}(t,\mathfrak{x},v)| \\ 
										&=&|n(\mathfrak{x}) \cdot v| + O_{\| \eta \|_{C^{3}}} (|x^{1} (t,\mathfrak{x},v) - \mathfrak{x}|) \times \{|v| + \| \nabla_{x} \Phi \|_{\infty}\}
										+ \| \nabla_{x} \Phi \|_{\infty}\\
										&=& |n(\mathfrak{x}) \cdot v| + O_{N,\| \eta \|_{C^{3}}} (\delta )  
										+ O_{N,\| \eta \|_{C^{3}}} (\| \nabla_{x} \Phi \|_{\infty}),
									\end{eqnarray*}
									where we have used the fact $|x^{1} (t,\mathfrak{x},v) - \mathfrak{x}|= |v||t-t^{1}| + \| \nabla_{x} \Phi \|_{\infty}|t-t^{1}|^{2}$. Let us choose 
									\begin{equation}\label{delta_1}
									\delta_{1} =2 \big|O_{N,\| \eta \|_{C^{3}}} (\delta )  
									+  O_{N,\| \eta \|_{C^{3}}} (\| \nabla_{x} \Phi \|_{\infty})\big| \ \ \text{ for } \ \ \delta\ll_{N,\Omega} 1, \  \| \nabla_{x} \Phi \|_{\infty} \ll_{N,\Omega} 1.
									\end{equation}
									Then $ |n(x^{1}(t,\mathfrak{x},v)) \cdot v^{1}(t,\mathfrak{x},v)|  \geq \frac{\delta_{1}}{2}$ for $|n(\mathfrak{x}) \cdot v|\geq \delta_{1}$. And, condition (\ref{degene_lower}) is independent to position $x$. Note that, from Lemma \ref{global to local}, $(t^{1},x^{1},v^{1})$ is continuous locally. Therefore, we can choose $r_{\mathfrak{x}}>0$ such that if $x \in B(\mathfrak{x}, r_{\mathfrak{x}}) \cap \bar{\Omega}$, $\frac{1}{N} \leq |v| \leq N, \frac{1}{N} \leq |v_{3}|$, $|n(\mathfrak{x}) \cdot v|\geq 2\delta_{1}$, and $\big|\big( \mathbf{q}_{1} \times \mathbf{q}_{2} \big) \cdot v \big| \geq \frac{1}{N}  $, then we have (\ref{nv_lower}) and (\ref{degene_lower}). Since $\bar{\Omega}$ is a compact subset of $\R^{3}$, we extract finite points $\{\mathfrak{x}_{i}\}_{i=1}^{I_{\Omega,N ,\delta, \delta_{1}}}$ with $I_{\Omega,N ,\delta, \delta_{1}}< \infty$ such that $\{B(\mathfrak{x}_{i}, r_{\mathfrak{x}_{i}})\}_{i=1}^{I_{\Omega,N ,\delta, \delta_{1}}}$ is an open covering of $\bar{\Omega}$. We define
									
									\begin{equation}\label{mathcal_O}
									\begin{split} 
									\mathcal{O}_{i}  &: = B(\mathfrak{x}_{i}, r_{\mathfrak{x}_{i}}), \\
									\mathcal{V}_{i}(\mathbf{q}_{1}, \mathbf{q}_{2})   &:= \big\{v \in \R^{3}: |v| \leq \frac{1}{N} \ \ \text{or} \ \ 
									|v|\geq N \ \ \text{or} \ \ |v_{3}| \leq \frac{1}{N}  \ \ \text{or} \ \ 
									|n(\mathfrak{x}) \cdot v| \leq 2 \delta_{1}, \\
									& \quad \quad \text{or} \ \ \big| \big( \mathbf{q}_{1} \times \mathbf{q}_{2} \big) \cdot v \big| \leq \frac{1}{N}  
									\big\},
									\end{split}
									\end{equation}
									for some two independent vectors $\mathbf{q}_{1}, \mathbf{q}_{2}$ in $\mathbb{R}^{3}$. Clearly we already proved that if $(x,v) \in \mathcal{O}_{i} \times \R^{3} \backslash \mathcal{V}_{i}(\mathbf{q}_{1}, \mathbf{q}_{2})$ for some $i=1,2,\cdots, I_{\Omega,N,\delta,\delta_{1}}$ then we have (\ref{nv_lower}). Moreover, $\int_{\mathcal{V}_{i}(\mathbf{q}_{1}, \mathbf{q}_{2})}
									e^{-|v|^{2}/100}  \dd v< O(\frac{1}{N}) + O(\delta_{1})$ from our construction. From (\ref{upper_k}), we prove (\ref{finite_k_V}).
								\end{proof}
								
								
								Now we are ready to prove the main theorem.
								
								\begin{theorem}\label{prop_full_rank} 
									Fix any arbitrary $(t,x,v) \in [T,T+1] \times \Omega \times \R^{3}$. Recall  $M,\delta, \delta_{1}$ and $\mathcal{O}_{i}, \mathcal{V}_{i}(\hat{\mathbf{e}}_{1},\hat{\mathbf{e}}_{2})$, which are chosen in Lemma \ref{OV}. For each $i=1,2, \cdots, I_{\Omega,N, \delta,\delta_{1}}$, there exists $\delta_{2}>0$ and $C^{0,\gamma}$-function $\psi^{\ell_{0}, \vec{\ell}, i, k}$ for $k \leq K_{i}$ in (\ref{finite_k_V}) where $\psi^{\ell_{0}, \vec{\ell}, i, k}$ is defined locally around $(T+ \delta_{2} \ell_{0}, X(T+ \delta_{2} \ell_{0};t,x,v), \delta_{2} \vec{\ell})$ with $(\ell_{0},\vec{\ell}) = (\ell_{0}, \ell_{1}, \ell_{2},\ell_{3}) \in \{ 0,1, \cdots, \lfloor\frac{1}{\delta_{2}}\rfloor+1\} \times \{- \lfloor\frac{N}{\delta_{2}}\rfloor-1, \cdots, 0 , \cdots, \lfloor\frac{N}{\delta_{2}}\rfloor+1 \}^{3}$ and \\
									$ \| \psi^{\ell_{0}, \vec{\ell}, i, k}  \|_{C^{0,\gamma}}   \leq C_{N,\Omega,\delta,\delta_{1}, \delta_{2}, \| \Phi \|_{C^{2,\gamma}} }< \infty$. 
									
									Moreover, if 
									\begin{equation}\label{Xs_OVi}
									(X(s;t,x,v),u) \in \mathcal{O}_{i} \times \R^{3} \backslash \mathcal{V}_{i}(\hat{\mathbf{e}}_{1}, \hat{\mathbf{e}}_{2})
									\ \ \text{for} \ \ i=1,2,\cdots, I_{\Omega,N, \delta,\delta_{1}},
									\end{equation}
									
									\begin{equation}\label{su_ell0}
									(s,u) \in [T+ (\ell_{0} -1) \delta_{2},T+ (\ell_{0} +1) \delta_{2} ] \times B(\delta_{2} \vec{\ell}; 2\delta_{2}),
									\end{equation}
									
									\begin{equation}\label{sprime_k}
									\begin{split}
									s^{\prime} &\in \big[ t^{k+1} (T+ \delta_{2} \ell_{0}; X(T+ \delta_{2} \ell_{0} ;t,x,v), \delta_{2} \vec{\ell})+\frac{1}{N} \\
									&\quad\quad\quad\quad , t^{k } (T+ \delta_{2} \ell_{0}; X(T+ \delta_{2} \ell_{0} ;t,x,v), \delta_{2} \vec{\ell})-\frac{1}{N}\big], 
									\end{split}
									\end{equation}
									and 
									\begin{equation}\label{sprime_psi}
									|s^{\prime} -   \psi^{\ell_{0}, \vec{\ell}, i, k}  (T+ \delta_{2} \ell_{0} , X( T+ \delta_{2}\ell_{0} ; t,x,v), \delta_{2} \vec{\ell} ) | > N^{2} (1 + \| \psi^{\ell_{0}, \vec{\ell}, i, k} \|_{C^{0,\gamma}}) (\delta_{2})^{\gamma} ,
									\end{equation}
									then 
									\begin{equation}\label{lower_1}
									\big| \p_{|u| } X(s^{\prime} ; s, X(s;t,x,v), u) \times   \p_{\hat{u}_{1}} X(s^{\prime} ; s, X(s;t,x,v), u)
									\big| > \epsilon_{\Omega, N,\| \Phi \|_{C^{2}} ,\delta_{1}, \delta_{2},\delta_{2}}.
									\end{equation}
									Here $\epsilon_{\Omega, N,\| \Phi \|_{C^{2}} ,\delta_{1}, \delta_{2},\delta_{2}}>0$ does not depend on $T, t,x,v$.

									For each $j=1,2,\cdots, I_{\Omega,N,\delta,\delta_{1}}$ in Lemma \ref{OV}, there exists $\delta_{3}>0$ and $C^{0,\gamma}$-functions 
									\begin{equation}\label{three_psi}
									\psi^{ \ell_{0}, \vec{\ell}, i, k, j, m_{0}, \vec{m}, k^{\prime}}_{1}, \psi^{\ell_{0}, \vec{\ell}, i, k, j, m_{0}, \vec{m}, k^{\prime}}_{2}, \psi^{ \ell_{0}, \vec{\ell}, i, k, j, m_{0}, \vec{m}, k^{\prime}}_{3},\end{equation}
									for $k^{\prime} \leq K^{j}$ in (\ref{finite_k_V}) where $\psi^{ \ell_{0}, \vec{\ell}, i, k, j, m_{0}, \vec{m}, k^{\prime}}_{n}$ is defined locally around $(T+ \delta_{3}m_{0} ; X(T+ \delta_{3}m_{0}; T+ \delta_{2} \ell_{0}, X ( T+ \delta_{2} \ell_{0}; t,x,v   ), \delta_{2} \vec{\ell}  ), \delta_{3} \vec{m} )$ for some $(m_{0}, \vec{m}) = (m_{0}, m_{1}, m_{2},m_{3})  \in\{ 0,1, \cdots, \lfloor\frac{1}{\delta_{3}}\rfloor+1\} \times \{- \lfloor\frac{N}{\delta_{3}}\rfloor-1, \cdots, 0 , \cdots, \lfloor\frac{N}{\delta_{3}}\rfloor+1 \}^{3}$ with $0< \delta_{3} \ll1$.
									
									Moreover, if we assume (\ref{Xs_OVi}), (\ref{su_ell0}), (\ref{sprime_k}), (\ref{sprime_psi}),
									\begin{equation}\label{sprime_j}
									\begin{split}
									&(X(s^{\prime};s, X(s;t,x,v),u), u^{\prime}) \in \mathcal{O}_{j} \times \R^{3} \backslash \mathcal{V}_{j}(\p_{|u|}X, \p_{\hat{u}_{1}}X), 	\\
									&\quad\quad \ \ \text{ for some } \  j=1,2,\cdots, I_{\Omega,N,\delta,\delta_{1}} \ \ \text{ in Lemma \ref{OV},}
									\end{split}
									\end{equation}
									\begin{equation}\label{sprimeprime_kprime}
									\begin{split}
									s^{\prime\prime}  \in& \  \Big[t^{k^{\prime}+1}  (T+ \delta_{3}m_{0} ; X(T+ \delta_{3}m_{0}; T+ \delta_{2} \ell_{0}, X ( T+ \delta_{2} \ell_{0}; t,x,v   ), \delta_{2} \vec{\ell}  ),  \delta_{3} \vec{m} ) + \frac{1}{N} \\ &   \ \ 
									\ \quad , t^{k^{\prime} } \underbrace{ (T+ \delta_{3}m_{0} ; X(T+ \delta_{3}m_{0}; T+ \delta_{2} \ell_{0}, X ( T+ \delta_{2} \ell_{0}; t,x,v   ), \delta_{2} \vec{\ell}  ), \delta_{3} \vec{m} ) }_{(**)} - \frac{1}{N}\Big],
									\end{split}
									\end{equation} 
									and
									\begin{equation}\begin{split}\label{sprimeprime_psi}
									&\min_{n=1,2,3}|s^{\prime\prime} - \psi^{ \ell_{0}, \vec{\ell}, i, k, j, m_{0}, \vec{m}, k^{\prime}}_{n}  (**)  |					>		N^{2} (1+ \max_{n=1,2,3} \| \psi^{ \ell_{0}, \vec{\ell}, i, k, j, m_{0}, \vec{m}, k^{\prime}}_{n}  \|_{C^{0,\gamma}} ) (\delta_{3})^{\gamma},\end{split}\end{equation}
									where $(**)$ is defined in (\ref{sprimeprime_kprime}). Then for each $\ell_{0}, \vec{\ell}, i, k, j, m_{0}, \vec{m}, k^{\prime}$, we can choose two distinct variables $\{\zeta_{1}, \zeta_{2}\} \subset \{|u|, \hat{u}_{1}, 
									\hat{u}_{1}^{\prime} , \hat{u}_{2}^{\prime}  \}$ such that \\ $(|u^{\prime}|, \zeta_{1}, \zeta_{2} ) \mapsto X(s^{\prime\prime};s^{\prime},X( s^{\prime}; s, X(s;t,x,v),u), u^{\prime} ) $ is one-to-one locally and 
									\begin{equation}\label{lower_zeta12}
									\Big|\det \left( \frac{\p X(s^{\prime\prime};s^{\prime},X( s^{\prime}; s, X(s;t,x,v),u), u^{\prime} ) }{\p (|u^{\prime}|, \zeta_{1}, \zeta_{2})} \right)
									\Big| > \epsilon^{\prime}_{\Omega, N, \| \Phi \|_{C^{2}}, \delta_{1}, \delta_{2}, \delta_{3} }.
									\end{equation}
									Here $\epsilon^{\prime}_{\Omega, N, \| \Phi \|_{C^{2}}, \delta_{1}, \delta_{2}, \delta_{3} }>0$ does not depend on $T,t,x,v$.
								\end{theorem}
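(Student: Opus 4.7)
The strategy is to combine the local transversality Lemmas \ref{lemma rank 2} and \ref{lemma rank 3} with the finite covering provided by Lemma \ref{OV}, and to propagate the resulting local estimates to a uniform bound on the entire parameter range via the $C^{0,\gamma}$-regularity of the exceptional functions.

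First I would set up the geometry via Lemma \ref{OV}: $\bar\Omega$ is covered by finitely many balls $\{\mathcal{O}_i\}_{i=1}^{I}$ with $I=I_{\Omega,N,\delta,\delta_1}<\infty$, and on each slice $\mathcal{O}_i\times(\mathbb{R}^3\setminus\mathcal{V}_i)$ the non-grazing bound (\ref{nv_lower}), the non-degeneracy (\ref{degene_lower}), and the uniform cap $K_i$ on the bounce count all hold. Under (\ref{Xs_OVi}), applied to the trajectory restarted from $(s,X(s;t,x,v))$ with velocity $u$, this furnishes exactly the quantitative hypotheses that Lemma \ref{lemma rank 2} requires. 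To promote the local conclusion of that lemma to something uniform in $(s,u)$, I would discretize $(s,u)$ on a mesh of scale $\delta_2$ indexed by $(\ell_0,\vec\ell)$ and apply Lemma \ref{lemma rank 2} at each reference point $p_{\ell_0,\vec\ell}:=(T+\delta_2\ell_0,\,X(T+\delta_2\ell_0;t,x,v),\,\delta_2\vec\ell)$ for each bounce count $k\le K_i$, aggregating the finitely many local exceptional functions into a single $\psi^{\ell_0,\vec\ell,i,k}$ with a common Hölder norm. For $(s,u)$ satisfying (\ref{su_ell0}), a Gronwall estimate along the trajectory (compare (\ref{global dX/dx})--(\ref{global dX/dv})) places $(s,X(s;t,x,v),u)$ within $O(\delta_2)$ of $p_{\ell_0,\vec\ell}$, so Hölder continuity gives
\[|\psi^{\ell_0,\vec\ell,i,k}(s,X(s;t,x,v),u)-\psi^{\ell_0,\vec\ell,i,k}(p_{\ell_0,\vec\ell})|\le C\|\psi^{\ell_0,\vec\ell,i,k}\|_{C^{0,\gamma}}\delta_2^\gamma.\]
Combined with (\ref{sprime_psi}) and the triangle inequality, this forces $|s'-\psi^{\ell_0,\vec\ell,i,k}(s,X(s;t,x,v),u)|>\delta_*$ for some $\delta_*=\delta_*(N,\delta_2,\gamma)>0$, and Lemma \ref{lemma rank 2} then delivers the uniform lower bound (\ref{lower_1}).

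For the rank-3 estimate (\ref{lower_zeta12}), the key new input is the transversality (\ref{crossY_n}) required by Lemma \ref{lemma rank 3}. I would set
\[Y_{X(s;t,x,v)}(y_1,y_2):=X(s;t,x,v)+y_1\,\partial_{|u|}X(s';s,X(s;t,x,v),u)+y_2\,\partial_{\hat{u}_1}X(s';s,X(s;t,x,v),u),\]
so that $\partial_{y_1}Y\times\partial_{y_2}Y=\partial_{|u|}X\times\partial_{\hat{u}_1}X$. The hypothesis $u'\notin\mathcal{V}_j(\partial_{|u|}X,\partial_{\hat{u}_1}X)$ in (\ref{sprime_j}) then reads as the non-degeneracy (\ref{crossY_n}) with magnitude controlled from below by the rank-2 conclusion of the first part. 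I would now rerun the discretization-plus-Hölder argument on $(s',u')$ at scale $\delta_3$, indexed by $(m_0,\vec m)$, applying Lemma \ref{lemma rank 3} at each reference point to produce the three $C^{0,\gamma}$ functions in (\ref{three_psi}); Hölder continuity transfers the estimate to the $\delta_3$-neighborhood, and (\ref{sprimeprime_psi}) is exactly the threshold keeping $s''$ uniformly away from all three $\psi_n$'s evaluated at the true initial condition, so Lemma \ref{lemma rank 3} yields (\ref{lower_zeta12}) with the selected pair $\{\zeta_1,\zeta_2\}$.

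The main obstacle is the interlocking of the two iterations: the transversality needed by Lemma \ref{lemma rank 3} is not an intrinsic property of the domain but must itself be manufactured from the rank-2 output of Lemma \ref{lemma rank 2}. This is precisely why the exclusion set $\mathcal{V}_j$ in Lemma \ref{OV} was designed to depend on two arbitrary independent vectors $(\mathbf{q}_1,\mathbf{q}_2)$: one specializes $(\mathbf{q}_1,\mathbf{q}_2)=(\partial_{|u|}X,\partial_{\hat{u}_1}X)$, which is meaningful only because the first-stage lower bound on the cross product is already in hand. Uniformity of the final constants $\epsilon,\epsilon'$ in $(t,x,v,T)$ then follows from the finiteness of the multi-index $(\ell_0,\vec\ell,i,k,j,m_0,\vec m,k')$, guaranteed by the finite bounce cap $K_i$ of Lemma \ref{OV} and the compactness of the good phase-space pieces; the remaining bookkeeping consists in choosing $\delta_1,\delta_2,\delta_3$ small in the correct order so that the Hölder-transfer losses $(\delta_2)^\gamma$ and $(\delta_3)^\gamma$ do not overwhelm the pointwise lower bounds produced by Lemmas \ref{lemma rank 2} and \ref{lemma rank 3}.
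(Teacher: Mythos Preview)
Your proposal is essentially the paper's own argument: the proof in the paper proceeds exactly by (i) invoking Lemma~\ref{OV} to guarantee the hypotheses of Lemma~\ref{lemma rank 2}, (ii) discretizing $(s,u)$ on a $\delta_2$-mesh and using the $C^{0,\gamma}$ bound (\ref{diff_psi_k}) together with the triangle inequality to transfer (\ref{sprime_psi}) from the grid point to the actual $(s,X(s;t,x,v),u)$, yielding (\ref{lower_1}); and then (iii) repeating with a $\delta_3$-mesh in $(s',u')$ and Lemma~\ref{lemma rank 3} to obtain (\ref{lower_zeta12}). Your discussion of how the rank-2 output feeds the transversality hypothesis of Lemma~\ref{lemma rank 3} via the choice $(\mathbf{q}_1,\mathbf{q}_2)=(\partial_{|u|}X,\partial_{\hat u_1}X)$ in $\mathcal{V}_j$ is in fact more explicit than the paper, which simply asserts ``from Lemma~\ref{OV} we verify the condition of Lemma~\ref{lemma rank 3}.''

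One small slip: your map $Y$ should be based at $X(s';s,X(s;t,x,v),u)$, not at $X(s;t,x,v)$, and it should be the genuine trajectory map $(y_1,y_2)\mapsto X(s';s,X(s;t,x,v),u(y_1,y_2))$ rather than its affine approximation, since Lemma~\ref{lemma rank 3} requires $Y$ to take values in $\bar\Omega$. The tangent vectors $\partial_{y_i}Y|_{0}=\partial_{|u|}X,\partial_{\hat u_1}X$ and hence the cross product you wrote are correct either way, so this does not affect the argument.
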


								\begin{proof}
									%
									%
									%
									%
									%
									\noindent\textit{Step 1. } Fix any arbitrary $(t,x,v) \in [T,T+1] \times \Omega \times \R^{3}$. Assume that \\
									$s\in [T,t]$ and $(X(s;t,x,v), u) \in \mathcal{O}_{i} \times \R^{3} \backslash \mathcal{V}_{i}(\hat{\mathbf{e}}_{1}, \hat{\mathbf{e}}_{2})$ for some $i$, where $\mathbf{e}_{1}$ and $\mathbf{e}_{2}$ are standard unit vector $(1,0,0)$ and $(0,1,0)$ in global coordinate. Due to Lemma \ref{OV}, \\
									$(X(s^{\prime}; s,X(s;t,x,v),u), V(s^{\prime}; s,X(s;t,x,v),u) )$ is well-defined for all $s^{\prime} \in [T,s]$ and $|n(x^{k}(s, X(s;t,x,v), u)) \cdot v^{k }(s, X(s;t,x,v), u)|\gtrsim_{\Omega, N}1$ for all $k$ with\\
									$|t-t^{k}(s, X(s;t,x,v), u)| \leq 1$.

									We note that, from $X(s;t,x,v) = X(\bar{s};t,x,v) + \int^{s}_{\bar{s}} V(\tau; t,x,v) \dd \tau$,
									\begin{equation}\begin{split}\label{diff_psi_k}
									&|\psi^{k} (s,X(s;t,x,v), u) - \psi^{k} (\bar{s},X(\bar{s};t,x,v), \bar{u})| \\
									&\leq   \   \|  \psi^{k} \|_{C^{0,\gamma}_{t,x,v}}  \{ |s- \bar{s}|^{\gamma} + |X(s;t,x,v)- X(\bar{s};t,x,v)| ^{\gamma} + |u- \bar{u}|^{\gamma} \}\\
									&\leq \ \| \psi^{k} \|_{C^{0,\gamma} _{t,x,v}}  \{ |s- \bar{s}|^{\gamma}   + 
									(1+ N^{\gamma} + \| \nabla_{x} \Phi \|_{\infty}^{\gamma})
									|u- \bar{u}|^{\gamma} \}.\end{split}
									\end{equation} 
									For $0<\delta_{2}\ll1$ we split 
									\begin{equation}\notag
									\begin{split}
									[T,T+1] &= \bigcup_{\ell_{0}=0}^{ [\delta_{2}^{-1}]+1 } \big[T+ (\ell_{0}-1) \delta_{2}, T+  (\ell_{0}+1) \delta_{2}  \big] , \\  
									\mathbb{R}^{3} \backslash \mathcal{V}_{i}(\hat{\mathbf{e}}_{1}, \hat{\mathbf{e}}_{2}) &=  \bigcup_{|\ell_{i}|=0}^{ [N /\delta_{2}^{-2}]+1 }
									B\big( (\ell_{1} \delta_{2}, \ell_{2} \delta_{2}, \ell_{3} \delta_{2} );2\delta_{2}\big)
									\cap  \  \mathbb{R}^{3} \backslash \mathcal{V}_{i}(\hat{\mathbf{e}}_{1}, \hat{\mathbf{e}}_{2}). 
									\end{split}
									\end{equation}
									From (\ref{diff_psi_k}), if 
									$$(s,u) \in \big[T+ (\ell_{0}-1) \delta_{2}, T+  (\ell_{0}+1) \delta_{2}  \big] \times \{ B\big( (\ell_{1} \delta_{2}, \ell_{2} \delta_{2}, \ell_{3} \delta_{2} );2\delta_{2}\big)
									\cap  \  \mathbb{R}^{3} \backslash \mathcal{V}_{i} \}, $$ 
									then 
									\begin{eqnarray*}
										&&	|\psi^{k} ( T+ \ell_{0} \delta, X(T+ \ell_{0}  \delta;t,x,v), (\ell_{1} \delta, \ell_{2} \delta, \ell_{3} \delta)  ) - \psi^{k} (s, X(s;t,x,v), u) | \\
										&&\leq
										\| \psi^{k} \|_{C^{0,\gamma}}(2+ N^{\gamma} + \| \nabla_{x} \Phi \|_{\infty}^{\gamma})
										( \delta_{2})^{\gamma}.
									\end{eqnarray*}
									Therefore, if (\ref{sprime_psi}) holds then 
									\begin{eqnarray}
									&&	|s^{\prime} - \psi^{k} (s, X(s;t,x,v),u)| \nonumber\\
									&\geq& |s^{\prime} - \psi^{k} ( T+ \ell_{0} \delta, X(T+ \ell_{0}  \delta;t,x,v), (\ell_{1} \delta, \ell_{2} \delta, \ell_{3} \delta)  )|\notag\\
									&&
									- |\psi^{k} ( T+ \ell_{0} \delta, X(T+ \ell_{0}  \delta;t,x,v), (\ell_{1} \delta, \ell_{2} \delta, \ell_{3} \delta)  ) - \psi^{k} (s, X(s;t,x,v), u) |
									\label{sprime-psi_lower}
									\\ 
									&\gtrsim&(N^{2}- N^{\gamma})\| \psi^{k} \|_{C^{0,\gamma}}( \delta_{2})^{\gamma} \gtrsim_{N}  \| \psi^{k} \|_{C^{0,\gamma}}( \delta_{2})^{\gamma}.\notag
									\end{eqnarray}

									Consider the mapping $u \mapsto X(s^{\prime}; s,X(s;t,x,v),u)$. Note that from Lemma \ref{OV} we verify the condition of Lemma \ref{lemma rank 2}. Applying Lemma \ref{lemma rank 2}, we construct $C^{0,\gamma}$-function $\psi^{k}: B_{\varepsilon} (s, X(s;t,x,v), u) \rightarrow \R$ for $k \leq K^{i}$ such that if \\
									$|s^{\prime} - \psi^{k}(s,X(s;t,x,v),u)|>(\delta_{2})^{\gamma}$, then $$|\p_{|u|} X(s^{\prime}; s,X(s;t,x,v),u) \times \p_{\hat{u}_{1}}X(s^{\prime}; s,X(s;t,x,v),u)|> \epsilon_{\Omega,N, \| \Phi \|_{C^{2}}, \delta_{1}, (\delta_{2})^{\gamma}  }>0. $$
									Clearly if (\ref{sprime_psi}) holds, then from (\ref{sprime-psi_lower}), we have  $|s^{\prime} - \psi^{k}(s,X(s;t,x,v),u)|>(\delta_{2})^{\gamma}$.

									\vspace{4pt}
									
									\noindent\textit{Step 2. } Assume all the conditions of (\ref{Xs_OVi})-(\ref{sprime_psi}) and (\ref{sprime_j}). Applying Lemma \ref{lemma rank 3}, we construct (\ref{three_psi}). As (\ref{sprime-psi_lower}),
									\begin{equation}\begin{split}\label{sprime-psi_lower2}
									& |  \psi( s^{\prime}, X(s^{\prime};s, X(s;t,x,v), u), u^{\prime}  )
									-   \psi( \bar{s^{\prime}}, X(\bar{s^{\prime}};s, X(s;t,x,v), u), \bar{u^{\prime}}  )|\\
									\leq& \  \| \psi \|_{C^{0,\gamma}} \{ |s^{\prime}-  \bar{s^{\prime}}|^{\gamma}  +
									(1+ N^{\gamma} + \| \nabla_{x} \Phi \|_{\infty}^{\gamma})
									|u^{\prime}-  \bar{u^{\prime}}|^{\gamma}  \}.
									\end{split}\end{equation}
									
									\noindent For $0< \delta_{3} \ll 1$, we split
									\begin{equation}\notag
									\begin{split}
									[T,T+1] 
									&= \bigcup_{m_{0}=0}^{ [\delta_{3}^{-1}]+1 } \big[T+ (m_{0}-1) \delta_{3}, T+  (m_{0}+1) \delta_{3}  \big] , 
									\\
									\mathbb{R}^{3} \backslash \mathcal{V}_{j}(\p_{|u|}X, \p_{\hat{u}_{1}}X ) &=  \bigcup_{|m_{i}|=0}^{ [N /\delta_{3}^{-2}]+1 }
									B\big( (m_{1} \delta_{3}, m_{2} \delta_{3}, m_{3} \delta_{3} );2\delta_{3}\big)
									\cap  \  \mathbb{R}^{3} \backslash \mathcal{V}_{j}(\p_{|u|}X, \p_{\hat{u}_{1}}X ). 
									\end{split}
									\end{equation}
									From (\ref{sprime-psi_lower2}) if 
									\begin{equation*}
									\begin{split}
										(s^{\prime}, u^{\prime}) &\in [T+ (m-1) \delta, T+ (m+1) \delta]  \\
										&\quad \times \{ B((m_{1} \delta, m_{2} \delta, m_{3} \delta); 2 \delta ) \cap \R^{3} \backslash \mathcal{V}_{j}(\p_{|u|}X, \p_{\hat{u}_{1}}X ) \}  , 	\\ 
										(X(s;t,x,v), u ) &\in 
										\mathcal{O}_{i} \times \R^{3 }\backslash \mathcal{V}_{i}(\hat{\mathbf{e}}_{1}, \hat{\mathbf{e}}_{2}), \\
										( X(s^{\prime};s, X(s;t,x,v), u), u^{\prime}) &\in \mathcal{O}_{j} \times \R^{3} \backslash \mathcal{V}_{j}(\p_{|u|}X, \p_{\hat{u}_{1}}X ),
									\end{split}
									\end{equation*}
									then
									\begin{eqnarray*}
										&&| s^{\prime} -  \phi  ( T+ \ell \delta, X(T+ \ell \delta;t,x,v), \vec{\ell} \delta  ) | \gtrsim_{N} \|\phi \|_{C^{0,\gamma}  } \delta^{\gamma}, \\
										&&|s^{\prime\prime}- \psi  ( T+ m \delta, X(T+ m \delta; T+\ell \delta, X(T+\ell \delta; t,x,v ), \vec{m}\delta  ), \vec{\ell} \delta )  | 
										\gtrsim_{N} \delta^{\gamma}.
									\end{eqnarray*}
									\\
									Consider the mapping $$(u,u^{\prime}) \mapsto X(
									s^{\prime\prime}; s^{\prime}, X(s^{\prime};s, X(s;t,x,v), u), u^{\prime}).$$
									Note that from Lemma \ref{OV} we verify the condition of Lemma \ref{lemma rank 3}. For each $i,j$ and $\ell_{0}, \ell_{1},\ell_{2},\ell_{3}$ and $m_{0}, m_{1},m_{2},m_{3}$, applying Lemma \ref{lemma rank 3}, we can choose two variables $\{ \zeta_{1}, \zeta_{2}\} \subset\{ |u|,\hat{u}_{1}, 
									\hat{u}^{\prime}_{1},  \hat{u}^{\prime}_{2}  \}$ so that (\ref{lower_zeta12}) holds.
									
									\end{proof}

		\section{A time-dependent potential}
								
								\begin{theorem}[Local existence]\label{local_existence} For a sufficiently small $\delta_{0}>0$ and $\delta_{\phi}>0$ there exists $T^{*} >0$ such that if $
								\|wf_{0}\|_{\infty}\leq\delta_{0}$ and $\|\phi\|_{C^{1}}\leq \delta_{\phi}$, then there exists a unique solution $f(t,x,v)$ to (\ref{E_eqtn}) in $[0, T^{*}  ) \times \O \times \R^{3}$ such that 
								\begin{equation}\label{local_est}
									\sup_{0 \leq t \leq T^{*} } \| w f (t) \|_{\infty} \leq 
									2 (\delta_{0} + C \delta_{\phi})
									,
									\end{equation} and $ \| w f (t) \|_{\infty}$ is continuous over $[0, T^{*}  )$. If $F_{0}= \mu_{E} + \sqrt{\mu}_{E} f_{0} \geq 0$, then $
									F = \mu_{E} + \sqrt{\mu_{E}} f \geq 0.$
								\end{theorem}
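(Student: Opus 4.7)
The plan is to construct the solution by a standard Picard iteration in weighted $L^{\infty}$ along the specular characteristics $(X_{\mathbf{cl}},V_{\mathbf{cl}})$ built in Section~2. Starting from $f^{0}\equiv 0$, I would define $f^{n+1}$ as the mild solution of the linearization
\[
\{\p_{t} + v\cdot\nabla_{x} - \nabla_{x}(\phi+\Phi)\cdot\nabla_{v} + e^{-\Phi}\nu(v)\}f^{n+1} = e^{-\Phi}Kf^{n} + e^{-\Phi/2}\Gamma(f^{n},f^{n}) - \bigl(\frac{1}{2}f^{n}+\sqrt{\mu_{E}}\bigr)v\cdot\nabla_{x}\phi,
\]
with $f^{n+1}(0)=f_{0}$ and the specular condition (\ref{specular_f}). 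Integration along the backward specular trajectories, well-defined thanks to the constructions of Section~2, produces an explicit Duhamel formula for $wf^{n+1}$.

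The first substep is to close the iteration in the ball $\{\|wf\|_{\infty}\le 2(\delta_{0}+C\delta_{\phi})\}$ on a short interval $[0,T^{*}]$. Using the standard weighted kernel estimate $\int|\mathbf{k}(v,u)|\frac{w(v)}{w(u)}\,du \lesssim \langle v\rangle^{-1}$, the bilinear bound $|w\Gamma(f,g)|\lesssim \nu(v)\|wf\|_{\infty}\|wg\|_{\infty}$, and the source bounds $|w\sqrt{\mu_{E}}\,v\cdot\nabla_{x}\phi|\lesssim\delta_{\phi}$, $|w f\, v\cdot\nabla_{x}\phi|\lesssim \delta_{\phi}\langle v\rangle\|wf\|_{\infty}$, the Duhamel identity gives
\[
\|wf^{n+1}(t)\|_{\infty} \le e^{-\nu_{0}t/2}\|wf_{0}\|_{\infty} + C\bigl(\delta_{\phi}+\|wf^{n}\|_{\infty}^{2}+T^{*}\|wf^{n}\|_{\infty}\bigr),
\]
which closes once $T^{*},\delta_{0},\delta_{\phi}$ are small. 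Here Lemma~\ref{uniform number of bounce} bounds the number of bounces in $[0,T^{*}]$ and Lemma~\ref{trans trajec} keeps $|V_{\mathbf{cl}}(s)|$ essentially equal to $|v|$ up to $O(\|\Phi\|_{C^{2}}+\delta_{\phi})$, so $\int_{s}^{t}\nu(V_{\mathbf{cl}})\,d\tau \simeq \nu(v)(t-s)$ and the absorption $e^{-\int\nu}$ remains effective.

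Next I would run the same estimate on the difference $g^{n+1}:=f^{n+1}-f^{n}$, whose source is $e^{-\Phi}Kg^{n}+e^{-\Phi/2}[\Gamma(g^{n},f^{n})+\Gamma(f^{n-1},g^{n})]-\frac{1}{2}g^{n}v\cdot\nabla_{x}\phi$. This yields a contraction
\[
\sup_{t\le T^{*}}\|wg^{n+1}(t)\|_{\infty}\le C(T^{*}+\delta_{0}+\delta_{\phi})\sup_{t\le T^{*}}\|wg^{n}(t)\|_{\infty},
\]
so $\{f^{n}\}$ is Cauchy and its limit $f$ solves (\ref{E_eqtn})-(\ref{specular_f}) in the mild sense. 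Uniqueness in this class follows from the same estimate applied to the difference of two solutions, and continuity of $\|wf(t)\|_{\infty}$ on $[0,T^{*})$ follows from dominated convergence in the Duhamel identity, using continuity of $(X_{\mathbf{cl}},V_{\mathbf{cl}})(s;t,\cdot,\cdot)$ and of $\int_{s}^{t}\nu(V_{\mathbf{cl}})\,d\tau$ in $t$.

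Finally, for the positivity $F\ge 0$, I would run a parallel, positivity-preserving iteration on $F^{n+1}$ itself,
\[
\{\p_{t}+v\cdot\nabla_{x}-\nabla_{x}(\phi+\Phi)\cdot\nabla_{v}+R(F^{n})\}F^{n+1}=Q_{+}(F^{n},F^{n}),\quad F^{n+1}(0)=F_{0}\ge 0,
\]
with $R(F):=\int B(v-u,\omega)F(u)\,d\omega\,du$, under the specular BC. Since $Q_{+}\ge 0$ and the linear transport with absorption preserves positivity along the characteristics, induction gives $F^{n}\ge 0$. Writing $F^{n}=\mu_{E}+\sqrt{\mu_{E}}\tilde f^{n}$ produces essentially the same contraction on $\tilde f^{n}$ as the one closed for $f^{n}$, up to errors absorbed by the smallness of $\delta_{0}+\delta_{\phi}$, so the two iterations converge to the same limit and $F=\mu_{E}+\sqrt{\mu_{E}}f\ge 0$. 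The main obstacle I anticipate is the bookkeeping for trajectories that undergo many specular bounces in short time near the boundary, where the time-dependent drift $-\nabla_{x}(\phi+\Phi)$ couples nontrivially to the reflection; Lemma~\ref{uniform number of bounce} together with Lemma~\ref{trans trajec} supplies exactly the uniform-in-bounce control needed to keep $\nu(V_{\mathbf{cl}})\simeq\nu(v)$, and once that is in hand the argument reduces to classical short-time $L^{\infty}$ Picard theory for kinetic equations.
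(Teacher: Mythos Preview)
Your proposal is correct and follows essentially the same route as the paper: Picard iteration in weighted $L^{\infty}$ along the specular characteristics, Duhamel formula, the standard bounds $\int|k_{w}(v,u)|\,du\lesssim\langle v\rangle^{-1}$ and $|w\Gamma|\lesssim\nu\,\|wf\|_{\infty}^{2}$, then Cauchy/uniqueness by the same estimate on differences. The one consolidation the paper makes is to run a \emph{single} positivity-preserving iteration directly on $F^{\ell}$ (with $Q_{+}(F^{\ell},F^{\ell})-\nu(F^{\ell})F^{\ell+1}$ on the right), which when rewritten in $f^{\ell}$ yields the mixed nonlinearity $\Gamma_{+}(f^{\ell},f^{\ell})-\Gamma_{-}(f^{\ell},f^{\ell+1})$ and thereby avoids your second parallel iteration and the separate argument that the two limits coincide.
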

								
								\begin{proof}
									For the proof we use a sequence of $F^{0}\equiv 0$ and for $\ell \geq 0$
									\begin{equation}\notag
									\begin{split}
									&\p_{t } F^{\ell+1} + v\cdot \nabla_{x} F^{\ell+1} - \nabla_{x} (\phi + \Phi) \cdot \nabla_{v} F^{\ell+1} \\
									&\quad\quad = Q_{+} (F^{\ell},F^{\ell}) - \nu(F^{\ell}) F^{\ell+1},  \ \ F|_{t=0} = F_{0}, \\
									&F^{\ell+1} (t,x,v) = F^{\ell+1} (t,x,R_{x}v) \ \ \ \text{on}  \ \  \p\O.
									\end{split}
									\end{equation}
									Note that $$\frac{d}{ds} e^{- \int^{t}_{s} \nu(F^{\ell}) (\tau, X(\tau;t,x,v), X(\tau;t,x,v) ) \dd \tau }F^{\ell+1} (s, X(s), V(s)) = Q_{+} (F^{\ell}, F^{\ell}) (s, X(s), V(s)),$$ where $X(s)= X(s;t,x,v), V(s):= V(s;t,x,v)$ satisfying (\ref{E_Ham}) and (\ref{specular_cycles}). Note that if $F^{\ell} \geq 0$, then $\nu(F^{\ell}) \geq 0$ and $Q_{+} (F^{\ell}, F^{\ell}) \geq 0$. Therefore, if $F^{\ell} \geq 0$ and $F_{0}\geq 0$, then 
									\begin{equation}\label{positive_F}
									F^{\ell+1} \geq 0 \ \ \text{ for all} \  \ell.
									\end{equation}

									\noindent From $F^{\ell+1} = \mu_{E} + \sqrt{\mu_{E}} f^{\ell+1}$,
									\begin{equation} \label{iteration scheme}
									\begin{split}
									&\p_t f^{\ell+1} + v\cdot\nabla_x f^{\ell+1} - \nabla_x (\phi +\Phi)\cdot\nabla_v f^{\ell+1}  + e^{-\Phi} \nu f^{\ell+1} + \frac{f^{\ell+1}}{2} v\cdot\nabla_x \phi\\ & =
									e^{-\Phi }Kf^{\ell } 
									-\sqrt{\mu_E} v\cdot\nabla_x \phi +e^{-\frac{\Phi }{2}} \Gamma_{+}(f^{\ell},f^{\ell})-e^{-\frac{\Phi }{2}} \Gamma_{-}(f^{\ell},f^{\ell+1}).\end{split}
									\end{equation}
									For $h^{\ell} := w f^{\ell}$
									\begin{equation} \label{equation_h}
									\begin{split}
									&  \p_t h^{\ell+1} + v\cdot\nabla_x h^{\ell+1} - \nabla_x( \phi + \Phi)\cdot\nabla_v h^{\ell+1} \\
									&\quad\quad + \frac{h^{\ell+1}}{w} \nabla (\phi+\Phi)\cdot\nabla_v w  + e^{-\Phi } \nu h^{\ell+1} + \frac{h^{\ell+1}}{2} v\cdot\nabla_x  \phi   \\
									&=  e^{-\Phi } K_w h^{\ell } - w\sqrt{\mu_E }v\cdot\nabla_x \phi + w e^{-\frac{\Phi }{2}} \Gamma_{+}(\frac{h^{\ell}}{w},\frac{h^{\ell}}{w}) 
									- w e^{-\frac{\Phi }{2}} \Gamma_{-}(\frac{h^{\ell}}{w},\frac{h^{\ell+1}}{w}) 
									.\end{split}
									\end{equation}
									We claim that we can choose $0<T^{*} \ll1 $ such that for all $\ell$
									\begin{equation}\label{local_uniform}
									\sup_{0 \leq t \leq T^{*}}\| h^{\ell}(t) \|_{\infty} \leq 2 ( \delta_{0}+C \delta_{\phi}).
									\end{equation}

									\noindent We define,
									\begin{equation}\label{E_G}\begin{split}
									E(v,t,x) &:= \exp \Big\{- \int_{s}^{t} \nu_{E} (\tau, X(\tau;t,x,v), V(\tau;t,x,v)) \dd \tau \Big\}  \\
									&:= \exp \Big\{ -\int_{s}^{t} \big[ e^{ -\Phi (X(\tau))}\nu(V(\tau))+\frac{1}{2} V(\tau )\cdot\nabla \phi(\tau,X(\tau))  \\
									&\quad\quad \quad\quad + \frac{1}{w}\nabla_x ( \phi(\tau,X(\tau)) + \Phi(X(\tau )) \cdot\nabla_v w(V(\tau )) \big]\dd \tau \Big\} ,  \\
									G^{\ell+1} &:=   -  w \sqrt{\mu_E}V\cdot\nabla_x \phi + w e^{-\frac{\Phi }{2}} \Gamma_{+}(\frac{h^{\ell}}{w},\frac{h^{\ell}}{w})- w e^{-\frac{\Phi }{2}} \Gamma_{-}(\frac{h^{\ell}}{w},\frac{h^{\ell+1}}{w}) .
									\end{split}\end{equation}
									Along the trajectory,
									%
									\begin{equation}\begin{split}\notag
									&  \frac{\dd}{\dd s} \Big( E(v,t,s) h^{\ell+1}(s,X(s;t,x,v),V(s;t,x,v)) \Big)\\
									&=  {E(v,t,s) } \big[ e^{-\Phi (X(s))} K_w h^{\ell+1}  + G^{\ell+1} \big](s,X(s;t,x,v),V(s;t,x,v)).\end{split}
									\end{equation}
									By integrating from $0$ to $t$, we obtain
									\begin{equation} \begin{split}\label{Duhamel_once}
									h^{\ell+1}(t,x,v) =& E(v,t,0) h^{\ell+1}(0, X(0), V(0)) + \int^{t}_{0} E(v,t,s) G^{\ell+1}(s)   \dd s\\
									&  + \int^{t}_{0} E(v,t,s) e^{-\Phi (X(s))} \int_{\R^{3}} k_{w}(u,V(s)) h^{\ell+1}(s,  X(s;t,x,v),u)   \dd u  \dd s .\end{split}
									\end{equation}
									
									\noindent From (\ref{nu0}), 
									\begin{equation}\label{est_nu_E}
									\langle  V(\tau;t,x,v)\rangle\lesssim_{\phi, \Phi} \nu_{E}(\tau, X(\tau;t,x,v), V(\tau;t,x,v)) \lesssim_{\phi, \Phi} \langle  V(\tau;t,x,v)\rangle.
									\end{equation}
									Recall the standard estimates (see Lemma 4 and Lemma 5 in \cite{GKTT1})
									\begin{equation}\label{est_kw}
									\int_{\R^{3}} |k_{w} (v,u)| \dd u \leq C_{K}\langle v\rangle^{-1}, \ \ \ 
									\Big| w \Gamma_{\pm} (\frac{h_{1}}{w}, \frac{h_{2}}{w})(v) \Big| \lesssim 
									\langle v\rangle |h_{1}| |h_{2}|.
									\end{equation}
									Therefore,
									\begin{equation}\label{est_G}
									\begin{split}
									|G^{\ell+1}  (s;t,x,v)| &\lesssim_{\Phi}  \| \nabla_{x} \phi (s )  \|_{\infty} e^{- \frac{|V(s)|^{2}}{8}} \\
									&\quad + \langle V(s;t,x,v) \rangle \{ \| h^{\ell}(s) \|_{\infty} + \| h^{\ell+1}(s) \|_{\infty}  \} \| h^{\ell}(s) \|_{\infty}. 
									\end{split}
									\end{equation}

									\noindent From (\ref{est_nu_E}) and (\ref{est_G}), we deduce that 
									\begin{equation}\notag
									\begin{split}
									\sup_{0\leq t \leq T}\| h^{\ell+1}(t) \|_{\infty} \leq& \  \delta_{0}+  C\delta_{\phi} +C_{K}  T\sup_{0 \leq s \leq t} \| h^{\ell+1} (s) \|_{\infty}\\
									&+C \big\{
									\sup_{0 \leq s \leq t} \| h^{\ell+1} (s) \|_{\infty}  + \sup_{0 \leq s \leq t} \| h^{\ell } (s) \|_{\infty} \big\} \sup_{0 \leq s \leq t} \| h^{\ell } (s) \|_{\infty}.
									\end{split}
									\end{equation}
									Choose $T^{*}>0$ such that $C_{K}T^{*} \ll 1$. Then from (\ref{local_uniform}) for $h^{\ell}$
									\begin{equation}\notag
									\Big(1- \frac{1}{10} - 2 (\delta_{0} + C \delta_{\phi})
									\Big) \times \sup_{0\leq t \leq T}\| h^{\ell+1}(t) \|_{\infty} \leq
									\delta_{0}+  C\delta_{\phi} + C\big( \delta_{0}+  C\delta_{\phi}\big)^{2},
									\end{equation}
									and we prove the same upper bound of (\ref{local_uniform}) for $h^{\ell+1}$ for sufficiently small $\delta_{0}$ and $\delta_{\phi}$.
									
									We can show that $h^{\ell}$ is a Cauchy sequence in $L^{\infty} ([0,T^{*}); L^{\infty}(\O \times\R^{3}))$ by repeat the argument with $h^{\ell+1} - h^{\ell}$. Then we pass a limit $\ell \rightarrow \infty$ to prove the existence and (\ref{local_est}). Using (\ref{positive_F}) and this limit we prove $F \geq 0$. Assume $h_{1}$ and $h_{2}$ solve the same equation (\ref{equation_h}). Following the same proof of (\ref{local_uniform}) we prove that $\sup_{0 \leq t \leq T^{*}}\| h_{1}- h_{2} \|_{\infty}\leq o(1) \sup_{0 \leq t \leq T^{*}}\| h_{1}- h_{2} \|_{\infty}$. Hence $h_{1} \equiv h_{2}$ and we conclude the uniqueness. 
									
									For $0<\e\ll1$, from (\ref{equation_h}) with $h=h^{\ell+1}$
									\begin{eqnarray*} 
										\|h(t+ \e)\|_{\infty} - \|h( t)\|_{\infty}
										&\leq&  \|h(t+ \e) -h( t)\|_{\infty} \\
										&\lesssim& \e \{ \| h _{0} \|_{\infty} + \| h \|_{\infty}+ \| h \|_{\infty}^{2} + \| \phi \|_{C^{1}}  \}.
									\end{eqnarray*}
									Hence $\| w f(t) \|_{\infty}$ is continuous on $[0, T^{*})$.\end{proof}

								\begin{lemma}For $w(v)= (1+ |v|)^{\beta}$ with $\beta>2$,
									\begin{equation}\label{est_Gamma_2}
									\begin{split}
									\Big|\int_{\R^{3}} \Gamma_{+}(\psi, \psi)   \varphi \dd v
									\Big|
									&\lesssim 
									\|  w \psi \| _{\infty}
									\| \psi \|_{L^{2}(\R^{3})}
									\|   \varphi  \|_{L^{2}(\R^{3})},\\
									\Big|\int_{\R^{3}} \Gamma_{-}(\psi, \psi)   \varphi \dd v
									\Big|
									&\lesssim 
									\|  w \psi \| _{\infty} \big\{
									\| \psi \|_{L^{2}(\R^{3})}
									\|   \varphi  \|_{L^{2}(\R^{3})}
									+ \| (\mathbf{I} - \mathbf{P}) \psi \|_{\nu} \| \varphi \|_{\nu}
									\big\}.
									\end{split}
									\end{equation}
									
								\end{lemma}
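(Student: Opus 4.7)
The plan is to first unfold both $\Gamma_\pm$ using the collisional conservation identity $\mu(u)\mu(v)=\mu(u')\mu(v')$, which cancels the $1/\sqrt{\mu(v)}$ prefactor and rewrites
\[
\Gamma_+(\psi,\psi)(v) = \int B(v-u,\omega)\sqrt{\mu(u)}\,\psi(u')\psi(v')\,\dd\omega\,\dd u,\qquad \Gamma_-(\psi,\psi)(v) = \psi(v)\int B(v-u,\omega)\sqrt{\mu(u)}\,\psi(u)\,\dd\omega\,\dd u.
\]
For the loss term I would bound $|\psi(u)|\leq\|w\psi\|_\infty/w(u)$ pointwise; since $B=|(v-u)\cdot\omega|$ and $\sqrt{\mu(u)}$ provides Gaussian decay (with $\beta>2$ ensuring integrability), the inner integral is controlled by $\|w\psi\|_\infty\,\nu(v)$, reducing the task to estimating $\int\nu(v)|\psi(v)\varphi(v)|\,\dd v$. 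I then split $\psi=\mathbf{P}\psi+\ip\psi$: on the hydrodynamic component $\mathbf{P}\psi=(a+b\cdot v+c|v|^2)\sqrt{\mu_E}$ the Gaussian prefactor absorbs $\nu(v)$, so Cauchy--Schwarz together with $L^2$-boundedness of $\mathbf{P}$ produces $\|\psi\|_{L^2}\|\varphi\|_{L^2}$; on the microscopic component, Cauchy--Schwarz directly gives $\|\ip\psi\|_\nu\|\varphi\|_\nu$, matching the second half of the claimed bound.

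For the gain term I would extract $\|w\psi\|_\infty$ by bounding only one of the two post-collisional factors, say $|\psi(u')|\leq\|w\psi\|_\infty/w(u')$. This writes $\Gamma_+(\psi,\psi)(v)$ as $\|w\psi\|_\infty$ times an integral operator $\int K(v,v')\psi(v')\,\dd v'$, where $K(v,v')$ packages $B(v-u,\omega)\sqrt{\mu(u)}/w(u')$ with the collisional $\delta$-distribution $\delta(v'-v+((v-u)\cdot\omega)\omega)$. I would then apply Schur's test: the two bounds $\sup_v\int K(v,v')\,\dd v'\lesssim 1$ and $\sup_{v'}\int K(v,v')\,\dd v\lesssim 1$ follow because $\sqrt{\mu(u)}$ concentrates the integration on $|u|\lesssim 1$, so the spherical average of $(1+|u'|)^{-\beta}$ yields decay $\min((1+|v|)^{1-\beta},(1+|v|)^{-2})$, which cancels the $B\sim 1+|v|$ growth precisely when $\beta>2$. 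Schur's test then delivers $\|\Gamma_+(\psi,\psi)\|_{L^2}\lesssim\|w\psi\|_\infty\|\psi\|_{L^2}$, and a final Cauchy--Schwarz against $\varphi$ closes the first estimate.

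The hard part will be this Schur estimate for the gain kernel. Unlike the loss term, which naturally furnishes the collision frequency $\nu(v)$, the gain term must avoid producing $\nu$-weights on both $\psi$ and $\varphi$, which would downgrade the conclusion to the weaker $\|\psi\|_\nu\|\varphi\|_\nu$. To prevent this one must carefully pair the polynomial weight $w(u')^{-1}$---which is near-singular in the $\omega$-directions where $u'$ is small---against the $\mathbb{S}^2$-measure and the $B$-growth, exploiting the identity $|u|^2+|v|^2=|u'|^2+|v'|^2$ to locate the mass of $(1+|u'|)^{-\beta}$ when $v$ is large. The hypothesis $\beta>2$ is exactly what is needed to make this balance quantitative.
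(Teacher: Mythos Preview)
Your treatment of $\Gamma_-$ is exactly the paper's: bound $|\psi(u)|\le \|w\psi\|_\infty/w(u)$, recognize the remaining $u$-integral as $\nu(v)$ up to constants, split $\psi=\mathbf{P}\psi+(\mathbf{I}-\mathbf{P})\psi$, and use that $\langle v\rangle\mathbf{P}\psi$ retains Gaussian decay so that Cauchy--Schwarz produces the two displayed terms.

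For $\Gamma_+$ you and the paper start the same way---pull out $\|w\psi\|_\infty$ from one post-collisional factor and reduce to an $L^2\to L^2$ bound on an integral operator---but the paper makes the kernel explicit via the Carleman representation (citing (32), Lemma~6 and (34) of \cite{Kim11}): after passing to the variables $(v',v_1')$ with $v_1'\in E_{vv'}$, the hyperplane integral together with the inequality $w(v_1')^{-1}e^{-|{-}v+v'+v_1'|^2/4}\lesssim w(v-v')^{-1}$ collapses everything to the \emph{symmetric} kernel $\langle v-v'\rangle^{-(\beta-1)}/|v-v'|^2$, whose integrability in either variable is immediate precisely when $\beta>2$. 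Your delta-function formulation in the original $(u,\omega)$ variables is equivalent in principle, but note that only the first Schur bound $\sup_v\int K(v,v')\,\dd v'$ reduces directly to the spherical-average computation you describe; for the second bound $\sup_{v'}\int K(v,v')\,\dd v$ the collision involution moves the Gaussian weight $\sqrt{\mu}$ off the integration variable, so the estimate is no longer symmetric and needs a separate argument. The Carleman route buys that symmetry for free, which is why the paper's proof is shorter than your sketch of the ``hard part'' suggests it would be.
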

								\begin{proof}
									Via the well-known Carleman representation (for example see (32) in \cite{Kim11}), we have 
									\begin{equation}\begin{split}\notag
									\Gamma_{+} & (\psi, \psi) (v) =  \frac{1}{\sqrt{\mu(v)} }Q_{+} (\sqrt{\mu }\psi, \sqrt{\mu }\psi) (v)\\
									&= 
									2 \int_{\R^{3}} \psi(v^{\prime}) \frac{
										1}{|v-v^{\prime}|^{2}}
									\int_{E_{vv^{\prime}}}
									\psi (v_{1}^{\prime})  e^{- \frac{|-v+ v^{\prime} + v_{1}^{\prime}|^{2}}{4}}
									B(2 v-v^{\prime } - v_{1}^{\prime}, \frac{v^{\prime} - v_{1}^{\prime}}{|v^{\prime} - v_{1}^{\prime}|})
									\dd v_{1}^{\prime} 
									\dd v^{\prime},
									\end{split}\end{equation}
									where $E_{vv^{\prime}}$ is a hyperplane containing $v \in\R^{3}$ and perpendicular to $\frac{v^{\prime}-v}{|v^{\prime}-v|} \in \mathbb{S}^{2}$, i.e.
									\[
									E_{vv^{\prime}} : = \{ v_{1}^{\prime} \in \R^{3}: (v_{1}^{\prime} - v ) \cdot (v^{\prime} - v) =0  \}.
									\]
									For the internal integration over $E_{vv^{\prime}}$, using Lemma 6 and (34) in \cite{Kim11}, we bound it above as 
									\begin{equation}\notag
									\int_{E_{vv^{\prime}}} \cdots \dd v_{1}^{\prime}
									\lesssim \| w \psi \|_{\infty}  
									\frac{ 1+|v-v^{\prime}|  }{ w(v-v^{\prime})} 
									\lesssim \| w \psi \|_{\infty}  \langle v-v^{\prime} \rangle^{-(\beta-1)} ,
									\end{equation}
									where we have used $w(v_{1}^{\prime})^{-1}e^{- \frac{|-v+ v^{\prime} + v_{1}^{\prime}|^{2}}{4}}\lesssim w( v- v^{\prime})^{-1}$. Note that \\ $\int_{\R^{3}} \frac{ \langle v-v^{\prime} \rangle^{-(\beta-1)} }{|v-v^{\prime}|^{2}  }  \dd v^{\prime}\lesssim 1$ for $\beta>2$. Hence we conclude that 
									\begin{equation}\begin{split}\notag
									&\big|\int_{\R^{3}} \Gamma_{+} (\psi, \psi) \varphi
									\dd v\big|\\
									&\lesssim 
									\| w \psi \|_{\infty}\int_{\R^{3}} \int_{\R^{3}}
									\frac{ \langle v-v^{\prime} \rangle^{-(\beta-1)} }{|v-v^{\prime}|^{2}  }
									|\psi(v^{\prime})|
									|\varphi(v)|
									\dd v^{\prime} \dd v\\
									&\lesssim  \| w \psi \|_{\infty}
									\Big[ \int_{\R^{3}}
									\Big(
									\int_{\R^{3}} 
									\frac{\langle v-v^{\prime} \rangle^{-( \beta-1)}}{|v-v^{\prime}|^{2}}
									\dd v
									\Big)
									|\psi(v^{\prime})|^{2} \dd v^{\prime}\Big]^{\frac{1}{2}} \\
									&\quad \times
									\Big[ \int_{\R^{3}}
									\Big(
									\int_{\R^{3}} 
									\frac{\langle v-v^{\prime} \rangle^{-( \beta-1)}}{|v-v^{\prime}|^{2}}
									\dd v^{\prime}
									\Big)
									|\varphi(v)|^{2} \dd v \Big]^{\frac{1}{2}}\\
									&\lesssim  \| w \psi \|_{\infty}
									\| \psi \|_{L^{2}(\R^{3})}  \| \varphi \|_{L^{2}(\R^{3})}.
									\end{split}
									\end{equation}
									
									\noindent For the $\Gamma_{-}$ estimate, we have 
									\begin{eqnarray*}
										\int_{\R^{3}}| \Gamma_{-} (\psi , \psi ) (v)\varphi(v) |  \dd v& \leq &\int_{\R^{3}} \int_{\R^{3}} |v-u| |\psi  (u) |\sqrt{\mu(u)} \dd u \times |\psi(v)|| \varphi(v)| \dd v\\
										&\lesssim&  \| w \psi  \|_{\infty}   \int_{\R^{3}}   \langle v\rangle \{ | \mathbf{P}\psi (v)|  + |( \mathbf{I}-  \mathbf{P})\psi (v)|  \}| \varphi(v)|\dd v\\
										&\lesssim& \| w \psi  \|_{\infty}
										\big\{ \| \psi \|_{L^{2} (\R^{3})} \| \varphi \|_{L^{2} (\R^{3})}
										+ \| (\mathbf{I} - \mathbf{P}) \psi\|_{\nu} \| \varphi \|_{\nu}
										\big\},
									\end{eqnarray*}
									where we have used the fact, for all $p \in [1, \infty]$,
									\[
									\| \langle v \rangle  \mathbf{P} \psi \|_{L^{p} (\R^{3})}
									\lesssim \Big\|   \langle v\rangle ^{3} \sqrt{\mu(v)}
									\int_{\R^{3}} \psi(u) \langle u\rangle ^{2} \sqrt{\mu(u)} \dd u\Big\|_{L^{p}(\R^{3})}
									\lesssim \| \psi \|_{L^{p}(\R^{3})} .
									\]
								\end{proof}
								
								\begin{lemma}Let $f$ solve (\ref{E_eqtn}). Then there exists a constant $C>0$ not depending on $f_{0}, f$ and $\phi $ such that, for all $t \geq 0$, 
									\begin{equation}\label{f_growth}
									\|  f(t) \|_{2}^{2} \leq C\Big( 
									\| f(0) \|_{2}^{2}
									+\int^{t}_{0} \| \phi(s) \|_{\infty}   \Big) \times \Big(
									1+ C(\| \phi \|_{\infty} + \| w f \|_{\infty}) t e^{ C(   \| \phi \|_{\infty}  + \| w f \|_{\infty} ) t }
									\Big).
									\end{equation}
								\end{lemma}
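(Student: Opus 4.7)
The plan is to carry out a standard weighted $L^{2}$ energy estimate on equation (\ref{E_eqtn}) and then apply a Gronwall argument. Specifically, I would multiply (\ref{E_eqtn}) by $f$ and integrate over $\Omega\times\R^{3}$, so that the left-hand side produces $\tfrac{1}{2}\frac{d}{dt}\|f(t)\|_{2}^{2}$ plus dissipation, while the right-hand side yields a linear source in $\phi$ and a quadratic contribution from $\Gamma(f,f)$.

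First I would dispose of the geometric terms. The transport term $\iint f(v\cdot\nabla_{x}f)=\tfrac{1}{2}\iint v\cdot\nabla_{x}(f^{2})$ reduces via Green's theorem to a boundary integral of $(v\cdot n)f^{2}$ on $\p\Omega\times\R^{3}$; this vanishes because the specular condition (\ref{specular_f}) together with the change of variables $v\mapsto R_{x}v$ makes the $v\cdot n>0$ and $v\cdot n<0$ contributions cancel. The field term $\iint f\,\nabla_{x}(\phi+\Phi)\cdot\nabla_{v}f=\tfrac{1}{2}\iint\nabla_{x}(\phi+\Phi)\cdot\nabla_{v}(f^{2})$ vanishes by integration by parts in $v$, since $\nabla_{x}(\phi+\Phi)$ is $v$-independent. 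The linearized collision operator provides nonnegative dissipation: by (\ref{semi-positive}) and the boundedness of $e^{-\Phi}$, $\iint e^{-\Phi}(Lf)f\geq c_{0}\|\sqrt{\nu}\ip f\|_{2}^{2}$ for some $c_{0}>0$.

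Next I would estimate the sources on the right-hand side. Using the pointwise inequality $f^{2}\leq\|wf\|_{\infty}|f|/w$ together with $|v|/w\in L^{2}_{v}$ (valid since $\beta>5/2$), one obtains
\begin{equation*}
\Big|\iint\tfrac{1}{2}f^{2}\,v\cdot\nabla_{x}\phi\Big|\lesssim\|\phi\|_{C^{1}}\|wf\|_{\infty}\|f\|_{2},\qquad\Big|\iint f\sqrt{\mu_{E}}\,v\cdot\nabla_{x}\phi\Big|\lesssim\|\phi\|_{C^{1}}\|f\|_{2}.
\end{equation*}
For the nonlinear piece, I would invoke (\ref{est_Gamma_2}) with $\varphi=f$ to get $|\iint e^{-\Phi/2}\Gamma(f,f)f|\lesssim\|wf\|_{\infty}(\|f\|_{2}^{2}+\|\ip f\|_{\nu}\|f\|_{\nu})$.

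The main obstacle will be the cross term $\|wf\|_{\infty}\|\ip f\|_{\nu}\|f\|_{\nu}$, since the factor $\|f\|_{\nu}$ is stronger than $\|f\|_{2}$. I would handle it by writing $\|f\|_{\nu}\leq\|\ip f\|_{\nu}+\|\P f\|_{\nu}$ and noting that $\|\P f\|_{\nu}\lesssim\|f\|_{2}$ because $\P f$ is a polynomial in $v$ times $\sqrt{\mu_{E}}$; Cauchy--Schwarz then yields $\|wf\|_{\infty}\|\ip f\|_{\nu}\|f\|_{\nu}\leq\tfrac{1}{2}c_{0}\|\ip f\|_{\nu}^{2}+C\|wf\|_{\infty}\|f\|_{2}^{2}$ provided $\|wf\|_{\infty}$ is small (which is guaranteed on the interval of existence from Theorem \ref{local_existence}), so the first piece is absorbed into the $L$-dissipation. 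Combining everything and applying Young's inequality $\|\phi\|_{C^{1}}\|f\|_{2}\leq\|\phi\|_{C^{1}}+\|\phi\|_{C^{1}}\|f\|_{2}^{2}$ produces a differential inequality of the form
\begin{equation*}
\frac{d}{dt}\|f(t)\|_{2}^{2}\leq C\|\phi(t)\|_{\infty}+C\big(\|\phi(t)\|_{\infty}+\|wf(t)\|_{\infty}\big)\|f(t)\|_{2}^{2},
\end{equation*}
and a standard application of Gronwall's inequality then yields (\ref{f_growth}), with the factor $1+C(\cdots)t e^{C(\cdots)t}$ arising from peeling the time-integrated source $\int_{0}^{t}\|\phi(s)\|_{\infty}\,ds$ off the exponential.
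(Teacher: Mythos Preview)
Your proposal is correct and follows essentially the same energy-estimate-plus-Gronwall strategy as the paper. The only differences are in bookkeeping: the paper handles the term $\iint \tfrac{1}{2}|f|^{2}\,v\cdot\nabla_{x}\phi$ by the decomposition $f=\mathbf{P}f+(\mathbf{I}-\mathbf{P})f$ (so that $\int|v||\mathbf{P}f|^{2}\lesssim\|f\|_{2}^{2}$ and the $(\mathbf{I}-\mathbf{P})f$ piece is absorbed into the $L$-dissipation) rather than via your pointwise bound $|f|^{2}\le\|wf\|_{\infty}|f|/w$, and for the nonlinear term the paper uses the orthogonality $\int_{\R^{3}}\Gamma(f,f)\,\mathbf{P}g\,dv=0$ to test directly against $(\mathbf{I}-\mathbf{P})f$, avoiding your extra step of splitting $\|f\|_{\nu}\le\|(\mathbf{I}-\mathbf{P})f\|_{\nu}+\|\mathbf{P}f\|_{\nu}$.
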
	
								\begin{proof}
									\begin{eqnarray*}
										&&\| f(t) \|_{2}^{2} +\int^{t}_{0} \iint_{\O \times \R^{3}} e^{- \Phi }f L f
										\\
										&= &
										\| f(0) \|_{2}^{2}-\underbrace{\int^{t}_{0} \iint_{\O \times \R^{3}} \frac{v \cdot \nabla_{x} \phi}{2}|f|^{2}}_{(I)}
										- \underbrace{\int^{t}_{0}\iint _{\O \times \R^{3}}v\cdot \nabla_{x} \phi \mathbf{P}f  \sqrt{\mu_{E}}}_{(II)}
										\\
										&& + \underbrace{\int^{t}_{0}\iint_{\O \times \R^{3}} e^{- \frac{\Phi}{2}}\Gamma (f,f) (\mathbf{I} - \mathbf{P})f}_{(III)}.
									\end{eqnarray*}
									By the decomposition $f= \mathbf{P} f + (\mathbf{I} - \mathbf{P})f$ and the strong decay-in-$v$ of $\mathbf{P} f$ in (\ref{E_Pf}),
									\begin{equation}\notag\begin{split}
									(I)  &\leq \| \phi \|_{\infty}\Big\{ \int^{t}_{0} \iint_{\O\times\R^{3}} |v| |\mathbf{P}f|^{2}
									+  \int^{t}_{0} \| (\mathbf{I} - \mathbf{P})f \|_{\nu}^{2}
									\Big\} 	\\
									&\lesssim \| \phi \|_{\infty}
									\Big\{ \int^{t}_{0} \| f \|_{2}^{2} +  \int^{t}_{0} \| (\mathbf{I} - \mathbf{P})f \|_{\nu}^{2}
									\Big\},\\
									(II) & \lesssim \| \phi \|_{\infty} \int^{t}_{0} \| f\|_{2}^{2}
									+ \int^{t}_{0} \| \phi (s) \|_{\infty} \dd s.
									\end{split}\end{equation}
									From (\ref{est_Gamma_2})
									\begin{equation}\notag
									\begin{split}
									(III)  &\lesssim \int^{t}_{0}\int_{\O}\| w f(s,x, \cdot) \|_{\infty}
									\| (\mathbf{I} - \mathbf{P}) f (s,x, \cdot) \|_{\nu} ^{2}   \dd x \dd s
									\\
									&\quad +   \int^{t}_{0}\int_{\O}\| w f(s,x, \cdot) \|_{\infty}
									\|  f (s,x, \cdot) \|_{2} ^{2}   \dd x \dd s
									\\
									&\lesssim \| w f \|_{\infty} \int^{t}_{0} \| (\mathbf{I} - \mathbf{P}) f (s)\|_{\nu}^{2} \dd s+ 
									\| w f \|_{\infty} \int^{t}_{0} \| f (s ) \|_{2}^{2} \dd s.
									\end{split}
									\end{equation}
									Using (\ref{semi-positive}) and collecting the terms, we deduce that, for some constant $C>0$, 
									\begin{equation}
									\begin{split}
									\| f(t) \|_{2}^{2}  \leq & \ \| f(t) \|_{2}^{2} 
									+ \big( \delta_{L} -   \| \phi \|_{\infty} -\| w f \|_{\infty} \big) \int^{t}_{0} \| (\mathbf{I} - \mathbf{P}) f \|_{\nu}^{2}\\
									\leq  & \ \| f(0)\|_{2}^{2}  + C  \big( \| \phi \|_{\infty}  +
									\| w f \|_{\infty}
									\big)
									\int^{t}_{0} \| f \|_{2}^{2} + C\int^{t}_{0} \| \phi \|_{\infty} . 
									\end{split}
									\end{equation}
									By the Gronwall's inequality we conclude (\ref{f_growth}).\end{proof}
								
								\begin{lemma}Assume $F= \mu_{E} + \sqrt{\mu_{E}}f$ solves (\ref{Boltzmann_phi}) and satisfies (\ref{evol_energy}). Assume (\ref{exp_phi}) and 
									\begin{equation}\label{large_lambda_phi}
									\lambda_{\phi} \gg \delta_{\phi}+ \| w f \|_{\infty}.
									\end{equation}
									Then 
									\begin{equation}\label{excess_energy}
									\begin{split}
									&\Big|\iint_{\O\times\R^{3}}  \Big( \frac{|v|^{2}}{2} + \Phi (x)  \Big) F(t,x,v) \dd x \dd v -  \iint_{\O\times\R^{3}}  \Big( \frac{|v|^{2}}{2} + \Phi (x)  \Big) F_{0}(x,v) \dd x \dd v\Big|\\
									&\lesssim\frac{\delta_{\phi}}{\lambda_{\phi}} \{1 + \| f(0) \|_{2}^{2} + \| w f \|_{\infty}\}.
									\end{split}
									\end{equation}
									
								\end{lemma}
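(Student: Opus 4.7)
The plan is to start from the exact energy identity (\ref{evol_energy}), which after rearrangement expresses the excess energy as
\begin{equation}\label{prop:reduce}
\iint_{\O\times\R^3}\Big(\tfrac{|v|^2}{2}+\Phi\Big)\big(F(t)-F_0\big)
\;=\;-\int_0^t \iint_{\O\times\R^3} F(s)\,v\cdot\nabla_x\phi(s,x)\,\dd x\,\dd v\,\dd s.
\end{equation}
So the whole task reduces to bounding this right-hand side by $\tfrac{\delta_\phi}{\lambda_\phi}\{1+\|f(0)\|_2^2+\|wf\|_\infty\}$.

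First I would use the ansatz $F=\mu_E+\sqrt{\mu_E}f$ to split the integrand. The $\mu_E$-contribution to \eqref{prop:reduce} is
$$
-\int_0^t \iint_{\O\times\R^3}\mu(v)e^{-\Phi(x)}\,v\cdot\nabla_x\phi(s,x)\,\dd x\,\dd v\,\dd s,
$$
which vanishes identically because $\int_{\R^3} v\,\mu(v)\,\dd v=0$ (the integrand is odd in $v$). This kills the potentially troublesome ``background'' contribution and leaves only
$$
\Big|\int_0^t\iint_{\O\times\R^3}\sqrt{\mu_E}\,f(s)\,v\cdot\nabla_x\phi(s)\,\dd x\,\dd v\,\dd s\Big|
\;\leq\;\int_0^t \|\nabla_x\phi(s)\|_\infty\,\|f(s)\|_{L^2}\,\big\||v|\sqrt{\mu_E}\big\|_{L^2(\O\times\R^3)}\,\dd s,
$$
by Cauchy--Schwarz, with $\||v|\sqrt{\mu_E}\|_{L^2}\lesssim 1$ since $\Phi$ is bounded.

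Next I would invoke the exponential decay hypothesis (\ref{exp_phi}), which gives both $\|\nabla_x\phi(s)\|_\infty\leq \delta_\phi e^{-\lambda_\phi s}$ and $\int_0^s\|\phi(\tau)\|_\infty\,\dd\tau\leq \delta_\phi/\lambda_\phi$. Plugging the latter into the $L^2$-growth estimate (\ref{f_growth}) yields
$$
\|f(s)\|_2^2 \;\lesssim\; \big(\|f(0)\|_2^2+\tfrac{\delta_\phi}{\lambda_\phi}\big)\,
\big(1+C(\delta_\phi+\|wf\|_\infty)\,s\,e^{C(\delta_\phi+\|wf\|_\infty)s}\big).
$$
The key structural point is now that the largeness hypothesis (\ref{large_lambda_phi}), $\lambda_\phi\gg \delta_\phi+\|wf\|_\infty$, forces the growth rate of $\|f(s)\|_2$ to be dominated by the decay rate of $\phi$. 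Concretely, $C(\delta_\phi+\|wf\|_\infty)<\lambda_\phi/2$, so
$$
\int_0^t e^{-\lambda_\phi s}\|f(s)\|_2\,\dd s \;\lesssim\;\big(\|f(0)\|_2+\sqrt{\tfrac{\delta_\phi}{\lambda_\phi}}\big)\int_0^t e^{-\lambda_\phi s/2}\,\dd s
\;\lesssim\; \frac{1}{\lambda_\phi}\big(\|f(0)\|_2+\sqrt{\tfrac{\delta_\phi}{\lambda_\phi}}\big).
$$

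Finally, multiplying by $\delta_\phi$ and using the elementary inequalities $\|f(0)\|_2\leq 1+\|f(0)\|_2^2$ and $\sqrt{\delta_\phi/\lambda_\phi}\leq 1+\delta_\phi/\lambda_\phi$, together with the fact that $\delta_\phi/\lambda_\phi$ can be absorbed into the $1+\|wf\|_\infty$ term (since both are small under the standing smallness hypotheses), yields the claimed bound \eqref{excess_energy}. The only delicate step is the interplay in the last integral: without the gap assumption (\ref{large_lambda_phi}) the exponent $C(\delta_\phi+\|wf\|_\infty)s-\lambda_\phi s$ need not be negative, and the bound would fail. This is where the strong decay of the external potential is indispensable, and it is the main (though essentially algebraic) obstacle in an otherwise routine computation.
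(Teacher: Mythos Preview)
Your proof is correct and follows essentially the same route as the paper: start from the energy identity \eqref{evol_energy}, split $F=\mu_E+\sqrt{\mu_E}f$, control the $f$-part via the $L^2$ growth bound \eqref{f_growth}, and use the gap condition \eqref{large_lambda_phi} so that the decay $e^{-\lambda_\phi s}$ of $\nabla_x\phi$ beats the growth $e^{C(\delta_\phi+\|wf\|_\infty)s}$ of $\|f(s)\|_2$. The one minor difference is that you observe the $\mu_E$-contribution vanishes exactly by the oddness of $v\mu(v)$, whereas the paper simply bounds it by $\iint \mu_E|v||\nabla_x\phi|\lesssim \delta_\phi e^{-\lambda_\phi s}$, which produces the ``$1$'' in the final bracket; since the target estimate \eqref{excess_energy} contains that ``$1$'' anyway, your refinement is cosmetic rather than structural.
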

								\begin{proof}
									The proof is a direct consequence of previous two lemmas ((\ref{evol_energy}), (\ref{f_growth})) and the exponential decay-in-time of $\phi(t)$ in (\ref{exp_phi}): The difference is bounded by
									\begin{equation}\notag
									\begin{split}
									&\leq \int^{t}_{0}\iint_{\O \times \R^{3}} 
									\big\{
									\mu_{E}(x,v)+
									\sqrt{\mu_{E} (x,v)} |f(s,x,v)|
									\big\}
									|v| |\nabla_{x} \phi(s,x) | \dd v \dd x \dd s
									\\
									&\leq
									\delta_{\phi}  \int^{t}_{0} e^{-\lambda_{\phi}s} 
									\bigg\{
									1+  C\Big( 
									\| f(0) \|_{2}^{2}
									+\int^{t}_{0} \| \phi(s) \|_{\infty}   \Big) \\
									&\quad \times \Big(
									1+ C(\| \phi \|_{\infty} + \| w f \|_{\infty}) s e^{ C(   \| \phi \|_{\infty}  + \| w f \|_{\infty} ) s}
									\Big)
									\bigg\} \dd s\\
									&\leq
									\delta_{\phi} \int^{t}_{0} e^{-\lambda_{\phi} s}
									\Big\{
									1+ C  \big(\| f(0) \|_{2}^{2} + \delta_{\phi} / \lambda_{\phi} \big)
									\big(
									1+ C(  \delta_{\phi} + \| w f \|_{\infty}) s e^{C( \delta_{\phi} +  \| w f \|_{\infty} )s}
									\big)
									\Big\}\dd s\\
									&\leq 
									\delta_{\phi}/ \lambda_{\phi} \big\{1+ C  \big(\| f(0) \|_{2}^{2} + \delta_{\phi} / \lambda_{\phi} \big) \big\}+    C
									\delta_{\phi }(  \delta_{\phi} + \| w f \|_{\infty}) \int^{t}_{0} e^{-  [\lambda_{\phi} -C( \delta_{\phi} +  \| w f \|_{\infty} )  ]s}\\
									&\lesssim \frac{\delta_{\phi}}{\lambda_{\phi}} \{1 + \| f(0) \|_{2}^{2} + \| w f \|_{\infty}\}.
									\end{split}
									\end{equation}
								\end{proof}
								
								\begin{lemma}[\cite{bounded_boltzmann, Kim11}]
									Recall $\mu_{E}$ in (\ref{E_Max}). Then
									\begin{equation}\label{est_via_entropy}
									\begin{split}
									|F- \mu_{E}|  \mathbf{1}_{|F- \mu_{E}|\geq \bar{\delta} \mu_{E}} &\leq
									\frac{4}{\bar{\delta}} \Big\{ 
									(F  \ln F  - \mu_{E} \ln \mu_{E} ) \\
									&\quad - (F - \mu_{E}) + \Big(  \frac{|v|^{2}}{2} + \Phi(x) \Big) (F - \mu_{E}) 
									\Big\} .
									\end{split}
									\end{equation}
									%
									%
								\end{lemma}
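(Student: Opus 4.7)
The plan is to reduce the pointwise inequality to a one-variable scalar inequality in the ratio $G := F/\mu_E$ and then verify the scalar inequality by elementary convex analysis. The key algebraic observation is that, using $-\ln\mu_E(x,v) = \tfrac{|v|^2}{2} + \Phi(x)$ from (\ref{E_Max}), the three terms in braces on the right-hand side collapse to $\mu_E \cdot h(G)$, where
\begin{equation*}
h(z) := z\ln z - z + 1, \qquad z \geq 0,
\end{equation*}
is the familiar nonnegative relative-entropy density. Indeed, writing $F = G\mu_E$ one has
\begin{equation*}
F\ln F - \mu_E\ln\mu_E = \mu_E G\ln G + (G-1)\mu_E\ln\mu_E,
\end{equation*}
while the remaining terms equal $-(G-1)\mu_E - (G-1)\mu_E\ln\mu_E$; the $\ln\mu_E$ contributions cancel and leave $\mu_E\bigl(G\ln G - (G-1)\bigr) = \mu_E h(G)$. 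Dividing through by $\mu_E$ reduces the full claim to the purely scalar assertion
\begin{equation*}
|G-1|\,\mathbf{1}_{|G-1|\geq \bar\delta} \ \leq \ \frac{4}{\bar\delta}\, h(G), \qquad G \geq 0,
\end{equation*}
which must be established for $\bar\delta$ sufficiently small (the regime in which the lemma is used later).

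The scalar inequality splits naturally into the two cases $G \geq 1 + \bar\delta$ and $0 \leq G \leq 1 - \bar\delta$. On the upper branch, set $\psi(G) := 4h(G) - \bar\delta(G-1)$; one has $\psi(1)=0$, $\psi'(G) = 4\ln G - \bar\delta$ and $\psi''(G) = 4/G > 0$, so $\psi'$ is increasing. The elementary bound $\ln(1+x) \geq x - x^2/2$ for $x \in [0,1]$ gives $\psi'(1+\bar\delta) \geq 3\bar\delta - 2\bar\delta^2 > 0$ for small $\bar\delta$, so $\psi' > 0$ on $[1+\bar\delta,\infty)$; a Taylor expansion at the endpoint then yields $\psi(1+\bar\delta) = \bar\delta^2 + O(\bar\delta^3) \geq 0$, so $\psi \geq 0$ throughout. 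The lower branch is symmetric: with $\tilde\psi(G) := 4h(G) - \bar\delta(1-G)$, the derivative $\tilde\psi'(G) = 4\ln G + \bar\delta$ is negative on $[0, 1-\bar\delta]$ for small $\bar\delta$, so $\tilde\psi$ is decreasing and minimized at $G = 1-\bar\delta$, where another Taylor expansion gives $\tilde\psi(1-\bar\delta) = \bar\delta^2 + O(\bar\delta^3) \geq 0$.

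The main (and only) delicate step, and the reason the universal constant $4$ is the tight choice rather than the $2$ one might naively expect from the quadratic approximation $h(G) \approx (G-1)^2/2$, is precisely the saturated-endpoint regime $G = 1 \pm \bar\delta$: there the left side of the reduced inequality equals exactly $\bar\delta^2$, while the right side equals $4\cdot\bar\delta^2/2 = 2\bar\delta^2$ to leading order, so one must track the $O(\bar\delta^3)$ corrections carefully to verify nonnegativity. Away from the endpoints the inequality holds with a comfortable margin: for $G$ bounded away from $1$, $h(G)$ is of fixed positive size while the left side is merely $\bar\delta\,|G-1|$, which vanishes as $\bar\delta \to 0$. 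This clarifies that the smallness of $\bar\delta$ is genuinely required for the constant $4$ to suffice (the inequality does fail numerically if $\bar\delta$ is of order one), but any application where $\bar\delta$ plays the role of a small threshold—as it does here—is comfortably covered.
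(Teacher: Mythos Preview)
Your proof is correct. Both your argument and the paper's begin with the same reduction: recognizing that the braced expression equals $\mu_E\, h(F/\mu_E)$ with $h(z)=z\ln z - z + 1$, so the claim becomes a one-variable inequality. The difference is in how the scalar inequality is verified. You differentiate $4h(G)-\bar\delta|G-1|$ and control the endpoints $G=1\pm\bar\delta$ by Taylor expansion. The paper instead invokes the integral Taylor remainder
\[
h(G)\;=\;\int_{1}^{G}\!\!\int_{1}^{s}\frac{\dd\tau}{\tau}\,\dd s\;\geq\;\frac{(G-1)^2}{2\max\{G,1\}},
\]
and then observes that on $\{|G-1|\geq\bar\delta\}$ one has $\dfrac{|G-1|}{\max\{G,1\}}\geq \dfrac{\bar\delta}{1+\bar\delta}\geq \dfrac{\bar\delta}{2}$, giving $h(G)\geq \tfrac{\bar\delta}{4}|G-1|$ in one line. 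This is slightly slicker and yields the inequality for all $\bar\delta\in(0,1]$, whereas your derivative argument as written needs $\bar\delta$ small (which is all the application requires). Your remark that the constant $4$ is saturated to leading order at $G=1\pm\bar\delta$ is a nice observation the paper does not make explicit.
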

								\begin{proof} 
									The proof is based on the proof of Lemma 4 of \cite{Kim11} and the argument in page 147 of \cite{bounded_boltzmann}. 
									
									By the Taylor expansion, for $t,s > 0$
									\begin{equation}\label{tlnt_exp}
									\frac{1}{\max \{ t,s \}} \frac{|t-s|^{2}}{2}
									\leq \int^{t}_{s}  \int^{s_{1}}_{s} \frac{1}{s_{2}} \dd  s_{2} \dd  s_{1}
									=t\ln t - s \ln s -  (1+ \ln s ) (t-s)   .
									\end{equation}
									
									Note that if $F(t,x,v) - \mu_{E} (x,v)  \geq \bar{\delta} \mu_{E} (x,v)$ with $0 \leq \bar{\delta} \ll1$, then $F\geq (1+\bar{\delta})\mu_{E}$ and hence
									\[ 
									{\max \{ F ,  \mu_{E}  \} }=   {(1+ \bar{\delta}) \mu_{E}  }.
									\]
									If $ \mu_{E} (x,v)-F(t,x,v)  \geq - \bar{\delta} \mu_{E} (x,v)$ then $(1+ \bar{\delta})\mu_{E} \geq F$ and  
									\[
									\max \{ F ,  \mu_{E}   \} \leq (1+ \bar{\delta}) \mu_{E}  .
									\]
									
									
									\noindent Therefore, if $|F- \mu_{E}| \geq \bar{\delta} \mu_{E}$ then 
									\begin{equation}\notag
									\frac{|F- \mu_{E}|}{\max\{ F, \mu_{E}  \}}  \times \frac{|F- \mu_{E}|}{2}\geq \frac{\bar{\delta} \mu_{E}}{ (1+ \bar{\delta}) \mu_{E}} \times \frac{|F- \mu_{E}|}{2} \geq \frac{\bar{\delta}}{2}  \times \frac{|F- \mu_{E}|}{2}
									\geq \frac{\bar{\delta}}{4}|F- \mu_{E}|
									.
									\end{equation}
									
									%
									%
									
									\noindent Hence, from (\ref{tlnt_exp}), we deduce that 
									\begin{equation}\notag
									\begin{split}
									& |F- \mu_{E}|  \mathbf{1}_{|F- \mu_{E}|\geq \bar{\delta} \mu_{E}} \\
									&\leq \frac{4}{\bar{\delta}} \frac{1}{\max\{ F, \mu_{E} \}} \frac{|F- \mu_{E}|^{2}}{2}
									\\
									&\leq \frac{4}{\bar{\delta}} \big\{ F(t ) \ln F(t )  - \mu_{E} \ln \mu_{E} -(1+ \ln \mu_{E} ) (F - \mu_{E}(x,v))\big\}\\
									&\leq \frac{4}{\bar{\delta}} \big\{ 
									(F  \ln F  - \mu_{E} \ln \mu_{E})  -(F - \mu_{E}) + (  {|v|^{2}}/2 + \Phi(x) ) (F - \mu_{E}) 
									\},
									\end{split}
									\end{equation}
									where we have used $\ln \mu_{E} =  -    (  \frac{|v|^{2}}{2}+ \Phi(x)  )$. 
								\end{proof}


								\begin{proof}[\textbf{Proof of Theorem \ref{theorem_time}}]
									Denote 
									\[
									T_{1} : = \sup\Big\{ t \geq 0:  \| w f(t) \|_{\infty} \leq 2(\delta_{0} + C \delta_{\phi}) \Big\}.
									\]
									Note that (\ref{conserv_F_mass}), (\ref{evol_energy}), and (\ref{entropy_bound}) hold for $0\leq t \leq T_{1}$. Nota that $f$ and $h$ satisfy (\ref{E_eqtn}) and (\ref{equation_h}) with $h^{\ell+1}=h= h^{\ell}$. Then we have (\ref{Duhamel_once}) with $h^{\ell+1}=h= h^{\ell}$.

									We apply Duhamel formula (\ref{Duhamel_once}) \textit{three times}, for $0\leq t \leq T_{1}$, and decompose integrand $h$ as
									\begin{equation}\label{decom_h}\begin{split}
									h =&  \ h \mathbf{1}_{|F- \mu_{E}| \geq \bar{\delta} \mu_{E}} + w \frac{F-\mu_{E}}{\sqrt{\mu_{E}}} \mathbf{1}_{|F- \mu_{E}| \leq \bar{\delta} \mu_{E}}  ,  \\
									\end{split}
									\end{equation}
									for sufficiently small $0< \bar{\delta}\ll 1$, to get
									
									\begin{eqnarray*} 
										&& h (t,x,v) \\
										&=& E(v,t,0) h (0) + \int^{t}_{0} E(v,t,s) G(s)  \ \dd s\\
										&& + \int^{t}_{0} E(v,t,s) e^{-\Phi (X(s))} \int_{u} k_{w}(u,v) h(s,  X(s),u)   \dd u  \dd s  \\ 
										&=& E(v,t,0) h(0) + \int^{t}_{0} E(v,t,s) G(s)    \dd s\\
										&& + \int^{t}_{0} E(v,t,s) e^{-\Phi (X(s))} \int_{u} k_{w}(u,v) E(u,s,0) \{ h(0) + \int_0^{s} E(u,s,s^{\prime}) G(s^{\prime}) \ \dd s^{\prime} \}   \\
										&& + \int^{t}_{0}  E(v,t,s) e^{-\Phi(X(s))} \int_{u} k_{w}(u,v) 
										\int^{s}_{0} E(u,s,s^{\prime}) e^{-\Phi (X(s^{\prime}))}  \\
										&& \quad \quad \quad    \times  \int_{u^{\prime}}
										k_{w}(u^\prime,u) h(s ^{\prime} , 
										X(s^{\prime}) ,u^{\prime} )   \dd u^{\prime}  \dd s^{\prime}  \dd u  \dd s
									\end{eqnarray*}
									\begin{equation} \label{full expan}
									\begin{split}
									&=  E(v,t,0) h(0)  + \int^{t}_{0} E(v,t,s) G(s)  \ \dd s  \\
									&\quad + \int^{t}_{0} E(v,t,s) e^{-\Phi } \int_{u} k_{w}(u,v) E(u,s,0) h(0)   \dd u   \dd s  \\
									& \quad +  \int^{t}_{0} \int_0^s \int_u E(v,t,s) e^{-\Phi } k_{w}(u,v) E(u,s,0) E(u,s,s^{\prime}) G(s^{\prime})  \dd u \dd s^{\prime} \dd s   \\
									& \quad + \int^{t}_{0}  E(v,t,s) e^{-\Phi (X(s))} \int_{u} k_{w}(u,v) \\
									&\quad\quad\quad\quad\quad\quad\quad \times
									\int^{s}_{0} E(u,s,s^{\prime}) e^{-\Phi (X(s^{\prime}))} \int_{u^{\prime}}
									k_{w}(u^\prime,u) E(u^{\prime},s^\prime,0) h(0)  \\
									& \quad +  \int^{t}_{0}  E(v,t,s) e^{-\Phi (X(s))} \int_{u} k_{w}(u,v) \\
									&\quad\quad\quad\quad\quad\quad\quad \times
									\int^{s}_{0} E(u,s,s^{\prime}) e^{-\Phi (X(s^{\prime}))} \int_{u^{\prime}}
									k_{w}(u^\prime,u) \int_0^{s^{\prime}} E(u^{\prime},s^{\prime},s^{\prime\prime}) G(s^{\prime\prime})   \\
									& \quad + \int^{t}_{0}  E(v,t,s) e^{-\Phi (X(s))} \int_{u} k_{w}(u,v) 
									\int^{s}_{0} E(u,s,s^{\prime}) e^{-\Phi (X(s^{\prime}))} \int_{u^{\prime}}
									k_{w}(u^\prime,u)   \\
									&  \ \ \quad \times \int_0^{s^\prime} E(u^{\prime},s^\prime,s^{\prime\prime}) e^{-\Phi (X(s^{\prime\prime}))} \int_{u^{\prime\prime}} k_{w}(u^{\prime\prime},u^\prime) h( s^{\prime\prime},  X( s^{\prime\prime} ) , u^{\prime\prime} ) \ \mathbf{1}_{|F- \mu_{E}| \leq \bar{\delta} \mu_{E}}   \\
									& \quad + \underline{ \int^{t}_{0}  E(v,t,s) e^{-\Phi (X(s))} \int_{u} k_{w}(u,v) 
										\int^{s}_{0} E(u,s,s^{\prime}) e^{-\Phi (X(s^{\prime}))} \int_{u^{\prime}}
										k_{w}(u^\prime,u) }  \\
									&  \ \ \quad \underline{  \times \int_0^{s^\prime} E(u^{\prime},s^\prime,s^{\prime\prime}) e^{-\Phi (X(s^{\prime\prime}))} \int_{u^{\prime\prime}} k_{w}(u^{\prime\prime},u^\prime) h( s^{\prime\prime},  X( s^{\prime\prime} ) , u^{\prime\prime} ) \ \mathbf{1}_{|F- \mu_{E}| \geq \bar{\delta} \mu_{E}} 
									}_{(*)} ,   
									\end{split}	 
									\end{equation}	
									where we abbreviated notations,
									\begin{equation*}
									\begin{split}
									X(s) &:= X(s;t,x,v) ,\quad X(s^{\prime}) := X^{\prime}(s^{\prime};s,X(s;t,x,v),u)  \\
									X(s^{\prime\prime})  &:= X(s^{\prime\prime};s^{\prime}, X^{\prime}(s^{\prime};s,X(s;t,x,v),u), u^{\prime}) ,
									\end{split}
									\end{equation*}
									\noindent and where we use similar definition as (\ref{E_G}).
									\begin{equation} \begin{split}
									E(v,t,s) &:= \exp \big\{- \int_{s}^{t} \nu_{E} (\tau, X(\tau;t,x,v), V(\tau;t,x,v)) \dd \tau \big\}
									\\&:=  \exp \big\{-\int_{s}^{t} \big[ e^{ -\Phi (X(\tau))}\nu(V(\tau))+\frac{1}{2} V(\tau )\cdot\nabla \phi(\tau,X(\tau))  \\
									&\quad \quad \quad \quad \quad \quad + \frac{1}{w}\nabla_x ( \phi(\tau,X(\tau)) + \Phi(X(\tau )) \cdot\nabla_v w(V(\tau )) \big]\dd \tau \big\} ,  \\
									G &:=   -  w \sqrt{\mu_E}V\cdot\nabla_x \phi + w e^{-\frac{\Phi }{2}} \Gamma(\frac{h}{w},\frac{h}{w}) .  \\
									\end{split}\end{equation}
									
									\noindent Under assumption of $\delta_{\phi} + \delta_{\phi}/ \lambda_{\phi} \ll 1$,
									\begin{equation} \label{exponent bound time dept}
									E(v,t,s) \leq e^{ -\frac{1}{2} e^{-\|\Phi\|_C}  \nu(v) (t-s) } := e^{ - \nu_{\Phi}(v) (t-s) } ,
									\end{equation}
									where we define ${\nu}_{\Phi}(v) := \frac{1}{2} e^{-\|\Phi\|_C}  \nu(v)$.  \\
									
									For (\ref{full expan}), every terms except $(*)$ are controlled by 
									\begin{equation} \label{non main}
									C_{\Phi, \lambda_{\phi}} ( \delta_{\phi} + \bar{\delta} + \|h(0)\|_\infty + \sup_{0\leq s\leq t} \|h(s)\|^{2}_{\infty} ) . 
									\end{equation}
									
									
									
									For $(*)$, we choose $m({N})$ so that 
									\begin{equation} \label{opeator k split}
									k_{w,m}(u,v) := \mathbf{1}_{\{ |u-v|\geq\frac{1}{m}, \ |u|\leq m \}} k_{w}(u,v) ,
									\end{equation}
									satisfies $\int_{\mathbb{R}^3} |k_{w,m}(u,v)-k_{w}(u,v)| \ \dd u \leq \frac{1}{{N}}$ for sufficiently large ${N}\geq 1$. Then, by splitting $k_w$,
									
									\begin{equation} \label{* decomp} \begin{split}
									(*) &\leq \underline{ \int_0^t \int_0^s \int_0^{s^{\prime}} e^{- {\nu}_{\Phi}(0)(t-s^{\prime\prime})} \int_u k_{w,m}(u,v) \int_{u^{\prime}} k_{w,m}(u^{\prime},u) }  \\
									& \quad\quad \underline{ \times \int_{ u^{\prime\prime} } k_{w,m}(u^{\prime\prime},u^{\prime}) h (s^{\prime\prime},X^{\prime\prime}(s^{\prime\prime}),u^{\prime\prime}) \mathbf{1}_{|F- \mu_{E}| \geq \bar{\delta} \mu_{E}} \ \dd u^{\prime\prime} \dd u^{\prime} \dd u \dd s^{\prime\prime} \dd s^{\prime} \dd s }_{(**)}  \\
									& \quad\quad + O_{\O}(\frac{1}{N}) \sup_{0\leq s \leq t} \|h (s)\|_\infty    \\
									\end{split} \end{equation}
									
									\noindent We analyze $(**)$. We use Theorem \ref{prop_full_rank}, then 
									\begin{equation}
									\begin{split}
									&\exists i_{s} \in \{ 1,2,\cdots, I_{\O,N}\} \quad\text{such that}\quad X(s) \in \mathcal{O}_{i_{s}},  \\
									&\exists j_{s,s^{\prime}} \in \{ 1,2,\cdots, I_{\O,N}\} \quad\text{such that}\quad X(s^{\prime}; s, X(s;t,x,v), u) \in \mathcal{O}_{j_{s,s^{\prime}}},
									\end{split}
									\end{equation}
									and then we can define following sets for fixed $n,\vec{n},i,k,m,\vec{m},j,k^{\prime}$, where Theorem \ref{prop_full_rank} does not work.
									\begin{equation}
									\begin{split}
									R_1 &:= \{ u \ \vert \ u \notin B( \vec{n}\delta ; 2\delta ) \cap \{\R^3\backslash \mathcal{V}_{i_{s}}(\hat{\mathbf{e}_{1}}, \hat{\mathbf{e}_{2}})\}  \} ,    \\
									R_2 &:= \{ s^{\prime} \ \vert \ |s-s^{\prime}| \leq \delta^{\gamma} \} , \\
									R_3 &:= \{ s^{\prime} \ \vert \ | s^{\prime} -  \psi_{1}^{n,\vec{n},i,k,m,\vec{m},j,k^{\prime}} ( n \delta, X(n \delta;t,x,v), \vec{n}\delta ) |\lesssim_{N} \delta^{\gamma} \|\psi_{1} \|_{C^{0,\gamma}} \} ,  \\
									R_4 &:= \{ u^{\prime} \ \vert \ u^{\prime} \notin B( \vec{m}\delta ; 2\delta ) \cap \{\R^3\backslash \mathcal{V}_{j_{s,s^{\prime}}} (\p_{|u|}X, \p_{\hat{u}_{1}}X )   \} \} ,   \\
									R_5 &:= \{ s^{\prime\prime} \ \vert \ |s^{\prime}-s^{\prime\prime}| \leq \delta^{\gamma} \} ,  \\
									R_6 &:= \{ s^{\prime\prime} \ \vert \  \min_{r=1,2} | s^{\prime\prime} -  \psi_{r}^{n,\vec{n},i,k,m,\vec{m},j,k^{\prime}} ( m \delta, X( m \delta; n \delta, X(n \delta; t,x,v ), \vec{m}\delta  ), \vec{n}\delta  ) | \\
									&\quad\quad\quad\quad \lesssim_{N} \delta^{\gamma} \min_{r=1,2} \|\psi_{r} \|_{C^{0,\gamma}} \}.
									\end{split}
									\end{equation}
									Therefore,
									\begin{equation} \label{**}
									\begin{split}	
									(**) &= \underline{ \sum_{n =0}^{[t/\delta]+1} \sum_{|\vec{n}| \leq N}  \sum_{m=0}^{[t/\delta]+1} \sum_{|\vec{m}| \leq N} \sum_{k}^{K_{i_{s}}} \sum_{k^{\prime}}^{K^{\prime}_{j_{s,s^{\prime}}}} \int^{(n+1) \delta}_{ (n-1) \delta} \ \int^{t^{k}-\delta^{\gamma}}_{t^{k+1}+\delta^{\gamma}} \int^{t^{k^{\prime}}-\delta^{\gamma}}_{t^{k^{\prime}+1}+\delta^{\gamma}} e^{-{\nu}_{\Phi}(0)(t-s^{\prime\prime})} } \\ 
									& \quad\underline{ \times \int_{ |u|\leq N, |u^{\prime}|\leq N, |u^{\prime\prime}|\leq N } \ | h( s^{\prime\prime},X(s^{\prime\prime}), u^{\prime\prime} ) | \ \mathbf{1}_{|F- \mu_{E}| \geq \bar{\delta} \mu_{E}} \ \mathbf{1}_{R_{1}^{c} \cap R_{2}^{c} \cap R_{3}^{c} \cap R_{4}^{c} \cap R_{5}^{c} \cap R_{6}^{c}} }_{\text{(MAIN)}} \\
									& \quad + B + R,  \\
									\end{split}	
									\end{equation}
									where $B$ term corresponds to where trajectory is near bouncing points and $R$ corresponds to where $(u, s^{\prime}, u^{\prime}, s^{\prime\prime})$ is in one of $R_1\sim R_6$. So we have the following small estimates for $B$ and $R$.
									
									\begin{equation} \label{BR}
									\begin{split}
									B &\leq \int_0^t \int_0^s \int_0^{s^{\prime}} e^{- {\nu}_{\Phi}(0)(t-s^{\prime\prime})} \int_{|u|\leq N} k_{w,m}(u,v) \int_{|u^{\prime}|\leq N} k_{w,m}(u^{\prime},u)   \\
									& \quad\quad \times \int_{|u^{\prime\prime}|\leq N} k_{w,m}(u^{\prime\prime},u^{\prime}) h (s^{\prime\prime},X^{\prime\prime}(s^{\prime\prime}),u^{\prime\prime}) \mathbf{1}_{|F- \mu_{E}| \geq \bar{\delta} \mu_{E}}   \\
									& \quad\quad \times \mathbf{1}_{|s^{\prime}-t^{k}|\leq \delta^{\gamma} \ \text{or} \ |s^{\prime\prime}-t^{k^{\prime}}|\leq \delta^{\gamma} }  \\
									&\leq C_{N}\delta^{\gamma} \sup_{0\leq s \leq t} \|h(s)\|_\infty,  \\
									R &\leq \int_0^t \int_0^s \int_0^{s^{\prime}} e^{- {\nu}_{\Phi}(0)(t-s^{\prime\prime})} \int_{|u|\leq N} k_{w,m}(u,v) \int_{|u^{\prime}|\leq N} k_{w,m}(u^{\prime},u)   \\
									& \quad\quad \times \int_{|u^{\prime\prime}|\leq N} k_{w,m}(u^{\prime\prime},u^{\prime}) h (s^{\prime\prime},X^{\prime\prime}(s^{\prime\prime}),u^{\prime\prime}) \mathbf{1}_{|F- \mu_{E}| \geq \bar{\delta} \mu_{E}}   \\
									& \quad\quad \times \mathbf{1}_{ R_1 \cup R_2 \cup R_3 \cup R_4 \cup R_5 \cup R_6 }  \\
									&\leq C_{N}\delta^{\gamma} \sup_{0\leq s \leq t} \|h(s)\|_\infty,  \\
									\end{split}
									\end{equation}
									
									For $\text{(MAIN)}$ in (\ref{**}), we are away from two sets $B$ and $R$. Under the condition of $(u, s^{\prime}, u^{\prime}, s^{\prime\prime}) \in R_{1}^{c} \ \cap \ R_{2}^{c} \ \cap R_{3}^{c} \ \cap R_{4}^{c} \ \cap R_{5}^{c} \ \cap R_{6}^{c}$, indices $n, \vec{n}, i_{s}, k, m, \vec{m}, j_{s,s^{\prime}}, k^{\prime}$ are determined so that
									\begin{eqnarray*}
										t \ &\in& \ [  (n-1)\delta,  (n+1)\delta ],   \\
										X(s;t,x,v) \ &\in& \ \mathcal{O}_{i_{s}} ,  \\
										X(s^{\prime};s,X(s;t,x,v),u) \ &\in& \ \mathcal{O}_{j_{s,s^{\prime}}} ,  \\
										u \ &\in& \ B(\vec{n}\delta;2\delta) \cap \R^3\backslash\mathcal{V}_{i_{s}}( \hat{\mathbf{e}}_{1}, \hat{\mathbf{e}}_{2} ) , \\
										u^{\prime} \ &\in& \ B(\vec{m}\delta;2\delta) \cap \R^3\backslash\mathcal{V}_{j_{s,s^{\prime}}}(\p_{|u|}X, \p_{\hat{u}_{1}}X )  .
									\end{eqnarray*}
									We can apply Theorem \ref{prop_full_rank} which gives local time-independent lower bound of 
									\[
									\Big| \det(\frac{\p(X(s^{\prime\prime}))}{\p(|u^{\prime}|, \zeta_1, \zeta_2)}) \Big| \ \geq \ \epsilon^{\prime}_{\delta } .
									\]
									Note that $\{ \zeta_{1}, \zeta_{2}\} \subset\{ |u|, \hat{u}_{1}, \hat{u}^{\prime}_{1},  \hat{u}^{\prime}_{2}  \}$ are chosen variables in Theorem \ref{prop_full_rank}   \ and $\{ \zeta_{3}, \zeta_{4}\} \subset\{ |u|, \hat{u}_{1}, \hat{u}^{\prime}_{1},  \hat{u}^{\prime}_{2}  \}$ are unchosen variables. Let us use $\mathcal{P}$ to denote projection of $B(\vec{n}\delta;2\delta) \cap \R^3\backslash\mathcal{V}_{i_{s}}( \hat{\mathbf{e}}_{1}, \hat{\mathbf{e}}_{2} )  \times
									B(\vec{m}\delta;2\delta) \cap \R^3\backslash\mathcal{V}_{j_{s,s^{\prime}}}(\p_{|u|}X, \p_{\hat{u}_{1}}X )$ into $\R^3$ which corresponds to $(|u^{\prime}|, \zeta_1, \zeta_2)$ components. If we choose sufficiently small $\delta$, there exist small $r_{\delta,n,\vec{n},i,k,m,\vec{m},j,k^{\prime}}$ such that there exist one-to-one map $\mathcal{M}$, 
									\begin{eqnarray*}
										\mathcal{M} &:& \mathcal{P} \Big( B(\vec{n}\delta;2\delta) \cap \R^3\backslash\mathcal{V}_{i_{s}}( \hat{\mathbf{e}}_{1}, \hat{\mathbf{e}}_{2} )  \times
										B(\vec{m}\delta;2\delta) \cap \R^3\backslash\mathcal{V}_{j_{s,s^{\prime}}}(\p_{|u|}X, \p_{\hat{u}_{1}}X ) \Big)  \\
										&& \mapsto
										B( X(s^{\prime\prime}; s^{\prime}, X(s^{\prime};s, X(s;t,x,v), u), u^{\prime}), r_{\delta,n,\vec{n},i,k,m,\vec{m},j,k^{\prime}} ).
									\end{eqnarray*} 
									So we perform change of variable for $\text{(MAIN)}$ in (\ref{**}) to obtain
									
									\begin{equation} \label{MAIN}
									\begin{split}	
									&\text{(MAIN)} \\
									&\leq  \sum_{n =0}^{[t/\delta]+1} \sum_{|\vec{n}| \leq N}  \sum_{m=0}^{[t/\delta]+1} \sum_{|\vec{m}| \leq N} \sum_{k}^{K_{i_{s}}} \sum_{k^{\prime}}^{K^{\prime}_{j_{s,s^{\prime}}}} \int^{(n+1) \delta}_{ (n-1) \delta} \ \int^{t^{k}-\delta^{\gamma}}_{t^{k+1}+\delta^{\gamma}} \int^{t^{k^{\prime}}-\delta^{\gamma}}_{t^{k^{\prime}+1}+\delta^{\gamma}} e^{-{\nu}_{\Phi}(0)(t-s^{\prime\prime})}  \\ 
									& \quad\quad \times \int_{u^{\prime\prime}} \dd u^{\prime\prime} \int_{\hat{u}_2, \zeta_3, \zeta_4} \ \mathbf{1}_{ |u|\leq N, |u^{\prime}|\leq N, |u^{\prime\prime}|\leq N } \ \dd \hat{u}_2 \dd \zeta_{3} \dd \zeta_{4} \\ 
									& \quad\quad \times \int_{ |u^{\prime}|, \zeta_1, \zeta_2 } \ \dd |u^{\prime}| \dd \zeta_{1} \dd \zeta_{2} \ 
									| h( s^{\prime\prime},X( s^{\prime\prime}), u^{\prime\prime} ) | \ \mathbf{1}_{|F- \mu_{E}| \geq \bar{\delta} \mu_{E}} \dd s \dd s^{\prime} \dd s^{\prime\prime}  
									\\
									&\leq  \sum_{n =0}^{[t/\delta]+1} \sum_{|\vec{n}| \leq N}  \sum_{m=0}^{[t/\delta]+1} \sum_{|\vec{m}| \leq N} \sum_{k}^{K_{i_{s}}} \sum_{k^{\prime}}^{K^{\prime}_{j_{s,s^{\prime}}}} \int^{(n+1) \delta}_{ (n-1) \delta} \ \int^{t^{k}-\delta^{\gamma}}_{t^{k+1}+\delta^{\gamma}} \int^{t^{k^{\prime}}-\delta^{\gamma}}_{t^{k^{\prime}+1}+\delta^{\gamma}} e^{-{\nu}_{\Phi}(0)(t-s^{\prime\prime})}  \\ 
									& \quad \times \int_{\hat{u}_2, \zeta_3, \zeta_4} \ \mathbf{1}_{ |u|\leq N, |u^{\prime}|\leq N, |u^{\prime\prime}|\leq N } \ \dd \hat{u}_2 \dd \zeta_{3} \dd \zeta_{4} \ \mathbf{1}_{|F- \mu_{E}| \geq \bar{\delta} \mu_{E}}  \\
									& \quad \times \int_{u^{\prime\prime}} \int_{ 	B( X(s^{\prime\prime}), r_{\delta,n,\vec{n},i,k,m,\vec{m},j,k^{\prime}} ) } \ 
									| h^{\ell+1}( s^{\prime\prime}, x, u^{\prime\prime} ) | \
									\frac{1}{ \epsilon^{\prime}_{\Omega, N, \| \Phi \|_{C^{2}}, \delta } } \dd x \dd u^{\prime\prime} \dd s \dd s^{\prime} \dd s^{\prime\prime}  
									\\
									&\leq \ C_{N,\delta,\Phi,\phi,\O} \int_{0}^{t} e^{-{\nu}_{\Phi}(0)(t-s^{\prime\prime})} \ \int_{\O}\int_{|u^{\prime\prime}|\leq N}  |h(s^{\prime\prime},x,u^{\prime\prime})| \ \mathbf{1}_{|F- \mu_{E}| \geq \bar{\delta} \mu_{E}} \  \dd u^{\prime\prime} \dd x \dd s^{\prime\prime}  \\
									&\leq C_{N,\delta,\Phi,\phi,\O} \sup_{0 \leq s^{\prime\prime} \leq t} \big\| h(s^{\prime\prime}) \mathbf{1}_{|F- \mu_{E}| \geq \bar{\delta} \mu_{E}} \big\|_{L^{1} (\O \times B_{N})}    \\
									&\leq C_{N,\delta,\Phi,\phi,\O} \Big\| \frac{w}{\sqrt{\mu}} \Big\|_{L^{\infty} (B_{N})} \sup_{0 \leq s^{\prime\prime} \leq t} \big\|   {(F(s^{\prime\prime}) -\mu_{E})} \mathbf{1}_{|F- \mu_{E}| \geq \bar{\delta} \mu_{E}} \big\|_{L^{1} (\O \times B_{N})} . \\
									\end{split}	
									\end{equation}
									
									\noindent From (\ref{est_via_entropy}) and (\ref{entropy_bound}), we can further bound it by
									\begin{equation}
									\begin{split}
									&\leq
									C_{N,\delta,\Phi,\phi,\O} \frac{1}{\bar{\delta}} \Big\| \frac{w}{\sqrt{\mu}} \Big\|_{L^{\infty} (B_{N})} \sup_{0 \leq s^{\prime\prime} \leq t } \Big\{
									\mathcal{H}(F(0)  )- \mathcal{H}(\mu_{E}) 
									\\
									&\quad\quad - \iint (F(s^{\prime\prime}) - \mu_{E}) + \iint  \Big(  \frac{|v|^{2}}{2} + \Phi(x) \Big) (F(s^{\prime\prime}) - \mu_{E}) 
									\Big\} .
									\end{split}
									\end{equation}
									
									\noindent Finally, utilizing (\ref{conserv_F_mass}), (\ref{normalize_M}) and (\ref{excess_energy}), we deduce that 
									\begin{equation}\label{main_final_E}
									\begin{split}
									\text{(MAIN)} &\leq C_{N,\delta,\Phi,\phi,\O} \frac{1}{\bar{\delta}} \Big\| \frac{w}{\sqrt{\mu}} \Big\|_{L^{\infty} (B_{N})} \\
									&\quad \times \Big( \mathcal{H}(F(0)) - \mathcal{H }(\mu_{E}) + \frac{\delta_{\phi}}{\lambda _{\phi}} \{1+ \sup_{0 \leq s^{\prime\prime } \leq t}\| wf (s^{\prime\prime } ) \|_{\infty}\} \Big).
									\end{split}
									\end{equation}

									\noindent For $\delta_{0}, \delta_{\phi},\frac{\delta_{\phi}}{\lambda _{\phi}} \ll 1 ,$ we collect (\ref{non main}), (\ref{* decomp}), (\ref{**}), (\ref{BR}), and (\ref{main_final_E}) to get,  \\
									\begin{equation}\label{time_uniform} 
									\sup_{0 \leq t \leq T_{1}} \| w f(t) \|_{\infty} \lesssim 
									\| wf _{0} \|_{\infty}  + \delta_{\phi} + \bar{\delta} +
									\mathcal{H}(0)- \mathcal{H}(\mu_{E}) 
									+ \frac{\delta_{\phi}}{\lambda_{\phi}} + \sup_{0 \leq t \leq T_{1}} \| w f(t) \|^2_{\infty}.
									\end{equation}
									By choosing small data we deduce $\sup_{0 \leq t \leq T_{1}} \| w f(t) \|_{\infty} < 2 (\delta_{0}+ C\delta_{\phi}) \ll 1$ from (\ref{local_est}). By continuity of $ \| w f(t) \|_{\infty} $ in Theorem \ref{local_existence} and a uniform bound, we conclude 
									\[T_{1}= \infty,
									\] and this proves the global-in-time existence.

								\end{proof}

								\section{A time-independent potential} 
								First we derive $L^2$-coercivity for the homogeneous linear Boltzmann of (\ref{E_eqtn})
								\begin{equation}\notag 
								\p_{t}f + v\cdot \nabla_{x} f  - \nabla_{x} \Phi( x) \cdot \nabla_{v} f  + e^{- \Phi  } L f 
								= 0,
								\end{equation}
								with specular reflection boundary condition on the boundary $\p\O$. From (\ref{normalize_M}),
								\begin{equation} 
								\iint_{\O\times\R^{3}} f(t)\sqrt{\mu_{E}}  = \iint_{\O\times\R^{3}} f_{0}\sqrt{\mu_{E}}
								=0 , \label{mass_linear} 
								\end{equation}
								\begin{equation}\begin{split}  \label{energy_linear}
								\iint_{\O\times\R^{3}}
								f(t)\Big( \frac{|v|^{2}}{2} + \Phi  \Big)\sqrt{\mu_{E}} 
								=  \iint_{\O\times\R^{3}} f_{0}\Big( \frac{|v|^{2}}{2} + \Phi   \Big) \sqrt{\mu_{E}}  =0
								.
								\end{split} \end{equation}
								If the domain is axis-symmetric (\ref{axis-symmetric}) and $\Phi$ is degenerated (\ref{degenerate})	then 
								\begin{equation}\label{angular_linear}
								\iint_{\O \times \R^{3}} f (t) \{ (x-x^{0} ) \times \varpi \} \cdot v \sqrt{\mu_{E}} =   \iint_{\O \times \R^{3}} f _{0} \{ (x-x^{0} ) \times \varpi \} \cdot v \sqrt{\mu_{E}}=0 .
								\end{equation}


								We prove Proposition \ref{prop_coercivity} by the contradiction argument of the proof of Proposition 11 in \cite{Guo10}. 
								We first study the geometric lemma, which allows estimating near the boundary via the interior bound, and postpone the proof of the proposition. Define the distance function toward the boundary as 
								\begin{equation}\label{dist_bdry}
								\dist(x,\p\O) = \inf\{|x-y|: y \in\p\O  \},
								\end{equation}
								which is well-defined if $\dist(x,\p\O) \ll1$. In this case there exists a unique $x_{*}\in\p\O$ satisfying $|x^{*}-x|=\dist(x,\p\O)$. We also define  
								\begin{equation}\label{n_bdry}
								n(x)= n(x_{*}) ,
								\end{equation}
								for $x \in\O$ with $\dist(x,\p\O) \ll1$.

								\begin{lemma}\label{boundary_interior}Let $g$ be a (distributional) solution to 
									\begin{equation}\label{eqtn_g}
									\p_{t} g + v\cdot\nabla_{x} g + E \cdot \nabla_{v} g= G,
									\end{equation}
									where $E= E(t,x) \in C^{1,\gamma}$. Then, for a sufficiently small $\e>0$,
									\begin{equation} \label{int_ext}
									\begin{split}
									&
									\int^{1-\e}_{\e}\|
									\mathbf{1}_{\dist(x, \p\O)<\e^{4} } 
									\mathbf{1}_{|n(x) \cdot v| > \e} g(t)
									\|_{2}^{2} \dd t
									\\
									&
									\quad\quad\quad \lesssim 
									\int^{1}_{0}\| \mathbf{1}_{\dist(x,\p\O)> \e^{3}/2 } g(t)\|_{2}^{2} \dd t
									+ \int^{1}_{0} 
									\iint_{\O\times\R^{3}}|gG|.
									\end{split}
									\end{equation}
								\end{lemma}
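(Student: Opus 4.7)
The plan is an energy estimate along the characteristic flow $(X(s;t,x,v),V(s;t,x,v))$ solving $\dot X = V$, $\dot V = -E(s,X)$. The key geometric observation is that if $\dist(x,\p\O)<\e^4$ and $|n(x)\cdot v|>\e$, then the characteristic through $(t,x,v)$ exits the thin boundary layer $\{\dist<\e^4\}$ and reaches the interior region $\{\dist>\e^3/2\}$ within a short time interval of length $O(\e^2)$, going backward if $n(x)\cdot v>\e$ and forward if $n(x)\cdot v<-\e$. This follows from the identity $\frac{d}{ds}\dist(X(s),\p\O) = -V(s)\cdot n(X(s))$, the bound $\|E\|_{C^1}\lesssim 1$, and the continuity of the extended normal field $n$ near $\p\O$: on $|s-t|\leq 3\e^2$ one has $|V(s)\cdot n(X(s))|\geq \e/2$ for $\e$ small enough, so the trajectory covers a normal distance of at least $\e^3$ during $[2\e^2,3\e^2]$.

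Since $g$ solves $\p_t g + v\cdot\nabla_x g + E\cdot\nabla_v g = G$, one has $\frac{d}{ds}g^2(s,X,V) = 2gG(s,X,V)$ along characteristics, so for each admissible shift $\tau\in[2\e^2,3\e^2]$
\[
g^2(t,x,v) \;=\; g^2(t\mp\tau,X(t\mp\tau),V(t\mp\tau)) \pm 2\int_{t\mp\tau}^{t} gG(r,X(r),V(r))\,dr,
\]
with the sign chosen according to that of $n(x)\cdot v$. To avoid that $\tau$ depends on $(x,v)$ during the subsequent change of variables, I would average this identity against a smooth weight $\chi\in C_c^\infty([2\e^2,3\e^2])$ with $\int\chi=1$, and then integrate over $(t,x,v)$ in the target set $\{t\in[\e,1-\e],\,\dist(x,\p\O)<\e^4,\,|n\cdot v|>\e\}$. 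For the first term on the right, Liouville's theorem says the map $(x,v)\mapsto(X(t\mp\tau),V(t\mp\tau))$ is volume-preserving and, by the choice of $\tau$, lands in $\{\dist(y,\p\O)>\e^3/2\}$; after the substitution $t'=t\mp\tau\in[0,1]$ this produces the first term on the right of (\ref{int_ext}). For the second term, another volume-preserving change on the inner integral together with $\tau\lesssim\e^2\leq 1$ yields $\lesssim\int_0^1\iint|gG|$.

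The main obstacle is justifying the pointwise identity above when $g$ is only a distributional solution, as in the intended application to the weak limit $Z$ of the contradiction sequence in Proposition \ref{prop_coercivity}. I would handle this by mollifying $g$ and $G$ in $(t,x,v)$, proving the estimate for the smooth approximants (where the characteristic identity is classical), and passing to the limit using $L^2_{\mathrm{loc}}$-convergence of the regularization, noting that both sides of (\ref{int_ext}) are stable under such perturbations. A secondary technical point is the smoothness of $\dist(\cdot,\p\O)$ and $n(\cdot)$ in a tubular neighborhood of $\p\O$: since $\O$ is $C^3$, one uses the local graph parametrization $\eta_{{p}}$ in (\ref{eta}) to realize $\dist$ as $-\X_{{p},3}$ (up to a smooth correction) and extends $n$ accordingly, making both $C^2$ on the relevant strip and giving the quantitative bounds on $V\cdot n$ used in the first paragraph.
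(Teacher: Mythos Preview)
Your approach is essentially the paper's: both split according to the sign of $n(x)\cdot v$, integrate the characteristic identity $\frac{d}{ds}g^{2}=2gG$ over a short time to transport $g^{2}$ from the thin shell $\{\dist<\e^{4},\ |n\cdot v|>\e\}$ into $\{\dist>\e^{3}/2\}$, and then change variables via Liouville. The paper simply uses a single fixed time shift (written as $\e$; the displayed computation is really for a shift of order $\e^{2}$) rather than your averaging over $\tau\in[2\e^{2},3\e^{2}]$---your own geometric bound already shows that any fixed $\tau$ in that interval works uniformly in $(x,v)$, so the averaging is unnecessary---and the paper does not spell out the mollification step you mention, while it does spend a paragraph checking global injectivity of the flow map (which is in fact automatic from ODE uniqueness once one knows the trajectory stays in $\Omega$).
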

								\noindent Note that this lemma is true even for a time-dependent external field case.
								\begin{proof}
									For $x \in \bar{\Omega}$ with $\dist(x,\p\O)< \e^{4}$, $n(x) \cdot v< - \e$, and $y \in\p\Omega$ with $|y- x_{*}| \ll1$,
									\begin{eqnarray*}
										|X(t+\e;t,x,v)- y| 
										&\geq& |(X(t+\e;t,x,v)-y) \cdot n(x_{*})| \\
										& =& |  (x- y) \cdot n(x_{*})
										+ v\cdot n(x_{*}) \e^{2}  +
										O(1)\| E \|_{\infty} \frac{\e^{4}}{2}
										|\\
										&\geq&
										\e^{3} - \e^{4} - O(1)\| E \|_{\infty} \frac{\e^{4}}{2}
										\ \geq  \  \e^{3}/2.
									\end{eqnarray*}
									Hence
									\begin{equation}\label{lower_in}
									\text{dist} (X(t+\e;t,x,v) ,\O)= \inf_{y \in\p\O,  |y-x_{*}| \ll 1}  |X(t+\e;t,x,v)- y| \ge \e^{3}/2.
									\end{equation}
									We can prove the exactly same lower bound of $|X(t-\e;t,x,v)- y|$ when $n(x) \cdot v>  \e$. Hence we conclude, for $x\in\bar{\Omega}$ with $\dist(x,\p\O)< \e^{4}$ and $n(x) \cdot v> \e$, 
									\begin{equation}\label{lower_out}
									\text{dist} (X(t-\e;t,x,v) ,\O)= \inf_{y \in\p\O,  |y-x_{*}| \ll 1}  |X(t-\e;t,x,v)- y| \ge \e^{3}/2.
									\end{equation}
									
									\noindent Moreover, it is well-known that $(x,v) \mapsto(X(t+\e;t,x,v), V(t+\e;t,x,v))$ for \\
									$\dist(x,\p\O) < \e^{4}, \ n(x) \cdot v<-\e$ and $(x,v) \mapsto(X(t-\e;t,x,v), V(t+\e;t,x,v))$ for $\dist(x,\p\O) < \e^{4}, \ n(x) \cdot v>\e$ are local diffeomorphism (since they never hit the boundary) and satisfy
									\begin{equation}\label{unit_Jac}
									\text{Jac} \left(\frac{\p (X(t\pm\e;t,x,v), V(t\pm\e;t,x,v))}{\p(x,v)}\right)=1.
									\end{equation}
									Note that
									\begin{eqnarray*}
										\| X(t+ \e; t,\cdot, \cdot)\|_{C^{1,\gamma}}  \leq  \frac{\e^{2}}{2} \|E \|_{C^{1,\gamma}},\ \
										\|  V(t+ \e; t,\cdot, \cdot)\|_{C^{1,\gamma}}  \leq   \e   \| E \|_{C^{1,\gamma}}.
									\end{eqnarray*}
									By expansions, we conclude that there exist sufficiently small $\delta>0$ and $\e_{0}>0$ such that for all $0<\e< \e_{0}$, $(X(t+ \e; t,\cdot,\cdot), V(t+ \e; t,\cdot,\cdot))$ is one-to-one in $\{ (x,v) \in \bar{\O} \times \R^{3}: \dist(x,\p\O) < \e^{4}, \ n(x) \cdot v < -\e, \ |x-x^{0}| + |v-v^{0}| < \delta   \}$ and $(X(t- \e; t,\cdot,\cdot), V(t- \e; t,\cdot,\cdot))$ is so in $\{ (x,v) \in \bar{\O} \times \R^{3}: \dist(x,\p\O) < \e^{4}, \ n(x) \cdot v>  \e, \ |x-x^{0}| + |v-v^{0}| < \delta   \}$. 
									
									On the other hand, if $X( {t}+ \e ;  {t}, \tilde{x}, \tilde{v}) = X( {t}+ \e ;  {t},  {x},  {v})$ and $V( {t}+ \e ;  {t}, \tilde{x}, \tilde{v}) = V( {t}+ \e ;  {t},  {x},  {v})$, then $|v-\tilde{v}| \leq \| E \|_{\infty} \e$ and $|x-\tilde{x}| \leq 2\| E \|_{\infty} \e^{2}$. We deduce the same conclusion if $X( {t}- \e ;  {t}, \tilde{x}, \tilde{v}) = X( {t}- \e ;  {t},  {x},  {v})$ and $V( {t}- \e ;  {t}, \tilde{x}, \tilde{v}) = V( {t}- \e ;  {t},  {x},  {v})$. Hence for a sufficiently small $\e$, such $(x,v)$ and $(\tilde{x}, \tilde{v})$ are close as $|(x,v) - (\tilde{x},\tilde{v})|< \delta$. From the local one-to-one in the previous sentence we conclude $(x,v)=(\tilde{x},\tilde{v})$.
									
									Now we are ready to prove (\ref{int_ext}). Note that $$\frac{d}{ds} |g(s,X(s;t,x,v),V(s;t,x,v))|^{2} = 2 (gG)(s,X(s;t,x,v),V(s;t,x,v)).$$
									For $(x, v)$ with $\dist(x,\p\O)< \e^{4}$ and $n(x) \cdot v< - \e$, taking integration $s \in [t,t+\e]$ along the trajectory,
									\begin{eqnarray*}
										|g(t,x,v)|^{2} =  |g(t+\e,X(t+\e ), V(t+\e ) ) |^{2}
										- 2  \int^{t+\e}_{t}  2 (gG)(s,X(s ),V(s )) \dd s.
									\end{eqnarray*}
									From (\ref{lower_in}), $\dist(X(t+\e ), \p\O )\geq \e^{3}/2$. Using (\ref{unit_Jac}) and the one-to-one property of $(x,v) \mapsto (X(s),V(s))$ for any fixed $|s| \leq \e$, we take an integration over $\dist(x,\p\O)< \e^{4}$ and $n(x) \cdot v< - \e$ and conclude that 
									\begin{equation}\label{intg_+}
									\|
									\mathbf{1}_{\dist(x, \p\O)<\e^{4} } 
									\mathbf{1}_{ n(x) \cdot v  < -\e} g(t )
									\|_{2}^{2}  
									=
									\| \mathbf{1}_{\dist(x,\p\O)> \e^{3}/2 } g(t+\e)\|_{2}^{2} 
									+ \int^{t+\e}_{t} 
									\iint_{\O\times\R^{3}}|g(s)G(s)|.
									\end{equation}
									
									For the other case, $\dist(x,\p\O)< \e^{4}$ and $n(x) \cdot v> \e$, we repeat the same argument with changing $\e$ to $-\e$ and conclude that
									\begin{equation}\label{intg_-}
									\|
									\mathbf{1}_{\dist(x, \p\O)<\e^{4} } 
									\mathbf{1}_{ n(x) \cdot v  > \e} g(t )
									\|_{2}^{2}  
									=
									\| \mathbf{1}_{\dist(x,\p\O)> \e^{3}/2 } g(t-\e)\|_{2}^{2} 
									+ \int^{t}_{t-\e} 
									\iint_{\O\times\R^{3}}|g(s)G(s)|.
									\end{equation}
									Finally by $\int^{1-\e}_{\e} (\ref{intg_+}) \dd t$ and $\int^{1-\e}_{\e} (\ref{intg_-}) \dd t$, we conclude (\ref{int_ext}).
								\end{proof}

								\begin{proof}[\textbf{Proof of Proposition \ref{prop_coercivity}}] First, it is easy to check that equation (\ref{linearized_eqtn}) is translation invariant in time, i.e. $\tilde{f}(t,x,v) := f(t+c,x,v)$ also solves same equation for any $c$. Note that this is not true for time-dependent potential case anymore, unless the potential is periodic in time. Therefore it suffices to prove coercivity for finite time interval $t \in [0,1]$ and so we claim (\ref{coercive}) for $N=0$. 
									
									\noindent\textit{Step 1.} Assume that Proposition \ref{prop_coercivity} is wrong. 
									This means for any $m \gg1$ there exists a solution $f^{m}$ to (\ref{linearized_eqtn}) satisfying the specular reflection BC which solves

									\begin{equation}\label{eqtn_fm}
									\p_{t} {f}^{m} + v\cdot \nabla_{x }  {f} ^{m} - \nabla_x \Phi \cdot \nabla_v  {f}^{m} + 
									e^{- \Phi}L {f}^{m} =0, \ \ \text{for} \ t \in [0,1]
									\end{equation}
									and satisfies
									\begin{equation}\label{contra_coercivity_1}
									\int^{ 1}_{ 0} \| \mathbf{P} f^{m } (t) \|_{2}^{2} \dd t
									\geq m \int^{ 1}_{0} \| (\mathbf{I} - \mathbf{P}) f^{m } (t) \|_{\nu}^{2} \dd t.
									\end{equation}
									\noindent We define normalized form of $f^{m}$ by
									\begin{equation} \label{normal Zm}
									Z^{m} (t,x,v) : = \frac{ f^{m}(t,x,v)}{ \sqrt{\int^{1}_{0} \| \mathbf{P} f^{m} (t) \| _{2}^{2} \dd t  }} .
									\end{equation}
									Then $Z^{m}$ solves 
									\begin{equation}\label{eqtn_Zm}
									\p_{t} Z ^{m} + v\cdot \nabla_{x } Z^{m}- \nabla_x \Phi \cdot \nabla_v Z^{m} 
									+ e^{- \Phi}L Z^{m} =0,
									\end{equation}
									\noindent and 
									\begin{equation}\label{Zm_BC}
									Z^{m}(t,x,v) = Z^{m}(t,x,R_{x}v)\  \ \ \ x\in\p\O  ,
									\end{equation}
									and
									\begin{equation} \label{contra_coercivity_Z}
									\frac{1}{m}
									\geq  \int^{ 1}_{0} \| (\mathbf{I} - \mathbf{P}) Z ^{m}(t) \|_{\nu}^{2} \dd t.
									\end{equation}

									\vspace{4pt}
									
									\noindent\textit{Step 2. }We claim that 
									\begin{equation}\label{claim_Zm}
									\sup_{m}\sup_{0 \leq t \leq 1}
									\| Z^{m} (t) \|_{2}^{2} < \infty.
									\end{equation}
									From (\ref{eqtn_Zm}), for $0 \leq t \leq 1$,
									\begin{equation}\label{energy_Zm}
									\|Z^{m}(t)\|_{2}^{2}
									+ \int^{t}_{0} e^{- \Phi}\big( L Z^{m}, Z^{m} \big)
									=
									\|Z^{m}(0)\|_{2}^{2}
									%
									.
									\end{equation}
									From the non-negativity of $L$
									,
									\begin{equation} \label{sup_Zm}
									\sup_{0 \leq t \leq 1}\| Z^{m}(t) \|_{2}^{2} 
									\leq  \| Z^{m}(0) \|_{2}^{2}
									.
									\end{equation}

									On the other hand, by integration $\int^{1}_{0}(\ref{energy_Zm}) \dd t$ and  utilizing (\ref{contra_coercivity_Z}) and (\ref{normal Zm}),
									\begin{equation} \label{bound_Zm}
									\| Z^{m}(0) \|_{2}^{2} 
									\lesssim_{\Phi
									}
									\int^{1}_{0} \| Z^{m} \|_{2}^{2} 
									+ \int^{1}_{0}  \| (\mathbf{I} - \mathbf{P}) Z^{m} \|_{\nu}^{2}
									\lesssim_{\Phi
									} 
									1+ \frac{1}{m}.
									\end{equation} 
									Therefore, we prove the claim (\ref{claim_Zm}) from (\ref{sup_Zm}) and (\ref{bound_Zm}).

									\vspace{4pt}
									
									\noindent\textit{Step 3.} Therefore, the sequence $\{Z^{m}\}_{m\gg 1 }$ is uniformly bounded in \\
									$\sup_{0 \leq t \leq 1} \| g(t) \|_{\nu}^{2} \dd t$. By the weak compactness of $L^{2}$-space, there exists weak limit $Z$ such that 
									\begin{equation}\label{limit_Z}
									Z^{m}\rightharpoonup Z  \ \ \text{in} \ \ 
									L^{\infty} ([0,1]; L^{2}_{\nu}(\O\times\R^{3})) \cap 
									L^{2} ([0,1] ; L^{2}_{\nu}( \O\times\R^{3})).
									\end{equation}
									
									\noindent Therefore, in the sense of distributions, $Z$ solves
									\begin{equation}\label{eqtn_Z}
									\p_{t} Z  + v\cdot \nabla_{x } Z  - \nabla_x   \Phi  \cdot \nabla_v Z =0.
									\end{equation}

									Now we consider the limit of the linear conservation laws. Note that, taking a weak limit $Z^{m} \rightharpoonup Z $ in $L^{\infty}_{t} L^{2}_{x,v}$ of (\ref{limit_Z}) and using (\ref{mass_linear}), (\ref{energy_linear}), and (\ref{normal Zm}), we deduce linear conservation laws, for almost every $t \in [0,1]$,
									\begin{equation}\label{coserv_Z} 
									\iint_{\O\times\R^{3}} Z(t) \sqrt{\mu_{E}} = 0, \ \
									\iint_{\O\times\R^{3}} Z(t) \big( \frac{|v|^{2}}{2} + \Phi
									\big) \sqrt{\mu_{E}}=0. 
									\end{equation}
									In the case that both (\ref{axis-symmetric}) and (\ref{degenerate}) hold, from (\ref{angular_linear}), 
									\begin{equation}\label{angular_Z} \iint_{\O \times \R^{3}} \{ (x-x^{0} ) \times \varpi \} \cdot v Z  (t)\sqrt{\mu_{E}}=0.\end{equation}
									
									%

									On the other hand, since 
									\[
									\mathbf{P}Z ^{m} \rightharpoonup \mathbf{P}Z \quad\text{and}\quad (\mathbf{I-P})Z ^{m} \rightarrow 0 \quad\text{in}\quad \int_0^1 \|\cdot\|_{\nu}^2 \dd t, 
									\]
									we know that weak limit $Z$ has only hydrodynamic part, \textit{i.e}
									\begin{equation} \label{limit_Z}
									Z(t,x,v) = \{a(t,x) + v\cdot b(x,v) +  |v|^{2}c(t,x)\}\sqrt{\mu_E},
									\end{equation}
									and 
									\begin{equation}\begin{split}\label{bound_Z}
									\int^{1}_{0} \| Z\|_{\nu}^{2}\dd t \leq  \liminf_{m\rightarrow \infty}  \int^{1}_{0} \| Z^{m}\|_{\nu}^{2}\dd t \leq 1+ \frac{1}{m}\rightarrow 1.
									\end{split}
									\end{equation}
									\vspace{4pt}
									
									\noindent\textit{Step 4. Interior compactness. } Let $\chi_{\e}: \bar{\Omega} \rightarrow [0,1]$ be a smooth function such that $\chi_{\e}(x) =1$ if $\dist(x,\p\O)> 2\e^{4} $ and $\chi_{\e}(x) =0$ if $\dist(x,\p\O)< \e^{4}$. From (\ref{eqtn_Zm}),
									\begin{eqnarray*}
										[\p_{t} + v\cdot \nabla_{x}  ](\chi_{\e}  Z^{m})
										= 
										\nabla_{x} \Phi \cdot \nabla_{v}(\chi_{\e}  Z^{m})
										+ v\cdot \nabla_{x} \chi_{\e} Z^{m}
										- e^{-\Phi} L (\chi_{\e}Z^{m})
									\end{eqnarray*} 
									From the standard Average lemma in \cite{gl}, $\chi_{\e} Z^{m}$ is compact i.e. 
									\begin{equation}\label{compact_int}
									\chi_{\e} Z^{m} \rightarrow \chi_{\e} Z \ \ \text{strongly in } 
									L^{2}([0,1]; L^{2}_{\nu}(\O\times\R^{3})).
									\end{equation}
									\vspace{4pt}
									
									\noindent\textit{Step 5. Near-boundary compactness. } First we claim that 
									\begin{equation}\label{bdry_m}
									\begin{split}
									&\int^{1-\e}_{\e}	\| \big(Z^{m} (t,x,v)- Z(t,x,v) \big) 
									\mathbf{1}_{
										\dist(x,\p\O) < {\e^{4}} 
									}
									\mathbf{1}_{|n(x) \cdot v|> \e} \|_{2}^{2}\\
									&
									\lesssim \int^{1}_{0}
									\| \big(Z^{m} (t,x,v)- Z(t,x,v) \big)\mathbf{1}_{\dist(x,\p\O) > \frac{\e^{3}}{2} } \|_{2}^{2}+ O(\frac{1}{\sqrt{m}})
									\end{split}
									\end{equation}
									
									\noindent We are looking up the equation of $Z^{m}-Z$. 
									By subtracting (\ref{eqtn_Zm}) from (\ref{eqtn_Z}),
									\begin{equation}\label{eqtn_Zm_Z}\begin{split}
									[\p_{t}  + v\cdot \nabla_{x }  - \nabla_x \Phi \cdot \nabla_v] (Z^{m}
									-Z) + e^{- \Phi}L Z^{m} = 0. 
									\end{split}\end{equation} 
									
									\noindent Now we apply Lemma \ref{boundary_interior} to (\ref{eqtn_Zm_Z}) by equating $g$ and $G$ with $Z^{m}
									-Z$ and the RHS of (\ref{eqtn_Zm_Z}) respectively. Then
									\begin{eqnarray*}
										&& \int^{1-\e}_{\e}\|
										\mathbf{1}_{\dist(x, \p\O)<\e^{4} } 
										\mathbf{1}_{|n(x) \cdot v| > \e} 
										(Z^{m}-Z)
										(t)
										\|_{2}^{2} \dd t
										\\
										&& \lesssim  
										\int^{1}_{0}\| \mathbf{1}_{\dist(x,\p\O)> \e^{3}/2 } (Z^{m}-Z)(t)\|_{2}^{2} \dd t
										+
										\int^{1}_{0} 
										\iint_{\O\times\R^{3}} |Z^{m}-Z|\langle v\rangle
										|(\mathbf{I}- \mathbf{P})Z^{m}|
										.
									\end{eqnarray*}
									Using the H\"older's inequality, we bound the last line of the above estimate by
									\begin{eqnarray*}
										\sqrt{m}\int^{1}_{0} \| (\mathbf{I} - \mathbf{P}) Z^{m} \|_{\nu}^{2} 
										+
										\frac{1}{ \sqrt{m}}
										\int^{1}_{0} \| Z^{m} \|_{\nu}^{2} + \| Z\|_{\nu}^{2}
										.
									\end{eqnarray*}
									By (\ref{claim_Zm}) and (\ref{contra_coercivity_Z}), we conclude (\ref{bdry_m}).

									On the other hand, from (\ref{contra_coercivity_Z}), (\ref{limit_Z}), and (\ref{claim_Zm}),
									\begin{equation}
									\label{grazing_small}
									\begin{split}
									&  \int^{1-\e}_{\e} \| ( Z^{m}- Z) \mathbf{1}_{|n(x) \cdot v| \leq \e } \|_{2}^{2} \\
									&\leq \   \int_{\e}^{1-\e} \| (\mathbf{I} - \mathbf{P}) Z^{m} \|_{\nu}^{2} + 
									O(\e) \int^{1-\e}_{\e} \| \mathbf{P} Z^{m} \|_{2}^{2} + \| \mathbf{P} Z  \|_{2}^{2}\\
									&\leq \   \frac{1}{m} + 	O(\e)
									\end{split}\end{equation}
									
									\vspace{4pt}
									
									\noindent\textit{Step 6. Summary. }
									For given $\e>0$, we can choose $m\gg_{\e}1$ such that 
									\begin{eqnarray*}
										&&\int^{1}_{0} \iint_{\O \times \R^{3}}  | 
										Z^{m} - Z |^{2}\\
										&\leq&
										\int^{1}_{1-\e} \iint_{\O \times\R^{3}}+ \int^{\e}_{0} \iint_{\O \times\R^{3}} + 
										\int^{1-\e}_{\e} \iint_{\O_{\e} \times \R^{3}}\\
										&&+ \int^{1-\e}_{\e} \iint_{
											\substack{\O \backslash \O_{\e} \times \R^{3}\\
												\cap \ 
												\{ |n(x) \cdot v|<\e  \ \text{or} \ |v|\geq \e^{-1}\}
											}}
											+ \int^{1-\e}_{\e} \iint_{
												\substack{\O \backslash \O_{\e} \times \R^{3}\\
													\cap \ 
													\{ |n(x) \cdot v|\geq\e  \ \text{and} \ |v|\leq \e^{-1}\}
												}}\\
												&<&C \e ,
											\end{eqnarray*} 
											where we have used (\ref{claim_Zm}), (\ref{compact_int}), (\ref{bdry_m}), and (\ref{grazing_small}). Therefore, we conclude that $Z^{m} \rightarrow Z$ strongly in $L^{2}([0,1] \times\Omega \times\R^{3})$ and hence 
											\begin{equation}\label{Z=1}
											\int^{1}_{0} \| Z\|_{2}^{2}=1.
											\end{equation}
											
											\vspace{4pt}
											
											
											
											\noindent\textit{Step 7.} We consider the boundary condition of $Z$. Fix a small constant $\delta>0$. In order to control $Z$ in $\{ (x,v) \in \gamma_{\pm}: |n(x) \cdot v| <\delta \}$ we use smooth functions $\phi_{\pm}^{\delta} :   \bar{\O} \times \R^{3} \rightarrow [0,1]$ where $\phi_{\pm}^{\delta} \equiv 1$ on $\{ (x,v) \in \gamma_{\pm}: |n(x) \cdot v| <\delta \}$ and $\phi_{\pm}^{\delta}\equiv 0$ on $\{ (x,v) \in \gamma_{\pm}: |n(x) \cdot v| >2\delta \}$ respectively. 
											
											From the weak formulation, we have $(\p_{t} + v\cdot\nabla_{x} - \nabla_{x} \Phi \cdot \nabla_{v}) |Z|^{2}=0$. Testing it with $\phi^{\delta}_{\pm}$, we obtain
											\begin{eqnarray*} \int^{1}_{0}
												\int_{\gamma} |Z |^{2} \phi^{\delta}_{\pm}  (n \cdot v) 
												&=&
												- \iint _{\O\times \R^{3}}  \phi^{\delta}_{\pm} |Z(1)|^{2} + \iint_{\O\times \R^{3}}  \phi^{\delta}_{\pm}|Z(0)|^{2}
												\\
												&&							+\int^{1}_{0}
												\iint_{\O\times \R^{3}} 
												- v\cdot \nabla_{x} \phi^{\delta}_{\pm} |Z |^{2}+ \nabla_{x} \Phi \cdot \nabla_{v} \phi^{\delta}_{\pm}   |Z |^{2}
												.
											\end{eqnarray*}
											From (\ref{limit_Z}) and (\ref{bound_Z}), we deduce that $Z \in L^{2} ( \{ (x,v) \in\gamma_{\pm} : |n(x) \cdot v|< \delta\})$, and $a,b,c \in L^{2}([0,1] \times \p\O)$ such that 
											\begin{equation}\label{abc_boundary}\begin{split}
											\int^{1 }_{0} \int_{\p\O} |a |^{2} +|b |^{2} +|c |^{2} 
											\lesssim \int^{1}_{0} \| Z\|_{\nu}^{2} 
											+ \sup_{0 \leq t \leq 1} \| Z(t)\|_{2}^{2}.
											\end{split}\end{equation}

											Now we claim that 
											\begin{equation}\label{Z_BC}
											Z(t,x,v) = Z(t,x,R_{x}v) \ \ \ \text{almost every }  \ [\delta, 1-\delta] \times \gamma_{-}.
											\end{equation}
											Let $\phi: \bar{\O} \times \R^{3}\rightarrow \R$ be a smooth bounded function with strong decay in $v$. Moreover, we assume that this test function is even function in $\phi(n(x) \cdot v)$ at the boundary. Testing (\ref{eqtn_Z}) with such function $\phi$, we have 
											\begin{equation}
											\begin{split}
											\int^{1}_{0} \int_{\gamma} Z \phi(n(x) \cdot v) 
											&= 
											- \iint_{\O\times\R^{3}} (Z(1)  - Z(0))\phi \\
											&\quad + 
											\int^{1}_{0} \iint_{\O\times\R^{3}} Z (- v\cdot \nabla_{x} \phi \quad + \nabla_{x} \Phi \cdot \nabla_{v} \phi )   
											.
											\label{Z_weak}
											\end{split}
											\end{equation}
											On the other hand, employing the same test function, from (\ref{eqtn_Zm}) and (\ref{Zm_BC}), we conclude that
											\begin{eqnarray*}
												0
												&=&
												- \iint_{\O\times\R^{3}} (Z^{m}(1) - Z^{m}(0)) \phi
												\\
												&&\quad +
												\int^{1}_{0} \iint_{\O\times\R^{3}} Z^{m} (- v\cdot \nabla_{x} \phi +\nabla_{x} \Phi \cdot \nabla_{v} \phi )  + 
												\int^{1}_{0}  \iint_{\O\times\R^{3}} 
												e^{  \Phi}  L Z^{m}\phi 
												.
											\end{eqnarray*} 
											By passing to the limit $m \rightarrow \infty$, from (\ref{limit_Z}) and (\ref{contra_coercivity_Z}), we realize that RHS of (\ref{Z_weak}) equals zero. Therefore, we conclude that 
											\begin{equation}
											\int^{1}_{0} \int_{\gamma} Z \phi(n(x) \cdot v) 
											=0.
											\end{equation}
											for any smooth function $\phi$ which is even in $n(x) \cdot v$ at the boundary. This proves (\ref{Z_BC}).
											
											Finally combining (\ref{Z_BC}), (\ref{limit_Z}), and (\ref{abc_boundary}), we prove (\ref{specular_Z}).

											\vspace{4pt}
											
											\noindent\textit{Step 8.} We claim (\ref{Z=0}). We consider the system of $a,b,$ and $c$ which is obtained by plugging (\ref{limit_Z}) in (\ref{eqtn_Z}). From \cite{K2}, in the sense of distributions, they solve (\ref{macro_eq}).
											
											The first equation of (\ref{macro_eq}) implies that $c$ is only a function of $t$, \textit{i.e.} $c=c(t)$. From the first three equations of (\ref{macro_eq}) we can get 
											\begin{equation}\label{b_form}
											b(t,x) = - \p_{t} c(t) x + \varpi(t) \times x + m(t).
											\end{equation}	
											The proof of (\ref{b_form}) is based on direct computations. (See Lemma 12 in \cite{Guo10} for the details) 
											
											From the second equation of (\ref{macro_eq}), we obtain $\nabla_{x} \cdot b = - 3c^{\prime} (t)$. By the divergence theorem and (\ref{specular_Z}), 
											\[
											-3 c^{\prime} (t) |\O| = \int_{\p\O} b\cdot {n} = 0 .
											\]
											Therefore, $c^{\prime}(t) = 0,  c(t) = c_0$, and $b = \varpi(t) \times x + m(t)$. We conclude 
											\begin{equation}\label{c_0}
											c(t,x)= c_{0} .
											\end{equation}

											We split into two cases $\varpi=0$ and $\varpi \neq 0$.
											
											\bigskip
											
											\textit{Case of $\varpi=0$.}	If $\varpi=0$, then $b(t)=m(t)$. From (\ref{specular_Z}) we deduce that
											\begin{equation}
											b(t)\equiv m(t) \equiv 0.\label{b_vanishing}
											\end{equation}
											Then from the last equation of (\ref{macro_eq}), $a=a(x)$. From the fourth equation of (\ref{macro_eq}), for some constant $C$, we obtain that 
											\begin{equation}\label{a_Phi}
											a(t,x)=   2 c_{0}  \Phi (x)+ C.
											\end{equation}
											Plugging (\ref{c_0}) and (\ref{a_Phi}) into the conservation laws (\ref{coserv_Z}), 
											\begin{equation}\notag
											\iint ( 2 c_{0} \Phi(x) + C + c_{0}|v|^{2}) \mu_{E} =0
											= \iint ( 2 c_{0} \Phi(x) + C + c_{0}|v|^{2}) (\frac{|v|^{2}}{2} + \Phi (x)) \mu_{E}.
											\end{equation}
											From the direct computations, we deduce $c_{0}=0=C$ and hence (\ref{Z=0}).


											
											\bigskip
											
											\textit{Case of $\varpi\neq0$.}
											From (\ref{specular_Z}), at the boundary,	 
											\begin{eqnarray*}
												b(t,x)\cdot {n}(x)  =   \big( \varpi(t)\times x + m(t) \big)\cdot {n}(x)=0.
											\end{eqnarray*}
											Since $m(t)$ is fixed vector for given $t$, we decompose $m(t)$ into the parallel and orthogonal components to $\varpi(t)$ as
											\[
											m(t) = \alpha(t) \varpi(t) - \varpi(t)\times x_0(t).
											\] 
											Then 
											\begin{eqnarray}
											b(t,x)\cdot {n}(x) &=& \big( \varpi(t)\times x + m(t) \big)\cdot {n}(x)\notag  \\
											&=& \big( \varpi(t)\times (x-x_0(t)) \big) \cdot {n}(x) + \alpha(t)\varpi(t)\cdot {n}(x) = 0,\quad\forall x\in\p\O. \label{o_a=0}  
											\end{eqnarray}
											
											\noindent Choose $t$ with $\varpi(t) \neq 0$. We can pick $x^{\prime}\in\p\O$ such that $\varpi(t)\parallel  {n}(x^{\prime})$. Then the first term of the RHS in (\ref{o_a=0}) is zero. Hence we deduce, from (\ref{b_form}) and (\ref{c_0}), that
											\begin{equation}\label{alpha=0}
											\alpha(t)=0 \ \ \text{and} \ \ b(t,x)= \varpi(t) \times \big(x-x^{0}(t)\big).
											\end{equation}
											This yields
											\begin{equation}\label{axis_0}
											\big( \varpi(t)\times (x-x_0(t)) \big) \cdot {n}(x)=0,\quad\forall x\in\p\O .
											\end{equation}
											The equality (\ref{axis_0}) implies that $\O$ is axis-symmetric with the origin $x_0(t)$ and the axis $\varpi(t)$. From (\ref{angular_Z}) and (\ref{alpha=0}),
											\begin{eqnarray*}
												0
												&=& \iint_{\O}|\varpi\times(x-x_0(t))\cdot v|^2 \mu e^{-\Phi} \dd x \dd v  .
											\end{eqnarray*}
											Therefore, we conclude that $\varpi(t)\equiv 0$ for all $t$. This proves $b(t,x) \equiv 0$. Then we follow the argument of \textit{Case of} $\varpi=0$ and deduce (\ref{Z=0}).
											%
											
											\vspace{4pt}
											
											\noindent\textit{Step 9. }Finally we deduce a contradiction from (\ref{Z=1}) and (\ref{Z=0}). Hence we prove the theorem.\end{proof}

										Once such a coercivity is proven, we can directly deduce an exponential decay.  
										\begin{corollary}\label{decay_U}Assume the same conditions in Proposition \ref{prop_coercivity}. 
											Then there exists $\lambda>0$ such that a solution of (\ref{linearized_eqtn}) satisfies
											\begin{equation}\label{U_decay}
											\sup_{0 \leq t}e^{\lambda t} 	\|f(t)\|_{2}^{2} \lesssim \| f_{0} \|_{2}^{2}.
											\end{equation}
											
										\end{corollary}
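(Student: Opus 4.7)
The plan is to combine the integrated coercivity estimate (\ref{coercive}) with the energy identity for the linearized equation in order to obtain a geometric decay of $\|f(N)\|_2^2$ along the integer times $N\in\mathbb{N}$, and then to upgrade this to a uniform exponential decay by monotonicity. No smallness of $\Phi$ is needed, but the resulting $\lambda$ inherits the non-constructive nature of $C$ in Proposition \ref{prop_coercivity}.

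First, I would multiply (\ref{linearized_eqtn}) by $f$ and integrate over $\Omega\times\R^{3}$. The streaming term $v\cdot\nabla_{x}f$ and the force term $-\nabla_{x}\Phi\cdot\nabla_{v}f$ produce only a boundary contribution of the form $\tfrac{1}{2}\int_{\gamma}|f|^{2}(n(x)\cdot v)\,\dd S\,\dd v$, which vanishes thanks to the specular reflection boundary condition (\ref{specular_f}) after the change of variables $v\mapsto R_{x}v$ on $\gamma_{-}$. This gives the clean energy identity
\begin{equation}\label{plan:energy}
\|f(t_{2})\|_{2}^{2}-\|f(t_{1})\|_{2}^{2}+2\int_{t_{1}}^{t_{2}}\iint_{\Omega\times\R^{3}}e^{-\Phi}\,L f\cdot f=0,
\end{equation}
valid for all $0\le t_{1}\le t_{2}$. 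In particular $t\mapsto\|f(t)\|_{2}^{2}$ is non-increasing, and combining (\ref{plan:energy}) with the semi-positivity (\ref{semi-positive}) yields
\begin{equation}\label{plan:dissipation}
2\,\delta_{L}e^{-\|\Phi\|_{\infty}}\int_{N}^{N+1}\|(\mathbf{I}-\mathbf{P})f(t)\|_{\nu}^{2}\,\dd t\le \|f(N)\|_{2}^{2}-\|f(N+1)\|_{2}^{2}.
\end{equation}

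Next I would invoke Proposition \ref{prop_coercivity}, which applies to $f$ since the hypotheses (specular BC, (\ref{mass_linear})--(\ref{energy_linear}) and, in the axis-symmetric case, (\ref{angular_linear})) are preserved by the flow (they follow from (\ref{conserv_F_mass}), (\ref{evol_energy}), (\ref{conserv_F_angular}) at the linearized level). Together with the trivial bound $\|(\mathbf{I}-\mathbf{P})f\|_{2}^{2}\le \|(\mathbf{I}-\mathbf{P})f\|_{\nu}^{2}$ this gives
\begin{equation}\label{plan:full}
\int_{N}^{N+1}\|f(t)\|_{2}^{2}\,\dd t=\int_{N}^{N+1}\bigl(\|\mathbf{P}f\|_{2}^{2}+\|(\mathbf{I}-\mathbf{P})f\|_{2}^{2}\bigr)\,\dd t\le (C+1)\int_{N}^{N+1}\|(\mathbf{I}-\mathbf{P})f\|_{\nu}^{2}\,\dd t.
\end{equation}
Now monotonicity of $\|f(t)\|_{2}^{2}$ gives $\|f(N+1)\|_{2}^{2}\le \int_{N}^{N+1}\|f(t)\|_{2}^{2}\,\dd t$, and plugging this into (\ref{plan:full}) and then (\ref{plan:dissipation}) produces
\begin{equation}\label{plan:geometric}
\|f(N+1)\|_{2}^{2}\le C'\bigl(\|f(N)\|_{2}^{2}-\|f(N+1)\|_{2}^{2}\bigr),\qquad C':=\frac{C+1}{2\delta_{L}e^{-\|\Phi\|_{\infty}}},
\end{equation}
i.e. $\|f(N+1)\|_{2}^{2}\le \theta\|f(N)\|_{2}^{2}$ with $\theta:=C'/(1+C')\in(0,1)$. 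Iterating yields $\|f(N)\|_{2}^{2}\le \theta^{N}\|f_{0}\|_{2}^{2}=e^{-\lambda N}\|f_{0}\|_{2}^{2}$ with $\lambda:=\log(1/\theta)>0$, and for an arbitrary $t\in[N,N+1]$ the monotonicity once more gives $\|f(t)\|_{2}^{2}\le\|f(N)\|_{2}^{2}\le e^{\lambda}e^{-\lambda t}\|f_{0}\|_{2}^{2}$, which is (\ref{U_decay}).

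The only conceptually delicate point is the justification of (\ref{plan:energy}) as a true identity (not an inequality) together with the vanishing of the specular boundary flux; this is where one would normally work with sufficiently smooth approximations, or appeal to the renormalized/weak-formulation framework in which the conservation laws used in Proposition \ref{prop_coercivity} already hold. Everything else is a routine consequence of (\ref{coercive}), (\ref{semi-positive}), and the energy identity, and no smallness of $\Phi$ enters; the non-constructive nature of $\lambda$ is inherited directly from the non-constructive $C$ produced by the contradiction argument behind Proposition \ref{prop_coercivity}.
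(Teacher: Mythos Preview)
Your proof is correct and uses the same ingredients as the paper---the energy identity, the semi-positivity (\ref{semi-positive}), and the unit-interval coercivity (\ref{coercive})---but packages them differently. The paper proceeds by a weighted energy estimate: it multiplies the equation by $e^{\lambda t}$, picks up the extra source term $\lambda e^{\lambda t}f$, and then applies (\ref{coercive}) on each subinterval $[k,k+1]$ (the weight $e^{2\lambda s}$ varying by at most a bounded factor over a unit interval) to absorb this source when $\lambda$ is taken small enough, giving $e^{\lambda N}\|f(N)\|_{2}^{2}\le \|f_{0}\|_{2}^{2}$ directly. Your approach instead first extracts the discrete contraction $\|f(N+1)\|_{2}^{2}\le \theta\,\|f(N)\|_{2}^{2}$ with $\theta<1$ from the unweighted energy identity plus (\ref{coercive}) and the monotonicity of $\|f(t)\|_{2}^{2}$, and then iterates. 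Your route is arguably a touch more elementary since it never has to carry the exponential weight through the coercivity step; the paper's route is slightly more direct once one accepts that bookkeeping. One minor cosmetic point: the ``trivial bound'' $\|(\mathbf{I}-\mathbf{P})f\|_{2}^{2}\le \|(\mathbf{I}-\mathbf{P})f\|_{\nu}^{2}$ should strictly carry the constant $C_{0}^{-1}$ from (\ref{nu0}), which is harmless for the argument.
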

										\begin{proof}[Proof of Corollary \ref{decay_U}] Assume that $0 \leq t \leq 1$. From the energy estimate of (\ref{linearized_eqtn}) in a time interval $[0,N]$, 
											\begin{equation}\label{energy_f} 
											\|f(N)\|_{2}^{2} + \int^{N}_{0} \iint_{\O\times\R^{3}} e^{- \Phi} f Lf 
											\leq \|f(0)\|_{2}^{2}  .
											\end{equation}
											
											\noindent From (\ref{linearized_eqtn}), for any $\lambda>0$
											\begin{equation}\label{eqtn_lamda}
											\big[\p_{t}  + v\cdot \nabla_{x}  - \nabla_{x} \Phi \cdot \nabla_{v}  \big] (e^{\lambda t} f)  + e^{- \Phi  } L (e^{ \lambda t}f )
											= \lambda e^{\lambda t} f.
											\end{equation}
											By the energy estimate,
											\begin{equation}\label{energy_f}  
											\| e^{\lambda t}f(N)\|_{2}^{2} + \underbrace{\int^{N}_{0} \iint_{\O\times\R^{3}} e^{- \Phi }e^{2\lambda s} f Lf } _{(I)}
											-  
											{
												\lambda \int^{N}_{0} \iint_{\O\times\R^{3}}
												|e^{\lambda s} f(s)|^{2}}
											\leq \|f(0)\|_{2}^{2} 
											. 
											\end{equation}
											
											\noindent Firstly we consider $(I)$ in (\ref{energy_f}). From (\ref{semi-positive}), the term $(I)$ in (\ref{energy_f}) is bounded below by 
											\begin{eqnarray*}
												%
												(I)\geq  
												\delta_{L} 
												\int^{N}_{0} \int_{\O} e^{- \Phi } \int_{\R^{3}} \langle v\rangle | e^{ \lambda s}(\mathbf{I} - \mathbf{P})f|^{2} \geq  \delta_{L} e^{-\| \Phi  \|_{\infty}} \int^{N}_{0} \| e^{ \lambda s} (\mathbf{I} - \mathbf{P}) f  \|_{\nu}^{2}.
											\end{eqnarray*}
											By time translation, we apply (\ref{coercive}) to obtain
											\begin{eqnarray*} 
												(I)
												&\geq&   \frac{\delta_{L} e^{-\| \Phi  \|_{\infty}}}{2} \int^{N}_{0} \| e^{ \lambda s}(\mathbf{I} - \mathbf{P}) f \|_{\nu}^{2}  + \frac{\delta_{L} e^{-\| \Phi \|_{\infty}}}{2 C} \int^{N}_{0} \|  e^{ \lambda s} \mathbf{P} f \|_{2}^{2} \\
												&\geq&  \frac{\delta_{L} e^{-\| \Phi  \|_{\infty}}}{2 C} \int^{N}_{0} \|  e^{ \lambda s}  f \|_{2}^{2}.
											\end{eqnarray*}
											
											%
											%
											%
											
											\noindent Therefore, we derive 
											\begin{equation}\label{energy_N0} 
											e^{\lambda N}\| f(N)\|_{2}^{2} 
											+ \Big(
											\frac{\delta_{L} e^{- \| \Phi \|_{\infty}}}{2C} - \lambda
											\Big)
											\int^{N}_{0} \| e^{ \lambda s} f \|_{2}^{2}  
											\leq \| f (0)\|_{2}^{2}. 
											\end{equation}
											
											On the other hand, from the energy estimate of (\ref{linearized_eqtn}) in a time interval $[N,t]$, using (\ref{semi-positive}), we have 
											\begin{equation}\label{energy_tN}
											\| f (t) \|_{2}^{2}  \leq \| f(N) \|_{2}^{2}.
											\end{equation}
											
											Finally choosing $\lambda \ll 1$, from (\ref{energy_N0}) and (\ref{energy_tN}), we conclude that 
											\begin{equation}
											e^{\lambda t } \| f(t) \|_{2}^{2} = e^{\lambda (t-N)} e^{\lambda N }\| f(N) \|_{2}^{2}
											\leq 2 \| f(0) \|_{2}^{2},
											\end{equation}
											and prove (\ref{U_decay}). \end{proof}

										\begin{proof}[\textbf{Proof of Theorem \ref{theorem_decay}}]  
											We sketch the proof of the nonlinear $L^\infty$ decay. Note that we have shown local existence result in (\ref{local_existence}) and global stability theorem \ref{theorem_time}, so we perform exponential decaying a-priori estimate for nonlinear problem to finish proof. 
											
											Note that for small $\|\Phi\|_{C^1} = \delta_{\Phi} \ll 1$, we have
											\begin{eqnarray*}
												e^{-\Phi} \nu(v) + \frac{1}{w}\nabla_y\Phi\cdot\nabla_v w  \geq \frac{1}{2} e^{-\delta_{\Phi}} \nu (v) .  \\
											\end{eqnarray*}
											This inequality implies,
											\begin{equation} \label{exponent bound}
											\begin{split}
											e^{-\int_{s}^{t}e^{ -\Phi(X)}\nu(V)\dd\tau - \int_s^t \frac{1}{w}\nabla_y\Phi\cdot\nabla_v w  } &\leq e^{ -\frac{1}{2} e^{-\delta_{\Phi}}  \nu(v) (t-s) }  := e^{ -\frac{1}{2} \nu_{\Phi}(v) (t-s) },
											\end{split}
											\end{equation}
											where we defined ${\nu}_{\Phi}(v) := e^{-\delta_{\Phi}}  \nu(v)$. Then, similar as proof of Theorem \ref{theorem_time},
											
											\begin{eqnarray*} 
												&& h (t,x,v) \\
												&=&  { E(v,t,T) h(T) }  + { \int^{t}_{T } E(v,t,s) e^{-\Phi} \int_{u} k_{w}(u,v) E(u,s,T) h(T) \dd u \dd s }  \\
												&& + \int^{t}_{T }  E(v,t,s) e^{-\Phi(X(s))} \int_{u} k_{w}(u,v) 
												\int^{s}_{T} E(u,s,s^{\prime}) e^{-\Phi(X(s^{\prime}))} 	\\
												&&\quad \times	\int_{u^{\prime}}
												k_{w}(u^\prime,u) E(u^{\prime},s^\prime,T) h (T) \dd u^{\prime} \dd s ^{\prime} \dd u \dd s     \\
												&& + \underline{ \int^{t}_{T }  E(v,t,s) e^{-\Phi(X(s))} \int_{u} k_{w}(u,v) 
													\int^{s}_{T} E(u,s,s^{\prime}) e^{-\Phi(X(s^{\prime}))} }	\\
												&& \underline{ \times \int_{u^{\prime}}
													k_{w}(u^\prime,u) \int_T^{s^\prime} E(u^{\prime},s^\prime,s^{\prime\prime}) e^{-\Phi(X(s^{\prime\prime}))}  }   \\
												&& \underline{ \times \int_{u^{\prime\prime}} k_{w}(u^{\prime\prime},u^\prime) h( s^{\prime\prime},  X( s^{\prime\prime} ; s^\prime, X( s^\prime ; s , X(s;t,x,v) , u ), u^\prime) , u^{\prime\prime} ) \dd u^{\prime\prime} \dd s ^{\prime\prime} \dd u^{\prime} \dd s ^{\prime} \dd u \dd s   }_{(IV)},  \\
											\end{eqnarray*}	
											where we defined,
											\begin{eqnarray*}
												E(v,t,s) &:=& e^{-\int_{s}^{t}e^{ -\Phi(X(s;t,x,v))}\nu(V(s;t,x,v))\dd\tau  - \int_s^t \frac{1}{w}\nabla_x\Phi(X(s;t,x,v))\cdot\nabla_v w(V(s;t,x,v))} .
											\end{eqnarray*}
											Except $(IV)$, the rest of terms are clearly bounded by 
											\begin{equation} \label{(I)}
											e^{- \frac{1}{2} {\nu}_{\Phi}(0) (t-T) } \| h(T) \|_\infty. 
											\end{equation}
											
											\noindent Estimate for $(IV)$ is gained by change of variable similar as (\ref{MAIN}) in proof of theorem \ref{theorem_time}. Using definition (\ref{opeator k split}) and performing change of variable,
											\begin{equation} \label{pre (IV)}
											\begin{split}
											(IV) &\lesssim \ C_{N,\O,\Phi,\b} \int_{T}^{t} \int_{X^{\prime\prime}}\int_{u^{\prime\prime}}  h (s^{\prime\prime},X^{\prime\prime}(s^{\prime\prime}),u^{\prime\prime}) \dd u^{\prime\prime} \dd X^{\prime\prime} \dd s 
											+ C_{N,\O,\Phi} \delta^{\gamma} \sup_{s\in[T,t]} \|h (s)\|_\infty  \\
											&\lesssim \ C_{N,\O,\Phi,\b} \int_{T}^{t} \|f (s)\|_{L^2_{x,v}} \dd s 
											+  C_{N,\O,\Phi} \delta^{\gamma} \sup_{s\in[T,t]} \|h (s)\|_\infty .  \\
											\end{split}	
											\end{equation}
											Hence,
											\begin{equation} \label{h infty est}
											\begin{split}
											\sup_{s\in[T,t]} \|h(s)\|_\infty & \lesssim_{N,\O,\Phi,\b} \ e^{-\frac{1}{2} {\nu}_{\Phi}(0) (t-T) } \|h (T)\|_\infty +  \int_{T}^{t} \|f (s)\|_2 \dd s  .
											\end{split}
											\end{equation}
											
											We assume that $m \leq t < m+1$ and define $\lambda^{*}:=\min\{ \frac{\nu_{\Phi}(0)}{2}, \lambda\}$, where $\lambda$ is some constant from Corollary \ref{decay_U}. We use (\ref{h infty est}) repeatedly for each time step, $[k,k+1), \ k\in\mathbb{N}$ and Corollary \ref{decay_U} to perform $L^{2}-L^{\infty}$ bootstrap, {i.e.}
											\begin{equation} \label{h infty decay}
											\begin{split}
											\|h(t)\|_{\infty} 
											&\lesssim_{N,\O,\Phi,\b} e^{-m \frac{\nu_{\Phi}(0)}{2}} \|h(0)\|_{\infty} + \sum_{k=0}^{m-1} e^{-k \frac{\nu_{\Phi}(0)}{2}} \int_{m-1-k}^{m-k} \|f(s)\| \dd s  \\
											&\lesssim_{N,\O,\Phi,\b} e^{-m \frac{\nu_{\Phi}(0)}{2}} \|h(0)\|_{\infty} + \sum_{k=0}^{m-1} e^{-k \frac{\nu_{\Phi}(0)}{2}} \int_{m-1-k}^{m-k} e^{-\lambda(m-1-k)} \|f(0)\| \dd s  \\
											&\leq C_{N,\O,\Phi,\b} e^{-\frac{\lambda^{*}}{2} t } \|h(0)\|_{\infty} . \\
											\end{split}
											\end{equation} 
											
											For nonlinear problem from Duhamel principle,
											\begin{equation} \label{Duhamel}
											\begin{split}
											h  &:= U(t)h_0 + \int_{0}^{t} U(t-s) w e^{-\frac{\Phi}{2}} \Gamma(\frac{h }{w},\frac{h }{w}) (s)  \dd s ,  \\
											\|h (t) \|_{\infty} &\lesssim_{N,\O,\Phi,\b} e^{-\frac{\lambda^{*}}{2} t } \|h(0)\|_{\infty} + \Big\| \int_{0}^{t}  U(t-s) w e^{-\frac{\Phi}{2}} \Gamma\big( \frac{h }{w}, \frac{h }{w} \big)(s) \dd s \Big\|_{\infty} , 
											\end{split}
											\end{equation}
											where $U(t)$ is linear solver for linearized Boltzmann equation. Inspired by \cite{Guo10}, we use Duhamel's principle again, \textit{i.e},
											\begin{equation} \label{double duhamel}
											U(t-s) = G(t-s) + \int_s^t G(t-s_1) K_{w}U(s_1-s) \dd s_1,
											\end{equation}
											where $G(t)$ is linear solver for the system
											\begin{equation} \label{step 2 eq}
											\begin{split}
											&\p_t h + v\cdot\nabla_x h - \nabla_x \Phi\cdot\nabla_v h + \frac{h}{w} \nabla\Phi\cdot\nabla_v w + e^{-\Phi} \nu h = 0 \\  &\quad \text{and}\quad |G(t)h_0| \leq e^{-\frac{1}{2}\nu_{\Phi}(v)t} |h_0|.
											\end{split}
											\end{equation}
											
											\noindent For the last term in (\ref{Duhamel}),
											\begin{equation} \label{nu increase}
											\begin{split}
											& \Big\| \int_{0}^{t}  U(t-s) w e^{-\frac{\Phi}{2}} \Gamma\big( \frac{h }{w}, \frac{h }{w} \big)(s) \dd s \Big\|_{\infty}   \\
											&\leq  \Big\| \int_{0}^{t}  G(t-s) w  \Gamma\big( \frac{h }{w}, \frac{h }{w} \big)(s) \dd s \Big\|_{\infty}  \\
											&\quad +   \Big\| \int_{0}^{t}  \int_{s}^{t} G(t-s_1) K_{w}  U(s_1-s) w \Gamma\big( \frac{h }{w}, \frac{h }{w} \big)(s) \dd s_1 \dd s  \Big\|_{\infty}   \\
											&\leq 
											C_{\Phi} e^{-\frac{\lambda^{*}}{2}t} \Big(\sup_{0\leq s\leq \infty} e^{\frac{\lambda^{*}}{2}s} \|h (s)\|_{\infty} \Big)^{2}.
											\end{split}
											\end{equation}	
											
											\noindent Therefore, for sufficiently small $\|h_0\|_{\infty} \ll 1$, we have a uniform bound
											\begin{equation} \label{uniform bound} 
											\sup_{0\leq t\leq \infty} e^{\frac{ \lambda^{*} }{2} t } \|h(t)\|_\infty \ll 1.
											\end{equation} 
											
											\noindent From this small uniform bound, we get global decay and uniqueness. Positivity was already proved in Theorem \ref{local_existence}.\end{proof}

        





\ack The authors thank Yan Guo and Sergey Bolotin for stimulating discussions. They also thank Nikolai Chernov that he answered kindly to many questions about the billiard theory. The author thanks a referee for the valuable comments. Their research is supported in part by NSF-DMS 1501031 and the University of Wisconsin-Madison Graduate School with funding from the Wisconsin Alumni Research Foundation. C. Kim thanks KAIST Center for Mathematical Challenges for the kind hospitality. 


\frenchspacing
\bibliographystyle{plain}

\end{document}